\documentclass[12pt,reqno]{amsart}
\usepackage{amsmath,amsfonts,amssymb,amsthm,amsxtra,amscd}
\usepackage[all]{xy}
\usepackage{mathrsfs} 
\usepackage{graphicx} 
\topmargin0mm
\oddsidemargin0mm
\evensidemargin0mm
\textheight22cm
\textwidth15cm
\numberwithin{equation}{section}

\theoremstyle{plain} 
\newtheorem{theorem}{Theorem}[section] 
\newtheorem{proposition}[theorem]{Proposition} 
\newtheorem{lemma}[theorem]{Lemma} 
\newtheorem{corollary}[theorem]{Corollary}

\theoremstyle{definition} 
\newtheorem{definition}{Definition}[section] 
 
\newtheorem{remark}[theorem]{Remark}

\newcommand{\p}{{\mathbb P}}

\newcommand{\z}{{\mathbb Z}} 
\newcommand{\pj}{{{\mathbb P}^1}}
\newcommand{\pii}{{{\mathbb P}^2}}
\newcommand{\piii}{{{\mathbb P}^3}}

\newcommand{\sce}{\mathscr{E}}
\newcommand{\scf}{\mathscr{F}} 
\newcommand{\scg}{\mathscr{G}}
\newcommand{\sco}{\mathscr{O}} 
\newcommand{\sch}{\mathscr{H}}
\newcommand{\sci}{\mathscr{I}}

\newcommand{\scl}{\mathscr{L}}
\newcommand{\scm}{\mathscr{M}}

\newcommand{\sct}{\mathscr{T}}

\newcommand{\fm}{{\mathfrak m}}

\newcommand{\tH}{\text{H}} 
\newcommand{\h}{\text{h}}

\newcommand{\izo}{\overset{\sim}{\rightarrow}} 
\newcommand{\Izo}{\overset{\sim}{\longrightarrow}} 
\newcommand{\ra}{\rightarrow} 
\newcommand{\lra}{\longrightarrow} 
\newcommand{\xra}{\xrightarrow}  
\newcommand{\vb}{\, \vert \, } 
\newcommand{\prim}{{\, \prime}} 
\newcommand{\secund}{{\prime \prime}}
\newcommand{\Ker}{\text{Ker}\, }
\newcommand{\Cok}{\text{Coker}\, }

\newcommand{\e}{\varepsilon}



\begin{document}

\title[Stable rank 3 vector bundles]{Stable rank 3 vector bundles on 
$\mathbb{P}^3$ with $c_1 = 0$, $c_2 = 3$}

\author[Coand\u{a}]{Iustin~Coand\u{a}} 
\address{Institute of Mathematics ``Simion Stoilow'' of the Romanian Academy, 
         P.O. Box 1-764, 
         RO--014700, Bucharest, Romania} 
\email{Iustin.Coanda@imar.ro} 


\subjclass[2020]{Primary:14J60; Secondary: 14D20, 14F06}

\keywords{Stable vector bundle, projective space, moduli space}


\begin{abstract}
We clarify the undecided case $c_2 = 3$ of a theorem of Ein, Hartshorne and 
Vogelaar [Math. Ann. 259 (1982), 541--569] about the restriction of a stable 
rank 3 vector bundle with $c_1 = 0$ on the projective 3-space to a general 
plane. It turns out that there are more exceptions to the stable restriction 
property than those conjectured by the three authors. One of them is a 
Schwarzenberger bundle (twisted by $-1$); it has $c_3 = 6$. There are also 
some exceptions with $c_3 = 2$ (plus, of course, their duals). We also prove, 
for completeness, the basic properties of the corresponding moduli spaces; 
they are all nonsingular and connected, of dimension 28.  
\end{abstract}

\maketitle 
\tableofcontents

\section*{Introduction}\label{S:intro} 

A basic technique for studying stable vector bundles on projective spaces
(over an algebraically closed field $k$ of characteristic 0) is 
to investigate their restrictions to general linear subspaces. The prototypes
are the Grauert-M\"{u}lich-Spindler theorem \cite{sp} asserting that if $E$ is
a semistable vector bundle on $\p^n$, $n \geq 2$, then, for the general line
$L \subset \p^n$, one has $E_L \simeq \bigoplus_{i=1}^r \sco_L(a_i)$, with
$a_1 \leq \cdots \leq a_r$ verifying $a_{i+1} - a_i \leq 1$, $i = 1 , \ldots ,
r-1$ and Barth's restriction theorem \cite{ba} asserting that if $E$ is a
stable rank 2 vector bundle on $\p^n$, $n \geq 3$, then its restricton to a
general hyperplane is stable unless $n = 3$ and $E$ is a (twist of a)
nullcorrelation bundle. After Gruson and Peskine reinterpreted Barth's
arguments, Ein, Hartshorne and Vogelaar \cite{ehv} were able to prove a
similar result for stable rank 3 vector bundles (actually, even reflexive
sheaves) on $\piii$. Their result is the following one$\, :$

\begin{theorem}\label{T:ehv}\emph{(Ein-Hartshorne-Vogelaar)}\quad   
If $E$ is a stable rank $3$ vector bundle with $c_1 = 0 $ on $\piii$ then the 
restriction of $E$ to a general plane is stable unless one of the following
holds$\, :$ 
\begin{enumerate}
\item[(1)] $c_2 \leq 3\, ;$   
\item[(2)] $E \simeq S^2N$, for some nullcorrelation bundle $N$ $($in which case
$c_2 = 4$ and $c_3 = 0)\, ;$ 
\item[(3)] There is an exact sequence$\, :$ 
\[
0 \lra \Omega_\piii(1) \lra E^\prim \lra \sco_{H_0}(-c_2+1) \lra 0\, ,
\]
for some plane $H_0 \subset \piii$, where $E^\prim$ is either $E$ or its dual
$E^\vee$.
\end{enumerate}
\end{theorem}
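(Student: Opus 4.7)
The plan is to adapt Barth's restriction-theorem strategy, in the Gruson--Peskine reinterpretation, to the rank 3 setting. Assume that for a general plane $H \subset \piii$ the restriction $E_H := E\vb H$ is not stable, and let $F_H \subset E_H$ denote the maximal destabilizing subsheaf, of rank $r' \in \{1,2\}$ and slope $\mu(F_H) \geq 0$, saturated so that $E_H/F_H$ is torsion-free. Since $E$ itself is stable with $c_1 = 0$, the Grauert--M\"{u}lich--Spindler theorem forces the splitting type of $E$ on a general line to be $(1,0,-1)$ or $(0,0,0)$; restricting further to a general line inside $H$ yields $c_1(F_H) \leq 1$ when $r' = 1$ and $c_1(F_H) \leq 2$ when $r' = 2$. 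Only finitely many numerical types of $F_H$ can therefore occur, and on a dense open $U \subset (\piii)^\vee$ the subsheaf $F_H$ is uniquely determined.

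Next I would globalise the family $\{F_H\}_{H \in U}$. On the flag variety $\Phi = \{(x,H) \in \piii \times (\piii)^\vee : x \in H\}$, with projections $p, q$, the subsheaves $F_H$ assemble into a coherent subsheaf $\scf \subset p^\ast E$ over $q^{-1}(U)$. One extends $\scf$ by saturation across $\Phi \setminus q^{-1}(U)$ and then analyses $p_\ast \scf$ on $\piii$. Either this pushforward (suitably twisted by a line bundle pulled back from $q$) produces a global subsheaf of $E$ of nonnegative slope, contradicting the stability of $E$, or the descent fails in a controlled way; in the latter case, the relative Euler sequence for $p$ extracts a nontrivial section of some twist $E^\prim(j)$, with $j \in \{-1,0,1\}$ and $E^\prim \in \{E, E^\vee\}$, whose zero scheme contains a plane.

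Reading off the possibilities then matches the exceptions (1)--(3). A section of $E^\prim(j)$ vanishing on a plane $H_0$ gives, after identifying the resulting Koszul-type data with the Euler sequence, an extension of the form $0 \ra \Omega_\piii(1) \ra E^\prim \ra \sco_{H_0}(-c_2+1) \ra 0$, which is exception (3); the rank 2 destabilizing case is treated dually on $E^\vee$. Exception (1) corresponds to the crude Chern class bounds forcing $c_2(E) \leq 3$ when all the invariants of $F_H$ are very small. The most delicate case is (2): when the destabilizing subsheaf is globally defined and has the Chern data of the canonical sub $N \subset S^2 N$ attached to a nullcorrelation bundle $N$, one must recognise $E$ as $S^2 N$ from the numerical data $c_2 = 4$, $c_3 = 0$ together with its cohomology table, typically via Beilinson's spectral sequence on $\piii$. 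The main obstacle I expect is exactly this descent-and-identification step: showing that every way the family $\{F_H\}$ can fail to descend to a genuine subsheaf of $E$ is accounted for by one of (1)--(3), and in particular recognising the $S^2 N$ bundle in case (2) from its Chern and cohomology data alone, without having a destabilizing global subsheaf to read off directly.
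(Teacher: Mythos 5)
This statement is Theorem~\ref{T:ehv} of Ein--Hartshorne--Vogelaar, which the paper quotes from \cite{ehv} without proof, so there is no internal proof to compare against; judged on its own, your proposal is a strategy outline rather than a proof, and the substance is missing precisely where the theorem lives. The opening moves (Harder--Narasimhan subsheaf $F_H$ of $E_H$, Grauert--M\"ulich--Spindler bounds on its slope, assembling the $F_H$ into a subsheaf $\scf \subset p^\ast E$ on the flag variety) are the correct Gruson--Peskine/Barth setup, but the dichotomy ``either $p_\ast\scf$ descends to a destabilizing subsheaf of $E$, or the descent fails in a controlled way'' is exactly the part that has to be proved: one must analyse the second fundamental form $\scf \ra (p^\ast E/\scf)\otimes \Omega_{\Phi/\piii}$ case by case according to the rank and degree of $F_H$, and show that each nonvanishing configuration forces one of (1)--(3). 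You explicitly defer this (``the main obstacle I expect is exactly this descent-and-identification step''), and cases (1) and (2) are not argued at all --- ``crude Chern class bounds'' is not a derivation of $c_2 \leq 3$, and you concede that $S^2N$ is not identified.

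Moreover, the mechanism you propose for extracting exception (3) is wrong as stated. A nonzero section $s$ of $E^\prim(j)$ with $j \in \{-1,0,1\}$ whose zero scheme contains a plane $H_0 = \{h_0 = 0\}$ would factor as $s = h_0 s^\prim$ with $s^\prim \in \tH^0(E^\prim(j-1))$, and stability of $E^\prim$ (with $c_1 = 0$) gives $\tH^0(E^\prim(l)) = 0$ for $l \leq 0$, so no such section exists. The object that actually produces exception (3) is not a section of a twist of $E^\prim$ on $\piii$ but an \emph{unstable plane of order} $c_2 - 1$, i.e.\ a surjection $E^\prim_{H_0} \ra \sco_{H_0}(-c_2+1)$, equivalently a nonzero morphism $\Omega_\piii(1) \ra E^\prim$, equivalently a nonzero class $\xi \in \tH^1(E^\prim(-1))$ with $S_1\xi = 0$ (see Remark~\ref{R:spehv}); identifying the cokernel of such a morphism as $\sco_{H_0}(-c_2+1)$ then uses the determinant of $\bigwedge^3$ of the map, Auslander--Buchsbaum, and a Chern class count. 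Without replacing your ``section vanishing on a plane'' step by this analysis, the reduction to the list (1)--(3) does not go through.
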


(The easier cases where $E$ has rank 3 and $c_1 = -1$ or $-2$ had been settled
earlier by Schneider \cite{sch}.) The three authors also show, in
\cite[Thm.~4.2]{ehv}, that, under the hypothesis of the above theorem, $c_2
\geq 2$ and $c_3 \leq c_2^2 - c_2$. Moreover, for $c_2 = 2$ they prove that 
the restriction of $E$ to any plane is not stable but in the case $c_2 = 3$ 
they assert, after the statement of \cite[Thm.~0.1]{ehv}, that they
"do not know exactly which bundles with $c_2 = 3$ have stable restrictions"
and conjecture that the only exceptions are again as in (3). 

The aim of this paper is to clarify the case $c_2 = 3$ of the theorem of Ein, 
Hartshorne and Vogelaar. Our main result is expressed by the next theorem.
As one can see from its statement, there are more exceptions than those
conjectured by the three authors (which shows that this case needs a special
treatment). 

\begin{theorem}\label{T:c10c23} 
Let $E$ be a stable rank $3$ vector bundle on $\piii$ with $c_1 = 0$, $c_2 = 3$ 
and $c_3 \geq 0$. 

\emph{(a)} If ${\fam0 H}^0(E_H) \neq 0$ for every plane $H \subset \piii$ then 
$c_3 = 6$ and there is an exact sequence$\, :$ 
\[
0 \lra \Omega_\piii(1) \lra E \lra \sco_{H_0}(-2) \lra 0\, , 
\]
for some plane $H_0$. 

\emph{(b)} If ${\fam0 H}^0(E_H^\vee) \neq 0$ for every plane $H \subset \piii$ 
then one of the following holds$\, :$ 
\begin{enumerate} 
\item[(i)] $c_3 = 6$ and, up to a linear change of coordinates in $\piii$, 
$E$ is the cokernel of the morphism
$\alpha \colon 3\sco_\piii(-2) \ra 6\sco_\piii(-1)$
defined by the transpose of the matrix$\, :$ 
\[
\begin{pmatrix} 
X_0 & X_1 & X_2 & X_3 & 0 & 0\\ 
0 & X_0 & X_1 & X_2 & X_3 & 0\\ 
0 & 0 & X_0 & X_1 & X_2 & X_3
\end{pmatrix}\, ; 
\]
\item[(ii)] $c_3 = 2$ and, up to a linear change of coordinates in $\piii$, 
$E$ is the cohomology sheaf of a monad of the form$\, :$ 
\[
0 \lra \sco_\piii(-2) \overset{\alpha}{\lra} 6\sco_\piii \overset{\beta}{\lra} 
2\sco_\piii(1) \lra 0\, , 
\]
with $\alpha = (X_2^2\, ,\, X_3^2\, ,\, -X_0X_2\, ,\, -X_1X_3\, ,\, X_0^2\, ,\, 
X_1^2)^{\fam0 t}$ 
and with $\beta$ defined by the matrix$\, :$ 
\[
\begin{pmatrix} 
X_0 & a_1X_1 & X_2 & a_1X_3 + a_3X_1 & 0 & a_3X_3\\ 
0 & b_1X_1 & X_0 & b_1X_3 + b_3X_1 & X_2 & b_3X_3
\end{pmatrix}\, , 
\]
where $a_1,\, a_3,\, b_1,\, b_3$ are scalars satisfying $a_1b_3 - a_3b_1 \neq 
0$. 
\end{enumerate}
\end{theorem}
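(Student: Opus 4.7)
For both parts the strategy is to translate the section hypothesis into a cohomological condition, use Riemann-Roch and the stability vanishings to pin down the intermediate cohomology of twists of $E$, and invoke the Beilinson spectral sequence on $\piii$ to produce an explicit presentation of $E$. Stability of $E$ and of $E^\vee$ gives $H^0(E(k))=0$ for $k\leq -1$ and $H^3(E(k))=0$ for $k\geq -3$. From the restriction sequence $0\to E(-1)\to E\to E_H\to 0$ the hypothesis of (a) forces $h^1(E(-1))\geq 1$; the analogous sequence for $E^\vee$ combined with Serre duality $H^1(E^\vee(-1))\simeq H^2(E(-3))^\vee$ converts the hypothesis of (b) into $h^2(E(-3))\geq 1$. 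Combined with the Riemann-Roch formula
\[
\chi(E(k))=\tfrac{k^3}{2}+3k^2+\tfrac{5k}{2}-3+\tfrac{c_3}{2}
\]
and the EHV bound $0\leq c_3\leq c_2^2-c_2=6$, this pins down a finite list of admissible cohomology tables $\{h^q(E(p))\}_{-3\leq p\leq 0}$.

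For part (a), the condition that $H^0(E_H)\neq 0$ holds on \emph{every} plane, together with the semicontinuity of $h^0(E_H)$, narrows this list to a single table, forcing in particular $c_3=6$. A direct construction then produces a surjection $E\twoheadrightarrow \sco_{H_0}(-2)$ for some plane $H_0$$\, :$ applying $\mathrm{Hom}(E,-)$ to $0\to \sco_\piii(-3)\to \sco_\piii(-2)\to \sco_{H_0}(-2)\to 0$ and using the stability vanishings, one obtains an embedding $\mathrm{Hom}(E,\sco_{H_0}(-2))\hookrightarrow \mathrm{Ext}^1(E,\sco_\piii(-3))\simeq H^2(E(-1))^\vee$, and the non-vanishing of $h^2(E(-1))$ read off the pinned-down table provides the map for a suitable $H_0$. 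The kernel is forced to be $\Omega_\piii(1)$ by a Chern class comparison together with the standard identification of reflexive sheaves with these invariants, and Chern class additivity confirms $c_3=6$. The EHV alternative with $E'=E^\vee$ is automatically excluded because it would require $c_3(E)=-c_3(E^\vee)=-6$, contradicting $c_3\geq 0$.

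For part (b), (a) cannot be applied to $E^\vee$ since $c_3(E^\vee)\leq 0$, so we run Beilinson directly on $E$. The constraint $h^2(E(-3))\geq 1$ forces the monad into one of exactly two shapes$\, :$ a pure resolution $0\to 3\sco_\piii(-2)\to 6\sco_\piii(-1)\to E\to 0$ with $c_3=6$, or a three-term monad $0\to \sco_\piii(-2)\to 6\sco_\piii\to 2\sco_\piii(1)\to 0$ with $c_3=2$; a dimension count excludes intermediate values of $c_3$. The main obstacle now becomes the normal-form step. In case (i), one recognises the transpose of $\alpha$ as a Hankel/catalecticant-type matrix and invokes the standard characterisation of Schwarzenberger bundles via their invariant $3$-dimensional subspace of quadrics in four variables. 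In case (ii), one first normalises $\alpha$ using the $\mathrm{GL}_6$-action on $6\sco_\piii$ together with a linear change of coordinates on $\piii$, exploiting the decomposition of the six quadratic components of $\alpha$ as pairwise products of two independent triples of linear forms$\, ;$ then one normalises $\beta$ modulo the residual stabiliser. The non-degeneracy $a_1b_3-a_3b_1\neq 0$ emerges as the precise condition for $\beta$ to be surjective everywhere, equivalently for the monad cohomology to be locally free and stable with the claimed Chern classes. Tracking this orbit analysis carefully—to ensure no sub-stratum is missed—will be the most delicate part of the argument.
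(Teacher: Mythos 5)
Your outline correctly identifies the general machinery (spectra/cohomology tables, Riemann--Roch, Beilinson monads, normal forms), but it has genuine gaps exactly where the real difficulty of this theorem lies. In part (a), the assertion that semicontinuity plus the list of admissible cohomology tables ``narrows the list to a single table, forcing $c_3=6$'' is not an argument and is not true at the level of dimensions: for $c_3=0$ (spectrum $(0,0,0)$) one has $h^1(E(-1))=h^1(E)=3$, and for $c_3=2$ one has $h^1(E(-1))=h^1(E)=2$, so the condition ``$\mathrm{H}^0(E_H)\neq 0$ for every plane'' is the condition that the determinant of a square matrix of linear forms on $\p^{3\vee}$ vanishes identically --- something no cohomology table rules out. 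Excluding this is the paper's central new ingredient: one studies the sheaf map $\mu\colon \mathrm{H}^1(E(-1))\otimes\sco_{\p^{3\vee}}(-1)\to\mathrm{H}^1(E)\otimes\sco_{\p^{3\vee}}$, shows $\Ker\mu\simeq\sco_{\p^{3\vee}}(a)$ with $a\le -2$ (no unstable plane of maximal order), $b\ge 1$ for the torsion-free quotient of $\Cok\mu$ (since $S_1\mathrm{H}^1(E(-1))=\mathrm{H}^1(E)$ from the Horrocks monad), and derives a contradiction from $a=-d+b+c_1((\Cok\mu)_{\mathrm{tors}})$ together with an analysis of the possible $Y$ and of corank-$2$ loci. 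Your proposal contains no substitute for this. (Your construction of the surjection $E\to\sco_{H_0}(-2)$ once the spectrum is known to be $(-2,-1,0)$ is essentially the paper's Remark on the equivalence (vi)$\Rightarrow$(iii), and is fine.)

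In part (b) the gap is of the same nature but larger. The claim that ``a dimension count excludes intermediate values of $c_3$'' is false: the hypothesis $\mathrm{H}^0(E_H^\vee)\neq 0$ for every plane is compatible at the level of cohomology tables with $c_3=0$ and $c_3=4$ (indeed $h^1(E^\vee(-1))\ge 1$ holds for every spectrum), and excluding these cases is where most of the work goes. For $c_3=4$ the paper must show that the general plane contains no line $L_0$ with $h^1(E_{L_0}^\vee)\ge 1$, via a case-by-case study of the seven possible degeneracy curves $Y$ of the component $2\sco_\piii(-2)\to 3\sco_\piii(-1)$ of the monad and of the secants of $Y$; for $c_3=2$ with an unstable plane a similar jumping-line argument is needed. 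None of this is visible in your plan. Finally, for the normal forms: in case (ii) the paper does not proceed by an orbit analysis of $\alpha$ but by classifying the $4$-dimensional subspaces $W\subset\bigwedge^2V$ arising as the image of $\mathrm{H}^0(\overline{\mu}^\vee)$ in $\mathrm{H}^0(\Omega_{\p^{3\vee}}(2))$ (three normal forms, two of which are then excluded), and in case (i) it shows $E$ has infinitely many unstable planes by a Chern-class computation on a plane $\Pi\subset\p^{3\vee}$ and then invokes Vall\`es' theorem on Steiner bundles; your appeal to ``the standard characterisation of Schwarzenberger bundles'' skips the step of proving that the hypothesis forces infinitely many unstable planes. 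As it stands the proposal is a plausible table of contents, not a proof.
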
 

It is clear that the above theorem answers the question of Ein, Hartshorne and 
Vogelaar because if $c_3 < 0$ then $c_3(E^\vee) = -c_3 > 0$. 

We recall now, briefly, the results from \cite{ehv} that we use in the proof
of Thm.~\ref{T:c10c23} and then outline the extra arguments that we need to
complete its proof. A more detailed description of our method can be
found in Section~\ref{S:prelim}. 

Let $E$ be a stable rank 3 vector bundle on $\piii$ with
Chern classes $c_1 = 0$, $c_2$, $c_3$. The Riemann-Roch theorem asserts, in
this case, that $\chi(E(l)) = \chi(3\sco_\piii(l)) - (l+2)c_2 + c_3/2$,
$\forall \, l \in \z$. In particular, $c_3$ must be even. According to a
theorem of Spindler \cite{sp2} (see, also, \cite[Cor.~3.5]{ehv}), for the
general plane $H \subset \piii$ one has $\h^0(E_H) \leq 1$ and $\h^0(E_H^\vee)
\leq 1$. One deduces that $c_2 \geq 2$ and $c_3 \leq c_2^2 - c_2$ (see
\cite[Thm.~4.2]{ehv}). Assume, now, that $c_2 \geq 3$ and that, for the
general plane $H \subset \piii$, one has $\tH^0(E_H) = 0$ and
$\h^0(E_H^\vee) = 1$.
Applying to $E^\vee$ \cite[Prop.~7.4(a)]{ehv} and the first part of the
proof of \cite[Prop.~6.4]{ehv}, it follows that, for the general plane
$H \subset \piii$, there is an exact sequence$\, :$
\[
0 \lra \left(\Omega_\piii(1)\right)_H \lra E_H^\vee \lra \sco_{L_0}(-c_2+1)
\lra 0\, , 
\]
for some line $L_0 \subset H$. This implies that
$\h^1(E_{L_0}^\vee) \geq c_2 - 2$.

\vskip2mm 

Conversely, if a plane $H \subset \piii$ contains a line $L_0$ such that
$\h^1(E_{L_0}^\vee) \geq c_2 - 2$ then $E_H$ is not stable. [\emph{Indeed}, if
$E_H$ were stable then one would have $\h^1(E_H^\vee) = c_2 -3$, by
Riemann-Roch, and $\h^2(E_H^\vee(-1)) = \h^0(E_H(-2)) = 0$ and this would
imply that $\h^1(E_{L_0}^\vee) \leq c_2 - 3$.] This observation allows one to
show quickly that the bundles from Thm.~\ref{T:c10c23}(b) are exceptions to
the stable restriction property. \emph{Indeed}, if $E$ is the bundle from
Thm.~\ref{T:c10c23}(b)(i) let $(t_0 , t_1)$ and $(u_0 , u_1)$ be two linearly
independent elements of $k^2$ and let $L \subset \piii$ be the line of
equations $\sum t_0^{3-i}t_1^iX_i = \sum u_0^{3-i}u_1^iX_i = 0$. Since the kernel
of $\tH^0(\alpha_L^\vee(-1))$ contains the linearly independent elements
$(t_0^5 , \ldots , t_1^5)^{\text{t}}$ and $(u_0^5 , \ldots , u_1^5)^{\text{t}}$ it
follows that $\h^0(E_L^\vee(-1)) \geq 2$. Taking into account that
$E_L^\vee(-1)$ is a subbundle of $6\sco_L$, one deduces that
$E_L^\vee(-1) \simeq 2\sco_L \oplus \sco_L(-3)$ hence
$E_L^\vee \simeq 2\sco_L(1) \oplus \sco_L(-2)$. Since any plane
$H \subset \piii$ contains such a line [the planes of equation of the form
$\sum t_0^{3-i}t_1^iX_i = 0$, $(t_0 , t_1) \in k^2 \setminus \{(0 , 0)\}$, form
a twisted cubic curve $\Gamma$ in the dual projective space $\p^{3 \vee}$ and
the secants of $\Gamma$ fill the whole of $\p^{3 \vee}$], $E$ is an exception
to the stable restriction property.

If $E$ is one of the bundles from Thm.~\ref{T:c10c23}(b)(ii), let $L$ be a line
joining a point $P_0$ of the line $X_0 = X_2 = 0$ and a point $P_1$ of the line
$X_1 = X_3 = 0$. Since $X_0$ and $X_2$ (resp., $X_1$ and $X_3$) vanish at $P_0$
(resp., $P_1$) one deduces, using the restriction to $L$ of the dual of the
monad defining $E$, that $\h^1(E_L^\vee) = 1$. Since a general plane contains
a line of this kind, $E$ is an exception to the stable restriction property,
too. 

\vskip2mm 

Now, in order to show that the bundles listed in Thm.~\ref{T:c10c23} are the
only exceptions to the stable restriction property (in the case $c_1 = 0$,
$c_2 = 3$), we proceed as follows. Using the properties of the \emph{spectrum}
of a stable rank 3 vector bundle on $\piii$ (see Remark~\ref{R:spectrum}) we
describe the
Horrocks monad of any such bundle $E$ with $c_1 =0$, $c_2 = 3$, $c_3 \geq 0$
(hence $c_3 \in \{0\, ,\, 2\, ,\, 4\, ,\, 6\}$) that is not isomorphic to one
of the bundles from Thm.~\ref{T:c10c23}(a). Then we show that, for the general 
plane $H \subset \piii$, one has $\tH^0(E_H) = 0$. We use, for that, the map
$\mu \colon \tH^1(E(-1)) \otimes \sco_{\p^{3 \vee}}(-1) \ra \tH^1(E) \otimes
\sco_{\p^{3 \vee}}$ deduced from the multiplication map
$\tH^1(E(-1)) \otimes \tH^0(\sco_\piii(1)) \ra \tH^1(E)$. If one would have
$\tH^0(E_H) \neq 0$, for the general plane $H \subset \piii$, then $\mu$ would
have, generically, corank 1. Since $\tH^1(E)$ and $\tH^1(E(-1))$ are
$k$-vector spaces of small dimension (equal, for both, to $3 - c_3/2$) one
gets readily a \emph{contradiction}. Note that this settles, already, the case
$c_3 = 0$.

\vskip2mm 

Finally, if $E$ (as above) does not satisfy the stable restriction property
then, for the general plane $H \subset \piii$, one must have $\tH^0(E_H) = 0$
and $\h^0(E_H^\vee) = 1$. If $c_3 = 2$ and $E$ has an unstable plane or if
$c_3 = 4$ one gets a \emph{contradiction} by showing that the general plane
$H \subset \piii$ contains no line $L_0$ such that $\h^1(E_{L_0}^\vee) \geq 1$.
The argument for the case $c_3 = 4$ is a little bit lengthy because the best
we were able to do was to split it into several cases. The analysis of each 
case is, however, easy.

If $c_3 = 2$ and $E$ has no unstable plane (resp., $c_3 = 6$) we show that
$E$ is as in Thm.~\ref{T:c10c23}(b)(ii) (resp., Thm.~\ref{T:c10c23}(b)(i)).
We use, for that, the morphism
$\mu \colon \tH^1(E^\vee(-1)) \otimes \sco_{\p^{3 \vee}}(-1) \ra \tH^1(E^\vee)
\otimes \sco_{\p^{3 \vee}}$
deduced from the multiplication map
$\tH^1(E^\vee(-1)) \otimes \tH^0(\sco_\piii(1)) \ra \tH^1(E^\vee)$
and, in the case $c_3 = 6$, the main result of Vall\`{e}s \cite{val}.

As a byproduct of our description of the Horrocks monads of the stable rank 3
vector bundles on $\piii$ with $c_1 = 0$ and $c_2 = 3$, we also show that all
of their moduli spaces are nonsingular and connected, of dimension 28.

\vskip2mm 

\noindent 
{\bf Notation.}\quad  (i) We denote by $\p^n$ the projective $n$-space over 
an algebraically closed field $k$ of characteristic 0. We use the classical 
definition $\p^n = \p(V) := (V \setminus \{0\})/k^\ast$, where $V := k^{n+1}$. 
If $e_0 , \ldots , e_n$ is the canonical basis of $V$ and $X_0 , \ldots , X_n$ 
the dual basis of $V^\vee$ then the homogeneous coordinate ring of $\p^n$ is the 
symmetric algebra $S := S(V^\vee) \simeq k[X_0 , \ldots , X_n]$.

(ii) If $\scf$ is a coherent sheaf on $\p^n$ and $i \geq 0$ an integer, we
denote by $\tH^i_\ast(\scf)$ the graded $k$-vector space 
$\bigoplus_{l \in \z}\tH^i(\scf(l))$ endowed with its natural structure of graded 
$S$-module. We also denote by $\h^i(\scf)$ the dimension of $\tH^i(\scf)$ as a 
$k$-vector space.

(iii) If $Y \subset X$ are closed subschemes of $\p^n$, we denote by $\sci_X$
the ideal sheaf of $\sco_\piii$ defining $X$ and by $\sci_{Y , X}$ the ideal
sheaf of $\sco_X$ defining $Y$ as a closed subscheme of $X$, i.e.,
$\sci_{Y , X} = \sci_Y/\sci_X$. If $\scf$ is a coherent sheaf on $\p^n$, we
denote the tensor product $\scf \otimes_{\sco_\p} \sco_X$ by $\scf_X$ and
identify it with the restriction $\scf \, \vert \, X$ of $\scf$ to $X$. In
particular, if $x$ is a (closed) point of $\p^n$ then $\scf_{\{x\}}$ is just
the \emph{reduced stalk} $\scf(x) := \scf_x/\fm_x\scf_x$ of $\scf$ at $x$. 

(iv) A \emph{monad} with cohomology sheaf $\scf$ is a bounded 
complex $K^\bullet$ (usually, with only three non-zero terms) of vector bundles 
on $\p^n$ such that $\sch^0(K^\bullet) \simeq \scf$ and $\sch^i(K^\bullet) = 0$ 
for $i \neq 0$. For \emph{Horrocks monads}, see Barth and Hulek \cite{bh}.

\section{Preliminaries}\label{S:prelim}

This section is devoted to recalling some well known facts about stable rank 3 
vector bundles with $c_1 = 0$ on $\pii$ and $\piii$. In particular, we recall
the definition and properties of the \emph{spectrum} of a stable rank 3 vector
bundle on $\piii$ and Beilinson's theorem. Then we introduce the new
ingredient used in the proof of Thm.~\ref{T:c10c23}, namely the morphism
$\mu \colon \tH^1(E^\prim(-1)) \otimes \sco_{\p^{3 \vee}}(-1) \ra \tH^1(E^\prim)
\otimes \sco_{\p^{3 \vee}}$
deduced from the multiplication map
$\tH^1(E^\prim(-1)) \otimes S_1 \ra \tH^1(E^\prim)$, where $E^\prim$ is a stable
rank 3 vector bundle on $\piii$ with $c_1 = 0$, $c_2 = 3$.
We conclude the section by recalling some basic facts about moduli spaces. 

\begin{lemma}\label{L:h0fleq2} 
Let $F$ be a semistable rank $3$ vector bundle on $\pii$ with Chern classes 
$c_1 = 0$ and $c_2 \geq 1$. Then ${\fam0 h}^0(F) \leq 2$ and if 
${\fam0 h}^0(F) = 2$ then $F$ can be realized as an extension$\, :$ 
\[
0 \lra 2\sco_\pii \lra F \lra \sci_Z \lra 0\, , 
\]
for some $0$-dimensional subscheme $Z$ of $\pii$. 
\end{lemma}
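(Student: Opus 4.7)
The plan is to analyze the evaluation morphism $e \colon \tH^0(F) \otimes \sco_\pii \to F$. Let $G$ be its image and let $\bar G \subseteq F$ be the saturation of $G$ inside $F$, so that $F/\bar G$ is torsion-free. Since $F$ is locally free on the smooth surface $\pii$, the subsheaf $\bar G$ is reflexive; in particular, if $\rk(\bar G) = 1$ then $\bar G \simeq \sco_\pii(d)$ for some $d \in \z$, and if $\rk(\bar G) = 2$ then $\bar G$ is a rank $2$ vector bundle with $F/\bar G \simeq \sco_\pii(d') \otimes \sci_Z$ for some $d' \in \z$ and some $0$-dimensional $Z \subset \pii$.

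The heart of the argument is to control $\rk(\bar G)$ and $c_1(\bar G)$ by two opposite constraints: semistability of $F$ (slope $0$) forces $c_1(\bar G) \leq 0$, while the existence of sections generating $\bar G$ forces $c_1(\bar G) \geq 0$. I would first rule out $\rk(\bar G) = 3$: in that case $G$ has generic rank $3$, so three sections of $F$ that are linearly independent at the generic point yield a morphism $3\sco_\pii \to F$ whose determinant is a nonzero morphism $\sco_\pii \to \sco_\pii$, hence a nonzero constant, hence an isomorphism; this forces $F \simeq 3\sco_\pii$ and contradicts $c_2 \geq 1$. If instead $\rk(\bar G) = 1$, the two inequalities combine to $d = 0$, hence $\bar G \simeq \sco_\pii$; since every section of $F$ factors through $G \subseteq \bar G$, one concludes $\h^0(F) \leq \h^0(\bar G) = 1$.

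The remaining case is $\rk(\bar G) = 2$. Here I would choose two sections of $F$ whose images span $\bar G$ at the generic point (possible because $G$ has generic rank $2$), obtaining a morphism $\alpha \colon 2\sco_\pii \to \bar G$ which is generically an isomorphism. Its determinant $\sco_\pii \to \det \bar G = \sco_\pii(c_1(\bar G))$ is a nonzero global section, which forces $c_1(\bar G) \geq 0$, so $c_1(\bar G) = 0$ and $\det \alpha$ is a nonzero constant. Consequently $\alpha$ is an isomorphism and $\bar G \simeq 2\sco_\pii$, which gives $\h^0(F) = \h^0(\bar G) = 2$ and, together with $F/\bar G \simeq \sci_Z$ (obtained from $d' = -c_1(\bar G) = 0$), the desired extension $0 \to 2\sco_\pii \to F \to \sci_Z \to 0$.

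The main delicate point I expect is the careful bookkeeping of the two opposing inequalities on $c_1(\bar G)$ in each subcase, together with the verification that $\bar G$ is actually locally free when $\rk(\bar G) = 2$ (which follows from the fact that a rank $2$ reflexive sheaf on a smooth surface is locally free). Once these are settled, the determinant-is-a-unit trick delivers both the bound $\h^0(F) \leq 2$ and the splitting $\bar G \simeq 2\sco_\pii$ essentially for free.
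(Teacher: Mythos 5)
Your argument is correct and is essentially the paper's proof in different packaging: the paper takes two linearly independent sections $s_1 , s_2$, shows $s_1 \wedge s_2 \neq 0$ (your observation that the evaluation image cannot have rank $1$, obtained there from the fact that $\tH^0(F(-1)) = 0$ forces the zero scheme of $s_1$ to have codimension $2$), uses $\tH^0(F^\vee(-1)) = 0$ to see that the zero scheme $Z$ of $s_1 \wedge s_2 \in \tH^0(\bigwedge^2 F) \simeq \tH^0(F^\vee)$ has codimension $2$ (your pair of opposite inequalities giving $c_1(\overline{G}) = 0$), and writes down the resulting Eagon--Northcott complex $0 \to 2\sco_\pii \to F \to \sci_Z \to 0$, which is exactly your exact sequence for the saturation $\overline{G} \simeq 2\sco_\pii$. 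The only cosmetic differences are that you invoke slope semistability of $F$ directly on the saturation where the paper uses the equivalent vanishings $\tH^0(F(-1)) = 0$ and $\tH^0(F^\vee(-1)) = 0$, and that the paper deduces $\h^0(F) = 2$ at the end from $\h^0(\sci_Z) = 0$ (via $\mathrm{length}\, Z = c_2 \geq 1$) rather than from your case analysis on the rank of the evaluation image.
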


\begin{proof} 
Since $F$ has rank 3 and $c_1 = 0$, it is semistable if and only if 
$\tH^0(F(-1)) = 0$ and $\tH^0(F^\vee(-1)) = 0$. It is well known that such a 
bundle has $c_2 \geq 0$ and if $c_2 = 0$ then $F \simeq 3\sco_\pii$. 

Now, under the hypothesis of the lemma, assume that $\h^0(F) \geq 2$ and 
consider two linearly independent global sections $s_1$ and $s_2$ of $F$. 
We assert that the global section $s_1 \wedge s_2$ of $\bigwedge^2F$ is 
non-zero. \emph{Indeed}, since $\tH^0(F(-1)) = 0$ the zero scheme $Z_1$ of 
$s_1$ has codimension at least 2 in $\pii$. If $s_1 \wedge s_2 = 0$ then there 
exists a regular function $f$ on $\pii \setminus Z_1$ such that $s_2 = fs_1$. 
But the only regular functions on $\pii \setminus Z_1$ are the constant ones 
hence $s_1$ and $s_2$ are linearly dependent, which \emph{contradicts} our 
assumption. 

We have $\bigwedge^2F \simeq F^\vee$. Since $\tH^0(F^\vee(-1)) = 0$, the zero 
scheme $Z$ of the global section $s_1 \wedge s_2$ of $\bigwedge^2F$ has 
codimension at least 2 in $\pii$. It follows that the \emph{Eagon-Northcott 
complex}$\, :$ 
\[
0 \lra 2\sco_\pii \xra{(s_1\, ,\, s_2)} F \xra{s_1 \wedge s_2 \wedge \ast} 
\sci_Z \lra 0 
\]
is exact. Since $\text{length}\, Z = c_2 \geq 1$ one deduces that $\h^0(F) = 
2$. 
\end{proof}

\begin{lemma}\label{L:gms}
Let $F$ be a rank $3$ vector bundle on $\pii$ with $c_1 = 0$ and such that
${\fam0 H}^0(F(-2)) = 0$ and ${\fam0 H}^0(F^\vee(-2)) = 0$. Then, for the
general line $L \subset \pii$, one has $F_L \simeq 3\sco_L$ or
$F_L \simeq \sco_L(1) \oplus \sco_L \oplus \sco_L(-1)$. 
\end{lemma}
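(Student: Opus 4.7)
The generic splitting type $F_L \simeq \sco_L(a_1) \oplus \sco_L(a_2) \oplus \sco_L(a_3)$, with $a_1 \geq a_2 \geq a_3$ and $\sum a_i = 0$, is well defined, so the task is to rule out every triple other than $(0,0,0)$ and $(1,0,-1)$. The plan is to convert the two vanishing hypotheses into bounds on subsheaves of $F$, then run through the possibilities for the Harder--Narasimhan filtration of $F$ and, in each case, read off the restriction to a general line.

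A saturated rank-$1$ subsheaf of $F$ is reflexive on the smooth surface $\pii$, hence a line bundle $\sco_\pii(d)$; the inclusion produces a nonzero section of $F(-d)$, so $\tH^0(F(-2)) = 0$ forces $d \leq 1$. For a saturated rank-$2$ subsheaf $G \subset F$, the quotient $F/G$ is torsion-free of rank $1$, hence of projective dimension at most $1$, so $G$ is itself locally free; taking $\bigwedge^2$ yields an embedding $\det G \simeq \sco_\pii(c_1(G)) \hookrightarrow \bigwedge^2 F \simeq F^\vee$, and $\tH^0(F^\vee(-2)) = 0$ forces $c_1(G) \leq 1$.

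If $F$ is $\mu$-semistable, the Grauert--M\"ulich--Spindler theorem recalled in the introduction gives $a_i - a_{i+1} \leq 1$, which together with $\sum a_i = 0$ leaves only the two allowed triples. Otherwise the maximal destabilizing subsheaf $F_1$ has $\mu(F_1) > 0$, hence $c_1(F_1) \geq 1$, and by the bounds above $c_1(F_1) = 1$ regardless of rank. When $\rk F_1 = 2$, $F_1$ is semistable of slope $1/2$ and Grauert--M\"ulich forces $(F_1)_L \simeq \sco_L(1) \oplus \sco_L$ for general $L$, while $F/F_1 \simeq \sci_Z(-1)$ for some $0$-dimensional $Z$. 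When $\rk F_1 = 1$, so $F_1 \simeq \sco_\pii(1)$, I would repeat the argument for $Q := F/F_1$: any rank-$1$ saturated subsheaf of $Q$ with $c_1 \geq 1$ would lift to a rank-$2$ subsheaf of $F$ with $c_1 \geq 2$, so either $Q$ is semistable with $Q_L \simeq \sco_L \oplus \sco_L(-1)$ by Grauert--M\"ulich, or its destabilizing subsheaf is $\sco_\pii$ with torsion-free quotient $\sci_{Z^\prim}(-1)$, which yields the same restriction on a general $L$.

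In each subcase $F_L$ fits in an exact sequence whose sub and quotient are direct sums of line bundles on $L \simeq \pj$ arranged so that the relevant $\mathrm{Ext}^1$ (between $\sco_L(-1)$ or $\sco_L \oplus \sco_L(-1)$ and $\sco_L(1)$ or $\sco_L(1) \oplus \sco_L$) vanishes. Hence every such extension splits and $F_L \simeq \sco_L(1) \oplus \sco_L \oplus \sco_L(-1)$. The only real obstacle is the bookkeeping of the Harder--Narasimhan cases; once the structure of $F_1$ and $F/F_1$ is pinned down by the subsheaf bounds above, the rest is immediate.
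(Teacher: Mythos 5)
Your argument is correct and follows essentially the same route as the paper: both reduce to the Grauert--M\"ulich(--Spindler) theorem for the semistable pieces of a destabilizing filtration whose slopes are pinned down by the hypotheses $\tH^0(F(-2)) = \tH^0(F^\vee(-2)) = 0$, and then split the restricted extension on $L \simeq \pj$ by an $\mathrm{Ext}^1$ vanishing. The only organizational difference is that the paper avoids your second Harder--Narasimhan step on $Q = F/F_1$ by passing to $F^\vee$ when $\tH^0(F(-1)) \neq 0$, so that the destabilizing subobject is always a rank-$2$ bundle $G(1)$ with quotient $\sci_Z(-1)$; also, in your rank-one case the maximal destabilizing subsheaf of the (possibly non-locally-free) quotient $Q$ is $\sci_W$ for some finite $W$ rather than $\sco_\pii$ itself, which is harmless for a general line.
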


\begin{proof}
If $\tH^0(F(-1)) = 0$ and $\tH^0(F^\vee(-1)) = 0$ then $F$ is semistable and
one can apply the theorem of Grauert-M\"{u}lich-Spindler \cite{sp}. If
$\tH^0(F^\vee(-1)) \neq 0$ or $\tH^0(F(-1)) \neq 0$ then one has an exact
sequence$\, :$
\[
0 \lra G(1) \lra F^\prim \lra \sci_Z(-1) \lra 0\, , 
\]
where $F^\prim$ is either $F$ or $F^\vee$, $Z$ is a 0-dimensional (or empty)
closed subscheme of $\pii$ and $G$ is a rank 2 vector bundle with
$c_1(G) = -1$. Since $\tH^0(F^\prim (-2)) = 0$ it follows that
$\tH^0(G(-1)) = 0$. If $\tH^0(G) = 0$ then $G$ is stable. If $\tH^0(G) \neq 0$
then $G$ can be realized as an extension
$0 \ra \sco_\pii \ra G \ra \sci_W(-1) \ra 0$, for some 0-dimensional subscheme
$W$ of $\pii$. Using the theorem of
Grauert-M\"{u}lich (for the former case) one gets that, for the general line
$L \subset \pii$ one has $G_L \simeq \sco_L \oplus \sco_L(-1)$. If, moreover,
$L \cap Z = \emptyset$ then
$F^\prim_L \simeq \sco_L(1) \oplus \sco_L \oplus \sco_L(-1)$. 
\end{proof}

\begin{remark}\label{R:c1scf}
We recall, here, a formula that we shall need a couple of times. If $\scf$ is 
a coherent torsion sheaf on $\p^n$ then$\, :$ 
\[
c_1(\scf) = {\textstyle \sum}(\text{length}\, \scf_\xi)\, \text{deg}\, X\, ,
\]
where the sum is indexed by the 1-codimensional irreducible components $X$ of 
$\text{Supp}\, \scf$ and $\xi$ is the generic point of $X$ (notice that 
$\scf_\xi$ is an Artinian $\sco_{\p^n , \xi}$-module).

\emph{Indeed}, let $\Sigma$ be the support of $\scf$ endowed with the reduced
subscheme structure. For any integer $i \geq 0$,
$\sci_\Sigma^i\scf/\sci_\Sigma^{i+1}\scf$ is an $\sco_\Sigma$-module hence there
exists a dense open subset $\Sigma_i$ of $\Sigma$ such that the restriction
of $\sci_\Sigma^i\scf/\sci_\Sigma^{i+1}\scf$ to $\Sigma_i$ is locally free.
$\text{length}\, (\sci_\Sigma^i\scf/\sci_\Sigma^{i+1}\scf)_\xi$ is the rank of this
locally free sheaf arround $\xi$. One can restrict, now, $\scf$ to a general
line $L \subset \p^n$. 
\end{remark}

\begin{remark}\label{R:spehv} 
Let $E$ be a stable rank 3 vector bundle on $\piii$ with $c_1 = 0$.
We recall that a plane $H_0 \subset \piii$ is an \emph{unstable plane} for $E$
if $\tH^0(E_{H_0}^\vee(-1)) \neq 0$. The largest integer $r \geq 1$ for which 
$\tH^0(E_{H_0}^\vee(-r)) \neq 0$ is the \emph{order} of $H_0$. 
Ein, Hartshorne and Vogelaar show, in \cite[Prop.~5.1]{ehv}, that the 
following conditions are equivalent$\, :$ 
\begin{enumerate} 
\item[(i)] $\tH^0(E_H) \neq 0$ for every plane $H$ and there is an unstable 
plane for $E\, ;$ 
\item[(ii)] There exists a plane $H_0 \subset \piii$ and an exact
sequence$\, :$ 
\[
0 \lra \Omega_\piii(1) \lra E \lra \sco_{H_0}(-c_2+1) \lra 0\, ; 
\]   
\item[(iii)] $E$ has an unstable plane of order $c_2 - 1$. 
\end{enumerate}
If the above conditions are satisfied then$\, :$ (iv) $c_3 = c_2^2 - c_2$. 
Moreover, if $c_2 \geq 4$ then (iv)\, $\Rightarrow$\, (iii) (hence all four 
conditions are equivalent). 
We assert that conditions (i)--(iii) above are also equivalent (for
$c_2 \geq 2$) to the condition$\, :$ 
\begin{enumerate} 
\item[(v)] There is a non-zero morphism $\phi \colon \Omega_\piii(1) \ra E$. 
\end{enumerate} 

\noindent 
\emph{Indeed}, let us show that (v)\, $\Rightarrow$\, (ii). Since 
$\Omega_\piii(1)$ and $E$ are stable vector bundles with
$c_1(\Omega_\piii(1)) = -1$ and $c_1(E) = 0$, $\phi$ must have, generically,
rank 3. It follows that 
$\bigwedge^3\phi \colon \sco_\piii(-1) \ra \sco_\piii$ is defined by a non-zero 
linear form $h_0$. Let $H_0 \subset \piii$ be the plane of equation $h_0 = 0$. 
$\Cok \phi$ is annihilated by $h_0$ hence it is an $\sco_{H_0}$-module. The 
Auslander-Buchsbaum relation shows that $\text{depth}(\Cok \phi)_x \geq 2$, 
$\forall \, x \in H_0$, hence $\Cok \phi$ is a locally free $\sco_{H_0}$-module. 
One deduces, from Remark~\ref{R:c1scf}, that it has rank 1, i.e., that 
$\Cok \phi \simeq \sco_{H_0}(a)$ for some $a \in \z$. One has
$1 = c_2(\Omega_\piii(1)) = c_2 + a$ hence $a = -c_2 + 1$. 

Notice that condition (v) above is equivalent to the existence of a non-zero 
element $\xi$ of $\tH^1(E(-1))$ such that $S_1\xi = 0$ in $\tH^1(E)$ (use the 
exact sequence $0 \ra \Omega_\piii(1) \ra S_1 \otimes \sco_\piii \ra 
\sco_\piii(1) \ra 0$). 
\end{remark}

\begin{lemma}\label{L:jumpinglines}
Let $E$ be a stable rank $3$ vector bundle on $\piii$ with $c_1 = 0$, $c_2 = 3$
and $c_3 \geq 0$. Assume that ${\fam0 H}^0(E_H) = 0$ for the general plane
$H \subset \piii$. Assume, also, that any line $L \subset \piii$ for which
${\fam0 h}^1(E_L^\vee) \geq 1$ either passes through one of finitely many
ponts, or is contained in one of finitely many planes, or belongs to a family
of dimension at most $1$. Then, for the general plane $H \subset \piii$, one
has ${\fam0 H}^0(E_H^\vee) = 0$. 
\end{lemma}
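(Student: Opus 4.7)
The plan is to argue by contradiction. Suppose that $\tH^0(E_H^\vee) \neq 0$ for the general plane $H \subset \piii$. Combined with the Spindler-type bound $\h^0(E_H^\vee) \leq 1$ from \cite[Cor.~3.5]{ehv} and the hypothesis $\tH^0(E_H) = 0$ generically, this forces $\h^0(E_H^\vee) = 1$ for general $H$. Applying \cite[Prop.~7.4(a)]{ehv} to $E^\vee$ and the argument from the first part of the proof of \cite[Prop.~6.4]{ehv} (both already invoked in the introduction), one obtains, for the general plane $H$, a line $L_0 \subset H$ together with an exact sequence
\[
0 \lra (\Omega_\piii(1))_H \lra E_H^\vee \lra \sco_{L_0}(-c_2+1) \lra 0\, .
\]
Restricting this sequence to $L_0$ and using the known cohomology of $(\Omega_\piii(1))_{L_0}$, one deduces $\h^1(E_{L_0}^\vee) \geq c_2 - 2 = 1$, so that $L_0$ is a jumping line in the sense of the hypothesis.

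The crux is then a dimension count. Write $\mathcal{J} \subset G(1, \piii)$ for the locus of jumping lines and $\mathcal{P} \subset \p^{3\vee}$ for the set of planes containing at least one line of $\mathcal{J}$. The step above produces an element of $\mathcal{J} \cap G(1, H)$ for each general $H$, so $\mathcal{P}$ is dense in $\p^{3\vee}$, hence $3$-dimensional. On the other hand, the hypothesis restricts $\mathcal{J}$ to a union of three kinds of families, and I expect each of them to contribute at most a $2$-dimensional subset of $\p^{3\vee}$: the lines through one of finitely many fixed points $P_i$ contribute only planes through some $P_i$, a union of finitely many $\p^2$'s; the lines of the $1$-dimensional subfamily contribute a family of planes of dimension at most $1+1=2$, since planes through a fixed line form a $\p^1$; and lines contained in one of finitely many fixed planes $H_0^j$ contribute either the finitely many $H_0^j$ themselves or planes $H \neq H_0^j$ with $H \cap H_0^j \in \mathcal{J}$, the latter having dimension at most $\dim(\mathcal{J} \cap G(1, H_0^j)) + 1$. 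Combining, $\dim \mathcal{P} \leq 2$, contradicting $\dim \mathcal{P} = 3$.

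The main obstacle is the last contribution: a priori, $\mathcal{J} \cap G(1, H_0^j)$ could be all of $G(1, H_0^j)$, a $2$-dimensional family, which would give a $3$-dimensional contribution to $\mathcal{P}$ and kill the dimension count. I expect to exclude this by a dedicated argument using the spectrum of $E$ and the numerical constraints $c_2 = 3$ and $c_3 \geq 0$: if every line in some plane $H_0^j$ were a jumping line then $E_{H_0^j}^\vee$ would carry an unusually large amount of cohomology in every line direction, either forcing $E$ into one of the exceptional configurations already isolated by Theorem~\ref{T:c10c23}(a) (which are excluded here since $\tH^0(E_H) = 0$ generically) or violating the bound on $\h^0$ of restrictions that follows from the spectrum. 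With this edge case removed, the dimension count closes and yields the desired contradiction.
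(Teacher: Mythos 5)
Your skeleton is the right one and matches what the paper does implicitly: assuming $\tH^0(E_H^\vee)\neq 0$ generically, the Spindler bound gives $\h^0(E_H^\vee)=1$, the Ein--Hartshorne--Vogelaar exact sequence produces a line $L_0\subset H$ with $\h^1(E_{L_0}^\vee)\geq c_2-2=1$ in the general plane, and one then counts dimensions over the three types of families of jumping lines allowed by the hypothesis. You have also correctly isolated the one dangerous case, namely a plane $H_0^j$ in which the jumping lines form a $2$-dimensional (hence dense) family, which would contribute a $3$-dimensional set of planes and destroy the count.

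But that case is exactly where your proposal stops being a proof: you only ``expect to exclude this by a dedicated argument,'' and the sketch you give (spectrum, ``unusually large amount of cohomology,'' falling into Theorem~\ref{T:c10c23}(a)) is not carried out and is not quite the right mechanism as stated. The paper closes this gap with two specific ingredients. First, since $\tH^0(E_H)=0$ for the general plane, $E$ does not satisfy the equivalent conditions (i)--(iii) of Remark~\ref{R:spehv}, and since $c_3\geq 0$ (so $c_3(E^\vee)=-c_3\neq 6=c_2^2-c_2$) neither does $E^\vee$; this yields $\tH^0(E_H(-2))=0$ and $\tH^0(E_H^\vee(-2))=0$ for \emph{every} plane $H$, not just the general one. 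Second, the Grauert--M\"ulich-type Lemma~\ref{L:gms} then shows that for every plane $H$ the general line $L\subset H$ has $E_L$ of balanced splitting type, hence $\h^1(E_L^\vee)=0$, so the jumping lines inside any fixed plane form a family of dimension at most $1$. This is the statement you need for the third contribution to your dimension count, and without it (or an equivalent) the argument does not close. Note that your guess that a totally-jumping plane would force $E$ into the exceptional configuration of Theorem~\ref{T:c10c23}(a) is close in spirit --- by the contrapositive of Lemma~\ref{L:gms} such a plane would be an unstable plane of order $2$ for $E$ or for $E^\vee$, which is precisely the excluded configuration --- but this deduction requires Lemma~\ref{L:gms}, which is the missing idea.
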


\begin{proof}
$E$ does not satisfy the equivalent conditions (i)--(iii) from
Remark~\ref{R:spehv} (because $\tH^0(E_H) = 0$ for the general plane
$H \subset \piii$) and nor does $E^\vee$ (because $c_3 \neq -6$). One
deduces that, for any plane $H \subset \piii$, one has $\tH^0(E_H^\vee(-2)) = 0$
and $\tH^0(E_H(-2)) = 0$. Lemma~\ref{L:gms} implies that, for any plane $H$,
the family of lines $L \subset H$ for which $\h^1(E_L^\vee) \geq 1$ has
dimension at most 1. It follows that the general plane $H \subset \piii$
contains no line $L_0$ for which $\h^1(E_{L_0}^\vee) \geq 1$. As we noticed in
the Introduction (right after the statement of Thm.~\ref{T:c10c23}), this
implies the conclusion of the lemma. 
\end{proof}

\begin{remark}\label{R:spectrum}
Let $E$ be a stable rank 3 vector bundle on $\piii$ with $c_1 = 0$. One of the 
most important application of Thm.~\ref{T:ehv} (and of the generalized
Grauert-M\"{u}lich theorem of Spindler \cite{sp}) is the existence of a
non-decreasing sequence of integers $k_E = (k_1 , \ldots , k_m)$, called the
\emph{spectrum} of $E$, such that, 
putting $K := \bigoplus_{i = 1}^m\sco_\pj(k_i)$, one has$\, :$ 
\begin{enumerate} 
\item[(I)] $\h^1(E(l)) = \h^0(K(l+1)$, for $l \leq -1\, ;$ 
\item[(II)] $\h^2(E(l)) = \h^1(K(l+1))$, for $l \geq -3$. 
\end{enumerate}
We shall need the following properties of the spectrum$\, :$ $m = c_2$, 
$-2\Sigma k_i = c_3$, the spectrum is connected, i.e., $k_{i+1} - k_i \leq 1$,
for $1 \leq i \leq m-1$, and if 0 does not occur in the spectrum then either 
$k_{m-2} = k_{m-1} = k_m = -1$ or $k_1 = k_2 = k_3 = 1$. Moreover, by Serre
duality, the spectrum of $E^\vee$ is $(-k_m , \ldots , -k_1)$. 
Details can be found in the
papers of Okonek and Spindler \cite{oks2}, \cite{oks3}, and in the papers
\cite{c1}, \cite{c2} of the author. These papers use the approach of
Hartshorne \cite{ha}, \cite{ha2} who treated the rank 2 case. Details can be
also found in Appendix~\ref{A:spectrum}. 

It is easy to show that conditions (i)--(iii) from Remark~\ref{R:spehv} are
also equivalent to the condition$\, :$
\begin{enumerate}
\item[(vi)] The spectrum of $E$ is $(-c_2+1, \ldots , -1 , 0)$. 
\end{enumerate}
\emph{Indeed}, (ii) $\Rightarrow$ (vi) by the above definition of the
spectrum. On the other hand, (vi) $\Rightarrow$ (iii) because the spectrum of
$E^\vee$ is $(0 , 1 , \ldots , c_2 - 1)$ hence $\h^1(E^\vee(-c_2)) = 1$ and
$\h^1(E^\vee(-c_2 + 1)) = 3$ hence there exists a non-zero linear form $h_0$
such that multiplication by
$h_0 \colon \tH^1(E^\vee(-c_2)) \ra \tH^1(E^\vee(-c_2+1))$ is the zero map which
implies that $\h^0(E_{H_0}^\vee(-c_2+1)) = 1$, $H_0$ being the plane of equation
$h_0 = 0$. 
\end{remark}

\begin{remark}\label{R:beilinson} 
Let $E$ be a stable rank 3 vector bundle on $\piii$ with $c_1 = 0$, $c_2 = 3$. 
According to the previous remark, if $c_3 = 6$ then the possible 
spectra of $E$ are $(-2, -1 , 0)$ and $(-1 , -1 , -1)$, if $c_3 = 4$ the
spectrum of $E$ is $(-1 , -1 , 0)$, if $c_3 = 2$ the spectrum of $E$ is
$(-1 , 0 , 0)$, and if $c_3 = 0$ the possible spectra of $E$ are $(0 , 0 , 0)$
and $(-1 , 0 , 1)$.
Assuming that neither $E$ nor $E^\vee$ satisfy the equivalent conditions
(i)--(iii) from Remark~\ref{R:spehv} (i.e., that the spectrum of $E$ 
is neither $(-2 , -1 , 0)$ nor $(0 , 1 , 2)$),
one has $\tH^1(E(l)) = 0$ for $l \leq -3$ and $\tH^2(E(l)) = 0$ 
for $l \geq -1$. In this case, \emph{Beilinson's theorem} \cite{bei}, 
with the improvements of Eisenbud, Fl\o ystad and Schreyer \cite[(6.1)]{efs} 
(these results are recalled in \cite[1.23--1.25]{acm1}), implies  
that $E$ is the cohomology sheaf of a monad that can be described as the 
total complex of a double complex with the following (possibly) non-zero 
terms$\, :$ 
\[
\SelectTips{cm}{12}\xymatrix{{\tH^1(E(-2)) \otimes \Omega_\piii^2(2)}\ar[r] & 
{\tH^1(E(-1)) \otimes \Omega_\piii^1(1)}\ar[r] & {\tH^1(E) \otimes \sco_\piii}\\ 
0\ar[r]\ar[u] & {\tH^2(E(-3)) \otimes \Omega_\piii^3(3)}\ar[r]\ar[u] & 
{\tH^2(E(-2)) \otimes \Omega_\piii^2(2)}\ar[u]} 
\]
such that the term $\tH^1(E(-1)) \otimes \Omega_\piii^1(1)$ has bidegree 
$(0,0)$. The horizontal differentials of this double complex are equal to 
$\sum_{i = 0}^3X_i \otimes e_i$, $X_i$ acting to the left on $\tH^p(E(-l))$ via 
the $S$-module structure of $\tH^p_\ast(E)$ and $e_i$ acting to the right on 
$\Omega_\piii^l(l)$ by contraction (recall that $\Omega_\piii^l(l)$ embeds 
canonically into $\sco_\piii \otimes \bigwedge^lV^\vee$). 
\end{remark}

\begin{lemma}\label{L:s1n-1}
Let $E$ be a stable rank $3$ vector bundle on $\piii$ with $c_1 = 0$,
$c_2 = 3$. Assume that neither $E$ nor $E^\vee$ satisfy the equivalent
conditions \emph{(i)--(iii)} from Remark~\emph{\ref{R:spehv}}. If
${\fam0 H}^2(E(-2)) = 0$ then, for any vector subspace $N_{-1}$ of codimension
$1$ of ${\fam0 H}^1(E(-1))$, one has $S_1N_{-1} = {\fam0 H}^1(E)$. 
\end{lemma}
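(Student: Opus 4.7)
The plan is to extract a strong surjectivity fact from the Beilinson monad of $E$ and then deduce the lemma by elementary linear algebra.

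First, I would invoke Remark~\ref{R:beilinson}: since neither $E$ nor $E^\vee$ satisfies (i)--(iii), $E$ is presented as the middle cohomology of the monad obtained as the total complex of the displayed double complex. The hypothesis $\tH^2(E(-2)) = 0$ kills the bottom-right term $\tH^2(E(-2)) \otimes \Omega_\piii^2(2)$, so the rightmost differential of the resulting three-term monad is just the ``horizontal'' map
\[
\beta\colon \tH^1(E(-1)) \otimes \Omega_\piii^1(1) \lra \tH^1(E) \otimes \sco_\piii ,
\]
the Beilinson encoding of the multiplication $S_1 \otimes \tH^1(E(-1)) \to \tH^1(E)$. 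Since the top cohomology of the monad vanishes, $\beta$ is surjective as a morphism of sheaves. Using the Euler sequence $0 \to \Omega_\piii(1) \to S_1 \otimes \sco_\piii \to \sco_\piii(1) \to 0$, the fiber $\Omega_\piii^1(1)(x)$ at $x \in \piii = \p(V)$ identifies canonically with the $3$-dimensional subspace $\ell_x^\perp \subset S_1$ of linear forms vanishing at $x$; moreover, every $3$-dimensional subspace of $S_1 = V^\vee$ is of this shape. Sheaf-level surjectivity of $\beta$ therefore yields
\[
(\ast)\quad U \cdot \tH^1(E(-1)) = \tH^1(E)\ \text{for every }3\text{-dimensional subspace }U \subset S_1 .
\]
In particular, $S_1 \cdot \tH^1(E(-1)) = \tH^1(E)$.

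Second, I would deduce the lemma from $(\ast)$ by contradiction. Suppose $S_1 N_{-1} \subsetneq \tH^1(E)$, put $W := \tH^1(E)/S_1 N_{-1}$ and $d := \dim_k W \ge 1$. Fix $\xi_0 \in \tH^1(E(-1)) \setminus N_{-1}$; using $\tH^1(E(-1)) = k\xi_0 \oplus N_{-1}$ together with $S_1 \tH^1(E(-1)) = \tH^1(E)$, one checks that the map $\phi\colon S_1 \to W$, $X \mapsto [X\xi_0]$, is surjective, hence $\ker\phi \subset S_1$ has codimension $d$. Since $d \ge 1$, one can pick $X_1, \ldots, X_{d-1} \in S_1$ so that $U := \ker\phi + \langle X_1, \ldots, X_{d-1}\rangle$ has dimension exactly $3$. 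By $(\ast)$, $U \cdot \tH^1(E(-1)) = \tH^1(E)$. On the other hand, $\ker\phi \cdot \xi_0 \subset S_1 N_{-1}$ by definition of $\phi$ and $U N_{-1} \subset S_1 N_{-1}$ trivially, so
\[
U \cdot \tH^1(E(-1)) \subset S_1 N_{-1} + \langle X_1\xi_0, \ldots, X_{d-1}\xi_0\rangle ,
\]
a subspace of dimension at most $\dim S_1 N_{-1} + (d-1) < \dim \tH^1(E)$, a contradiction.

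The only substantive step is the extraction of $(\ast)$ from the Beilinson monad: the hypothesis $\tH^2(E(-2)) = 0$ is used precisely to prevent a second, ``vertical'' contribution (coming from $\tH^2(E(-2)) \otimes \Omega_\piii^2(2)$) to the rightmost differential, which would otherwise obstruct the identification of that differential with pure multiplication by linear forms. Once $(\ast)$ is in hand, the lemma follows by the short pigeonhole-style dimension count above.
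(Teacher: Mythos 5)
Your proof is correct and rests on the same key input as the paper's: the hypothesis $\tH^2(E(-2))=0$ removes the term $\tH^2(E(-2))\otimes\Omega^2_\piii(2)$ from the Beilinson monad and forces the differential $\tH^1(E(-1))\otimes\Omega^1_\piii(1)\to\tH^1(E)\otimes\sco_\piii$ to be an epimorphism. Your fiberwise statement $(\ast)$ and the ensuing dimension count are an unwound version of the paper's endgame, which instead passes to the quotient by $N_{-1}\otimes\Omega^1_\piii(1)$ and $N_0\otimes\sco_\piii$ and invokes the non-existence of an epimorphism $\Omega^1_\piii(1)\to\sco_\piii$ --- the same fact as your observation that every $3$-dimensional subspace of $S_1$ is the fiber $\ell_x^{\perp}$ for some $x$; note that taking $N_0$ to be a hyperplane containing $S_1N_{-1}$ (so $d=1$) would let you skip the choice of the auxiliary $X_1,\ldots,X_{d-1}$.
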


\begin{proof}
Since $\tH^2(E(-2)) = 0$, the Beilinson monad of $E$ shows that the morphism
$\delta \colon \tH^1(E(-1)) \otimes \Omega_\piii(1) \ra \tH^1(E) \otimes
\sco_\piii$ deduced from the map
$\tH^1(E(-1)) \otimes S_1 \ra \tH^1(E)$ is an epimorphism. Assume, by
contradiction, that there exists a proper subspace $N_0$ of $\tH^1(E)$ such
that $S_1N_{-1} \subseteq N_0$. Then $\delta$ maps
$N_{-1} \otimes \Omega_\piii(1)$ into $N_0 \otimes \sco_\piii$. Since there is no
epimorphism $\Omega_\piii(1) \ra \sco_\piii$ one gets a \emph{contradiction}. 
\end{proof}

\begin{remark}\label{R:mu} 
Let $E$ be a stable rank 3 vector bundle on $\piii$ with $c_1 = 0$, $c_2 = 3$.
The new ingredient that we use in the proof of Thm.~\ref{T:c10c23} is the
analysis of the map$\, :$
\[
\mu \colon \tH^1(E^\prim(-1)) \otimes \sco_{\p^{3\vee}}(-1) \lra \tH^1(E^\prim)
\otimes \sco_{\p^{3\vee}} 
\]
deduced from the multiplication map
$\tH^1(E^\prim(-1)) \otimes S_1 \ra \tH^1(E^\prim)$, where $E^\prim$ is either
$E$ or $E^\vee$. Notice that the kernel 
of the reduced stalk of $\mu$ at a point $[h] \in \p^{3 \vee}$ corresponding to 
a plane $H \subset \piii$ of equation $h = 0$ is isomorphic to
$\tH^0(E_H^\prim)$. 

If the spectrum of $E^\prim$ is not $(-2 , -1 , 0)$ then
$\tH^2(E^\prim(l)) = 0$ for 
$l \geq -1$ hence, by Riemann-Roch, $\tH^1(E^\prim(-1))$ and $\tH^1(E^\prim)$
have the same dimension, namely $d := 3 - c^\prime_3/2$ (with
$c_3^\prime = \pm c_3$). If one fixes $k$-bases of 
$\tH^1(E^\prim(-1))$ and of $\tH^1(E^\prim)$ then
$\tH^0(\mu(1)) \colon \tH^1(E^\prim(-1)) \ra \tH^1(E^\prim) \otimes
\tH^0(\sco_{\p^{3\vee}}(1))$ 
is represented by a $d \times d$ matrix
$\mathcal{M}$ with entries in $\tH^0(\sco_{\p^{3\vee}}(1)) = V$. Then the 
$d \times d$ matrix $M_i$ with scalar entries associated to the multiplication 
map $X_i \colon \tH^1(E^\prim(-1)) \ra \tH^1(E^\prim)$ is obtained by evaluating
$X_i$ at the entries of $\mathcal{M}$. It follows that
$\mathcal{M} = \sum_{i = 0}^3M_ie_i$. 

Notice that the same matrix $\mathcal{M}$ defines the horizontal differential 
$\tH^1(E(-1)) \otimes \Omega_\piii^1(1) \ra \tH^1(E) \otimes \sco_\piii$ of the 
Beilinson monad of $E$ (recall that 
$\text{Hom}_{\sco_\piii}(\Omega_\piii^1(1) , \sco_\piii)$ can be identified with 
$V$).

Assume, now, that $\tH^0(E_H^\prim) \neq 0$, for every plane $H \subset \piii$.
Then $\mu$ has, generically, corank 1 (by the theorem of Spindler \cite{sp2}
recalled in the Introduction). Since $\Ker \mu$ is reflexive of rank 1 it must
be invertible, i.e., $\Ker \mu \simeq \sco_{\p^{3 \vee}}(a)$, for some integer
$a$. On the other hand, one must have an exact sequence$\, :$
\[
0 \lra (\Cok \mu)_{\text{tors}} \lra \Cok \mu \lra \sci_Y(b)
\lra 0\, , 
\]
for some integer $b$ and some closed subscheme $Y$ of $\p^{3 \vee}$, of
codimension at least 2. One has the relation$\, :$
\begin{equation}\label{E:ab}
a = -d + b + c_1\left((\Cok \mu)_{\text{tors}}\right)\, . 
\end{equation}
By Remark~\ref{R:c1scf}, $c_1\left((\Cok \mu)_{\text{tors}}\right) \geq 0$. We
conclude with three easy observations.

(i) One has $a \leq -2$. \emph{Indeed}, if $a = -1$ then there exists a
non-zero element $\xi$ of $\tH^1(E^\prim(-1))$ such that $S_1\xi = (0)$ in
$\tH^1(E^\prim)$ which \emph{contradicts} the last assertion in
Remark~\ref{R:spehv}.

(ii) If $S_1\tH^1(E^\prim(-1)) = \tH^1(E^\prim)$ then $b \geq 1$.
\emph{Indeed}, if $b = 0$ then one must have $Y = \emptyset$. The kernel of
the epimorphism $\tH^1(E^\prim) \otimes \sco_{\p^{3 \vee}} \ra \sco_{\p^{3 \vee}}$ has
the form $N_0 \otimes \sco_{\p^{3 \vee}}$, for some 1-codimensional subspace
$N_0$ of $\tH^1(E^\prim)$. Since the image of $\mu$ is contained in
$N_0 \otimes \sco_{\p^{3 \vee}}$ it follows that
$S_1\tH^1(E^\prim(-1)) \subseteq N_0$, which \emph{contradicts} our assumption.

(iii) Let $L$ be a line and $\piii$ and let $L^\vee$ be the line in
$\p^{3 \vee}$ whose points correspond to the planes containing $L$. The
restriction
$\mu \vb L^\vee \colon \tH^1(E^\prim(-1)) \otimes \sco_{L^\vee}(-1) \ra
\tH^1(E^\prim) \otimes \sco_{L^\vee}$
is defined by the multiplication map
$\tH^1(E^\prim(-1)) \otimes \tH^0(\sci_L(1)) \ra \tH^1(E^\prim)$. If
$\tH^2(E^\prim(-2)) = 0$ then, tensorizing by $E^\prim$ the exact sequence
$0 \ra \sco_\piii(-2) \ra 2\sco_\piii(-1) \ra \sco_\piii \ra \sco_L \ra 0$, one
gets an exact sequence$\, :$
\[
\tH^1(E^\prim(-1)) \otimes \tH^0(\sci_L(1)) \lra \tH^1(E^\prim) \lra
\tH^1(E^\prim_L) \lra 0\, . 
\]
If $\h^1(E^\prim_L) \geq 1$ and if $L$ is contained in a plane $H$ with
$\h^0(E^\prim_H) = 1$ (hence $\h^1(E^\prim_H) = 1$, by Riemann-Roch) one deduces
that $\Cok (\mu \vb L^\vee) \simeq \sco_{L^\vee} \oplus \sct$, where $\sct$ is a
torsion $\sco_{L^\vee}$-module. This implies that if $b \geq 1$ then $L$ must
intersect $Y$. If, moreover, the general plane $H \subset \piii$ contains
a line $L_0$ with $\h^1(E_{L_0}^\prim) \geq 1$ then $\dim Y = 1$. 
\end{remark}

\begin{remark}\label{R:moduli}  
We denote, for $n \in \{0,\, 2,\, 4,\, 6\}$, by $\text{M}(n)$ the moduli space 
of stable rank 3 vector bundles on $\piii$ with Chern classes $c_1 = 0$, 
$c_2 = 3$, $c_3 = n$. Let $E$ be such a bundle and let $[E]$ be the 
corresponding point of $\text{M}(n)$. It is well known that the tangent space 
$\text{T}_{[E]}\text{M}(n)$ of $\text{M}(n)$ at $[E]$ is canonically isomorphic 
to $\tH^1(E^\vee \otimes E)$ and that every irreducible component of 
$\text{M}(n)$ containing $[E]$ has dimension
$\geq \h^1(E^\vee\otimes E) - \h^2(E^\vee \otimes E)$.
In particular, if $\tH^2(E^\vee \otimes E) = 0$ then 
$\text{M}(n)$ is nonsingular at $[E]$, of (local) dimension
$\h^1(E^\vee \otimes E)$.
These results can be deduced from the work of Wehler \cite{w} 
(see Huybrechts and Lehn \cite[Cor.~4.5.2]{hl}). 

Since $E$ is stable, one has $\h^0(E^\vee \otimes E) = 1$ and $\h^3(E^\vee 
\otimes E) = \h^0((E \otimes E^\vee)(-4)) = 0$ hence$\, :$ 
\[
\h^1(E^\vee\otimes E) - \h^2(E^\vee \otimes E) = 1 - \chi(E^\vee \otimes E)\, . 
\]
Since $E^\vee \otimes E$ is selfdual, one has $c_i(E^\vee \otimes E) = 0$ for 
$i$ odd. Moreover, $c_2(E^\vee \otimes E)$ depends only on $c_1(E)$ and 
$c_2(E)$ (restrict to a plane). If $E_0$ is the bundle from 
Theorem~\ref{T:c10c23}(b)(i) then $\chi(E_0^\vee \otimes E_0) = -27$ because 
$E_0^\vee \otimes E_0$ is the cohomology sheaf of a monad of the form$\, :$ 
\[
0 \lra 18\sco_\piii(-1) \lra 45\sco_\piii \lra 18\sco_\piii(1) \lra 0\, . 
\]
Consequently, if $E$ is an arbitrary rank 3 vector bundle with $c_1 = 0$, 
$c_2 = 3$ and any $c_3$ then $\chi(E^\vee \otimes E) = -27$. One deduces that 
the \emph{expected dimension} of $\text{M}(n)$ is 28.  
\end{remark}

\section{The case $c_3 = 0$}\label{S:c30} 

\begin{lemma}\label{L:c30000monads} 
Let $E$ be a stable rank $3$ vector bundle on $\piii$ with $c_1 = 0$, $c_2 = 
3$, $c_3 = 0$ and spectrum $(0 , 0 , 0)$. Then $E$ is the cohomology sheaf of a 
Beilinson monad of the form$\, :$ 
\[
0 \lra 3\Omega_\piii^3(3) \overset{\gamma}{\lra} 3\Omega_\piii^1(1) 
\overset{\delta}{\lra} 3\sco_\piii \lra 0\, , 
\]
and of a Horrocks monad of the form$\, :$ 
\[
0 \lra 3\sco_\piii(-1) \overset{\alpha}{\lra} 9\sco_\piii 
\overset{\beta}{\lra} 3\sco_\piii(1) \lra 0\, . 
\]
\end{lemma}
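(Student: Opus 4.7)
The first thing to do is to use $K = 3\sco_\pj$ together with the spectrum formulas of Remark~\ref{R:spectrum}: $\h^1(E(l)) = 3\h^0(\sco_\pj(l+1))$ for $l \leq -1$ and $\h^2(E(l)) = 3\h^1(\sco_\pj(l+1))$ for $l \geq -3$. Combined with Riemann-Roch, this yields $\h^1(E(-2)) = 0$, $\h^1(E(-1)) = \h^1(E) = 3$, $\h^2(E(-3)) = 3$, and $\h^2(E(l)) = 0$ for $-2 \leq l \leq 0$. Since the spectrum $(0,0,0)$ is neither $(-2,-1,0)$ nor $(0,1,2)$ (and the same holds for $E^\vee$), Beilinson's theorem in the form of Remark~\ref{R:beilinson} is available. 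Substituting the values above into the displayed double complex, only three terms survive, namely $\tH^1(E(-1)) \otimes \Omega_\piii^1(1)$, $\tH^1(E) \otimes \sco_\piii$ in the top row and $\tH^2(E(-3)) \otimes \Omega_\piii^3(3)$ in the bottom row, so the total complex is exactly the Beilinson monad
\[
0 \lra 3\Omega_\piii^3(3) \xra{\gamma} 3\Omega_\piii^1(1) \xra{\delta} 3\sco_\piii \lra 0
\]
with $\sch^0 \simeq E$.

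\textbf{Step 2 (Horrocks monad).} To obtain the Horrocks monad, I would identify $3\Omega_\piii^3(3) \simeq 3\sco_\piii(-1)$, set $K := \Ker\delta$, and record the display sequences $0 \to 3\sco_\piii(-1) \to K \to E \to 0$ and $0 \to K \to 3\Omega_\piii^1(1) \xra{\delta} 3\sco_\piii \to 0$. The direct sum of three copies of the Euler sequence, $0 \to 3\Omega_\piii^1(1) \xra{\iota} 12\sco_\piii \xra{\pi} 3\sco_\piii(1) \to 0$, allows one to form the pushout of $\delta$ along $\iota$; its lower row is $0 \to 3\sco_\piii \to P \to 3\sco_\piii(1) \to 0$, and since $\text{Ext}^1(3\sco_\piii(1), 3\sco_\piii) = 9\,\tH^1(\sco_\piii(-1)) = 0$ it splits, giving $P \simeq 3\sco_\piii \oplus 3\sco_\piii(1)$. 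The other side of the pushout then produces an exact sequence
\[
0 \lra K \lra 12\sco_\piii \xra{(\tilde\delta,\, \pi)} 3\sco_\piii \oplus 3\sco_\piii(1) \lra 0
\]
in which $\tilde\delta$ extends $\delta$. As $\tilde\delta$ is a split surjection between trivial bundles, one can decompose $12\sco_\piii \simeq 9\sco_\piii \oplus 3\sco_\piii$ with $\Ker\tilde\delta \simeq 9\sco_\piii$; after replacing $\pi$ by $\pi' := \pi - (\pi|_{3\sco_\piii}) \circ \tilde\delta$ (which preserves the kernel of $(\tilde\delta, \pi)$ and is permissible because $\text{Hom}(3\sco_\piii, 3\sco_\piii(1))$ is nonzero), the new map $\pi'$ vanishes on the $3\sco_\piii$-summand, so its restriction $\beta \colon 9\sco_\piii \to 3\sco_\piii(1)$ is a surjection with kernel $K$. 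Splicing $0 \to K \to 9\sco_\piii \xra{\beta} 3\sco_\piii(1) \to 0$ with $0 \to 3\sco_\piii(-1) \to K \to E \to 0$ yields the claimed Horrocks monad
\[
0 \lra 3\sco_\piii(-1) \xra{\alpha} 9\sco_\piii \xra{\beta} 3\sco_\piii(1) \lra 0.
\]

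The Beilinson step is essentially bookkeeping once the cohomology of $E$ is in hand. The main delicate point is the last part of Step 2: ensuring that the effective middle term of the Horrocks monad is precisely $9\sco_\piii$ and that the induced map onto $3\sco_\piii(1)$ is surjective. Both issues are handled by the splitting $12\sco_\piii \simeq 9\sco_\piii \oplus 3\sco_\piii$, which exists because $\tilde\delta$ is a surjection between direct sums of $\sco_\piii$'s, combined with the elementary ``column operation'' $\pi \mapsto \pi'$ that clears the off-diagonal part of the quotient map.
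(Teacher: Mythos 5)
Your proof is correct and follows the same route as the paper: the cohomology of $E$ is read off from the spectrum $(0,0,0)$ and Riemann--Roch, Remark~\ref{R:beilinson} gives the three-term Beilinson monad, and the Horrocks monad is obtained from it via the Euler sequence $0 \to \Omega_\piii^1(1) \to 4\sco_\piii \to \sco_\piii(1) \to 0$. The paper leaves the second step as a one-line remark; your Step 2 simply spells out the standard pushout-and-cancellation argument that it alludes to.
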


\begin{proof} 
One has, by Riemann-Roch, $\h^1(E) = 3$. For the first monad see 
Remark~\ref{R:beilinson} while the second monad can be deduced from the first 
one and the exact sequence $0 \ra \Omega_\piii^1(1) \ra 4\sco_\piii \ra 
\sco_\piii(1) \ra 0$. 
\end{proof}

\begin{proposition}\label{P:c30000}  
Let $E$ be a stable rank $3$ vector bundle on $\piii$ with $c_1 = 0$,
$c_2 = 3$, $c_3 = 0$ and spectrum $(0 , 0 , 0)$. Then the restriction of $E$
to a general plane is stable. 
\end{proposition}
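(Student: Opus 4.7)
The plan is to show that both $\tH^0(E_H) = 0$ and $\tH^0(E_H^\vee) = 0$ hold for the general plane $H \subset \piii$; together these imply $E_H$ is stable. Since the spectra of $E$ and of $E^\vee$ are both $(0,0,0)$, the same argument applied to either bundle proves both vanishings, so I focus on showing $\tH^0(E_H) = 0$ for the general $H$.

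I argue by contradiction: assume $\tH^0(E_H) \neq 0$ for every plane $H$ and apply Remark~\ref{R:mu} with $E^\prim = E$. One has $d = 3 - c_3/2 = 3$, and the spectrum forces $\tH^2(E(-2)) = 0$, so Lemma~\ref{L:s1n-1} gives $S_1\tH^1(E(-1)) = \tH^1(E)$. Observations (i) and (ii) of Remark~\ref{R:mu} then yield $a \leq -2$ and $b \geq 1$, which combined with relation (\ref{E:ab}) and the non-negativity of $c_1((\Cok\mu)_{\text{tors}})$ force $a = -2$, $b = 1$, $c_1((\Cok\mu)_{\text{tors}}) = 0$. In particular, $\Ker\mu \simeq \sco_{\p^{3\vee}}(-2)$.

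To derive a contradiction, choose a generator of $\Ker\mu$, viewed as an element $\omega \in \tH^1(E(-1)) \otimes V$, and decompose $\omega = \sum_{l=0}^3 \eta_l \otimes e_l$. The vanishing of the composition $\mu \circ \omega \in \tH^1(E) \otimes S_2V$, upon collecting the coefficients of $e_ie_l$, translates to the antisymmetric relations $X_i\eta_j + X_j\eta_i = 0$ in $\tH^1(E)$ for all $i, j \in \{0,1,2,3\}$. Regard $\eta$ as the linear map $V \to \tH^1(E(-1))$ sending $e_i \mapsto \eta_i$, with image $N$ and kernel $K$; since $\omega \neq 0$, $\dim N \in \{1,2,3\}$. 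The decisive step is the case analysis on $\dim N$, and the main obstacle is the middle case $\dim N = 2$.

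If $\dim N = 1$, write $\eta_i = \lambda_i\eta$; taking any $i$ with $\lambda_i \neq 0$, the diagonal relation $X_i\eta_i = 0$ forces $X_i\eta = 0$, and the remaining relations then give $X_j\eta = 0$ for all $j$, i.e., $S_1\eta = 0$, which contradicts the last assertion of Remark~\ref{R:spehv} since the spectrum is not $(-2,-1,0)$. If $\dim N = 2$, the antisymmetric relations imply that the map $B_\wedge\colon \bigwedge^2 V \to \tH^1(E)$, $e_i \wedge e_j \mapsto 2X_i\eta_j$, vanishes on $V \wedge K$, since $B_\wedge(e \wedge v) = 2(\sum\alpha_iX_i)\cdot\eta(v) = 0$ for $v \in K$ and $e = \sum\alpha_ie_i$. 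Hence $B_\wedge$ factors through $\bigwedge^2(V/K) = \bigwedge^2 N$, a $1$-dimensional space, and its image equals $S_1N$, forcing $\dim S_1N \leq 1$; this contradicts Lemma~\ref{L:s1n-1} applied to the codimension-$1$ subspace $N_{-1} = N$. If $\dim N = 3$, a generator $v = \sum v_ie_i$ of $K$ produces $\tilde{v} = \sum v_iX_i \in V^\vee$ satisfying $\tilde{v}\cdot\eta_k = -X_k\cdot\eta(v) = 0$ for all $k$, so $\tilde{v}$ annihilates $\tH^1(E(-1))$; then $\tH^0(E_{H_{\tilde{v}}}) = \tH^1(E(-1))$ has dimension $3$, while $E_{H_{\tilde{v}}}$ is semistable (using $\tH^1(E(-2)) = \tH^1(E^\vee(-2)) = 0$ to see $\tH^0(E_{H_{\tilde{v}}}(-1)) = \tH^0(E_{H_{\tilde{v}}}^\vee(-1)) = 0$), contradicting Lemma~\ref{L:h0fleq2}. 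The analogous argument applied to $E^\vee$ gives $\tH^0(E_H^\vee) = 0$ for the general $H$, finishing the proof.
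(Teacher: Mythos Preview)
Your proof is correct and shares the paper's opening moves: the reduction by duality, the appeal to Remark~\ref{R:mu}, and the conclusion $a=-2$, $b=1$ from relation~\eqref{E:ab}. After that point the two arguments diverge. The paper works on the \emph{cokernel} side: from $b=1$ one has $\Cok\mu \twoheadrightarrow \sci_Y(1)$ with $Y$ forced to be a point or a line, and each possibility is excluded (the point case by Lemma~\ref{L:h0fleq2}, the line case by factoring $\mu$ through $\sco_{\p^{3\vee}}\oplus\sco_{\p^{3\vee}}(-1)$ and invoking Lemma~\ref{L:s1n-1}). You instead work on the \emph{kernel} side: from $a=-2$ you extract the generator $\omega$, translate $\mu\circ\omega=0$ into the symmetric relations $X_i\eta_j+X_j\eta_i=0$, and split on the rank of $\eta\colon V\to\tH^1(E(-1))$. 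Your three cases end up using exactly the same ingredients as the paper (Remark~\ref{R:spehv} for rank~$1$, Lemma~\ref{L:s1n-1} for rank~$2$, Lemma~\ref{L:h0fleq2} for rank~$3$), but the path is purely linear-algebraic and avoids any discussion of the scheme $Y$; the $\bigwedge^2$ trick in the rank~$2$ case is a nice way to package the bound $\dim S_1N\le 1$. The paper's route has the advantage of being reusable in the later sections (the same cokernel analysis appears in Prop.~\ref{P:c32nounstableplane}), while yours is arguably more self-contained for this single statement.
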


\begin{proof} 
Since $E^\vee$ has the same Chern classes and spectrum as $E$, it suffices to 
show that, for the general plane $H \subset \piii$, one has $\tH^0(E_H) = 0$. 
Assume, by contradiction, that $\tH^0(E_H) \neq 0$, for every plane $H$. 
Then the morphism $\mu \colon \tH^1(E(-1)) \otimes \sco_{\p^{3\vee}}(-1) \ra
\tH^1(E) \otimes \sco_{\p^{3\vee}}$
from Remark~\ref{R:mu} has, generically, corank 1. Using the notation from
that remark, one has $a \leq -2$ (by observation (i)) and $b \geq 1$ (by
observation (ii) because, using the Horrocks monad of $E$, one sees that
$S_1\tH^1(E(-1)) = \tH^1(E)$).
It follows, from relation \eqref{E:ab} (in which one has
$d = 3$), that $a = -2$ and $b = 1$. Since $\sci_Y(1)$ is globally generated
by at most 3 linear forms, $Y$ must be a point or a line. $Y$ cannot be a
point because, by Lemma~\ref{L:h0fleq2}, $\mu$ has corank $\leq 2$ at every
point of $\p^{3 \vee}$ ($E_H$ is semistable for every plane $H$ because
$\tH^1(E(-2)) = 0$ and $\tH^1(E^\vee(-2)) = 0$).

$Y$ cannot be a line, either. \emph{Indeed}, if $Y$ is a line then the kernel
of the epimorphism $\tH^1(E) \otimes \sco_{\p^{3 \vee}} \ra \sci_Y(1)$ is
isomorphic to $\sco_{\p^{3 \vee}} \oplus \sco_{\p^{3 \vee}}(-1)$, hence $\mu$
factorizes as$\, :$
\[
\tH^1(E(-1)) \otimes \sco_{\p^{3 \vee}}(-1) \overset{\overline{\mu}}{\lra}
\sco_{\p^{3 \vee}} \oplus \sco_{\p^{3 \vee}}(-1) \lra \tH^1(E) \otimes
\sco_{\p^{3 \vee}}\, . 
\]
The kernel of the component
$\tH^1(E(-1)) \otimes \sco_{\p^{3 \vee}}(-1) \ra \sco_{\p^{3 \vee}}(-1)$ of
$\overline{\mu}$ has the form $N_{-1} \otimes \sco_{\p^{3 \vee}}(-1)$, for some
1-codimensional subspace $N_{-1}$ of $\tH^1(E(-1))$. The direct summand
$\sco_{\p^{3 \vee}}$ of $\sco_{\p^{3 \vee}} \oplus \sco_{\p^{3 \vee}}(-1)$ corresponds
to a 1-dimensional subspace $N_0$ of $\tH^1(E)$. Since $\mu$ maps
$N_{-1} \otimes \sco_{\p^{3 \vee}}(-1)$ into $N_0 \otimes \sco_{\p^{3 \vee}}$, it
follows that $S_1N_{-1} \subseteq N_0$, which \emph{contradicts}
Lemma~\ref{L:s1n-1}. 
\end{proof} 

\begin{lemma}\label{L:c30h0eh2}  
Let $E$ be a stable rank $3$ vector bundle on $\piii$ with $c_1 = 0$, $c_2 = 
3$, $c_3 = 0$ and spectrum $(0 , 0 , 0)$. Then the planes $H \subset \piii$ 
for which ${\fam0 h}^0(E_H) = 2$ form a closed subset of $\p^{3 \vee}$ of 
dimension $\leq 1$. 
\end{lemma}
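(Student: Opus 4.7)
The plan is to use the map $\mu$ from Remark~\ref{R:mu} with $E^\prim = E$. Since the spectrum is $(0,0,0)$, both $\tH^1(E(-1))$ and $\tH^1(E)$ have dimension $d = 3$, and from the spectrum one also gets $\tH^1(E(-2)) = \tH^1(E^\vee(-2)) = 0$, hence $\tH^0(E_H(-1)) = \tH^0(E_H^\vee(-1)) = 0$ for every plane $H$; consequently $E_H$ is semistable for every $H$, and Lemma~\ref{L:h0fleq2} applies to give $\h^0(E_H) \leq 2$ everywhere. The set in the statement is therefore $Z := \{[h] \in \p^{3 \vee} : \h^0(E_H) \geq 2\}$, which coincides with the rank-$\leq 1$ locus of the $3 \times 3$ matrix $\mathcal{M}$ of linear forms representing $\mu$, cut out by the nine $2 \times 2$ minors of $\mathcal{M}$. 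Proposition~\ref{P:c30000} gives $\det \mathcal{M} \not\equiv 0$; in particular $\Ker \mu = 0$ (as a torsion-free subsheaf of $3\sco_{\p^{3\vee}}(-1)$ of generic rank $0$), so $\Cok \mu$ is a torsion sheaf with $c_1 = 3$.

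I plan to argue by contradiction: assume $Z$ has a $2$-dimensional irreducible component $V(f)$, with $f$ irreducible of degree $d$. At the generic point $\xi$ of $V(f)$ the local ring is a DVR with uniformizer $f$, and the Smith normal form gives $\mathcal{M}_\xi \sim \text{diag}(f^{a_1}, f^{a_2}, f^{a_3})$ with $0 \leq a_1 \leq a_2 \leq a_3$; the generic corank of $\mathcal{M}$ along $V(f)$ equals $\#\{i : a_i > 0\} \geq 2$, and the length of $(\Cok \mu)_\xi$ equals $\sum a_i$. Remark~\ref{R:c1scf} then yields $2d \leq d \sum a_i \leq c_1(\Cok \mu) = 3$, which forces $d = 1$; corank $3$ along $V(f)$ is excluded by Lemma~\ref{L:h0fleq2} (it would give $\h^0(E_H) = 3$ for $H \in V(f)$), so the generic corank is exactly $2$ and $\mathcal{M}|_{V(f)}$ has generic rank $1$.

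After a linear change of coordinates on $\p^3$ one may assume $f = \lambda_0$, a dual coordinate. Writing $\mathcal{M} = \lambda_0 A + B$ with $A := M_0$ and $B := \sum_{i \geq 1}\lambda_i M_i$, the $3 \times 3$ matrix $B$ of linear forms in $\lambda_1, \lambda_2, \lambda_3$ has generic rank $1$, hence factors as $B = p q^{\text{t}}$ with either \emph{case} (i) $p \in k^3$ constant and $q$ linear, or \emph{case} (ii) $p$ linear and $q \in k^3$ constant (the possibilities $p = 0$ or $q = 0$ would return us to corank $\geq 3$ along $V(\lambda_0)$, already excluded). In case (i), each $M_i$ with $i \geq 1$ has image in $\langle p \rangle \subseteq \tH^1(E)$; in case (ii) each $M_i$ with $i \geq 1$ vanishes on $\ker q^{\text{t}} \subseteq \tH^1(E(-1))$. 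In either case I will exhibit a codimension-$1$ subspace $N_{-1} \subset \tH^1(E(-1))$ with $S_1 N_{-1} \subsetneq \tH^1(E)$, contradicting Lemma~\ref{L:s1n-1}: in case (ii) take $N_{-1} := \ker q^{\text{t}}$, so that $S_1 N_{-1} = A(N_{-1})$ has dimension $\leq 2$; in case (i) take $N_{-1}$ containing $A^{-1}(p)$ when $A$ is invertible, or containing $\ker A$ when $A$ is singular --- a short case analysis on the rank of $A$ shows $S_1 N_{-1} \subseteq A(N_{-1}) + \langle p \rangle$ has dimension $\leq 2$ in every subcase.

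The conceptual heart of the plan is the second paragraph, where the Smith normal form together with Remark~\ref{R:c1scf} severely constrains $\det \mathcal{M}$ assuming $\dim Z = 2$; once this is in place, the final contradiction via Lemma~\ref{L:s1n-1} is routine linear algebra on the matrix pencil $\sum_i \lambda_i M_i$.
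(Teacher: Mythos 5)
Your proposal is correct, and it follows the same overall strategy as the paper (analyse the degeneracy locus of $\mu$, reduce to a plane, and finish with linear algebra resting on the non-existence of an epimorphism $\Omega_{\piii}(1) \ra \sco_\piii$), but one key step is handled by a genuinely different and more efficient argument. The paper only observes that the $2\times 2$ minors of $\mathcal{M}$ cut out $\Sigma$, so that $\Sigma$ lies on a quadric, and must then dispose of the two quadric cases by examining the line bundle $\scl = \text{Im}\,\mu_\Sigma$ ($\sco_\Sigma(-1)$ forces an element $\xi$ with $S_1\xi = 0$, $\sco_\Sigma$ forces $S_1\tH^1(E(-1))$ to be one-dimensional). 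Your Smith-normal-form count, $2\deg\Sigma \leq \deg\Sigma \cdot \operatorname{length}(\Cok\mu)_\xi \leq c_1(\Cok\mu) = 3$ via Remark~\ref{R:c1scf}, kills the quadric possibility in one stroke and is a real simplification. In the remaining plane case your two subcases (i) and (ii) are exactly the paper's Cases 2 and 1; the paper argues directly with the Beilinson differential $\delta$ being an epimorphism, whereas you route both through Lemma~\ref{L:s1n-1}, which packages the same obstruction and whose hypotheses ($\tH^2(E(-2)) = 0$, neither $E$ nor $E^\vee$ of spectrum $(-2,-1,0)$) do hold here. The only point you leave implicit is the classical fact that a linear space of matrices of rank $\leq 1$ has either a common $1$-dimensional column space or a common $1$-dimensional row space (so that $B = pq^{\mathrm{t}}$ with one factor constant); the paper obtains the same dichotomy by noting that $\text{Im}\,\mu_\Sigma$ is a line subbundle of $3\sco_\Sigma$, hence $\sco_\Sigma(a)$ with $a \in \{-1,0\}$. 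Either justification should be recorded, but the argument is sound as it stands.
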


\begin{proof} 
We showed, in Prop.~\ref{P:c30000}, that $\h^0(E_H) = 0$, for the general plane 
$H \subset \piii$. Moreover, for any plane $H$, $E_H$ is semistable (because 
$\tH^1(E(-2)) = 0$ and $\tH^1(E^\vee(-2)) = 0$) hence $\h^0(E_H) \leq 2$, 
by Lemma~\ref{L:h0fleq2}. Assume, by contradiction, that there exists a closed 
(reduced and) irreducible surface $\Sigma \subset \p^{3 \vee}$ such that, for 
every $[h] \in \Sigma$, if $H \subset \piii$ is the plane of equation $h = 0$ 
then $\h^0(E_H) = 2$. Consider, as in the proof of Prop.~\ref{P:c30000}, the 
morphism $\mu \colon \tH^1(E(-1)) \otimes \sco_{\p^{3\vee}}(-1) \ra \tH^1(E) 
\otimes \sco_{\p^{3\vee}}$ from Remark~\ref{R:mu}. Then $\mu$ has rank 1 at every 
point of $\Sigma$ hence the image of $\mu_\Sigma \colon \tH^1(E(-1)) \otimes 
\sco_\Sigma(-1) \ra \tH^1(E) \otimes \sco_\Sigma$ 
is a subbundle $\scl$ of rank 1 of the trivial bundle 
$\tH^1(E) \otimes \sco_\Sigma$. 

Now, all the $2 \times 2$ minors of the $3 \times 3$ matrix defining $\mu$ 
vanish on $\Sigma$ hence $\Sigma$ is either a nonsingular quadric surface, or 
a quadric cone, or a plane. It follows that either $\scl \simeq 
\sco_\Sigma(-1)$, or $\scl \simeq \sco_\Sigma$, or $\Sigma$ is a nonsingular 
quadric surface and $\scl \simeq \sco_\Sigma(-1 , 0)$ or $\scl \simeq 
\sco_\Sigma(0 , -1)$. 

\vskip2mm 

\noindent 
$\bullet$\quad If $\text{deg}\, \Sigma = 2$ and $\scl \simeq \sco_\Sigma(-1)$ 
or $\Sigma$ is a nonsigular quadric surface and $\scl \simeq 
\sco_\Sigma(-1 , 0)$ or $\scl \simeq \sco_\Sigma(0 , -1)$ then there exists a 
direct summand $\sco_\Sigma(-1)$ of $\tH^1(E(-1)) \otimes \sco_\Sigma(-1)$ such 
that $\mu_\Sigma$ vanishes on it. This direct summand corresponds to a non-zero 
element $\xi$ of $\tH^1(E(-1))$ such that $h\xi = 0$ in $\tH^1(E)$, $\forall 
\, [h] \in \Sigma$. It follows that $S_1\xi = (0)$ in $\tH^1(E)$, and this 
\emph{contradicts} the last part of Remark~\ref{R:spehv}. 

\noindent
$\bullet$\quad If $\text{deg}\, \Sigma = 2$ and $\scl \simeq \sco_\Sigma$ then 
there exists a 1-dimensional subspace $N_0$ of $\tH^1(E)$ such that the image 
of $\mu_\Sigma$ is $N_0 \otimes \sco_\Sigma$. It follows that $h\tH^1(E(-1)) 
\subseteq N_0$, $\forall \, [h] \in \Sigma$, hence $S_1\tH^1(E(-1)) \subseteq 
N_0$, which \emph{contradicts} the fact that $S_1\tH^1(E(-1)) = \tH^1(E)$ (use 
the Horrocks monad of $E$ from Lemma~\ref{L:c30000monads}). 

\noindent 
$\bullet$\quad It thus remains to consider the following two cases$\, :$ 

\vskip2mm 

\noindent 
{\bf Case 1.}\quad $\Sigma$ \emph{is a plane and} $\scl \simeq 
\sco_\Sigma(-1)$. 

\vskip2mm 

\noindent 
In this case, $\Sigma = \p((V/kv_3)^\vee) \subset \p(V^\vee) = \p^{3 \vee}$, for 
some non-zero vector $v_3$ of $V$. Choosing convenient bases of $\tH^1(E)$ and 
$\tH^1(E(-1))$, $\mu_\Sigma$ is represented by a matrix of the form$\, :$ 
\[
\begin{pmatrix} 
{\widehat v}_0 & 0 & 0\\
{\widehat v}_1 & 0 & 0\\
{\widehat v}_2 & 0 & 0 
\end{pmatrix}
\]
where $v_0,\, v_1,\, v_2$ are elements of $V$ such that their 
classes$\pmod{kv_3}$ form a $k$-basis of $V/kv_3$, i.e., such that $v_0,\, 
v_1,\, v_2,\, v_3$ is a $k$-basis of $V$. It follows that $\mu$ itself is 
represented by a matrix of the form$\, :$ 
\[
\begin{pmatrix} 
v_0 + c_{00}v_3 & c_{01}v_3 & c_{02}v_3\\ 
v_1 + c_{10}v_3 & c_{11}v_3 & c_{12}v_3\\ 
v_2 + c_{20}v_3 & c_{21}v_3 & c_{22}v_3
\end{pmatrix}
\]
with $c_{ij} \in k$. 
Now, according to Remark~\ref{R:beilinson}, $E$ is the cohomology bundle of a 
Beilinson monad of the form$\, :$ 
\[
0 \lra \tH^2(E(-3)) \otimes \Omega_\piii^3(3) \overset{\gamma}{\lra} 
\tH^1(E(-1)) \otimes \Omega_\piii^1(1) \overset{\delta}{\lra} 
\tH^1(E) \otimes \sco_\piii \lra 0\, . 
\]
Moreover, as we noticed in Remark~\ref{R:mu}, $\delta$ and $\mu$ are defined 
by the same matrix with entries in $V$. 
Choose $(a_0 , a_1 , a_2) \in k^3 \setminus \{0\}$ such that 
$(a_0 , a_1 , a_2)(c_{0i} , c_{1i} , c_{2i})^{\text{t}} = 0$, $i = 1,\, 2$. 
The composition of the map
$(a_0 , a_1 , a_2) \colon 3\sco_\piii \ra \sco_\piii$ and of $\delta$ is not an
epimorphism (because there is no epimorphism $\Omega_\piii(1) \ra \sco_\piii$)
and this \emph{contradicts} the fact that $\delta$ is an epimorphism. 

\vskip2mm 

\noindent 
{\bf Case 2.}\quad $\Sigma$ \emph{is a plane and} $\scl \simeq \sco_\Sigma$. 

\vskip2mm 

\noindent 
Analogously, the differential $\delta$ of the Beilinson monad of $E$ is 
defined by a matrix of the form$\, :$ 
\[
\begin{pmatrix} 
v_0 + c_{00}v_3 & v_1 + c_{01}v_3 & v_2 + c_{02}v_3\\ 
c_{10}v_3 & c_{11}v_3 & c_{12}v_3\\ 
c_{20}v_3 & c_{21}v_3 & c_{22}v_3
\end{pmatrix}\, ,
\] 
for some basis $v_0 , \ldots , v_3$ of $V$. One gets a \emph{contradiction} as
in Case~1 by composing the map $(0 , 1 , 0) \colon 3\sco_\piii \ra \sco_\piii$
with $\delta$. 
\end{proof} 

\begin{lemma}\label{L:s1xi} 
Let $E$ be a stable rank $3$ vector bundle on $\piii$ with $c_1 = 0$,
$c_2 = 3$, $c_3 = 0$ and spectrum $(0 , 0 , 0)$. If $\xi$ is a general element
of ${\fam0 H}^1(E(-1))$ then $S_1\xi = {\fam0 H}^1(E)$. 
\end{lemma}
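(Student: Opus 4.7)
The plan is to introduce the incidence variety
\[
Z := \bigl\{([\xi], [h]) \in \p(\tH^1(E(-1))) \times \p^{3 \vee} \, : \, h\xi = 0 \text{ in } \tH^1(E)\bigr\}
\]
and bound $\dim Z$ in two independent ways; the incompatibility of the two bounds, under the assumption that the lemma fails, will give the desired \emph{contradiction}.

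First, project $Z$ onto the second factor $\p^{3 \vee}$. The fibre over $[h]$ (corresponding to a plane $H \subset \piii$ of equation $h = 0$) is $\p(\tH^0(E_H))$: indeed, $\tH^0(E) = 0$ by stability, and the long exact sequence of $0 \to E(-1) \xrightarrow{h} E \to E_H \to 0$ identifies $\tH^0(E_H)$ with the kernel of multiplication by $h \colon \tH^1(E(-1)) \to \tH^1(E)$ (this is also the kernel of the reduced stalk of $\mu$ at $[h]$; cf.\ Remark~\ref{R:mu}). The image of this projection is thus contained in $\Sigma_1 := \{[h] : \h^0(E_H) \geq 1\}$, the degeneracy locus of $\mu$. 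By Prop.~\ref{P:c30000}, $\tH^0(E_H) = 0$ for the general plane $H$; since $\mu$ is a map between trivial bundles of rank $3$, it is generically an isomorphism, and $\Sigma_1 = \{\det \mu = 0\}$ is a cubic surface in $\p^{3 \vee}$, of dimension $2$. The sublocus $\Sigma_2 := \{[h] : \h^0(E_H) = 2\}$ has dimension $\leq 1$ by Lemma~\ref{L:c30h0eh2}, and $\h^0(E_H) \leq 2$ for every $H$ by Lemma~\ref{L:h0fleq2} (applicable because $E_H$ is semistable for every plane $H$, since $\tH^1(E(-2)) = \tH^1(E^\vee(-2)) = 0$). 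A component of $Z$ dominating $\Sigma_1$ has dimension $\leq 2 + 0 = 2$, and a component lying over $\Sigma_2$ has dimension $\leq 1 + 1 = 2$; hence $\dim Z \leq 2$.

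Second, project $Z$ onto the first factor $\p(\tH^1(E(-1)))$, which has dimension $2$. The fibre over $[\xi]$ is the projectivization of the kernel of the linear map $V^\vee \to \tH^1(E)$, $h \mapsto h\xi$; since $\dim V^\vee = 4 > 3 = \dim \tH^1(E)$, this kernel is always nonzero, so this projection is surjective and the fibre over $[\xi]$ has dimension $3 - \dim(S_1 \xi)$. Suppose, by \emph{contradiction}, that $S_1 \xi \neq \tH^1(E)$ for every $\xi \in \tH^1(E(-1))$; then $\dim(S_1 \xi) \leq 2$ throughout, every fibre of this second projection has dimension $\geq 1$, and so $\dim Z \geq 2 + 1 = 3$, contradicting the bound of the previous paragraph. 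Hence some $\xi_0$ satisfies $\dim(S_1 \xi_0) = 3$, and by upper-semicontinuity of the corank of $h \mapsto h\xi$ in $\xi$, the condition $\dim(S_1 \xi) = 3$ holds on a dense open subset of $\tH^1(E(-1))$, which is precisely the statement of the lemma.
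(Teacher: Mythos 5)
Your proposal is correct and follows essentially the same route as the paper: the same incidence variety $Z$, the bound $\dim Z \leq 2$ via the projection to $\p^{3\vee}$ using Prop.~\ref{P:c30000} and Lemma~\ref{L:c30h0eh2}, and then the dimension count for the fibres of the projection to $\p(\tH^1(E(-1)))$. Your write-up merely makes explicit the details (the cubic determinantal surface, the semistability of every $E_H$, the semicontinuity at the end) that the paper leaves to the reader.
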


\begin{proof} 
Consider the following closed subset of $\p^{3 \vee} \times \p(\tH^1(E(-1))\, :$ 
\[
Z := \{([h] , [\xi]) \vb h\xi = 0 \text{ in } \tH^1(E)\}\, . 
\]
Prop.~\ref{P:c30000} and Lemma~\ref{L:c30h0eh2} give us information about 
the dimension of the fibres of the first projection $p \colon Z \ra \p^{3 \vee}$.
One deduces that $\dim Z \leq 2$. Considering, now, the second 
projection $q \colon Z \ra \p(\tH^1(E(-1))$, one sees that if $\xi$ is a 
general element of $\tH^1(E(-1))$ then $\dim q^{-1}([\xi]) \leq 0$. This means 
that $\dim_k\{h \in S_1 \vb h\xi = 0 \text{ in } \tH^1(E)\} \leq 1$ which 
implies that $S_1\xi = \tH^1(E)$. 
\end{proof} 

\begin{lemma}\label{L:beta} 
Let $E$ be a stable rank $3$ vector bundle on $\piii$ with $c_1 = 0$, $c_2 = 
3$, $c_3 = 0$ and spectrum $(0 , 0 , 0)$. Then the differential
$\beta \colon 9\sco_\piii \ra 3\sco_\piii(1)$
of the Horrocks monad of $E$ from 
Lemma~\emph{\ref{L:c30000monads}} is defined, up to automorphisms of 
$9\sco_\piii$ and $3\sco_\piii(1)$, by a matrix of the form$\, :$ 
\[
\begin{pmatrix} 
h_0 & h_1 & h_2^\prime & h_3^\prime & h_4^\prime & h_5^\prime & h_6^\prime & 
h_7^\prime & h_8^\prime\\ 
0 & h_0 & h_1 & h_2 & h_3 & 0 & 0 & 0 & 0\\
0 & 0 & 0 & 0 & 0 & h_0 & h_1 & h_2 & h_3
\end{pmatrix}\, , 
\]
with $h_0, \ldots , h_3$ a $k$-basis of $S_1$, and such that $h_2^\prime$ and 
$h_6^\prime$ belong to $kh_2 + kh_3$ and $h_i^\prime \in kh_1 + kh_2 + kh_3$ for 
$i \in \{3 , \ldots , 8\} \setminus \{6\}$.  
\end{lemma}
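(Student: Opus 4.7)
The plan exploits the duality between $E$ and $E^\vee$ together with Lemma~\ref{L:s1xi} applied to $E^\vee$, which has the same Chern classes $(0, 3, 0)$ and spectrum $(0, 0, 0)$ as $E$. Dualizing the Horrocks monad of $E$ gives the Horrocks monad $3\sco_\piii(-1) \overset{\beta^\vee}{\lra} 9\sco_\piii \overset{\alpha^\vee}{\lra} 3\sco_\piii(1)$ of $E^\vee$, so the three columns of $\beta^\vee$, equivalently the three rows of $\beta$, correspond to a basis of $\tH^1(E^\vee(-1))$. By Lemma~\ref{L:s1xi} applied to $E^\vee$, after an invertible change of basis of $\tH^1(E^\vee(-1))$ one may assume that the second and third rows of $\beta$ correspond to linearly independent generic elements $\xi_2, \xi_3 \in \tH^1(E^\vee(-1))$ with $S_1\xi_i = \tH^1(E^\vee)$ and with their annihilator lines in $S_1$ distinct. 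Since $\beta$ is a surjection of vector bundles, the $3 \times 9$ matrix of $\beta$ has rank~$3$ at every point of $\piii$, so each row of $\beta$, viewed as a $k$-linear map $k^9 \to S_1$, is surjective (its nine linear forms have no common zero in $\piii$, hence span $S_1$); combined with the genericity, the pair $(\text{row}\,2, \text{row}\,3)$ is a surjection $k^9 \twoheadrightarrow S_1 \oplus S_1$ with $1$-dimensional kernel.

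Choose the first column of $9\sco_\piii$ to span this kernel, so that rows~$2$ and~$3$ vanish in column~$1$. The restriction of $(\text{row}\,2, \text{row}\,3)$ to the $8$-dimensional subspace spanned by columns~$2$--$9$ is then an isomorphism onto $S_1 \oplus S_1$, and decomposes as $\ker(\text{row}\,3|_{k^8}) \oplus \ker(\text{row}\,2|_{k^8})$ (each summand of dimension~$4$, intersecting trivially since their common kernel lies in column~$1$, which is not in $k^8$). Fix a basis $h_0, h_1, h_2, h_3$ of $S_1$ and choose bases of these two $4$-dimensional summands that are mapped by row~$2$ and row~$3$, respectively, to $(h_0, h_1, h_2, h_3)$; placing these bases as columns~$2$--$5$ and columns~$6$--$9$ produces the Koszul shape of rows~$2$ and~$3$ displayed in the lemma.

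With rows~$2$ and~$3$ fixed, the operations on $\beta$ preserving them are: (a)~adding scalar multiples of rows~$2$ or~$3$ to row~$1$; (b)~adding scalar multiples of column~$1$ to any other column (permissible because column~$1$ has zero entries in rows~$2, 3$); (c)~rescaling row~$1$ and column~$1$; and (d)~a simultaneous basis change of $S_1$ matched by the corresponding column changes within columns~$2$--$5$ and within columns~$6$--$9$. Using (d) and (c), choose the basis so that $h_0$ equals the row~$1$ entry of column~$1$. Apply (b) to kill the $h_0$-coefficient of the row~$1$ entries in columns~$2$--$9$. Apply (a) with appropriate scalars to kill the $h_1$-coefficient of the row~$1$ entries in columns~$3$ and~$7$ (feasible because rows~$2$ and~$3$ have $h_1$ in those columns), and then a second pass of (b) removes the $h_0$-terms reintroduced in columns~$2$ and~$6$. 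A careful further use of (d), adjusting $h_1$ to absorb the row~$1$ column~$2$ entry, brings that entry to $h_1$ exactly. The resulting matrix has precisely the stated form. The main technical obstacle is the simultaneous Koszul normalization of rows~$2$ and~$3$ in the middle step, which relies on the combined rank computation and the coordinated choice of column bases and basis of $S_1$; a secondary subtlety is the final basis adjustment via (d), which requires that the initial $\xi_2, \xi_3$ are sufficiently generic so that $e_2$ remains independent of $e_1$ in $S_1$.
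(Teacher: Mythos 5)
Your overall strategy (put rows $2$ and $3$ of $\beta$ into Koszul form, then clean up row $1$ by row and column operations) is the paper's strategy, but the two inputs that make it work are respectively inverted and missing. First, the three rows of $\beta$ correspond to a basis of $\tH^1(E(-1))$, not of $\tH^1(E^\vee(-1))$: from $0 \ra K \ra 9\sco_\piii \ra 3\sco_\piii(1) \ra 0$ and $0 \ra 3\sco_\piii(-1) \ra K \ra E \ra 0$ one gets $\tH^0(3\sco_\piii) \izo \tH^1(E(-1))$ and $\Cok \tH^0(\beta) \izo \tH^1(E)$, the quotient map $\tH^0(3\sco_\piii(1)) = S_1 \otimes \tH^1(E(-1)) \ra \tH^1(E)$ being the multiplication map. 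Consequently $\Cok \tH^0(\beta_{23}) \simeq \tH^1(E)/S_1\xi_1$, where $\xi_1$ is the element corresponding to the row you \emph{discard}, namely row $1$. So the joint surjectivity of $(\text{row }2 , \text{row }3) \colon k^9 \ra S_1 \oplus S_1$ --- the crux of what you call the main technical obstacle --- is equivalent to $S_1\xi_1 = \tH^1(E)$, and it is to the first row that Lemma~\ref{L:s1xi} must be applied. Making $\xi_2 , \xi_3$ generic is beside the point, and your stated justification (each row separately surjects onto $S_1$, ``combined with the genericity'') is a non sequitur: two individually surjective maps $k^9 \ra k^4$ need not be jointly surjective onto $k^8$, and the conditions $S_1\xi_2 = S_1\xi_3 = \tH^1(E)$ say nothing about the rank of the pair of rows. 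As written, your construction leaves $\xi_1$ unconstrained, so the $1$-dimensionality of the common kernel, hence the whole Koszul normalization of rows $2$ and $3$, is not established.

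Second, the normalization of row $1$ is not purely formal. That the $(1,1)$-entry is nonzero uses $\tH^0(E) = 0$ (injectivity of $\tH^0(\beta)$); more seriously, to rename the $(1,2)$-entry $h_1$ you need it (or, after swapping the two $4$-blocks, the $(1,6)$-entry) to be linearly independent of $h_0$. If both were multiples of $h_0$, then $(h_0 , 0 , 0)^{\text{t}}$, $(0 , h_0 , 0)^{\text{t}}$ and $(0 , 0 , h_0)^{\text{t}}$ would all lie in the span of the columns, multiplication by $h_0 \colon \tH^1(E(-1)) \ra \tH^1(E)$ would vanish, and $\h^0(E_{H_0}) = 3$ would contradict Lemma~\ref{L:h0fleq2} applied to the semistable bundle $E_{H_0}$. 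Your closing remark that the ``final basis adjustment \ldots requires that $\xi_2 , \xi_3$ are sufficiently generic'' does not supply this input: the obstruction lives on the special plane $H_0$ and is excluded by semistability of the restriction, not by genericity of the chosen basis. With these two corrections your outline becomes the paper's proof.
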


\begin{proof}
Put $K := \Ker \beta$. One has an exact sequence $0 \ra 
3\sco_\piii(-1) \overset{\overline \alpha}{\ra} K \ra E \ra 0$,
with 
$\overline \alpha$ induced by $\alpha \colon 3\sco_\piii(-1) \ra 9\sco_\piii$. 
One gets isomorphisms$\, :$ 
\[
\tH^0(3\sco_\piii) \Izo \tH^1(K(-1)) \Izo \tH^1(E(-1))\, ,\  
\Cok \tH^0(\beta) \Izo \tH^1(K) \Izo \tH^1(E)\, . 
\]
Choose a decomposition $3\sco_\piii \simeq \sco_\piii \oplus 2\sco_\piii$ such 
that $1 \in \tH^0(\sco_\piii)$ is mapped, by the first of the above 
isomorphisms, into an element $\xi$ of $\tH^1(E(-1))$ such that
$S_1\xi = \tH^1(E)$ (see Lemma~\ref{L:s1xi}). If
$\text{pr}_{23} \colon 3\sco_\piii(1) \ra 2\sco_\piii(1)$
is the projection onto the last two factors and 
$\beta_{23} := \text{pr}_{23} \circ \beta$, one sees easily that 
$\tH^0(\beta_{23}) \colon \tH^0(9\sco_\piii) \ra \tH^0(2\sco_\piii(1))$ is 
surjective. 

Consider, now, a decomposition
$9\sco_\piii \simeq \sco_\piii \oplus 
8\sco_\piii$ such that $\Ker \tH^0(\beta_{23}) = \tH^0(\sco_\piii)$.
Then, up to an automorphism of $8\sco_\piii$, $\beta$ is represented by a
matrix of the form$\, :$
\[
\begin{pmatrix} 
h_0^\prime & h_1^\prime & h_2^\prime & h_3^\prime & h_4^\prime & h_5^\prime & 
h_6^\prime & h_7^\prime & h_8^\prime\\ 
0 & h_0 & h_1 & h_2 & h_3 & 0 & 0 & 0 & 0\\
0 & 0 & 0 & 0 & 0 & h_0 & h_1 & h_2 & h_3
\end{pmatrix}\, , 
\]
with $h_0 , \ldots , h_3$ an arbitrary $k$-basis of $S_1$ (which will be, 
subsequently, modified). Since $\tH^0(E) = 0$, $\tH^0(\beta)$ is injective 
hence $h_0^\prime \neq 0$. Up to automorphisms of the two direct summands 
$4\sco_\piii$ of $8\sco_\piii$, one can assume that $h_0 = h_0^\prime$. 

Let $H_0 \subset \piii$ be the plane of equation $h_0 = 0$. One has, according 
to Lemma~\ref{L:h0fleq2}, $\h^0(E_{H_0}) \leq 2$. It follows that
$h_1^\prime \notin kh_0$ or
$h_5^\prime \notin kh_0$. We can assume that $h_1^\prime \notin kh_0$
(if $h_1^\prime \in kh_0$ and $h_5^\prime \notin kh_0$ then we transpose 
the second and the third rows of the matrix and then we transpose the columns 
$i$ and $i + 4$, $i = 2 , \ldots , 5$). Then again, by automorphisms of the 
two direct summands $4\sco_\piii$ of $8\sco_\piii$ fixing their first direct 
summand $\sco_\piii$, we can assume that, moreover, $h_1 = h_1^\prime$. 

Finally, substracting from the first row a linear combination of the second 
and the third rows, we can asssume that
$h_2^\prime ,\, h_6^\prime \in kh_0 + kh_2 + kh_3$
(this operation also modifies the second entry $h_1$ on the first 
row but we shall fix this immediately). Then substracting from the $i\, $th 
column, $2 \leq i \leq 9$, a convenient multiple of the first column, we can 
arrange that $h_1$ reappears as the second entry of the first row, that 
$h_2^\prime ,\, h_6^\prime \in kh_2 + kh_3$, and that $h_i^\prime \in kh_1 + kh_2 
+ kh_3$, for $i \in \{3 , \ldots , 8\} \setminus \{6\}$. 
\end{proof}

\begin{lemma}\label{L:h1erah1el}  
Let $E$ be a vector bundle on $\piii$ and $L \subset \piii$ a line. If 
${\fam0 H}^2(E(-1)) = 0$ and ${\fam0 H}^3(E(-2)) = 0$ then the restriction map 
${\fam0 H}^1(E) \ra {\fam0 H}^1(E_L)$ is surjective and one has an exact 
sequence$\, :$ 
\[
2{\fam0 H}^1(E) \xra{(h_0\, ,\, h_1)} {\fam0 H}^1(E(1)) \lra 
{\fam0 H}^1(E_L(1)) \lra 0\, . 
\]
\end{lemma}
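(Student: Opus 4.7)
\emph{Proof plan.}
Let $h_0, h_1$ be linear forms defining $L$, and consider the Koszul resolution of $\sco_L$,
\[
0 \lra \sco_\piii(-2) \lra 2\sco_\piii(-1) \xra{(h_0\, ,\, h_1)} \sco_\piii \lra \sco_L \lra 0\, .
\]
Tensoring with $E(l)$ and splitting at the image sheaf $M_l := \sci_L \cdot E(l)$ produces, for each $l$, two short exact sequences
\[
0 \ra E(l-2) \ra 2E(l-1) \ra M_l \ra 0 \quad \text{and} \quad 0 \ra M_l \ra E(l) \ra E_L(l) \ra 0\, ,
\]
whose composite sheaf morphism $2E(l-1) \ra M_l \hookrightarrow E(l)$ is multiplication by $(h_0\, ,\, h_1)$.

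For the surjectivity of $\tH^1(E) \ra \tH^1(E_L)$, I apply the case $l = 0$: the hypotheses $\tH^2(E(-1)) = 0$ and $\tH^3(E(-2)) = 0$ imply, via the long exact sequence of the first sequence, that $\tH^2(M_0) = 0$, and then the long exact sequence of the second sequence gives $\tH^1(E) \ra \tH^1(E_L) \ra \tH^2(M_0) = 0$.

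For the exact sequence in the statement, take $l = 1$. The hypothesis $\tH^2(E(-1)) = 0$ produces a surjection $2\tH^1(E) \ra \tH^1(M_1)$ from the first sequence, while the second sequence identifies the image of $\tH^1(M_1) \ra \tH^1(E(1))$ with the kernel of the restriction $\tH^1(E(1)) \ra \tH^1(E_L(1))$. Since the composite sheaf morphism is $(h_0\, ,\, h_1)$, this kernel coincides with the image of $(h_0\, ,\, h_1) \colon 2\tH^1(E) \ra \tH^1(E(1))$, giving exactness at the middle term.

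The remaining surjectivity of $\tH^1(E(1)) \ra \tH^1(E_L(1))$ is the main obstacle, since it does not come from the Koszul long exact sequences under only the given hypotheses (one would need something like $\tH^2(E) = 0$ or $\tH^3(E(-1)) = 0$ in addition). To handle it, I would invoke the commutative square whose top row is the $S$-module multiplication $\tH^1(E) \otimes S_1 \ra \tH^1(E(1))$, whose bottom row is the analogous multiplication $\tH^1(E_L) \otimes S_1 \ra \tH^1(E_L(1))$ (factored through the surjection $S_1 \ra \tH^0(\sco_L(1))$), and whose vertical arrows are the restrictions (tensored with the identity on $S_1$ on the left). The left vertical is surjective by the first assertion, and since $E_L$ splits as a direct sum of line bundles on $L \simeq \pj$, a direct computation shows that the multiplication $\tH^1(E_L) \otimes \tH^0(\sco_L(1)) \ra \tH^1(E_L(1))$ is surjective, making the bottom row surjective as well. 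A diagram chase then forces the right vertical to be surjective, completing the proof.
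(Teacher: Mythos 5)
Your proof is correct, and for the last (and only delicate) step it takes a genuinely different route from the paper's. The paper's entire proof is: invoke the Castelnuovo--Mumford Lemma in the slightly generalized form of \cite[Lemma~1.21]{acm1}, which upgrades the two hypotheses ${\fam0 H}^2(E(-1)) = 0$ and ${\fam0 H}^3(E(-2)) = 0$ to ${\fam0 H}^2(E(l)) = 0$ for all $l \geq -1$ and ${\fam0 H}^3(E(l)) = 0$ for all $l \geq -2$, and then tensor the Koszul complex of $L$ by $E$ and $E(1)$; in particular the vanishings ${\fam0 H}^2(E) = 0$ and ${\fam0 H}^3(E(-1)) = 0$, which you correctly identify as what the Koszul long exact sequences need for the surjectivity of ${\fam0 H}^1(E(1)) \ra {\fam0 H}^1(E_L(1))$, \emph{are} consequences of the stated hypotheses --- the ``obstacle'' you describe is not one in the paper's treatment. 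Your workaround is nonetheless valid: the square relating the multiplication maps ${\fam0 H}^1(E) \otimes S_1 \ra {\fam0 H}^1(E(1))$ and ${\fam0 H}^1(E_L) \otimes {\fam0 H}^0(\sco_L(1)) \ra {\fam0 H}^1(E_L(1))$ does commute, the left vertical is surjective by your first step, and the bottom multiplication is surjective for every split bundle on $\pj$ (degree by degree, ${\fam0 H}^1(\sco_\pj(a)) \otimes {\fam0 H}^0(\sco_\pj(1)) \ra {\fam0 H}^1(\sco_\pj(a+1))$ is onto for all $a$, e.g.\ by Serre duality), so the right vertical is surjective. What your version buys is self-containedness --- it avoids citing the generalized regularity lemma --- at the cost of an extra diagram and a computation on $\pj$; what the paper's version buys is brevity and the stronger, reusable conclusion that \emph{all} the higher cohomology in the relevant range vanishes.
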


\begin{proof}
The Castelnuovo-Mumford Lemma (in its slightly more general form stated in 
\cite[Lemma~1.21]{acm1}) implies that $\tH^2(E(l)) = 0$ for $l \geq -1$ and 
$\tH^3(E(l)) = 0$ for $l \geq -2$. One tensorizes, now, by $E$ and by $E(1)$ 
the Koszul complex $0 \ra \sco_\piii(-2) \ra 2\sco_\piii(-1) \ra \sco_\piii \ra 
\sco_L \ra 0$. 
\end{proof}

\begin{lemma}\label{L:l0}  
Let $E$ be a stable rank $3$ vector bundle on $\piii$ with $c_1 = 0$, $c_2 = 
3$, $c_3 = 0$ and spectrum $(0 , 0 , 0)$. Assume that the diferential $\beta$ 
of the Horrocks monad of $E$ is defined by the matrix from the conclusion of 
Lemma~\emph{\ref{L:beta}}. Let $L_0 \subset \piii$ be the line of equations 
$h_0 = h_1 = 0$. Then ${\fam0 H}^1(E(1)) \izo {\fam0 H}^1(E_{L_0}(1))$. 
Moreover, ${\fam0 h}^1(E_{L_0}(1)) \leq 1$ and ${\fam0 h}^1(E_{L_0}(1)) = 1$ if 
and only if $h_2^\prime = h_6^\prime = 0$ and $h_5^\prime \in kh_1$. 
\end{lemma}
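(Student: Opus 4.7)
The plan is to first obtain the isomorphism by applying Lemma \ref{L:h1erah1el} and showing that multiplication by $h_0$ and by $h_1$ kills $\tH^1(E)$ in $\tH^1(E(1))$, then to determine $\h^1(E_{L_0}(1))$ by studying the splitting type of $E_{L_0}$ on $L_0\cong\pj$.

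The hypotheses of Lemma \ref{L:h1erah1el} are satisfied: property (II) of the spectrum $(0,0,0)$ gives $\tH^2(E(-1)) = 0$, and Serre duality with the stability of $E^\vee$ gives $\tH^3(E(-2)) \cong \tH^0(E^\vee(-2))^\vee = 0$. The resulting exact sequence
\[
2\tH^1(E) \xra{(h_0, h_1)} \tH^1(E(1)) \lra \tH^1(E_{L_0}(1)) \lra 0
\]
reduces the isomorphism to showing that the map $(h_0, h_1)$ is zero. I identify $\tH^1(E) = 3S_1/J$ and $\tH^1(E(1)) = 3S_2/J'$ via the Horrocks monad, where $J \subset 3S_1$ is the $k$-span of the nine columns of $\beta$ and $J' = S_1\cdot J$. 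Writing $e_1, e_2, e_3$ for the standard basis of $3S_1$, the first two columns are $h_0 e_1$ and $h_1 e_1 + h_0 e_2$, and the $(j+2)$-th column ($j=1,2,3$) is $h_{j+1}^\prime e_1 + h_j e_2$. A reduction using columns $2$ through $9$ expresses every $[h_i e_2]$ and $[h_i e_3]$ in terms of the classes $[h_j e_1]$ ($j=1,2,3$), which therefore span $\tH^1(E)$. For each such generator: $h_0 h_j e_1 = h_j \cdot (h_0 e_1) \in S_1 \cdot (h_0 e_1) \subset J'$, so $h_0[h_j e_1] = 0$; and subtracting $h_0$ times the $(j+2)$-th column from $h_j$ times the second column gives $(h_1 h_j - h_0 h_{j+1}^\prime)e_1 \in J'$, so since $h_{j+1}^\prime \in kh_1 + kh_2 + kh_3$ by Lemma \ref{L:beta}, the correction $h_0 h_{j+1}^\prime e_1$ again lies in $S_1\cdot(h_0 e_1) \subset J'$, giving $h_1[h_j e_1] = 0$.

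For the ``moreover'' part, I restrict the Horrocks monad to $L_0\cong\pj$; its cohomology is $E_{L_0} \simeq \bigoplus_{i=1}^3 \sco_{L_0}(a_i)$ with $a_1 \geq a_2 \geq a_3$ and $\sum a_i = 0$. The long exact sequence of the restricted monad yields $\h^0(E_{L_0}(-1)) \leq \h^1(3\sco_{L_0}(-2)) = 3$, and a direct analysis of the image of $\tH^0(\beta_{L_0}): 9k \to 3S_1(L_0)$ (using $h_0|_{L_0} = h_1|_{L_0} = 0$) gives $\h^0(E_{L_0}) = 5 - \dim U$ with $U := k\overline h_2^\prime + k\overline h_5^\prime + k\overline h_6^\prime \subseteq S_1(L_0)$ (overlines denoting restriction to $L_0$). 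Enumerating the triples $(a_1, a_2, a_3)$ subject to $\sum a_i = 0$, $\h^0(E_{L_0}) \leq 5$, and $\h^0(E_{L_0}(-1)) \leq 3$, and computing $\h^1(E_{L_0}(1)) = \sum\max(-a_i-2, 0)$, one finds $\h^1(E_{L_0}(1)) \leq 1$ in all cases, with equality exactly when $(a_1, a_2, a_3) \in \{(3,0,-3), (2,1,-3)\}$; both splittings occur precisely when $\h^0(E_{L_0}) = 5$, i.e., $\dim U = 0$. By the constraints $h_2^\prime, h_6^\prime \in kh_2 + kh_3$ and $h_5^\prime \in kh_1 + kh_2 + kh_3$ from Lemma \ref{L:beta} together with $h_0|_{L_0} = h_1|_{L_0} = 0$, this $\dim U = 0$ condition is exactly $h_2^\prime = h_6^\prime = 0$ and $h_5^\prime \in kh_1$. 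The main subtlety is using the inequality $\h^0(E_{L_0}(-1)) \leq 3$ to exclude the splittings $(4,-2,-2)$ and $(4,-1,-3)$, which would otherwise satisfy $\h^0(E_{L_0}) = 5$ but give different values of $\h^1(E_{L_0}(1))$.
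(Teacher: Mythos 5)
Your proposal is correct. The first half coincides with the paper's argument: both verify the hypotheses of Lemma~\ref{L:h1erah1el} and kill the multiplication maps $h_0,h_1\colon \tH^1(E)\to\tH^1(E(1))$ by exhibiting the relevant elements of $3S_2$ as $S_1$-combinations of the columns of $\beta$; your version is marginally leaner because you first observe that $[h_1e_1],[h_2e_1],[h_3e_1]$ generate $\tH^1(E)=3S_1/J$ and only check those. The second half takes a slightly different route. The paper splits $\Ker\beta_{L_0}$ as $2\sco_{L_0}\oplus N$ with $N=\Ker\bigl(7\sco_{L_0}\to 3\sco_{L_0}(1)\bigr)$, so that $N$ is a subbundle of a trivial bundle with all splitting degrees $\leq 0$; then $\h^0(N)\leq 3$ and $\chi(N)=1$ give $\h^1(N)\leq 2$, hence $\h^1(N(1))\leq 1$ at once, and the equality case is read off from $\h^0(N)=3$ --- no twist by $-1$ is ever needed. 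You instead enumerate the splitting types of $E_{L_0}$ itself, which forces you to import the extra bound $\h^0(E_{L_0}(-1))\leq 3$ (correctly obtained from the twisted restricted monad) to exclude $(4,-2,-2)$ and $(4,-1,-3)$; in exchange you get the actual splitting types $(3,0,-3)$ and $(2,1,-3)$ in the degenerate case, which the paper does not make explicit. The numerical criterion $h_2^\prime=h_6^\prime=0$ and $h_5^\prime\in kh_1$ comes out identically in both treatments, via the same computation of $\Ker\tH^0(\beta_{L_0})$.
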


\begin{proof} 
We assert that $h_i\tH^1(E) = (0)$ in $\tH^1(E(1))$, $i = 0,\, 1$. 
\emph{Indeed}, one has, as at the beginning of the proof of 
Lemma~\ref{L:beta}, $\tH^1(E(1)) \simeq \Cok \tH^0(\beta(1))$. One thus has to 
show that the elements of $3\tH^0(\sco_\piii(2))$ of the form 
$(h_ih_j , 0 , 0)^{\text{t}}$, $(0 , h_ih_j , 0)^{\text{t}}$, 
$(0 , 0 , h_ih_j)^{\text{t}}$, $i = 0,\, 1$, $j = 0 , \ldots , 3$, can be written 
as combinations of the columns of the matrix defining $\beta$ with 
coefficients linear forms. This is quite easy. For example$\, :$ 
\begin{gather*}
(0 , h_0h_j , 0)^{\text{t}} = 
h_0(h_{j+1}^\prime , h_j , 0)^{\text{t}} - 
h_{j+1}^\prime(h_0 , 0 , 0)^{\text{t}}\, ,\\    
(h_1h_j , 0 , 0)^{\text{t}} = 
h_j(h_1 , h_0 , 0)^{\text{t}} - (0 , h_0h_j , 0)^{\text{t}}\, ,\\ 
(0 , h_1h_j , 0)^{\text{t}} = 
h_1(h_{j+1}^\prime , h_j , 0)^{\text{t}} - (h_1h_{j+1}^\prime , 0 , 0)^{\text{t}}\, . 
\end{gather*}
Lemma~\ref{L:h1erah1el} implies, now, that $\tH^1(E(1)) \izo 
\tH^1(E_{L_0}(1))$. Restricting to $L_0$ the Horrocks monad of $E$ one sees 
that one has an exact sequence$\, :$ 
\[
0 \lra 3\sco_{L_0}(-1) \lra 2\sco_{L_0} \oplus N \lra E_{L_0} \lra 0\, , 
\]
where $N$ is the kernel of the epimorphism $7\sco_{L_0} \ra 3\sco_{L_0}(1)$ 
defined by the matrix$\, :$ 
\[
\begin{pmatrix} 
h_2^\prime & h_3^\prime \vb L_0 & h_4^\prime \vb L_0 & h_5^\prime \vb L_0 & 
h_6^\prime & h_7^\prime \vb L_0 & h_8^\prime \vb L_0\\ 
0 & h_2 & h_3 & 0 & 0 & 0 & 0\\ 
0 & 0 & 0 & 0 & 0 & h_2 & h_3
\end{pmatrix}\, . 
\]
Now, $\tH^1(N(l)) \izo \tH^1(E_{L_0}(l))$ for $l \geq 0$. One has
$\h^0(N) \leq 3$
(because there is no epimorphism $3\sco_{L_0} \ra 3\sco_{L_0}(1)$). 
Since $\h^0(N) - \h^1(N) = 1$ it follows that $\h^1(N) \leq 2$ hence 
$\h^1(N(1)) \leq 1$ (since $N$ is a bundle on $L_0 \simeq \pj$). 

If $\h^1(N(1)) = 1$ then $\h^1(N) = 2$ hence $\h^0(N) = 3$. If a linear 
combination of the columns of the above matrix, with coefficients
$c_1 , \ldots , c_7$,
is 0 then one must have $c_2 = c_3 = c_6 = c_7 = 0$. One deduces 
that if $\h^0(N) = 3$ then $h_2^\prime = h_6^\prime = 0$ and
$h_5^\prime \vb L_0 = 0$. 
Conversely, if $h_2^\prime = h_6^\prime = 0$ and $h_5^\prime \vb L_0 = 0$ then 
$N \simeq 3\sco_{L_0} \oplus \sco_{L_0}(-3)$ hence $\h^1(N(1)) = 1$.  
\end{proof} 

\begin{proposition}\label{P:c30000moduli} 
Let ${\fam0 M}(0)$ be the moduli space of stable rank $3$ vector bundles on 
$\piii$ with $c_1 = 0$, $c_2 = 3$, $c_3 = 0$. Then the open subscheme 
${\fam0 M}_{\fam0 min}(0)$ of ${\fam0 M}(0)$ corresponding to the bundles with 
minimal spectrum $(0 , 0 , 0)$ is nonsingular and irreducible, of dimension 
$28$. 
\end{proposition}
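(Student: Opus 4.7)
I will establish both irreducibility and smoothness of dimension $28$ by exploiting the Horrocks monad of Lemma~\ref{L:c30000monads} and the normal form for $\beta$ of Lemma~\ref{L:beta}, and then reducing $\tH^2(E^\vee \otimes E) = 0$ to a concrete surjectivity statement via a hypercohomology spectral sequence.

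For irreducibility and dimension, I would parametrize $\text{M}_{\text{min}}(0)$ by the rational variety $P$ of pairs $(\alpha , \beta)$ where $\beta$ has the normal form of Lemma~\ref{L:beta} and $\alpha$ is a compatible inclusion $3\sco_\piii(-1) \hookrightarrow \Ker \beta$ yielding a stable quotient. The normal form carries $16$ parameters for the basis $h_0 , \ldots , h_3$ of $S_1$ plus $19$ scalars for $h_2^\prime , \ldots , h_8^\prime$, and the compatible $\alpha$'s form an open subset of $\tH^0(\text{Hom}(3\sco_\piii(-1) , \Ker \beta)) = 3\tH^0(\Ker \beta(1))$. Since these data form an irreducible algebraic family and stability is an open condition, $P$ is irreducible. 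The group $\text{GL}(3) \times \text{GL}(9) \times \text{GL}(3)$ acts on $P$ with only scalar stabilizer at stable points, and $\text{M}_{\text{min}}(0)$ is the quotient; its dimension matches the expected $28$ from Remark~\ref{R:moduli}.

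For nonsingularity, by Remark~\ref{R:moduli} I must show $\tH^2(E^\vee \otimes E) = 0$ for every $[E] \in \text{M}_{\text{min}}(0)$. Since $E^\vee$ also has spectrum $(0 , 0 , 0)$, it has a Horrocks monad $M^\bullet = [3\sco_\piii(-1) \to 9\sco_\piii \to 3\sco_\piii(1)]$; tensoring with $E$ yields a three-term complex quasi-isomorphic to $E^\vee \otimes E$, with associated hypercohomology spectral sequence $E_1^{p,q} = \tH^q(M^p \otimes E) \Rightarrow \tH^{p+q}(E^\vee \otimes E)$. The vanishings $\tH^0(E(l)) = 0$ for $l \leq 0$, $\tH^2(E(l)) = 0$ for $l \geq -1$ and $\tH^3(E(l)) = 0$ for $l \geq -3$ (from the spectrum via Remark~\ref{R:spectrum}) leave only $E_\infty^{1 , 1} = \Cok d_1^{0 , 1}$ as a potential contribution to $\tH^2(E^\vee \otimes E)$, where $d_1^{0 , 1} \colon 9\tH^1(E) \to 3\tH^1(E(1))$ is induced by the map $\alpha^\vee \colon 9\sco_\piii \to 3\sco_\piii(1)$ coming from $E$'s monad. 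Since $\h^1(E(1)) \leq 1$ by Lemma~\ref{L:l0}, the statement reduces to the surjectivity of $d_1^{0 , 1}$ in the special case $\h^1(E(1)) = 1$ (equivalently $h_2^\prime = h_6^\prime = 0$ and $h_5^\prime \in kh_1$). Here I would deduce surjectivity from the explicit matrix of $\beta$ in Lemma~\ref{L:beta} together with the monad relation $\beta \circ \alpha = 0$ and the density property of Lemma~\ref{L:s1n-1}, since the failure of surjectivity would produce a codimension $1$ subspace of $\tH^1(E)$ invariant under a family of multiplication maps, forbidden by that lemma.

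The main obstacle is the final surjectivity check in the special case $\h^1(E(1)) = 1$, as one must translate the normal form of $\beta$ through the relation $\beta \circ \alpha = 0$ into sufficiently precise information about $\alpha^\vee$ acting on $\tH^1(E)$; this will likely require a short explicit computation using the matrix of $\beta$ in Lemma~\ref{L:beta}, and possibly a further case distinction reflecting the position of the special locus inside the parameter space.
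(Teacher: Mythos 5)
Your overall skeleton coincides with the paper's: reduce nonsingularity to $\tH^2(E^\vee \otimes E) = 0$ via the Horrocks monad (your hypercohomology spectral sequence computes exactly the cokernel of $9\tH^1(E) \to 3\tH^1(E(1))$ that the paper obtains from two short exact sequences), and get irreducibility from the normal form of $\beta$ in Lemma~\ref{L:beta}. However, both halves have a genuine gap. For irreducibility, the assertion that the incidence variety $P$ of pairs $(\alpha , \beta)$ is irreducible is unjustified: the fibre $3\tH^0(\Ker \beta(1))$ of the projection $P \to \{\beta\}$ jumps from dimension $18$ to $21$ exactly on the locus $h_2^\prime = h_6^\prime = 0$, $h_5^\prime \in kh_1$ (equivalently $\h^1(E(1)) = 1$, by Lemma~\ref{L:l0}), and a linear fibration with jumping fibre dimension over an irreducible base need not be irreducible. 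The paper therefore treats the two strata $\mathcal{U}$ and $\text{M}_{\text{min}}(0) \setminus \mathcal{U}$ separately, shows each is irreducible, and then must exhibit an explicit one-parameter family of monads $(\alpha_t , \beta_t)$ degenerating from one stratum to the other to conclude ${\overline{\mathcal{U}}} \cap (\text{M}_{\text{min}}(0) \setminus \mathcal{U}) \neq \emptyset$; nothing in your proposal supplies this connecting step.

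For nonsingularity, your reduction to the surjectivity of $d_1^{0,1} \colon 9\tH^1(E) \to 3\tH^1(E(1))$ is correct, but the suggested way of finishing does not work: Lemma~\ref{L:s1n-1} concerns the multiplication $\tH^1(E(-1)) \to \tH^1(E)$, whereas the map here is induced by the linear entries of $\alpha^\vee$ acting $\tH^1(E) \to \tH^1(E(1))$, and this multiplication genuinely has a large kernel (by Lemma~\ref{L:l0}, $h_0$ and $h_1$ annihilate all of $\tH^1(E)$ in $\tH^1(E(1))$), so a ``no invariant codimension-one subspace'' argument cannot be run verbatim. The paper's missing idea is geometric: by Lemma~\ref{L:l0} there is a line $L_0$ with $\tH^1(E(1)) \izo \tH^1(E_{L_0}(1))$ and, by Lemma~\ref{L:h1erah1el}, $\tH^1(E) \to \tH^1(E_{L_0})$ surjective; since $\alpha_{L_0}^\vee \colon 9\sco_{L_0} \to 3\sco_{L_0}(1)$ is an epimorphism of sheaves on a one-dimensional scheme, it is automatically surjective on $\tH^1$, and surjectivity of $d_1^{0,1}$ follows. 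You flag this step as the main obstacle, correctly, but without the restriction to $L_0$ the ``short explicit computation'' you defer is precisely the hard part.
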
 

\begin{proof} 
We show, firstly, that if $E$ is a stable rank 3 vector bundle on $\piii$ with 
the Chern classes and spectrum from the statement then $\tH^2(E^\vee \otimes E) 
= 0$. \emph{Indeed}, consider the Horrocks monad of $E$ from 
Lemma~\ref{L:c30000monads}. Let $Q$ be the cokernel of $\alpha$. Tensorizing 
by $E$ the exact sequence$\, :$ 
\[
0 \lra 3\sco_\piii(-1) \overset{{\overline \beta}^\vee}{\lra} Q^\vee \lra 
E^\vee \lra 0\, , 
\]
and using the fact that $\tH^i(E(-1)) = 0$, $i = 2,\, 3$, one gets that 
$\tH^2(Q^\vee \otimes E) \izo \tH^2(E^\vee \otimes E)$. Then, tensorizing by $E$ 
the exact sequence$\, :$ 
\[
0 \lra Q^\vee \lra 9\sco_\piii \overset{\alpha^\vee}{\lra} 3\sco_\piii(1) \lra 
0\, ,
\]
and using the fact that $\tH^2(E) = 0$, one gets an exact sequence$\, :$
\[
9\tH^1(E) \xra{\tH^1(\alpha^\vee \otimes \, {\text{id}}_E)} 3\tH^1(E(1)) 
\lra \tH^2(Q^\vee \otimes E) \lra 0\, . 
\] 
Now, Lemma~\ref{L:l0} implies that there is a line $L_0 \subset \piii$ such 
that $\tH^1(E(1)) \izo \tH^1(E_{L_0}(1))$. Moreover, by Lemma~\ref{L:h1erah1el}, 
the restriction map $\tH^1(E) \ra \tH^1(E_{L_0})$ is surjective. Since 
$\alpha_{L_0}^\vee \colon 9\sco_{L_0} \ra 3\sco_{L_0}(1)$ is an epimorphism, 
the map 
\[
\tH^1(\alpha_{L_0}^\vee \otimes \, {\text{id}}_{E \vb L_0}) \colon 9\tH^1(E_{L_0}) 
\lra 3\tH^1(E_{L_0}(1)) 
\]
is surjective (because $L_0$ has dimension 1). This implies that 
$\tH^1(\alpha^\vee \otimes \, {\text{id}}_E)$ is surjective hence $\tH^2(Q^\vee 
\otimes E) = 0$ hence $\tH^2(E^\vee \otimes E) = 0$. It follows that 
$\text{M}_{\text{min}}(0)$ is nonsingular, of pure dimension 28 (see 
Remark~\ref{R:moduli}). 

Next, let $\mathcal{U}$ be the open subset of $\text{M}_{\text{min}}(0)$ 
corresponding to the bundles with $\tH^1(E(1)) = 0$. There is a surjective 
morphism $\mathcal{S} \ra \mathcal {U}$, where $\mathcal{S}$ is the (reduced) 
space of monads of the form
$0 \ra 3\sco_\piii(-1) \overset{\alpha}{\ra} 
9\sco_\piii \overset{\beta}{\ra} 3\sco_\piii(1) \ra 0$,
with $\beta$ defined by 
a matrix of the form considered in the conclusion of Lemma~\ref{L:beta} such 
that at least one of the following holds$\, :$ $h_2^\prime \neq 0$ or 
$h_6^\prime \neq 0$ or $h_5^\prime \notin kh_1$ (see Lemma~\ref{L:l0}). For any 
such $\beta$, $\tH^0(\beta(1)) \colon \tH^0(9\sco_\piii(1)) \ra 
\tH^0(3\sco_\piii(2))$ is surjective hence the vector space$\, :$ 
\begin{equation}\label{E:spaceofalphas}
\{ \alpha \in \text{Hom}_{\sco_\piii}(3\sco_\piii(-1) , 9\sco_\piii) \vb 
\beta \circ \alpha = 0\} 
\end{equation}
has dimension $3 \times (\h^0(9\sco_\piii(1)) - \h^0(3\sco_\piii(2))) = 18$. 
One deduces that $\mathcal{S}$ is irreducible hence $\mathcal{U}$ is 
irreducible. 

According to Lemma~\ref{L:l0}, the complement $\text{M}_{\text{min}}(0) 
\setminus \mathcal{U}$ parametrizes the bundles with $\h^1(E(1)) = 1$. One has 
a surjective morphism $\mathcal{T} \ra \text{M}_{\text{min}}(0) \setminus 
\mathcal{U}$, where $\mathcal{T}$ is the space of monads of the above form 
but, this time, with $h_2^\prime = 0$, $h_6^\prime = 0$ and $h_5^\prime \in kh_1$. 
In this case $\tH^0(\beta(1))$ has corank 1 hence the space 
\eqref{E:spaceofalphas} has dimension $3 \times 7 = 21$. Anyway, $\mathcal{T}$ 
is irreducible hence $\text{M}_{\text{min}}(0) \setminus \mathcal{U}$ is 
irreducible. 

Since $\text{M}_{\text{min}}(0)$ is nonsingular, of pure dimension 28, it 
suffices, in order to check that it is irreducible, to show that 
${\overline {\mathcal{U}}} \cap (\text{M}_{\text{min}}(0) \setminus \mathcal{U}) 
\neq \emptyset$. This can be shown using the family of monads 
$0 \ra 3\sco_\piii(-1) \overset{\alpha_t}{\lra} 9\sco_\piii 
\overset{\beta_t}{\lra} 3\sco_\piii(1) \ra 0$, with$\, :$ 
\[
\beta_t := 
\begin{pmatrix} 
X_0 & X_1 & 0 & 0 & X_2 & tX_3 & tX_2 & X_3 & 0\\
0 & X_0 & X_1 & X_2 & X_3 & 0 & 0 & 0 & 0\\
0 & 0 & 0 & 0 & 0 & X_0 & X_1 & X_2 & X_3
\end{pmatrix}\, , 
\]
\[
\alpha_t^\vee := 
\begin{pmatrix} 
X_2 & X_3 & 0 & tX_3 & -X_0 - tX_2 & 0 & X_2 & -X_1 & 0\\ 
0 & X_2 & X_3 & -X_0 & -X_1 & 0 & 0 & 0 & 0\\
-X_3 & 0 & 0 & 0 & 0 & -X_2 & X_3 & X_0 & -X_1
\end{pmatrix}\, . 
\qedhere
\]
\end{proof}

\begin{proposition}\label{P:c30-101} 
Let $E$ be a stable rank $3$ vector bundle on $\piii$ with $c_1 = 0$,
$c_2 = 3$, $c_3 = 0$ and spectrum $(-1 , 0 , 1)$. Then$\, :$ 

\emph{(a)} $E$ has an unstable plane of order $1$. 

\emph{(b)} The restriction of $E$ to a general plane is stable. 

\emph{(c)} $E$ is the cohomology sheaf of a Horrocks monad of the form$\, :$ 
\[
0 \lra \sco_\piii(-2) \overset{\alpha}{\lra} \sco_\piii(1) \oplus 3\sco_\piii 
\oplus \sco_\piii(-1) \overset{\beta}{\lra} \sco_\piii(2) \lra 0\, . 
\]
\end{proposition}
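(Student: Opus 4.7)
My plan is to address parts (a), (c), (b) in that order, with the main work concentrated in (c).

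\smallskip

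\emph{Part (a).} Since the spectrum of $E^\vee$ coincides with that of $E$, namely $(-1,0,1)$, one has $\h^1(E^\vee(-2))=1$ and $\h^1(E^\vee(-3))=0$. The $S$-module structure on $\tH^1_\ast(E^\vee)$ induces a $k$-linear map $S_1 \ra \mathrm{Hom}_k(\tH^1(E^\vee(-2)),\tH^1(E^\vee(-1))) \simeq k^3$ from a $4$-dimensional source, hence with non-trivial kernel. Pick $0 \neq h_0$ in the kernel and set $H_0 := V(h_0) \subset \piii$. The cohomology sequences of $0 \ra E^\vee(-j-1) \ra E^\vee(-j) \ra E^\vee_{H_0}(-j) \ra 0$ for $j=1,2$, combined with $\h^0(E^\vee(-1))=0$ (stability) and $\h^1(E^\vee(-3))=0$ (spectrum), give $\h^0(E^\vee_{H_0}(-1)) \geq 1$ and $\h^0(E^\vee_{H_0}(-2)) = 0$. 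So $H_0$ is an unstable plane of $E$ of order exactly $1$.

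\smallskip

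\emph{Part (c).} The strategy is to extract the Horrocks monad from the Beilinson description. By Remark~\ref{R:beilinson} and the spectrum values $\h^1(E(-l))$ for $l=1,2$ and $\h^2(E(-l))$ for $l=2,3$ (other relevant cohomology vanishing), $E$ is the cohomology of a Beilinson double complex with outer terms $\Omega_\piii^2(2) \oplus 3\sco_\piii(-1)$ and $3\sco_\piii$, and middle $3\Omega_\piii^1(1) \oplus \Omega_\piii^2(2)$. Resolving $\Omega_\piii^1(1)$ and $\Omega_\piii^2(2)=T_\piii(-2)$ via the Euler sequences
\[
0 \ra \sco_\piii(-2) \ra 4\sco_\piii(-1) \ra \Omega_\piii^2(2) \ra 0, \qquad 0 \ra \Omega_\piii^1(1) \ra 4\sco_\piii \ra \sco_\piii(1) \ra 0,
\]
and performing the cancellations forced by the Beilinson differentials and minimality, reduces the complex to a line-bundle monad. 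The outer terms are pinned down by a Koszul argument: if $h_0,h_1 \in S_1$ were linearly independent elements both annihilating the unique (up to scalar) $\xi \in \tH^1(E(-2))$, then tensoring the Koszul complex on $(h_0,h_1)$ with $E$ would yield $0 \ra E(-2) \ra 2E(-1) \ra \sci_L \otimes E \ra 0$ with $L=V(h_0,h_1)$, and the long exact sequence combined with $\tH^0(\sci_L \otimes E) \subseteq \tH^0(E) = 0$ would force $\xi = 0$, a contradiction. Hence $\mathrm{Ann}(\xi) \cap S_1$ is $1$-dimensional, and dually the same holds for $E^\vee$. This fixes the outer terms of the minimal Horrocks monad as $\sco_\piii(-2)$ and $\sco_\piii(2)$; a Chern-class comparison (the middle term $\bigoplus_{i=1}^5 \sco_\piii(a_i)$ must satisfy $\sum a_i = 0$ and $\sum a_i^2 = 2$) yields the unique possibility $\sco_\piii(1) \oplus 3\sco_\piii \oplus \sco_\piii(-1)$.

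\smallskip

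\emph{Part (b).} Given the monad from (c), restriction to a general plane $H \subset \piii$ produces a monad $0 \ra \sco_H(-2) \ra \sco_H(1) \oplus 3\sco_H \oplus \sco_H(-1) \ra \sco_H(2) \ra 0$ for $E_H$. The induced map $\tH^0(A^0|_H) \ra \tH^0(\sco_H(2))$ is between $6$-dimensional vector spaces depending algebraically on $H$; an isomorphism at one $H$ propagates to the general $H$ by openness. Hence $\tH^0(E_H) = 0$. Since the Horrocks monad is self-dual in shape (the middle summand is self-dual and the outer terms interchange under dualization), the dual monad has the same form, and the analogous computation gives $\tH^0(E_H^\vee) = 0$ for the general $H$. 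Therefore $E_H$ is stable for the general plane.

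\smallskip

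The main technical obstacle lies in part (c): carefully tracking the Beilinson differentials and cancellations so that the resulting line-bundle complex is the minimal Horrocks monad without redundant paired summands. The Koszul argument pins down the outer terms cleanly, but the middle term emerges only after this careful reduction (or, equivalently, from the Chern-class constraints once the outer terms are fixed).
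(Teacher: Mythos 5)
Part (a) is correct and is essentially the paper's argument. The trouble is in the order (a), (c), (b) and in the two places where that order forces you to supply something you haven't supplied. In the paper, (b) is deduced directly from (a): by \cite[Prop.~5.1]{ehv} (Remark~\ref{R:spehv}), if $E$ had an unstable plane \emph{and} $\tH^0(E_H) \neq 0$ for every plane $H$, then $E$ would be an extension of $\sco_{H_0}(-2)$ by $\Omega_\piii(1)$ and its spectrum would be $(-2,-1,0)$, contradicting the hypothesis; the same applies to $E^\vee$. The paper then \emph{uses} (b) inside the proof of (c): multiplication by a general linear form $h \colon \tH^1(E(-1)) \ra \tH^1(E)$ is injective (because $\tH^0(E_H)=0$), hence bijective between $3$-dimensional spaces, and this is what shows $S_1\tH^1(E(-1)) = \tH^1(E)$, i.e.\ that $\tH^1_\ast(E)$ has no minimal generator in degree $0$. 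Your proof of (c) never addresses this degree. The Koszul argument you give (which is exactly the paper's) shows only that $\xi \in \tH^1(E(-2))$ is not killed by two independent linear forms, hence $S_1\xi = \tH^1(E(-1))$ and there is no minimal generator in degree $-1$; Castelnuovo--Mumford leaves degree $0$ open, and a priori the right-hand term of the minimal Horrocks monad could be $\sco_\piii(2) \oplus m\sco_\piii$ with $m>0$ (dually for the left-hand term). The phrase ``performing the cancellations forced by the Beilinson differentials and minimality'' is precisely where this work is hidden and not done.

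The gap then propagates to (b): you assert that $\tH^0(\beta_H)$, a map between two $6$-dimensional spaces varying algebraically with $H$, ``is an isomorphism at one $H$'' and propagate by openness --- but no such $H$ is exhibited, and its existence is exactly the statement $\tH^0(E_H)=0$ for some plane, i.e.\ the content of (b). As written the argument is circular. The fix is simply to adopt the paper's order: prove (b) from (a) via Remark~\ref{R:spehv} and the fact that condition (ii) there forces the spectrum $(-2,-1,0)$ (Remark~\ref{R:spectrum}(vi)); then (c) goes through, and your Chern-class determination of the middle term (rank $5$, $\sum a_i = 0$, $\sum a_i^2 = 2$, whence $\sco_\piii(1)\oplus 3\sco_\piii\oplus\sco_\piii(-1)$) is a perfectly good, slightly slicker substitute for the paper's computation of $\h^0(B(\pm 1))$.
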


\begin{proof} 
(a) The spectrum of $E^\vee$ is $(-1 , 0 , 1)$, too. It follows that 
$\h^1(E^\vee(-2)) = 1$ and $\h^1(E^\vee(-1)) = 3$. One deduces that there exists 
a non-zero linear form $h_0$ such that the multiplication by $h_0 \colon 
\tH^1(E^\vee(-2)) \ra \tH^1(E^\vee(-1))$ is the zero map. If $H_0$ is the plane 
of equation $h_0 = 0$ then $\h^0(E_{H_0}^\vee(-1)) = 1$. 

(b) It follows from (a) and from \cite[Prop.~5.1]{ehv} (recalled in 
Remark~\ref{R:spehv}) that $\tH^0(E_H) = 0$, for the general plane
$H \subset \piii$.
Since $E^\vee$ has the same Chern classes and spectrum as $E$, 
one deduces that, for the general plane $H \subset \piii$, one has 
$\tH^0(E_H^\vee) = 0$, too. 

(c) We assert that the graded $S$-module $\tH^1_\ast(E)$ is generated by 
$\tH^1(E(-2))$. \emph{Indeed}, using the spectrum one sees that $\tH^1(E(l)) 
= 0$ for $l \leq -3$, $\h^1(E(-2)) = 1$, $\h^1(E(-1)) = 3$ and that 
$\tH^2(E(l)) = 0$ for $l \geq -1$. Moreover, $\h^1(E) = 3$ by Riemann-Roch. 
Since $\tH^2(E(-1)) = 0$ and $\tH^3(E(-2)) = 0$, the slightly more general 
variant of the Castelnuovo-Mumford Lemma recalled in \cite[Lemma~1.21]{acm1} 
implies that $\tH^1_\ast(E)$ is generated in degrees $\leq 0$. 

Since $\tH^0(E) = 0$, $\tH^1(E(-2))$ cannot be annihilated by two linearly 
independent linear forms (because if it would be, denoting by $L$ the line 
defined by these forms and using the exact sequence $0 \ra \sco_\piii(-2) \ra 
2\sco_\piii(-1) \ra \sci_L \ra 0$, one would have $\tH^0(\sci_L \otimes E) \neq 
0$). It follows that $S_1\tH^1(E(-2)) = \tH^1(E(-1))$. On the other hand, 
by (b), if $h$ is a general linear form then, denoting by $H$ the plane of 
equation $h = 0$, one has $\tH^0(E_H) = 0$ hence multiplication by
$h \colon \tH^1(E(-1)) \ra \tH^1(E)$ is injective hence bijective. Our
assertion is proven. 

Since $E^\vee$ has the same Chern classes and spectrum as $E$ it follows that 
$\tH^1_\ast(E^\vee)$ is generated by $\tH^1(E^\vee(-2))$. One deduces (see Barth 
and Hulek \cite{bh}) that $E$ is the cohomology sheaf of a Horrocks monad of 
the form $0 \ra \sco_\piii(-2) \ra B \ra \sco_\piii(2) \ra 0$, where $B$ is a 
direct sum of line bundles. $B$ has rank 5, $\tH^0(B(-2)) = 0$ and 
$\h^0(B(-1)) = \h^0(\sco_\piii(1)) - \h^1(E(-1)) = 1$. Analogously, 
$\tH^0(B^\vee(-2)) = 0$ and $\h^0(B^\vee(-1)) = 1$. It follows that
$B \simeq \sco_\piii(1) \oplus 3\sco_\piii \oplus \sco_\piii(-1)$. 
\end{proof}

\begin{lemma}\label{L:h0beta2}  
If $\beta \colon \sco_\piii(1) \oplus 3\sco_\piii \oplus \sco_\piii(-1) \ra 
\sco_\piii(2)$ is an epimorphism with ${\fam0 H}^0(\beta)$ injective then 
${\fam0 H}^0(\beta(2))$ is surjective. 
\end{lemma}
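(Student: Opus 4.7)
My plan is to restrict $\beta$ to the plane $H$ cut out by its linear component, reducing the problem to an algebraic question on $\pii$. Writing $\beta = (h, q_1, q_2, q_3, f)$ with $h \in S_1$, $q_i \in S_2$, $f \in S_3$, the injectivity of $\tH^0(\beta)$ forces $h \neq 0$. I set $H := V(h)$, let $A$ denote the homogeneous coordinate ring of $H \simeq \pii$, and write $\bar q_i, \bar f$ for the restrictions. Using the exact sequence $0 \to \sco_\piii(3) \xra{h} \sco_\piii(4) \to \sco_H(4) \to 0$ together with the vanishing $\tH^1(\sco_\piii(l)) = 0$ for $l \geq 0$, surjectivity of $\tH^0(\beta(2))$ becomes equivalent to $J_4 = A_4$, where $J := (\bar q_1, \bar q_2, \bar q_3, \bar f) \subset A$. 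The hypotheses translate to: $\bar q_1, \bar q_2, \bar q_3$ linearly independent in $A_2$, and $(\bar q_1, \bar q_2, \bar q_3, \bar f)$ having no common zero on $H$.

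I will then analyze the subscheme $W := V(\bar q_1, \bar q_2, \bar q_3) \subset H$ and set $J' := (\bar q_1, \bar q_2, \bar q_3)$. If $W$ had a $1$-dimensional component, it would be a line $L$ dividing each $\bar q_i$, and linear independence would force $\langle \bar q_i \rangle = L \cdot A_1$, giving $V(J') = L$; then $\bar f|_L$ is a cubic form on $L \simeq \pj$ that necessarily has a zero, contradicting the no-common-zero hypothesis. An analogous argument, using that a length $\geq 4$ scheme with $\dim I(W)_2 \geq 3$ must lie on a line, shows $\text{length}(W) \leq 3$ and that $W$ contains no $3$ collinear points.

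Finally, I will proceed by case analysis on $n := \text{length}(W) \in \{0, 1, 2, 3\}$. For $n = 0$ the $\bar q_i$ form a regular sequence, and the Koszul complex gives the Hilbert series $(1+t)^3$ for $A/J'$, whence $(A/J')_d = 0$ for $d \geq 4$. For $n \geq 1$, the key claim is $J'_4 = I(W)_4$, which yields $(A/J')_4 \simeq (A/I(W))_4 \simeq k^n$ via evaluation at the scheme-theoretic points of $W$; this is immediate from Hilbert--Burch when $n = 3$ (non-collinear points, where $I(W)$ is itself generated by three quadrics) and will be verified by a direct syzygy computation for $n = 1, 2$, each showing $J'_3 = I(W)_3$ and hence $J'_4 = I(W)_4$ via the regularity bound $\text{reg}\,I(W) \leq 2$. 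The no-common-zero hypothesis forces $\bar f$ to be non-vanishing at each point of $W$; combined with the surjectivity of evaluation $A_1 \to k^n$ (valid since $n \leq 3$ with no $3$ points of $W$ collinear), multiplication by $\bar f$ gives a surjection $A_1 \twoheadrightarrow (A/J')_4$, which yields $J_4 = A_4$. The hard part will be the identification $J'_4 = I(W)_4$ in the cases $n = 1, 2$, where $J' \subsetneq I(W)$ in degree $2$; this requires a careful verification that $J'$ admits no unexpected linear syzygies beyond those forced by the structure of $I(W)$.
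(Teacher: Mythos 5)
Your proposal is correct, but it takes a genuinely different route from the paper's. Both arguments begin identically, reducing modulo the linear form $h$ to a question about the ideal generated by $\bar q_1,\bar q_2,\bar q_3,\bar f$ on $H\simeq\pii$; but from there the paper argues uniformly and sheaf-theoretically, while you stratify by the length of the base locus. The paper introduces the kernel $K$ of $3\sco_{H}\ra\sci_{Z,H}(2)$ (the sheafification of your $J'$), notes that $K$ is a rank $2$ bundle with $c_1(K)=-2$, hence $K^\vee\simeq K(2)$, and obtains the two vanishings it needs, $\tH^1(K(2))=0$ and $\tH^1(\sci_{Z,H}(1))=0$, from the dualized resolution $0\ra\sco_{H}(-2)\ra 3\sco_{H}\ra K^\vee\ra\omega_Z(1)\ra 0$ together with $\tH^0(K)=0$ (which is exactly the linear independence of the three quadrics) and Serre duality; no discussion of $\mathrm{length}(Z)$ ever enters. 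Your route instead reduces everything to the claim $J'_4=I(W)_4$, which amounts to showing that the space of linear syzygies of $(\bar q_1,\bar q_2,\bar q_3)$ has dimension exactly $n-1$ for $n=\mathrm{length}(W)\ge 1$; this is true, and your deferral of it is honest, but it is where essentially all the work lies. Concretely: a syzygy $\sum\ell_i\bar q_i=0$ with $\ell_1,\ell_2,\ell_3$ a basis of $A_1$ forces $(\bar q_1,\bar q_2,\bar q_3)=x\wedge v(x)$ for a linear endomorphism $v$ of $k^3$, so $W$ is the zero scheme of a section of the tangent bundle of $\pii$ and has length exactly $3$; a syzygy with $\dim_k\langle\ell_1,\ell_2,\ell_3\rangle=2$ forces a common linear factor of two of the quadrics and hence $n\ge 2$; and two independent linear syzygies put you in the Hilbert--Burch situation with $n=3$. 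Once these are written out, your case analysis closes (the $n=0$ Koszul case and the final multiplication-by-$\bar f$ step are fine, also for non-reduced $W$, since a length $\le 3$ scheme not contained in a line satisfies $\tH^1(\sci_{W,H}(1))=0$). The trade-off is clear: the paper's self-duality trick buys uniformity in $Z$ at the price of some homological bookkeeping, whereas your approach is elementary but must confront the degenerate nets of conics one by one. One small imprecision to fix: in the case $n=3$ you invoke Hilbert--Burch for non-collinear \emph{points}, but $W$ may be non-reduced; the needed statement (the saturated ideal is generated by three quadrics with two linear syzygies) holds for every length-$3$ subscheme of $\pii$ not contained in a line, and you do need it in that generality.
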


\begin{proof} 
The hypothesis $\tH^0(\beta)$ injective is equivalent to
$\tH^0(\Ker \beta) = 0$
and the conclusion to $\tH^1(\Ker \beta(2)) = 0$. The component 
$\sco_\piii(1) \ra \sco_\piii(2)$ is defined by a linear form $h_0$ which must 
be non-zero. We deduce an exact sequence$\, :$ 
\[
0 \lra \Ker \beta \lra 3\sco_\piii \oplus \sco_\piii(-1) 
\overset{\overline{\beta}}{\lra} \sco_{H_0}(2) \lra 0\, . 
\]
Let $\beta_0 \colon 3\sco_{H_0} \oplus \sco_{H_0}(-1) \ra \sco_{H_0}(2)$ denote 
the morphism ${\overline{\beta}} \otimes \sco_{H_0}$. Then one has an exact 
sequence$\, :$ 
\[
0 \lra 3\sco_\piii(-1) \oplus \sco_\piii(-2) \lra \Ker \beta \lra \Ker \beta_0 
\lra 0\, . 
\]
Let $\beta_0^\prime \colon 3\sco_{H_0} \ra \sco_{H_0}(2)$ and $\beta_0^\secund 
\colon \sco_{H_0}(-1) \ra \sco_{H_0}(2)$ be the components of $\beta_0$. 
$\beta_0^\secund$ is defined by a cubic form $f \in \tH^0(\sco_{H_0}(3))$. One 
deduces that $\text{Im}\, \beta_0^\prime = \sci_{Z , H_0}(2)$, where $Z$ is a 
closed subscheme of $H_0$ of dimension $\leq 0$. Let $K$ denote the kernel of 
$\beta_0^\prime$. One has exact sequences$\, :$ 
\begin{gather*}
0 \lra K \lra 3\sco_{H_0} \lra \sci_{Z , H_0}(2) \lra 0\, ,\\
0 \lra K \lra \Ker \beta_0 \lra \sci_{Z , H_0}(-1) \lra 0\, . 
\end{gather*}
$K$ is a rank 2 vector bundle on $H_0$ with $c_1(K) = -2$. It follows that 
$K^\vee \simeq K(2)$. Dualizing the first exact sequence, one get an exact 
sequence$\, :$ 
\[
0 \lra \sco_{H_0}(-2) \lra 3\sco_{H_0} \lra K^\vee \lra \omega_Z(1) \lra 0 
\] 
from which one deduces that $\tH^1(K^\vee) = 0$ hence $\tH^1(K(2)) = 0$. One 
also deduces, from the first exact sequence, that $\tH^1(\sci_{Z , H_0}(1)) 
\simeq \tH^2(K(-1)) \simeq \tH^0(K^\vee(-2))^\vee \simeq \tH^0(K)^\vee = 0$. 
Using, now, the second exact sequence one gets that $\tH^1(\Ker \beta_0(2)) 
= 0$. 
\end{proof}

\begin{proposition}\label{P:c30moduli}  
The moduli space ${\fam0 M}(0)$ of stable rank $3$ vector bundles on $\piii$ 
with Chern classes $c_1 = 0$, $c_2 = 3$, $c_3 = 0$ is nonsingular and 
irreducible, of dimension $28$. 
\end{proposition}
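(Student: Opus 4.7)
The plan is to combine Proposition~\ref{P:c30000moduli} with a parallel analysis of the complementary closed locus $Z \subset \text{M}(0)$ parametrising stable bundles with spectrum $(-1,0,1)$. These are the only two spectra possible for $c_3 = 0$ (Remark~\ref{R:beilinson}), and $Z$ is closed in $\text{M}(0)$ by semicontinuity of $\h^1(E(-2))$.

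For $E \in Z$ I would first show that $\tH^2(E^\vee \otimes E) = 0$. Writing the Horrocks monad of Proposition~\ref{P:c30-101}(c) as $0 \ra \sco_\piii(-2) \xra{\alpha} B \xra{\beta} \sco_\piii(2) \ra 0$ with $B := \sco_\piii(1) \oplus 3\sco_\piii \oplus \sco_\piii(-1)$, and setting $Q := \Cok\alpha$, one has exact sequences $0 \ra E \ra Q \ra \sco_\piii(2) \ra 0$ and, after dualising $0 \ra \sco_\piii(-2) \ra B \ra Q \ra 0$, $0 \ra Q^\vee \ra B^\vee \ra \sco_\piii(2) \ra 0$. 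Dualising the first and tensoring with $E$, the vanishing $\tH^3(E(-2)) = 0$ (Serre duality together with stability) reduces the problem to $\tH^2(Q^\vee \otimes E) = 0$; tensoring the second with $E$ and invoking the spectrum to kill $\tH^2(E(l))$ for $l \in \{-1,0,1\}$ further reduces it to $\tH^1(E(2)) = 0$. Since $\tH^0(E) = 0$ by stability, $\tH^0(\beta)$ is injective, so Lemma~\ref{L:h0beta2} yields that $\tH^0(\beta(2))$ is surjective; a final chase of the sequences above delivers $\tH^1(E(2)) = 0$. Combined with Proposition~\ref{P:c30000moduli}, $\text{M}(0)$ is then nonsingular of pure local dimension $28$ at every point.

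To conclude irreducibility, I would bound $\dim Z \leq 27$ using a monad parameter count. Over the irreducible open locus $U \subset \text{Hom}(B, \sco_\piii(2))$ of epimorphisms with $\tH^0(\beta)$ injective (of dimension $54$), Lemma~\ref{L:h0beta2} guarantees that the space of admissible $\alpha$'s, namely $\Ker \tH^0(\beta(2))$, has constant dimension $\h^0(B(2)) - \h^0(\sco_\piii(4)) = 54 - 35 = 19$. The total space of monads is therefore an irreducible vector bundle of rank $19$ over $U$, of dimension $73$; quotienting by the free action (modulo the diagonal scalar stabiliser) of $\text{Aut}(\sco_\piii(-2)) \times \text{Aut}(B) \times \text{Aut}(\sco_\piii(2))$, whose effective dimension is $1 + 45 + 1 - 1 = 46$, yields $\dim Z \leq 27 < 28$. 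Since $\text{M}(0)$ is nonsingular of pure dimension $28$ while $Z$ is closed of strictly smaller dimension, $\text{M}_{\text{min}}(0)$ is dense in $\text{M}(0)$; being irreducible by Proposition~\ref{P:c30000moduli}, its closure equals $\text{M}(0)$, which is therefore irreducible. The main obstacle is the cohomological reduction bringing Lemma~\ref{L:h0beta2} into contact with $\tH^2(E^\vee \otimes E)$ in the preceding step; once that vanishing is secured, the dimension and connectedness arguments above are essentially formal.
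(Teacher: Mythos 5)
Your proposal is correct and follows essentially the same route as the paper: the same decomposition of $\text{M}(0)$ by spectrum, the same reduction of $\tH^2(E^\vee\otimes E)=0$ on the maximal-spectrum locus to $\tH^1(E(2))=0$ via the Horrocks monad of Proposition~\ref{P:c30-101}(c) and Lemma~\ref{L:h0beta2}, and the same $73-46=27$ parameter count for that locus, which forces $\text{M}_{\text{min}}(0)$ to be dense. The numerical data ($54$, $19$, $\dim\text{Aut}(B)=45$) all agree with the paper's computation.
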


\begin{proof} 
The closed subset $\text{M}_{\text{max}}(0) := \text{M}(0) \setminus 
\text{M}_{\text{min}}(0)$ of $\text{M}(0)$ corresponds to the bundles with 
maximal spectrum $(-1 , 0 , 1)$. We show, firstly, that if $E$ is such a 
bundle then $\tH^2(E^\vee \otimes E) = 0$. \emph{Indeed}, according to 
Prop.~\ref{P:c30-101}(c), $E$ is the cohomology sheaf of a monad of 
the form
$0 \ra \sco_\piii(-2) \overset{\alpha}{\ra} B \overset{\beta}{\ra} \sco_\piii(2)
\ra 0$,
where $B = \sco_\piii(1) \oplus 3\sco_\piii \oplus \sco_\piii(-1)$.
If $Q$ is the cokernel of $\alpha$ then one has an exact 
sequence $0 \ra \sco_\piii(-2) \ra Q^\vee \ra E^\vee \ra 0$. Since
$\tH^3(E(-2)) = 0$, the map
$\tH^2(Q^\vee \otimes E) \ra \tH^2(E^\vee \otimes E)$
is surjective. Tensorizing by $E$ the exact sequence$\, :$ 
\[
0 \lra Q^\vee \lra \sco_\piii(-1) \oplus 3\sco_\piii \oplus \sco_\piii(1) 
\overset{\alpha^\vee}{\lra} \sco_\piii(2) \ra 0\, ,
\] 
and using the fact that $\tH^1(E(2)) = 0$ (by Lemma~\ref{L:h0beta2}) and 
that $\tH^2(E(l)) = 0$, for $l \geq -1$, according to the spectrum, one 
deduces that $\tH^2(Q^\vee \otimes E) = 0$ hence $\tH^2(E^\vee \otimes E) = 0$. 

It follows that $\text{M}(0)$ is nonsingular, of local dimension 28, at 
every point of $\text{M}_{\text{max}}(0)$ (hence it is nonsingular of dimension 
28 everywhere, due to Prop.~\ref{P:c30000moduli}). We will show, next, that 
$\text{M}_{\text{max}}(0)$ is irreducible, of dimension 27. This will imply, of 
course, that $\text{M}(0)$ is irreducible (by Prop.~\ref{P:c30000moduli}, 
again). 

Let $\mathcal{N}$ be the (reduced) space of the monads of the above form, 
with $\tH^0(\beta)$ and $\tH^0(\alpha^\vee)$ injective. Applying the functor 
$\sch^0$ to these monads, one gets a map $\mathcal{N} \ra \text{M}(0)$ whose 
image is $\text{M}_{\text{max}}(0)$. The vector space
$\text{Hom}_{\sco_\piii}(B , \sco_\piii(2))$ has dimension 54 and if
$\beta \colon B \ra \sco_\piii(2)$ is 
an epimorphism with $\tH^0(\beta)$ injective then, by Lemma~\ref{L:h0beta2}, 
the space of morphisms $\alpha \colon \sco_\piii(-2) \ra B$ satisfying 
$\beta \circ \alpha = 0$ has dimension $54 - 35 = 19$. One deduces that 
$\mathcal{N}$ is irreducible of dimension $54 + 19 = 73$. 

Now, two bundles $E$ and $E^\prime$ with spectrum $(-1 , 0 , 1)$ are isomorphic 
if and only if their Horrocks monads are isomorphic (see \cite[Prop.~4]{bh}). 
Let $G$ be the group of automorphisms of $B$. The group $\Gamma := 
\text{GL}(1) \times G \times \text{GL}(1)$ acts on $\mathcal{N}$ by$\, :$ 
\[
(c , \phi , c^\prime) \cdot (\alpha , \beta) := (\phi \alpha c^{\prime -1} , 
c\beta \phi^{-1})\, . 
\]
The orbits of this action are exactly the fibres of the morphism
$\mathcal{N} \ra \text{M}(0)$. $\text{GL}(1)$
embeds diagonally into $\Gamma$. Since 
$\text{Hom}_{\sco_\piii}(E , E) \simeq k$, for any stable bundle on $\piii$, 
the stabilizer of any element of $\mathcal{N}$ under the action of $\Gamma$ 
is the image of that embedding. Since $\Gamma/\text{GL}(1)$ has dimension 
$47 - 1 = 46$ it follows that $\text{M}_{\text{max}}(0)$ has dimension
$73 - 46 = 27$. 
\end{proof}

\section{The case $c_3 = 2$}\label{S:c32}

\begin{lemma}\label{L:c32monad}
Let $E$ be a stable rank $3$ vector bundle on $\piii$ with $c_1 = 0$,
$c_2 = 3$, $c_3 = 2$. Then$\, :$ 

\emph{(a)} $E$ is the cohomology sheaf of a monad of the form$\, :$ 
\[
0 \lra \sco_\piii(-1) \oplus \sco_\piii(-2) \overset{\alpha}{\lra} 
6\sco_\piii \oplus \sco_\piii(-1) \overset{\beta}{\lra} 2\sco_\piii(1) 
\lra 0\, . 
\]

\emph{(b)} If $E$ has no unstable plane then it is the cohomology sheaf of a 
monad of the form$\, :$ 
\[
0 \lra \sco_\piii(-2) \overset{\alpha}{\lra} 6\sco_\piii 
\overset{\beta}{\lra} 2\sco_\piii(1) \lra 0\, .
\]
\end{lemma}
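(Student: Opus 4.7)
My plan is to prove both parts starting from the Beilinson monad. First, I would read off the cohomology of $E$ from its spectrum: by Remark~\ref{R:beilinson}, since $c_3 = 2$ the spectrum of $E$ is $(-1, 0, 0)$, which yields $\h^1(E(-2)) = 0$, $\h^1(E(-1)) = \h^1(E) = 2$, $\h^2(E(-1)) = 0$, $\h^2(E(-2)) = 1$, $\h^2(E(-3)) = 4$. Since $(-1, 0, 0)$ is neither $(-2, -1, 0)$ nor $(0, 1, 2)$, neither $E$ nor $E^\vee$ satisfies conditions~(i)--(iii) of Remark~\ref{R:spehv}, so Remark~\ref{R:beilinson} provides the Beilinson monad
\[
0 \lra 4\sco_\piii(-1) \lra 2\Omega_\piii^1(1) \oplus \Omega_\piii^2(2) \lra 2\sco_\piii \lra 0
\]
with $E$ as cohomology at the middle term.

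For part~(a), I would convert this Beilinson monad into a monad of line bundles by using the exact sequences $0 \to \sco_\piii(-2) \to 4\sco_\piii(-1) \to \Omega_\piii^2(2) \to 0$ and $0 \to \Omega_\piii^1(1) \to 4\sco_\piii \to \sco_\piii(1) \to 0$. The vanishings $\text{Ext}^1(\sco_\piii(-1), \sco_\piii(-2)) = 0$ and $\text{Ext}^1(\sco_\piii(1), \sco_\piii) = 0$ allow one to lift the component of the Beilinson differential landing in $\Omega_\piii^2(2)$, and to extend the component coming out of $\Omega_\piii^1(1)$, to maps between free sheaves. Forming the total complex of the resulting double complex gives a 3-term monad of line bundles quasi-isomorphic to the original. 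After cancelling trivial direct summands $\sco_\piii(-k) \izo \sco_\piii(-k)$, a rank count (based on the cohomology dimensions from the first step) shows that the surviving monad has the shape $\sco_\piii(-1) \oplus \sco_\piii(-2) \to 6\sco_\piii \oplus \sco_\piii(-1) \to 2\sco_\piii(1)$, which is~(a).

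For part~(b), I would translate the hypothesis "$E$ has no unstable plane" into a cohomological condition. A plane $H = \{h = 0\}$ is unstable for $E$ iff $\tH^0(E_H^\vee(-1)) \neq 0$; combining this with $\tH^0(E^\vee(-1)) = 0$ (stability of $E$) and the long exact sequence of $0 \to E^\vee(-2) \xrightarrow{\cdot h} E^\vee(-1) \to E_H^\vee(-1) \to 0$, this is equivalent to the kernel of multiplication by $h$ on $\tH^1(E^\vee(-2))$ being nonzero. Hence "no unstable plane" means that the multiplication map $\tH^1(E^\vee(-2)) \otimes S_1 \to \tH^1(E^\vee(-1))$ is injective. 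By Serre duality, $\h^1(E^\vee(-2)) = \h^2(E(-2)) = 1$ and $\h^1(E^\vee(-1)) = \h^2(E(-3)) = 4$, so this 4-to-4 map is in fact bijective. Bijectivity means $\tH^1_\ast(E^\vee)$ is generated by its minimal-degree piece; dually, the scalar component $\sco_\piii(-1) \to \sco_\piii(-1)$ of $\alpha$ in the monad from~(a) is nonzero, so cancelling this trivial summand produces the monad~(b).

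The main obstacle will be carrying out the cancellation count in part~(a) — one must carefully track the ranks of the various components of the differentials in the enlarged line-bundle monad, using the values of $\h^1(E(l))$ and $\h^2(E(l))$ to pin down exactly how many $\sco_\piii(-1)$ and $\sco_\piii$ pairs cancel — and, for part~(b), verifying that the cohomological bijectivity forces precisely the scalar $\sco_\piii(-1) \to \sco_\piii(-1)$ component of~(a) (rather than some other component) to be nonzero.
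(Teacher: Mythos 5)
Your reduction to the Beilinson monad and the mechanism for trading $\Omega^2_\piii(2)$ and $\Omega^1_\piii(1)$ for sums of line bundles are sound, and your part (b) is essentially the paper's argument. But there is a genuine gap in part (a): the number of pairs $\sco_\piii(-1)\simeq\sco_\piii(-1)$ and $\sco_\piii\simeq\sco_\piii$ that cancel is \emph{not} determined by the dimensions $\h^1(E(l))$, $\h^2(E(l))$. It is governed by the ranks of two multiplication maps: $S_1\otimes\tH^1(E(-1))\to\tH^1(E)$ (which controls how many of the $2\sco_\piii$ in the right-hand term $2\sco_\piii\oplus 2\sco_\piii(1)$ of your enlarged monad cancel against the $8\sco_\piii$ in the middle) and $S_1\otimes\tH^1(E^\vee(-2))\to\tH^1(E^\vee(-1))$ (which, via Serre duality, computes the rank of the $4\times 4$ scalar block $4\sco_\piii(-1)\to 4\sco_\piii(-1)$ of the left differential). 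To reach the stated shape you need the first map to be surjective and the second to have rank at least $3$; and a monad of the form claimed in (a) in fact forces both conditions, since it exhibits $\tH^1_\ast(E)$ as a quotient of $\tH^0_\ast(2\sco_\piii(1))$ (generated in degree $-1$) and $\tH^1_\ast(E^\vee)$ as a quotient of $\tH^0_\ast(\sco_\piii(1)\oplus\sco_\piii(2))$. Neither condition is a formal consequence of the Betti numbers.

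These two facts are precisely Claims 1 and 2 in the paper's proof, and each requires a specific argument. For the first, one assumes the image of $S_1\otimes\tH^1(E(-1))$ lies in a hyperplane of the $2$-dimensional space $\tH^1(E)$ and extracts from the Beilinson monad a complex $\sco_\piii(-1)\xrightarrow{\phi}\Omega^2_\piii(2)\xrightarrow{\pi}\sco_\piii$ with $\phi\neq 0$ and $\pi$ an epimorphism, which does not exist. Note that epimorphisms $\Omega^2_\piii(2)\to\sco_\piii$ \emph{do} exist (they correspond to nullcorrelations, as $c_3(\Omega^1_\piii(2))=0$), so this step cannot be dismissed by a degree or Chern-class count; the point is that $\pi\circ\phi\neq 0$ for every nonzero $\phi$. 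For the second, if the rank were $\leq 2$ there would be two independent linear forms annihilating the generator of $\tH^1(E^\vee(-2))$, giving a line $L$ with $\tH^0(\sci_L\otimes E^\vee)\neq 0$ and contradicting $\tH^0(E^\vee)=0$. You need to supply these arguments (or equivalents) before the cancellation count in (a) --- and hence also (b), which rests on (a) --- is justified.
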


\begin{proof} 
(a) The spectrum of $E$ must be $(-1 , 0 , 0)$. One deduces that $\tH^1(E(l)) 
= 0$ for $l \leq -2$, $\h^1(E(-1)) = 2$, $\h^2(E(-3)) = 4$, $\h^2(E(-2)) = 1$, 
and $\tH^2(E(l)) = 0$ for $l \geq -1$. By Riemann-Roch, $\h^1(E) = 2$. 

\vskip2mm 

\noindent 
{\bf Claim 1.}\quad $\tH^1_\ast(E)$ \emph{is generated by} $\tH^1(E(-1))$. 

\vskip2mm 

\noindent 
\emph{Indeed}, since $\tH^2(E(-1)) = 0$ and $\tH^3(E(-2)) = 0$, the 
Castelnuovo-Mumford Lemma (in its slightly more general form recalled in 
\cite[Lemma~1.21]{acm1}) implies that $\tH^1_\ast(E)$ is generated in degrees 
$\leq 0$. It remains to show that the multiplication map $S_1 \otimes 
\tH^1(E(-1)) \ra \tH^1(E)$ is surjective. Assume, by contradiction, that it 
is not. Then its image is contained in a 1-dimensional subspace $A$ of 
$\tH^1(E)$. 
Consider the Beilinson monad of $E$ (see Remark~\ref{R:beilinson})$\, :$ 
\[
0 \lra \tH^2(E(-3)) \otimes \Omega_\piii^3(3) \overset{\gamma}{\lra} 
\begin{matrix} 
\tH^2(E(-2)) \otimes \Omega_\piii^2(2)\\ 
\oplus\\ 
\tH^1(E(-1)) \otimes \Omega_\piii^1(1) 
\end{matrix}
\overset{\delta}{\lra} \tH^1(E) \otimes \sco_\piii \lra 0\, . 
\]
By our assumption, the image of the restriction $\delta_2$ of $\delta$ to 
$\tH^1(E(-1)) \otimes \Omega_\piii^1(1)$ is contained in $A \otimes \sco_\piii$. 
Let $A^\prime$ denote the quotient $\tH^1(E)/A$. Denoting by $\gamma_1$ the 
component
$\tH^2(E(-3)) \otimes \Omega_\piii^3(3) \ra \tH^2(E(-2)) \otimes 
\Omega_\piii^2(2)$
of $\gamma$, one deduces that one has an epimorphism 
$\Cok \gamma_1 \ra A^\prime \otimes \sco_\piii$. But the multiplication map 
$S_1 \otimes \tH^2(E(-3)) \ra \tH^2(E(-2))$ is surjective (because 
$\tH^3(E(-4)) = 0$) hence the morphism $\gamma_1$ is non-zero. Since there is 
no complex $\sco_\piii(-1) \overset{\phi}{\ra} \Omega_\piii^2(2) 
\overset{\pi}{\ra} \sco_\piii$ 
with $\phi \neq 0$ and $\pi$ an epimorphism, we have got the desired
\emph{contradiction}. 

\vskip2mm 

\noindent 
{\bf Claim 2.}\quad $\tH^1_\ast(E^\vee)$ \emph{has one minimal generator of 
degree} $-2$ \emph{and at most one of degree} $-1$. 

\vskip2mm 

\noindent 
\emph{Indeed}, since the spectrum of $E^\vee$ is $(0 , 0 , 1)$, it follows that 
$\tH^1(E^\vee(l)) = 0$ for $l \leq -3$, $\h^1(E^\vee(-2)) = 1$, 
$\h^1(E^\vee(-1)) = 4$, and $\tH^2(E^\vee(l)) = 0$ for $l \geq -2$. One deduces 
that $\tH^1_\ast(E^\vee)$ is generated in degrees $\leq -1$ (because 
$\tH^2(E^\vee(-2)) = 0$ and $\tH^3(E^\vee(-3)) = 0$). Moreover, the 
multiplication map $S_1 \otimes \tH^1(E^\vee(-2)) \ra \tH^1(E^\vee(-1))$ cannot 
have rank $\leq 2$ because in that case it would exist a line
$L \subset \piii$ such that $\tH^0(\sci_L \otimes E^\vee) = 0$,
\emph{which is not true}. 

\vskip2mm 

\noindent 
The two claims above imply that $E$ is the cohomology sheaf of a (not 
necessarily minimal) Horrocks monad of the form $0 \ra \sco_\piii(-1) \oplus 
\sco_\piii(-2) \ra B \ra 2\sco_\piii(1) \ra 0$, with $B$ a direct sum of line 
bundles. $B$ has rank $7$, $\tH^0(B(-1)) = 0$, $c_1(B) = -1$ hence $B \simeq 
6\sco_\piii \oplus \sco_\piii(-1)$. 

(b) If $E$ has no unstable plane then the multipliction map $S_1 \otimes 
\tH^1(E^\vee(-2)) \ra \tH^1(E^\vee(-1))$ is injective hence bijective hence 
$\tH^1_\ast(E^\vee)$ has no minimal generator of degree $-1$. 
\end{proof}

\begin{lemma}\label{L:c32h0eh0}  
Let $E$ be a stable rank $3$ vector bundle on $\piii$ with $c_1 = 0$, $c_2 = 
3$, $c_3 = 2$. Then, for the general plane $H \subset \piii$, one has 
${\fam0 H}^0(E_H) = 0$. 
\end{lemma}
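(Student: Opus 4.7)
The plan is to run the $\mu$-map argument from Remark~\ref{R:mu} with $E^\prim = E$ and derive a contradiction from a pure Chern-class count. Since $c_3 = 2$, the spectrum of $E$ is $(-1,0,0)$ and the spectrum of $E^\vee$ is $(0,0,1)$, so neither of them is of the form $(-c_2+1,\ldots,-1,0)$. In particular, neither $E$ nor $E^\vee$ satisfies the equivalent conditions (i)--(iii) from Remark~\ref{R:spehv}, so the hypotheses of Lemma~\ref{L:s1n-1} are met. From the spectrum one reads $\tH^1(E(-2)) = 0$ and $\tH^2(E(-2)) = 0$, and by Riemann-Roch $\h^1(E(-1)) = \h^1(E) = 3 - c_3/2 = 2$. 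Thus both sides of $\mu$ are rank-$2$ trivial bundles on $\p^{3\vee}$, and $d = 2$ in the notation of Remark~\ref{R:mu}.

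Assume, for a contradiction, that $\tH^0(E_H) \neq 0$ for every plane $H \subset \piii$. Then, by Spindler's theorem, $\mu$ has generic corank $1$, so $\Ker \mu \simeq \sco_{\p^{3\vee}}(a)$ for some integer $a$ and $\Cok \mu$ fits in the standard exact sequence of Remark~\ref{R:mu}, with formula
\[
a = -d + b + c_1\big((\Cok \mu)_{\fam0 tors}\big) = -2 + b + c_1\big((\Cok \mu)_{\fam0 tors}\big).
\]

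Now I plug in the two observations from Remark~\ref{R:mu}. Observation~(i) gives $a \leq -2$. For observation~(ii), I invoke Lemma~\ref{L:s1n-1}: since $\tH^2(E(-2)) = 0$, the multiplication map $S_1 \otimes \tH^1(E(-1)) \ra \tH^1(E)$ is surjective, so observation~(ii) yields $b \geq 1$. Combining these with $c_1\big((\Cok \mu)_{\fam0 tors}\big) \geq 0$ (Remark~\ref{R:c1scf}) forces
\[
a \;=\; -2 + b + c_1\big((\Cok \mu)_{\fam0 tors}\big) \;\geq\; -2 + 1 + 0 \;=\; -1,
\]
contradicting $a \leq -2$. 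Hence the assumption was false, and $\tH^0(E_H) = 0$ for the general plane $H$.

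There is essentially no obstacle here beyond assembling the numerics correctly; the hard work has already been done in Remark~\ref{R:mu} and Lemma~\ref{L:s1n-1}. The only point that requires a moment of care is verifying that Lemma~\ref{L:s1n-1} is applicable (i.e.\ that $\tH^2(E(-2)) = 0$ and that neither $E$ nor $E^\vee$ has spectrum $(-c_2+1,\ldots,0)$), all of which follow directly from the spectrum of a stable rank $3$ bundle with $c_1=0$, $c_2=3$, $c_3=2$.
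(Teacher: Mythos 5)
Your overall strategy is exactly the paper's: run the $\mu$-map of Remark~\ref{R:mu} for $E$ itself, get $a\leq -2$ from observation (i), $b\geq 1$ from observation (ii), and contradict relation \eqref{E:ab} with $d=2$. The numerics ($d=2$, $a\leq -2$, $b\geq 1$, $c_1((\Cok\mu)_{\mathrm{tors}})\geq 0$) are all assembled correctly and this is precisely how the paper concludes.

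There is, however, a genuine gap in how you justify the one nontrivial input, namely the surjectivity of $S_1\otimes \tH^1(E(-1))\ra \tH^1(E)$ needed for observation (ii). You claim that $\tH^2(E(-2))=0$ and invoke Lemma~\ref{L:s1n-1}. But for $c_3=2$ the spectrum of $E$ is $(-1,0,0)$, so $K=\sco_{\pj}(-1)\oplus 2\sco_{\pj}$ and property (II) of the spectrum gives $\h^2(E(-2))=\h^1(K(-1))=\h^1(\sco_{\pj}(-2))=1\neq 0$ (this is stated explicitly in the proof of Lemma~\ref{L:c32monad}(a)). Hence the hypothesis of Lemma~\ref{L:s1n-1} fails for $E$, and your appeal to it is not valid. (It does hold for $E^\vee$, whose spectrum is $(0,0,1)$, but that is not the bundle you need it for here.) The surjectivity $S_1\tH^1(E(-1))=\tH^1(E)$ is nevertheless true: it is Claim 1 in the proof of Lemma~\ref{L:c32monad}(a), whose proof must contend with the extra term $\tH^2(E(-2))\otimes\Omega^2_\piii(2)$ in the Beilinson monad — one shows that the component $\gamma_1\colon \tH^2(E(-3))\otimes\Omega^3_\piii(3)\ra \tH^2(E(-2))\otimes\Omega^2_\piii(2)$ is non-zero and that there is no complex $\sco_\piii(-1)\overset{\phi}{\ra}\Omega^2_\piii(2)\overset{\pi}{\ra}\sco_\piii$ with $\phi\neq 0$ and $\pi$ an epimorphism. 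Once you replace your citation of Lemma~\ref{L:s1n-1} by this claim, your proof coincides with the paper's.
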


\begin{proof} 
Assume, by contradiction, that this is not the case. Consider the morphism
$\mu \colon \tH^1(E(-1)) \otimes \sco_{\p^{3 \vee}}(-1) \ra \tH^1(E) \otimes
\sco_{\p^{3 \vee}}$ from Remark~\ref{R:mu}. Using the notation from that remark,
one has $a \leq -2$ (by observation (i)) and $b \geq 1$ (by observation (ii)
because, by Claim 1 in the proof of Prop.~\ref{P:c30000}, one has
$S_1\tH^1(E(-1)) = \tH^1(E)$). But this \emph{contradicts} relation
\eqref{E:ab} (in which one has $d = 2$). 
\end{proof}

\begin{proposition}\label{P:c32unstableplane} 
Let $E$ be a stable rank $3$ vector bundle on $\piii$ with $c_1 = 0$, $c_2 = 
3$, $c_3 = 2$. Assume that $E$ has an unstable plane. Then, for the general 
plane $H \subset \piii$, one has ${\fam0 H}^0(E_H^\vee) = 0$.
\end{proposition}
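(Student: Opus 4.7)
The plan is to deduce the conclusion from Lemma~\ref{L:jumpinglines}. Lemma~\ref{L:c32h0eh0} already furnishes $\tH^0(E_H) = 0$ for the general plane, so it remains to show that the family of lines $L \subset \piii$ with $\h^1(E_L^\vee) \geq 1$ satisfies one of the three constraints of Lemma~\ref{L:jumpinglines}. To exploit the unstable plane $H_0$ of order $r \geq 1$, I would produce a short exact sequence$\, :$
\[
0 \lra E^\vee \lra G^\vee \lra \sco_{H_0}(r+1) \lra 0\, .
\]
The subbundle $\sco_{H_0}(r) \hookrightarrow E^\vee_{H_0}$ dualizes on $H_0$ to a surjection $E_{H_0} \twoheadrightarrow \sco_{H_0}(-r)$, which lifts trivially through $E \twoheadrightarrow E_{H_0}$ to a surjection $E \twoheadrightarrow \sco_{H_0}(-r)$; letting $G$ denote its kernel (a rank $3$ locally free sheaf), dualizing the short exact sequence $0 \to G \to E \to \sco_{H_0}(-r) \to 0$ and using the identification $\text{Ext}^1(\sco_{H_0}(-r), \sco_\piii) \simeq \sco_{H_0}(r+1)$ (from the Koszul resolution of $\sco_{H_0}$) produces the displayed sequence. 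The Chern classes of $G$ are determined by $c(G) c(\sco_{H_0}(-r)) = c(E)$; for $r = 1$ this yields $c_1(G) = -1$, $c_2(G) = 2$, $c_3(G) = -2$.

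For a line $L \not\subset H_0$, restriction of the displayed sequence to $L$ gives $0 \to E^\vee_L \to G^\vee_L \to \sco_{\{P\}} \to 0$, where $P = L \cap H_0$, presenting $E^\vee_L$ as the elementary transformation of $G^\vee_L$ at $P$. A direct splitting-type check shows that whenever $G^\vee_L$ is balanced, i.e., $\simeq \sco_L(1) \oplus \sco_L \oplus \sco_L$, the resulting $E^\vee_L$ is either $3\sco_L$ or $\sco_L(1) \oplus \sco_L \oplus \sco_L(-1)$, both of which have $\h^1 = 0$. Consequently every jumping line of $E^\vee$ outside $H_0$ is a jumping line of $G^\vee$, and the small Chern classes of $G$ confine---via the generalized Grauert-M\"ulich-Spindler theorem, or a direct monad analysis---the jumping locus of $G^\vee$ to a family of dimension at most $1$. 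For lines $L$ contained in $H_0$, the vanishings $\tH^0(E^\vee_{H_0}(-2)) = \tH^0(E_{H_0}(-2)) = 0$ (from the spectrum of $E$, as computed in the proof of Lemma~\ref{L:c32monad}) allow Lemma~\ref{L:gms} to be applied on $H_0 = \pii$ to $E^\vee_{H_0}$, showing that the jumping lines inside $H_0$ form a subfamily of dimension at most $1$ there---and, of course, all lie in the single plane $H_0$. Hence Lemma~\ref{L:jumpinglines} applies and yields $\tH^0(E_H^\vee) = 0$ for the general plane $H$.

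The main obstacle is establishing the dimension bound on the jumping locus of $G^\vee$. Because $\tH^0(G^\vee(-1))$ can be nonzero---produced by the connecting homomorphism $\tH^0(\sco_{H_0}(r)) \to \tH^1(E^\vee(-1))$ attached to the displayed sequence---$G$ need not be semistable, and the Grauert-M\"ulich-Spindler theorem does not apply to it directly. A finer analysis, either through the Horrocks monad of $G^\vee$ (leveraging the small discriminant $c_2(G) = 2$ when $r = 1$) or via a careful study of the extension class determining $G^\vee$, will be required to complete the argument.
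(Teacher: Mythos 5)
Your proposal is not complete, and you say so yourself: the entire weight of the argument rests on bounding the jumping locus of $G^\vee$ by a family of dimension at most $1$, and this is exactly the step you leave open. Since $G$ can fail to be semistable (as you note, the connecting map $\tH^0(\sco_{H_0}(r)) \ra \tH^1(E^\vee(-1))$ can produce sections of $G^\vee(-1)$), Grauert--M\"ulich--Spindler is unavailable, and ``a finer analysis will be required'' is the whole problem restated, not a proof. There are also secondary inaccuracies in the reduction: a non-zero section of $E_{H_0}^\vee(-r)$ gives an injection $\sco_{H_0}(r) \hookrightarrow E_{H_0}^\vee$ that need not be a subbundle inclusion, so the quotient of $E$ is in general $\sci_{Z_1,H_0}(-r)$ for a $0$-dimensional $Z_1$, the kernel $G$ is then only reflexive (not locally free) at the points of $Z_1$, and the dualized sequence and its restriction to lines through those points need to be adjusted. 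You also use $r=1$ without justification; this does hold, but because an unstable plane of order $c_2-1=2$ would force $c_3=6$ by Remark~\ref{R:spehv}, which should be said.

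The paper avoids all of this by working with the minimal Horrocks monad of Lemma~\ref{L:c32monad}(a): minimality forces the component $\sco_\piii(-2) \ra \sco_\piii(-1)$ of $\alpha$ to be a non-zero linear form $h_0$, whose zero plane is the unstable plane $H_0$, and dualizing the monad exhibits $E^\vee$ as the middle cohomology of
\[
0 \lra 2\sco_\piii(-1) \lra 2\sco_\piii \oplus \Omega_\piii(1)
\overset{\e}{\lra} \sco_{H_0}(2) \lra 0\, ,
\]
with $\e$ given by two linearly independent conics $f_0, f_1$ on $H_0$ together with a map $\Omega_\piii(1) \ra \sco_{H_0}(2)$. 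For $L \not\subset H_0$, $\tH^1(E_L^\vee)$ is the cokernel of $\tH^0(\e_L)$, which is surjective unless $L$ meets $Z := \{f_0 = f_1 = 0\}$ (a set of dimension $\leq 1$), and through each point of $Z$ at most one line $L \not\subset H_0$ fails surjectivity; Lemma~\ref{L:jumpinglines} then applies. If you want to salvage your route, you would essentially have to reprove this kind of explicit control for $G^\vee$, which is no easier than the direct monad computation.
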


\begin{proof} 
We intend to apply Lemma~\ref{L:jumpinglines}. 
According to Lemma~\ref{L:c32monad}, $E$ is the cohomology sheaf of a minimal 
Horrocks monad of the form$\, :$ 
\[
0 \lra \sco_\piii(-1) \oplus \sco_\piii(-2) \overset{\alpha}{\lra} 
6\sco_\piii \oplus \sco_\piii(-1) \overset{\beta}{\lra} 2\sco_\piii(1) 
\lra 0\, . 
\] 
The component $\sco_\piii(-1) \ra \sco_\piii(-1)$ of $\alpha$ is zero and the 
component $\sco_\piii(-2) \ra \sco_\piii(-1)$ is defined by a linear form $h_0$. 
Since $\tH^0(\alpha^\vee)$ is injective, one has $h_0 \neq 0$. Let
$H_0 \subset \piii$ be the plane of equation $h_0 = 0$.

Dualizing the above monad, one deduces that $E^\vee$ is the middle cohomology
sheaf of a complex$\, :$
\[
0 \lra 2\sco_\piii(-1) \lra 2\sco_\piii \oplus \Omega_\piii(1) \overset{\e}{\lra}
\sco_{H_0}(2) \lra 0\, . 
\]
$\e$ is defined by two elements $f_0$, $f_1$ of $\tH^0(\sco_{H_0}(2))$ and by
a morphism $\Omega_\piii(1) \ra \sco_{H_0}(2)$ which can be written as a
composite map
$\Omega_\piii(1) \ra \sco_\piii \otimes V^\vee \overset{\phi}{\ra} \sco_{H_0}(2)$.
Since $\tH^0(E^\vee) = 0$ it follows that $f_0$ and $f_1$ are linearly
independent. Put $Z := \{x \in H_0 \vb f_0(x) = f_1(x) = 0\}$.

If $L$ is a line not contained in $H_0$ then $\tH^1(E_L^\vee)$ is isomorphic
to the cokernel of$\, :$
\[
\tH^0(\e_L) \colon \tH^0\left(2\sco_L \oplus (\Omega_\piii(1) \vb L)\right)
\lra \tH^0(\sco_{L \cap H_0}(2))\, . 
\]
If $L \cap Z = \emptyset$ then $\tH^0(\e_L)$ is surjective hence
$\tH^1(E_L^\vee) = 0$. Assume, now, that $L \cap H_0$ consists of a point $x$
belonging to $Z$. Since $\e$ is an epimorphism, the map
$(\Omega_\piii(1))(x) \ra (\sco_{H_0}(2))(x)$ is surjective. But
$(\Omega_\piii(1))(x) = \tH^0(\sci_{\{x\}}(1)) \subset V^\vee$ and
$\tH^0(\Omega_\piii(1) \vb L) = \tH^0(\sci_L(1)) \subset \tH^0(\sci_{\{x\}}(1))$.
One deduces that for all the lines $L$ passing through $x$ and not contained
in $H_0$, except at most one, the map
$\tH^0(\Omega_\piii(1) \vb L) = \tH^0(\sci_L(1)) \ra \sco_{H_0}(2)(x) =
\tH^0(\sco_{L \cap H_0}(2))$
induced by $\phi$ is surjective. Consequently, any line $L \subset \piii$
for which $\h^1(E_L^\vee) \geq 1$ is either contained in $H_0$ or belongs to
a family of dimension at most 1 (because $\dim Z \leq 1$). The conclusion of
the proposition follows, now, from Lemma~\ref{L:jumpinglines}. 
\end{proof} 

\begin{lemma}\label{L:wsubwedge2v}  
Let $V$ be a $4$-dimensional $k$-vector space and $W$ a $4$-dimensional 
subspace of $\bigwedge^2V$. Then there exists a $k$-basis $v_0 , \ldots , v_3$ 
of $V$ such that $W$ admits one of the following bases$\, :$ 
\begin{enumerate} 
\item[(i)] $v_0 \wedge v_1$, $v_1 \wedge v_2$, $v_2 \wedge v_3$,
$v_0 \wedge v_3\, ;$ 
\item[(ii)] $v_0 \wedge v_1$, $v_1 \wedge v_2$, $v_2 \wedge v_3$,
$v_0 \wedge v_2 + v_1 \wedge v_3\, ;$ 
\item[(iii)] $v_0 \wedge v_1$, $v_1 \wedge v_2$, $v_2 \wedge v_3$,
$v_0 \wedge v_2$. 
\end{enumerate}
\end{lemma}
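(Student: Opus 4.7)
The plan is to study $W$ through its $2$-dimensional orthogonal complement $W^\perp \subset \bigwedge^2 V$ with respect to the non-degenerate symmetric bilinear form $(\omega, \eta) \mapsto \omega \wedge \eta$ on $\bigwedge^2 V$, which takes values in $\bigwedge^4 V \simeq k$ once a volume form is fixed. Since $W = (W^\perp)^\perp$, classifying $W$ up to a change of basis of $V$ amounts to classifying $W^\perp$. The associated quadric $Q = \{\omega \wedge \omega = 0\} \subset \p(\bigwedge^2 V) = \p^5$ is the Klein quadric, i.e.\ the image of the Grassmannian $G(2, V)$ under the Pl\"{u}cker embedding; its points correspond to the decomposable bivectors and hence to the lines in $\p(V) \simeq \piii$.

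Since $k$ is algebraically closed, the projective line $\ell := \p(W^\perp) \subset \p^5$ meets $Q$ in exactly one of three configurations: $\ell$ is contained in $Q$; or $\ell$ cuts $Q$ in two distinct points; or $\ell$ is tangent to $Q$ at a single point. I will show that these three configurations correspond, in this order, to cases (iii), (i) and (ii) of the lemma.

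If $\ell \subset Q$, then $\ell$ is a line on $G(2, V)$, and every such line has the form $\{M \in G(2, V) : P \subset M \subset \Pi\}$ for a flag $P \subset \Pi$ consisting of a $1$- and a $3$-dimensional subspace of $V$. Choosing $v_2 \in P$, extending to a basis $v_0, v_1, v_2$ of $\Pi$, and picking any $v_3 \notin \Pi$ gives $W^\perp = \langle v_2 \wedge v_0,\, v_2 \wedge v_1\rangle$, and a direct computation of $(W^\perp)^\perp$ yields case (iii). In the two-point case, the intersection points correspond to two lines $L_0, L_1 \subset \p(V)$ which must be skew, for if they met then every linear combination of their Pl\"{u}cker bivectors would remain decomposable and we would actually be in the previous case. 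Choosing $v_0, v_2$ a basis of $L_0$ and $v_1, v_3$ a basis of $L_1$ gives $W^\perp = \langle v_0 \wedge v_2,\, v_1 \wedge v_3\rangle$, whose orthogonal complement is case (i).

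The tangent case is the most delicate step and I expect it to be the main obstacle. The tangent point of $\ell$ corresponds to a decomposable $\omega_0$, so we may choose $v_1, v_2$ to span the corresponding line and set $\omega_0 = v_1 \wedge v_2$; complete to a basis $v_0, v_1, v_2, v_3$ of $V$ and pick $\omega_1 \in W^\perp$ independent from $\omega_0$. Subtracting a suitable multiple of $\omega_0$, we may assume $\omega_1$ has no $v_1 \wedge v_2$ component. The tangency condition $\omega_0 \wedge \omega_1 = 0$ forces the $v_0 \wedge v_3$ coefficient of $\omega_1$ to vanish, so
\[
\omega_1 = a_{01}\, v_0 \wedge v_1 + a_{02}\, v_0 \wedge v_2 + a_{13}\, v_1 \wedge v_3 + a_{23}\, v_2 \wedge v_3,
\]
with $a_{01} a_{23} - a_{02} a_{13} \neq 0$ (this being the Pfaffian condition $\omega_1 \wedge \omega_1 \neq 0$). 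The subgroup $\text{GL}(\langle v_0, v_3\rangle) \times \text{GL}(\langle v_1, v_2\rangle)$ of $\text{GL}(V)$ preserves the decomposition and acts on the invertible $2 \times 2$ matrix of coefficients of $\omega_1$ by arbitrary row and column operations; using this transitivity I can normalize $\omega_1$ to $v_0 \wedge v_2 - v_1 \wedge v_3$, and a final computation of $(W^\perp)^\perp$ in this basis then gives case (ii).
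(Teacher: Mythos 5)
Your proof is correct, and its overall strategy coincides with the paper's: both arguments replace the $4$-dimensional $W$ by its $2$-dimensional orthogonal complement (you inside $\bigwedge^2V$ via the wedge pairing, the paper inside $\bigwedge^2V^\vee$ via the canonical pairing --- the two are identified by a choice of volume form) and then classify that pencil into three normal forms. The one genuine difference is in how the classification of the pencil is obtained: the paper simply cites it as a known result (\cite[Lemma~G.4]{acm2} in its notation), whereas you prove it from scratch by intersecting the line $\p(W^\perp)$ with the Klein quadric and treating the three configurations (line contained in the quadric, secant, tangent) separately. Your version is therefore self-contained; the trade-off is a longer argument, with the tangent case requiring the small extra verifications you supply (that tangency at $[\omega_0]$ is equivalent to $\omega_0\wedge\omega_1 = 0$ together with $\omega_1\wedge\omega_1 \neq 0$, and that the stabilizer of the flag acts transitively on the invertible $2\times 2$ coefficient matrices). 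All of these steps check out, and the final computations of $(W^\perp)^\perp$ do land on cases (iii), (i) and (ii) respectively, matching the correspondence implicit in the paper's proof.
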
 

\begin{proof} 
Consider the canonical pairing $\langle \ast , \ast \rangle \colon 
\bigwedge^2V^\vee \times \bigwedge^2 V \ra k$
and let
$W^\perp \subset \bigwedge^2V^\vee$
consist of the elements $\alpha$ with
$\langle \alpha , \omega \rangle = 0$, $\forall \, \omega \in W$.
Since $W^\perp$ has dimension 
$2$, a well known result (see, for example, \cite[Lemma~G.4]{acm2}) says that 
there exists a basis $h_0 , \ldots , h_3$ of $V^\vee$ such that $W^\perp$ admits 
one of the following bases$\, :$ 
\begin{enumerate} 
\item[(1)] $h_0 \wedge h_2$, $h_1 \wedge h_3$$\, ;$ 
\item[(2)] $h_0 \wedge h_3$, $h_0 \wedge h_2 - h_1 \wedge h_3$$\, ;$ 
\item[(3)] $h_0 \wedge h_3$, $h_1 \wedge h_3$. 
\end{enumerate}
Let $v_0 , \ldots , v_3$ be the dual basis of $V$. If $W^\perp$ admits the 
basis (1) (resp., (2), resp., (3)) then $V$ admits the basis (i) (resp., (ii), 
resp., (iii)). 
\end{proof} 

\begin{proposition}\label{P:c32nounstableplane} 
Let $E$ be a stable rank $3$ vector bundle on $\piii$ with $c_1 = 0$,
$c_2 = 3$, $c_3 = 2$. Assume that $E$ has no unstable plane. If
${\fam0 H}^0(E_H^\vee) \neq 0$, for every plane $H \subset \piii$, then $E$ is
as in Theorem~\emph{\ref{T:c10c23}(b)(ii)}. 
\end{proposition}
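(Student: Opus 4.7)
The plan is to apply Remark~\ref{R:mu} to $E^\vee$ and to extract enough structure from the morphism $\mu$ to identify the Horrocks monad of $E$ explicitly. By Lemma~\ref{L:c32monad}(b), $E$ is the cohomology sheaf of a monad
\[
0 \lra \sco_\piii(-2) \xra{\alpha} 6\sco_\piii \xra{\beta} 2\sco_\piii(1) \lra 0\, ,
\]
and the spectrum of $E^\vee$ is $(0,0,1)$, so Riemann--Roch yields $\tH^1(E^\vee(-1))$ and $\tH^1(E^\vee)$ both of dimension $4$, while $\tH^2(E^\vee(l)) = 0$ for $l \geq -2$.

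I then apply Remark~\ref{R:mu} with $E^\prim = E^\vee$: since $\tH^0(E_H^\vee) \neq 0$ for every plane $H$, the $4 \times 4$ matrix $\mathcal{M}$ of linear forms on $\p^{3 \vee}$ representing $\mu$ has generic corank $1$. Observation (i) of that remark gives $a \leq -2$. Moreover, neither $E$ nor $E^\vee$ satisfies Remark~\ref{R:spehv}(i)--(iii), because $\pm c_3 = \pm 2 \neq 6 = c_2^2 - c_2$, so Lemma~\ref{L:s1n-1} gives $S_1 \cdot \tH^1(E^\vee(-1)) = \tH^1(E^\vee)$, whence $b \geq 1$ by observation (ii). Generic corank exactly $1$ forces $(\Cok \mu)_{\text{tors}}$ to be supported in codimension $\geq 2$, so $c_1((\Cok \mu)_{\text{tors}}) = 0$ and $a = -4 + b$; hence $(a, b) \in \{(-3, 1), (-2, 2)\}$.

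To pin down $(a, b) = (-2, 2)$ and the geometry of $Y$, I combine Lemma~\ref{L:c32h0eh0} with Lemma~\ref{L:jumpinglines}: the hypothesis $\tH^0(E_H^\vee) \neq 0$ for every $H$ together with $\tH^0(E_H) = 0$ for general $H$ means the conclusion of Lemma~\ref{L:jumpinglines} fails, so the general plane $H \subset \piii$ must contain a line $L_0$ with $\tH^1(E_{L_0}^\vee) \neq 0$. Observation (iii) of Remark~\ref{R:mu} then forces $\dim Y = 1$. The case $b = 1$ would yield a surjection $\sco_{\p^{3 \vee}}^4 \to \sci_Y(1)$ which, combined with $\dim Y = 1$, requires $Y$ to be contained in a single line of $\p^{3 \vee}$; a closer inspection using the extra structure of $\mathcal{M}$ (notably the loci where $\mathcal{M}$ has corank $\geq 2$) rules this out, leaving $(a, b) = (-2, 2)$.

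Finally, $Y$ is a $1$-dimensional subscheme cut out by quadrics, and the loci where $\mathcal{M}$ drops to corank $\geq 2$ correspond to planes $H$ with $\tH^0(E_H^\vee) \geq 2$. I would then show that $Y$ is the disjoint union of two skew lines $\ell_0^\vee \sqcup \ell_1^\vee \subset \p^{3 \vee}$, dual to two skew lines $\ell_0, \ell_1 \subset \piii$, and change coordinates so that $\ell_0 = \{X_0 = X_2 = 0\}$ and $\ell_1 = \{X_1 = X_3 = 0\}$. Since $\mathcal{M}$ coincides with the horizontal differential $\delta$ in the Beilinson monad of $E^\vee$ (Remarks~\ref{R:beilinson} and~\ref{R:mu}), its form in these coordinates translates, via the Beilinson--Horrocks correspondence applied to the Horrocks monad of $E$, into the asserted form of $\beta$, up to automorphisms of $6\sco_\piii$ and $2\sco_\piii(1)$; the relation $\beta \circ \alpha = 0$ together with the injectivity of $\alpha$ then determines $\alpha$ as the stated transpose of quadrics. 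The main obstacle I anticipate is the identification of $Y$ as precisely two skew lines and the subsequent derivation of the explicit form of $\beta$, which will require ruling out alternative $1$-dimensional configurations for $Y$ (twisted cubic, various reducible curves, etc.) by a careful case analysis of the rank stratification of $\mathcal{M}$.
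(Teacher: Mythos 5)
Your opening moves track the paper's: you invoke Lemma~\ref{L:c32monad}(b), compute that $\tH^1(E^\vee(-1))$ and $\tH^1(E^\vee)$ are both $4$-dimensional, and set up the morphism $\mu$ of Remark~\ref{R:mu} for $E^\vee$. But there are two genuine gaps. First, your determination of $(a,b)$ rests on the claim that generic corank $1$ forces $(\Cok \mu)_{\mathrm{tors}}$ to be supported in codimension $\geq 2$, hence $c_1((\Cok \mu)_{\mathrm{tors}})=0$. That implication is false in general: a map of bundles of generic corank $1$ can have a cokernel with torsion along a divisor (where the corank jumps to $2$), and Remark~\ref{R:c1scf} then gives $c_1>0$. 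The paper avoids this by first proving $b\geq 2$ (its Claim~1: if $b=1$ then $Y$ is a line, $\mu$ factors through $(N_0\otimes\sco_{\p^{3\vee}})\oplus\sco_{\p^{3\vee}}(-1)$, and one contradicts Lemma~\ref{L:s1n-1}); only then does relation \eqref{E:ab} with $d=4$ and $a\leq -2$ force $a=-2$, $b=2$ and $c_1((\Cok \mu)_{\mathrm{tors}})=0$ simultaneously. Your alternative exclusion of $(a,b)=(-3,1)$ (``a closer inspection\dots rules this out'') is not an argument.

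Second, and more seriously, the derivation of the explicit monad --- which is the actual content of the proposition --- is deferred: you say you ``would then show'' that $Y$ is a pair of skew lines and that this ``will require ruling out alternative configurations.'' The paper's route here is quite different from classifying the curve $Y$: it shows (via its Claim~2 on the dimension of $S_1N_{-1}$) that $\Ker\mu\simeq\sco_{\p^{3\vee}}(-2)$ is embedded by four linearly independent vectors of $V$, so that $\Cok\kappa\simeq \mathrm{T}_{\p^{3\vee}}(-2)$ and $\mu$ is encoded by a $4$-dimensional subspace $W\subset\bigwedge^2V$; Lemma~\ref{L:wsubwedge2v} reduces $W$ to three normal forms; and the relation $\mathcal{M}\wedge(w_0,\dots ,w_3)^{\mathrm{t}}=0$ coming from the Beilinson monad of $E^\vee$ (together with the linear independence of the $w_i$, which is exactly where the no-unstable-plane hypothesis enters) eliminates two of the three cases and produces the explicit matrices in the remaining one. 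None of this machinery appears in your proposal, so the proof is not complete.
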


\begin{proof} 
According to Lemma~\ref{L:c32monad}(b), $E$ is the cohomology sheaf of a monad 
of the form$\, :$ 
\[
0 \lra \sco_\piii(-2) \overset{\alpha}{\lra} 6\sco_\piii 
\overset{\beta}{\lra} 2\sco_\piii(1) \lra 0\, .
\] 
The spectrum of the dual vector bundle $E^\vee$ is $(0 , 0 , 1)$. It follows 
that $\tH^1(E^\vee(l)) = 0$ for $l \leq -3$, $\h^1(E^\vee(-2)) = 1$, 
$\h^1(E^\vee(-1)) = 4$, $\h^2(E^\vee(-3)) = 2$, $\tH^2(E^\vee(l)) = 0$ for 
$l \geq -2$. Moreover, by Riemann-Roch, $\h^1(E^\vee) = 4$. By 
Remark~\ref{R:beilinson}, $E^\vee$ is the cohomology sheaf of a Beilinson 
monad of the form$\, :$ 
\[
0 \lra \begin{matrix} \tH^1(E^\vee(-2)) \otimes \Omega_\piii^2(2)\\ \oplus\\ 
\tH^2(E^\vee(-3)) \otimes \Omega_\piii^3(3) \end{matrix} 
\overset{\gamma}{\lra} \tH^1(E^\vee(-1)) \otimes \Omega_\piii^1(1) 
\overset{\delta}{\lra} \tH^1(E^\vee) \otimes \sco_\piii \lra 0\, .  
\]

Consider, now, the morphism
$\mu \colon \tH^1(E^\vee(-1)) \otimes \sco_{\p^{3\vee}}(-1) \ra \tH^1(E^\vee)
\otimes \sco_{\p^{3\vee}}$
from Remark~\ref{R:mu}. Using the notation from that remark, one has
$a \leq -2$ (by observation (i)) and $b \geq 1$ (by observation (ii) because
one sees easily, dualizing the above Horrocks monad, that
$S_1\tH^1(E^\vee(-1)) = \tH^1(E^\vee)$).

\vskip2mm

\noindent
{\bf Claim 1.}\quad $b \geq 2$.

\vskip2mm 

\noindent
\emph{Indeed}, assume, by contradiction, that $b = 1$. As we noticed in the
Introduction (after the statement of Thm.~\ref{T:c10c23}) the general plane
$H \subset \piii$ contains a line $L_0$ with $\h^1(E^\vee_{L_0}) \geq 1$.
Observation (iii) in Remark~\ref{R:mu} implies that $Y$ has dimension 1.
Since $\sci_Y(1)$ is globally generated, $Y$ must be a line. In this case,
$\mu$ factorizes as$\, :$
\[
\tH^1(E^\vee(-1)) \otimes \sco_{\p^{3 \vee}}(-1) \overset{\overline{\mu}}{\lra}
(N_0 \otimes \sco_{\p^{3 \vee}}) \oplus \sco_{\p^{3 \vee}}(-1) \lra
\tH^1(E^\vee) \otimes \sco_{\p^{3 \vee}}\, , 
\]
for some subspace $N_0$ of $\tH^1(E^\vee)$ of dimension 2. The kernel of the
component
$\tH^1(E^\vee(-1)) \otimes \sco_{\p^{3 \vee}}(-1) \ra \sco_{\p^{3 \vee}}(-1)$
of $\overline{\mu}$ has the form $N_{-1} \otimes \sco_{\p^{3 \vee}}(-1)$ for some
1-codimensional subspace $N_{-1}$ of $\tH^1(E^\vee(-1))$. Since $\mu$ maps
$N_{-1} \otimes \sco_{\p^{3 \vee}}(-1)$ into $N_0 \otimes \sco_{\p^{3 \vee}}$ it
follows that $S_1N_{-1} \subseteq N_0$ which \emph{contradicts}
Lemma~\ref{L:s1n-1}. 

\vskip2mm 

It follows, now, from Claim 1 and from relation \eqref{E:ab} (with $d = 4$), 
that $b = 2$ and $a = -2$. 
In this case, we have an exact sequence$\, :$  
\[
0 \lra \sco_{\p^{3 \vee}}(-2) \overset{\kappa}{\lra} \tH^1(E^\vee(-1)) \otimes 
\sco_{\p^{3 \vee}}(-1) \overset{\mu}{\lra} \tH^1(E^\vee) \otimes 
\sco_{\p^{3 \vee}}\, .
\] 
Choosing a basis of $\tH^1(E^\vee(-1))$, $\kappa$ is defined by four vectors 
$u_0 , \ldots , u_3 \in V = \tH^0(\sco_{\p^{3 \vee}}(1))$. We assert that 
$u_0 , \ldots , u_3$ \emph{are linearly independent}. 

\emph{Indeed}, if $ku_0 + \ldots + ku_3$ has dimension $c < 4$ then there is a 
decomposition $\tH^1(E^\vee(-1)) = N_{-1} \oplus N_{-1}^\prime$, with $N_{-1}$ 
of dimension $c$, such that
$\text{Im}\, \kappa \subset N_{-1} \otimes \sco_{\p^{3 \vee}}(-1)$. 
It follows that
$\Cok \kappa \simeq \scf \oplus (N_{-1}^\prime \otimes \sco_{\p^{3 \vee}}(-1))$,
where $\scf$ is a sheaf defined by an exact 
sequence$\, :$
\[
0 \lra \sco_{\p^{3 \vee}}(-2) \lra N_{-1} \otimes \sco_{\p^{3 \vee}}(-1) \lra 
\scf \lra 0\,. 
\]
One cannot have $c = 1$ because, in this case, $\scf$ is a torsion sheaf and 
this \emph{contradicts} the fact that $\Cok \kappa \simeq \text{Im}\, \mu$. 
If $c \in \{2,\, 3\}$ then, dualizing the above sequence, one sees that 
$\text{Hom}_{\sco_{\p^{3 \vee}}}(\scf , \sco_{\p^{3 \vee}})$ has dimension 1 for 
$c = 2$ and dimension 3 for $c = 3$. One deduces that there exists a subspace 
$N_0$ of $\tH^1(E^\vee)$, of dimension 1 if $c = 2$ and of dimension 3 if
$c = 3$, such that $\mu$ maps $N_{-1} \otimes \sco_{\p^{3 \vee}}(-1)$ into
$N_0 \otimes \sco_{\p^{3 \vee}}$. This means that $S_1N_{-1} \subseteq N_0$ and this 
\emph{contradicts} the following assertion$\, :$

\vskip2mm 

\noindent 
{\bf Claim 2.}\quad \emph{If} $N_{-1}$ \emph{is a subspace of} 
${\fam0 H}^1(E^\vee(-1))$, \emph{of dimension} $2$ \emph{or} $3$, 
\emph{then the dimension of the subspace} $S_1N_{-1}$ 
\emph{of} ${\fam0 H}^1(E^\vee)$ \emph{is} $> \dim_kN_{-1}$. 

\vskip2mm 

\noindent 
\emph{Indeed}, the image of
$\tH^0(\alpha^\vee) \colon \tH^0(6\sco_\piii) \ra \tH^0(\sco_\piii(2))$
is a 6-dimensional, base point free subspace $U$ of 
$\tH^0(\sco_\piii(2))$. One has
$\tH^1(E^\vee(-1)) \simeq \tH^0(\sco_\piii(1))$ 
and $\tH^1(E^\vee) \simeq \tH^0(\sco_\piii(2))/U$. By the first isomorphism, 
$N_{-1}$ is identified with $\tH^0(\sci_\Lambda(1))$, for some linear subspace 
$\Lambda$ of $\piii$ with $\text{codim}(\Lambda , \piii) = \dim_kN_{-1}$. Then 
$S_1N_{-1} \simeq (\tH^0(\sci_\Lambda(2)) + U)/U$ hence
$\tH^1(E^\vee)/S_1N_{-1}$ 
is isomorphic to the cokernel of the restriction map
$U \ra \tH^0(\sco_\Lambda(2))$.
Since $U$ is base point free, the dimension of this 
cokernel is $0$ if $\dim \Lambda = 0$ and $\leq 1$ if $\dim \Lambda = 1$. The  
claim is proven. 

\vskip2mm 

It remains that $u_0, \ldots , u_3$ are linearly independent. This 
implies that $\Cok \kappa \simeq \text{T}_{\p^{3 \vee}}(-2)$. $\mu$ induces a 
morphism $\overline{\mu} \colon \text{T}_{\p^{3 \vee}}(-2) \ra \tH^1(E^\vee) 
\otimes \sco_{\p^{3 \vee}}$.
Since one has  $S_1\tH^1(E^\vee(-1)) = \tH^1(E^\vee)$, the map
$\tH^0(\mu^\vee) \colon \tH^1(E^\vee)^\vee \ra \tH^1(E^\vee(-1))^\vee \otimes
\tH^0(\sco_{\p^{3 \vee}}(1))$
is injective. It follows that the map 
$\tH^0({\overline{\mu}}^\vee) \colon \tH^1(E^\vee)^\vee \ra 
\tH^0(\Omega_{\p^{3 \vee}}(2))$
is injective, too. Its image is a 4-dimensional 
subspace $W$ of $\tH^0(\Omega_{\p^{3 \vee}}(2)) \simeq \bigwedge^2V$. Using
Lemma~\ref{L:wsubwedge2v} one sees, now, that, choosing convenient $k$-bases
of $\tH^1(E^\vee(-1))$ and $\tH^1(E^\vee)$, $\mu$ is represented by some
concrete $4 \times 4$ matrix $\mathcal{M}$ with entries in $V$.

We make, at this point, the following observation$\, :$ the same matrix
$\mathcal{M}$ defines the differential $\delta$ 
of the above Beilinson monad of $E^\vee$ (see Remark~\ref{R:mu}).
The component 
$\gamma_1 \colon \tH^1(E^\vee(-2)) \otimes \Omega_\piii^2(2) \ra 
\tH^1(E^\vee(-1)) \otimes \Omega_\piii^1(1)$ of the differential $\gamma$ of the 
monad is defined by a $4 \times 1$ matrix $(w_0 , \ldots , w_3)^{\text{t}}$ with 
entries in $V$. Since $E$ has no unstable plane, the multiplication map 
$\tH^1(E^\vee(-2)) \otimes S_1 \ra \tH^1(E^\vee(-1))$ is an isomorphism. It 
follows that $w_0 , \ldots , w_3$ \emph{are linearly independent}. Moreover, 
the fact that $\delta \circ \gamma_1 = 0$ is equivalent to the following 
relation (for matrices with entries in the exterior algebra $\bigwedge 
V$)$\, :$ 
\begin{equation}\label{E:mwedgew}
\mathcal{M} \wedge (w_0\, ,\, w_1\, ,\, w_2\, ,\, w_3)^{\text{t}} 
= 0\, . 
\end{equation}

The argument splits, now, according to Lemma~\ref{L:wsubwedge2v} into three
cases. 

\vskip2mm 

\noindent 
{\bf Case 1.}\quad $W$ \emph{is as in Lemma}~\ref{L:wsubwedge2v}(i). 

\vskip2mm 

\noindent 
In this case, choosing convenient bases of $\tH^1(E^\vee(-1))$ and 
$\tH^1(E^\vee)$, $\mu$ is defined by the transpose of the matrix
\[
\begin{pmatrix} 
-v_1 & 0 & 0 & -v_3\\ 
v_0 & -v_2 & 0 & 0\\
0 & v_1 & -v_3 & 0\\
0 & 0 & v_2 & v_0
\end{pmatrix}\, , 
\text{ i.e., by the matrix }
\mathcal{M} := 
\begin{pmatrix} 
-v_1 & v_0 & 0 & 0\\
0 & -v_2 & v_1 & 0\\
0 & 0 & -v_3 & v_2\\
-v_3 & 0 & 0 & v_0
\end{pmatrix}\, . 
\]
Recall that the same matrix defines the differential
$\delta \colon \tH^1(E^\vee(-1)) \otimes \Omega_\piii^1(1) \ra \tH^1(E^\vee)
\otimes \sco_\piii$  
of the Beilinson monad of $E^\vee$.  

It is an elementary fact that if $u_1 , \ldots , u_p$ 
are linearly independent vectors and if $u_1^\prime , \ldots , u_p^\prime$ are 
some other vectors satisfying $\sum_{i = 1}^pu_i \wedge u_i^\prime = 0$ then 
there exists a $p \times p$ symmetric matrix $A$ such that$\, :$ 
\[
(u_1^\prime , \ldots , u_p^\prime) = (u_1 , \ldots , u_p)A\, . 
\]
In particular, $u_i^\prime \in ku_1 + \ldots + ku_p$, $i = 1 , \ldots , p$.

One sees, now, easily 
that relation \eqref{E:mwedgew} implies that
$w_i \in kv_i$, $i = 0, \ldots , 3$,
i.e., that $w_i = a_iv_i$, $i = 0 , \ldots , 3$. One deduces, from the 
same relation, that $a_i = (-1)^ia_0$, $i = 1, \, 2,\, 3$. Consequently, we 
can assume that $w_i = (-1)^iv_i$, $i = 0, \ldots , 3$. Moreover, after a 
linear change of coordinates in $\piii$, we can assume that
$v_i = (-1)^ie_i$, $i = 0 , \ldots , 3$, where $e_0 , \ldots , e_3$ is the
canonical basis of $V = k^4$. 

Now, the Beilinson monad of $E^\vee$ shows that one has an exact sequence$\, :$ 
\[
0 \lra 2\sco_\piii(-1) \lra K \lra E^\vee \lra 0\, ,
\]
where $K$ is the cohomology sheaf of the monad$\, :$ 
\[
0 \lra \Omega_\piii^2(2) \overset{\gamma_1}{\lra} 4\Omega_\piii^1(1) 
\overset{\delta}{\lra} 4\sco_\piii \lra 0\, , 
\]
with $\delta$ and $\gamma_1$ defined by the matrices$\, :$ 
\[
\delta = 
\begin{pmatrix} 
e_1 & e_0 & 0 & 0\\
0 & -e_2 & -e_1 & 0\\
0 & 0 & e_3 & e_2\\
e_3 & 0 & 0 & e_0
\end{pmatrix}\, ,\  
\gamma_1 = 
\begin{pmatrix} e_0\\ e_1\\ e_2\\ e_3 \end{pmatrix}\, . 
\]

We assert that $K$ is isomorphic to the kernel of the epimorphism $\pi \colon 
6\sco_\piii \ra \sco_\piii(2)$ defined by 
$(X_2^2\, ,\, X_3^2\, ,\, -X_0X_2\, ,\, -X_1X_3\, ,\, X_0^2\, ,\, X_1^2)$. 
\emph{Indeed}, let $K^\prime$ be the kernel of $\pi$. The only non-zero 
cohomology groups $\tH^p(K^\prime(l))$ in the range $-3 \leq l \leq 0$ are 
$\tH^1(K^\prime(-2)) \simeq S_0$, $\tH^1(K^\prime(-1)) \simeq S_1$ and 
$\tH^1(K^\prime) \simeq S_2/I_2$, where $I_2$ is the subspace of $S_2$ generated 
by the monomials defining $\pi$. Choosing the canonical bases of $S_0$ and 
$S_1$ and the basis of $S_2/I_2$ consisting of the classes of the monomials 
$X_0X_1$, $-X_1X_2$, $X_2X_3$, $X_0X_3$ one sees that the Beilinson monad of 
$K^\prime$ is precisely the above monad (the linear part 
$\tH^1(K^\prime(-l)) \otimes \Omega_\piii^l(l) \ra \tH^1(K^\prime(-l+1)) \otimes 
\Omega_\piii^{l-1}(l-1)$ 
of a differential of the Beilinson monad is defined 
by $\sum_{i = 0}^3X_i \otimes e_i$). It follows that $K^\prime \simeq K$. 

Consequently, $E^\vee$ is the cohomology sheaf of a monad of the form$\, :$ 
\[
0 \lra 2\sco_\piii(-1) \overset{\rho}{\lra} 6\sco_\piii \overset{\pi}{\lra} 
\sco_\piii(2) \lra 0\, , 
\]
with $\pi$ the morphism considered above. $\tH^0(\pi(1)) \colon 
\tH^0(6\sco_\piii(1)) \ra \tH^0(\sco_\piii(3))$ is obviously surjective hence its 
kernel has dimension 4. It is, therefore, generated by the elements$\, :$ 
\begin{gather*} 
(X_0\, ,\, 0\, ,\, X_2\, ,\, 0\, ,\, 0\, ,\, 0)^{\text{t}}\, ,\ 
(0\, ,\, X_1\, ,\, 0\, ,\, X_3\, ,\, 0\, ,\, 0)^{\text{t}}\, ,\\
(0\, ,\, 0\, ,\, X_0\, ,\, 0\, ,\, X_2\, ,\, 0)^{\text{t}}\, ,\  
(0\, ,\, 0\, ,\, 0\, ,\, X_1\, ,\, 0\, ,\, X_3)^{\text{t}}\, . 
\end{gather*}
One deduces that $\rho$ must be defined by the transpose of a matrix of the 
form$\, :$ 
\[
\begin{pmatrix} 
a_0X_0 & a_1X_1 & a_0X_2 + a_2X_0 & a_1X_3 + a_3X_1 & a_2X_2 & a_3X_3\\
b_0X_0 & b_1X_1 & b_0X_2 + b_2X_0 & b_1X_3 + b_3X_1 & b_2X_2 & b_3X_3
\end{pmatrix}\, . 
\]
Since $\rho^\vee$ is surjective at the point $[1 : 0 : 0 : 0]$, one has 
$\begin{vmatrix} a_0 & a_2\\ b_0 & b_2 \end{vmatrix} \neq 0$. Permuting, if 
necessary, the rows of the above matrix, one can assume that $a_0 \neq 0$. 
Substracting from the second row the first row multiplied by $b_0a_0^{-1}$, 
one gets the matrix$\, :$ 
\[
\begin{pmatrix} 
a_0X_0 & a_1X_1 & a_0X_2 + a_2X_0 & a_1X_3 + a_3X_1 & a_2X_2 & a_3X_3\\ 
0 & b_1^\prime X_1 & b_2^\prime X_0 & b_1^\prime X_3 + b_3^\prime X_1 & b_2^\prime X_2 
& b_3^\prime X_3
\end{pmatrix}\, , 
\]
where $b_i^\prime = a_0^{-1}\begin{vmatrix} a_0 & a_i\\ b_0 & b_i \end{vmatrix}$, 
$i = 1,\, 2,\, 3$. Notice that $b_2^\prime \neq 0$. Substracting from the 
first row of the new matrix the second row multiplied by $a_2b_2^{\prime -1}$, 
one gets the matrix$\, :$ 
\[
\begin{pmatrix} 
a_0X_0 & a_1^\prime X_1 & a_0X_2 & a_1^\prime X_3 + a_3^\prime X_1 & 0 & 
a_3^\prime X_3\\ 
0 & b_1^\prime X_1 & b_2^\prime X_0 & b_1^\prime X_3 + b_3^\prime X_1 & b_2^\prime X_2 
& b_3^\prime X_3 
\end{pmatrix}\, , 
\]
where $a_i^\prime = b_2^{\prime -1}\begin{vmatrix} a_i & a_2\\ b_i^\prime & 
b_2^\prime \end{vmatrix}$. Finally, multiplying the first (resp., second) row 
of the last matrix by $a_0^{-1}$ (resp., $b_2^{\prime -1}$), one gets the 
matrix$\, :$ 
\[
\begin{pmatrix} 
X_0 & a_1^\secund X_1 & X_2 & a_1^\secund X_3 + a_3^\secund X_1 & 0 & 
a_3^\secund X_3\\ 
0 & b_1^\secund X_1 & X_0 & b_1^\secund X_3 + b_3^\secund X_1 & X_2 
& b_3^\secund X_3 
\end{pmatrix}\, . 
\]
Since $\rho^\vee$ is surjective at the point $[0 : 1 : 0 : 0]$, it follows that 
$\begin{vmatrix} a_1^\secund & a_3^\secund \\ b_1^\secund & b_3^\secund 
\end{vmatrix} \neq 0$. Conversely, if this determinant is non-zero then$\, :$ 
\[
X_0^2\, ,\  X_2^2\, ,\  {\textstyle \begin{vmatrix} a_1^\secund & a_3^\secund \\ 
b_1^\secund & b_3^\secund \end{vmatrix}}X_1^2\, ,\ 
{\textstyle \begin{vmatrix} a_1^\secund & a_3^\secund \\ 
b_1^\secund & b_3^\secund \end{vmatrix}}X_3^2
\]
are among the $2 \times 2$ minors of the above matrix hence this matrix 
defines an epimorphism $6\sco_\piii \ra 2\sco_\piii(1)$. 

\vskip2mm 

\noindent 
{\bf Case 2.}\quad $W$ \emph{is as in Lemma}~\ref{L:wsubwedge2v}(ii). 

\vskip2mm 

\noindent
In this case, $\mu$ is defined by the matrix$\, :$ 
\[
\mathcal{M} := 
\begin{pmatrix} 
-v_1 & v_0 & 0 & 0\\
0 & -v_2 & v_1 & 0\\
0 & 0 & -v_3 & v_2\\
-v_2 & -v_3 & v_0 & v_1
\end{pmatrix}\, . 
\]
Recall relation \eqref{E:mwedgew}. One deduces, from this relation, using the
elementary fact recalled at the beginning of Case 1,
that $w_0 \in kv_0 + kv_1$, $w_1 \in (kv_1 + kv_0) \cap (kv_2 + kv_1) = kv_1$,
$w_2 \in kv_2$, and $w_3 \in kv_2 + kv_3$.
Moreover, the relation $-v_2 \wedge w_1 + v_1 \wedge w_2 = 0$
implies that $w_1 = -av_1$ and $w_2 = av_2$, for some $a \in k$ and 
the relation $-v_3 \wedge w_2 + v_2 \wedge w_3 = 0$ implies that
$w_3 = -av_3 + bv_2$,
for some $b \in k$. The coefficient of $v_1 \wedge v_3$ in the left 
hand side of the relation$\, :$ 
\[
-v_2 \wedge w_0 - v_3 \wedge w_1 + v_0 \wedge w_2 + v_1 \wedge w_3 = 0 
\]
is $-2a$ hence $a = 0$ and this \emph{contradicts} the fact that $w_0 , 
\ldots , w_3$ are linearly independent. Consequently, this case 
\emph{cannot occur}. 

\vskip2mm 

\noindent 
{\bf Case 3.}\quad $W$ \emph{is as in Lemma}~\ref{L:wsubwedge2v}(iii). 

\vskip2mm 

\noindent
In this case, $\mu$ is defined by the matrix$\, :$
\[
\mathcal{M} := 
\begin{pmatrix} 
-v_1 & v_0 & 0 & 0\\ 
0 & -v_2 & v_1 & 0\\
0 & 0 & -v_3 & v_2\\
-v_2 & 0 & v_0 & 0
\end{pmatrix}\, .  
\]
Let $h$ be a non-zero element of $V^\vee$ vanishing in $v_0, \, v_1 ,\, v_2$ 
and let $H \subset \piii$ be the plane of equation $h = 0$.  
The matrix of the multiplication by
$h \colon \tH^1(E^\vee(-1)) \ra \tH^1(E^\vee)$
is obtained by applying $h$ to the entries of $\mathcal{M}$. 
This matrix has rank 1 hence $\h^0(E_H^\vee) = 3$. But this \emph{contradicts} 
Lemma~\ref{L:h0fleq2}. Consequently, this case \emph{cannot occur}. 
\end{proof} 

\begin{lemma}\label{L:betac32} 
Let $E$ be a stable rank $3$ vector bundle on $\piii$ with $c_1 = 0$,
$c_2 = 3$, $c_3 = 2$. Then the differential
$\beta \colon 6\sco_\piii \oplus \sco_\piii(-1) \ra 2\sco_\piii(1)$
of the Horrocks monad of 
$E$ from Lemma~\emph{\ref{L:c32monad}(a)} is defined, up to automorphisms of 
$6\sco_\piii \oplus \sco_\piii(-1)$ and $2\sco_\piii(1)$, by a matrix of the 
form$\, :$ 
\[
\begin{pmatrix} 
h_0 & h_1 & h_2^\prime & h_3^\prime & h_4^\prime & h_5^\prime & q\\ 
0 & 0 & h_0 & h_1 & h_2 & h_3 & 0 
\end{pmatrix}\, , 
\]
with $h_0, \ldots , h_3$ a $k$-basis of $S_1$, and such that
$h_2^\prime , \ldots , h_5^\prime$ belong to $kh_2 + kh_3$ and
$q \in kh_2^2 + kh_2h_3 + kh_3^2$.   
\end{lemma}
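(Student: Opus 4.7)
The strategy parallels the proof of Lemma~\ref{L:beta}: produce a preferred basis for the middle term of the monad, then reduce the matrix of $\beta$ by source and target automorphisms. First I establish the analogue of Lemma~\ref{L:s1xi}, namely that there exists $\xi_0 \in \tH^1(E(-1))$ with $S_1\xi_0 = \tH^1(E)$. Consider
\[
Z := \{([h],[\xi]) \in \p^{3\vee} \times \p(\tH^1(E(-1))) \mid h\xi = 0 \text{ in } \tH^1(E)\}\, .
\]
By Lemma~\ref{L:c32h0eh0}, for a general plane $H \subset \piii$ one has $\tH^0(E_H) = 0$, so multiplication by a general linear form is an isomorphism between the two-dimensional spaces $\tH^1(E(-1))$ and $\tH^1(E)$; the first projection $Z \to \p^{3\vee}$ therefore has empty generic fibre, its image lies in the quadric $\{\det \mathcal{M} = 0\}$ (over which the fibres are $0$-dimensional except over the rank-$0$ locus, which is itself $0$-dimensional), and one finds $\dim Z \leq 2$. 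Projecting to $\p^1 = \p(\tH^1(E(-1)))$, the generic fibre has dimension $\leq 1$, whence for a generic $[\xi_0]$ one has $\dim\{h \in S_1 \mid h\xi_0 = 0\} \leq 2$ and hence $S_1\xi_0 = \tH^1(E)$.

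Next, I use $\xi_0$ to fix compatible decompositions. Via the connecting homomorphism of $0 \to K(-1) \to 6\sco_\piii(-1) \oplus \sco_\piii(-2) \to 2\sco_\piii \to 0$ (with $K := \Ker \beta$) I identify $\tH^1(E(-1))$ with $\tH^0(2\sco_\piii) = 2k$. Choose a basis $\{e_1, e_2\}$ of $2k$ with $e_1 = \xi_0$, inducing a decomposition $2\sco_\piii(1) = \sco_\piii(1) \oplus \sco_\piii(1)$; let $\pi_2$ denote projection onto the second factor and set $\beta_2 := \pi_2 \circ \beta$. Under the identification $\tH^1(E) \cong 2S_1/\mathrm{Im}\,\tH^0(\beta)$, the equality $S_1\xi_0 = \tH^1(E)$ becomes $(S_1 \oplus 0) + \mathrm{Im}\,\tH^0(\beta) = 2S_1$, which is equivalent to $\tH^0(\beta_2)\colon 6k \to S_1$ being surjective. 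Pick $6\sco_\piii = 2\sco_\piii \oplus 4\sco_\piii$ with $\Ker \tH^0(\beta_2) = \tH^0(2\sco_\piii)$ and choose the basis of $4\sco_\piii$ so that its image under $\tH^0(\beta_2)$ is a basis $h_0, h_1, h_2, h_3$ of $S_1$. Then $\beta$ is represented by the matrix
\[
\begin{pmatrix} h_0' & h_1' & h_2' & h_3' & h_4' & h_5' & q\\ 0 & 0 & h_0 & h_1 & h_2 & h_3 & q' \end{pmatrix}.
\]

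The remaining normalisation is routine: (i) writing $q' = \sum_{i=0}^{3} f_i h_i$ and subtracting $f_i$ times column $i{+}3$ from column $7$ kills $q'$ and modifies $q$; (ii) the injectivity of $\tH^0(\beta)$ (from $\tH^0(E) = 0$) forces $h_0', h_1'$ to be linearly independent in $S_1$, so I may choose the basis of $S_1$ so that $h_0 = h_0'$ and $h_1 = h_1'$, the compatible change of basis on $4\sco_\piii$ merely relabelling $h_2',\ldots,h_5'$; (iii) scalar column operations $\mathrm{col}_i \mapsto \mathrm{col}_i + a_i\mathrm{col}_1 + b_i\mathrm{col}_2$ with $a_i, b_i \in k$ leave the second row unchanged and allow me to place $h_2',\ldots,h_5'$ in $kh_2 + kh_3$; (iv) the operation $\mathrm{col}_7 \mapsto \mathrm{col}_7 + f_1\mathrm{col}_1 + f_2\mathrm{col}_2$ with $f_1, f_2 \in S_1$ exploits the direct-sum decomposition $S_2 = (h_0 S_1 + h_1 S_1) \oplus (kh_2^2 + kh_2 h_3 + kh_3^2)$ to normalise $q$ into the second summand. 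The only genuine content is the existence of $\xi_0$ at the outset; the rest is essentially bookkeeping, as in the proof of Lemma~\ref{L:beta}.
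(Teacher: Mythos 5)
Your reduction steps (i)--(iv) coincide with the paper's bookkeeping and are fine, as is the observation that everything reduces to producing $\xi_0 \in \tH^1(E(-1))$ with $S_1\xi_0 = \tH^1(E)$ (equivalently, a projection $\pi \colon 2\sco_\piii(1) \ra \sco_\piii(1)$ with $\tH^0(\pi \circ \beta)$ surjective onto $S_1$). The gap is in your proof that $\xi_0$ exists. The bound $\dim Z \leq 2$ rests entirely on the parenthetical assertion that the rank-$0$ locus of $\mu$ is $0$-dimensional, which you do not justify. Here $d = 2$, so the rank-$0$ locus is the common zero locus of the four entries of the $2 \times 2$ matrix $\mathcal{M}$, i.e.\ a \emph{linear} subspace of $\p^{3\vee}$ of dimension $3 - \dim_k U$, where $U \subseteq V$ is the span of those entries. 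Nothing you cite excludes $\dim_k U = 1$, i.e.\ $\mathcal{M} = v \otimes M$ with $v \in V$ nonzero and $M$ an invertible scalar matrix: this is compatible with Lemma~\ref{L:c32h0eh0} (multiplication by $h$ is then an isomorphism for every $[h]$ off the plane $h(v) = 0$) and with $S_1\tH^1(E(-1)) = \tH^1(E)$. In that case the rank-$0$ locus is a plane, $\dim Z = 3$, every fibre of $q$ is $2$-dimensional, and no $\xi_0$ exists --- so this is not a harmless omission but the entire content of the step. Note that in the parallel $c_3 = 0$ situation, bounding the corank-$2$ locus of $\mu$ costs the paper a separate lemma (Lemma~\ref{L:c30h0eh2}) with a full case analysis; you would need its $c_3 = 2$ analogue.

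The scenario $\mathcal{M} = v \otimes M$ can be excluded, but the argument that does it is essentially the paper's own proof of the lemma, which bypasses $\mu$ altogether: if multiplication by $h$ were zero for every $h$ in the $3$-dimensional space $v^\perp \subset S_1$, then $v^\perp \oplus v^\perp \subseteq \mathrm{Im}\, \tH^0(\beta)$, and since $\tH^0(\beta)$ is injective this inclusion is an equality; hence $\Cok \beta_1 \simeq 2\sco_{\{x_0\}}$ (with $x_0 = [v]$), a skyscraper needing two generators. But the surjectivity of $\beta$ forces the rank-one component $\beta_2 \colon \sco_\piii(-1) \ra 2\sco_\piii(1)$ to surject onto $\Cok \beta_1$, so $\Cok \beta_1$ must be cyclic --- a contradiction. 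The paper runs exactly this analysis directly: $\Cok \beta_1 \simeq \sco_Z(-1)$ with $\dim Z \leq 0$, whence a general $\pi$ makes $\pi \circ \beta_1$ an epimorphism, and the normal form follows. You should either prove that the entries of $\mathcal{M}$ span a subspace of $V$ of dimension at least $2$, or, more economically, replace the incidence-variety step by this direct analysis of $\Cok \beta_1$.
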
 

\begin{proof} 
Let $\beta_1 \colon 6\sco_\piii \ra 2\sco_\piii(1)$ be the restriction of 
$\beta$. The other component $\beta_2 \colon \sco_\piii(-1) \ra 2\sco_\piii(1)$ 
of $\beta$ induces an epimorphism $\sco_\piii(-1) \ra \Cok \beta_1$ hence 
$\Cok \beta_1 \simeq \sco_Z(-1)$, for some closed subscheme $Z$ of $\piii$. 
Since one has an epimorphism $2\sco_\piii(1) \ra \sco_Z(-1)$, it follows that 
$\dim Z \leq 0$. 

\vskip2mm 

\noindent
{\bf Claim.}\quad \emph{For a general surjection}
$\pi \colon 2\sco_\piii(1) \ra \sco_\piii(1)$,
$\pi \circ \beta_1 \colon 6\sco_\piii \ra \sco_\piii(1)$ 
\emph{is an epimorphism}. 

\vskip2mm 

\noindent 
\emph{Indeed}, since $Z$ has dimension $\leq 0$ one has $\Cok \beta_1 \simeq 
\sco_Z(1)$. The epimorphism $2\sco_\piii(1) \ra \sco_Z(1)$ is defined by two 
global sections $f_0,\, f_1$ of $\sco_Z$, vanishing simultaneously at no point 
of $Z$. It follows that a general linear combination $a_0f_0 + a_1f_1$ 
vanishes at no point of $Z$. If $\pi \colon 2\sco_\piii(1) \ra \sco_\piii(1)$ is 
defined by $(-a_1 , a_0)$ then $\pi \circ \beta_1$ is an epimorphism. 

\vskip2mm 

It follows from the claim that, up to an automorphism of $2\sco_\piii(1)$, 
one can assume that $\text{pr}_2 \circ \beta_1$ is an epimorphism, where 
$\text{pr}_2 \colon 2\sco_\piii(1) \ra \sco_\piii(1)$ is the projection on the 
second factor. Now, up to an automorphism of $6\sco_\piii \oplus 
\sco_\piii(-1)$, one can assume that the matrix of $\beta$ has the form$\, :$ 
\[
\begin{pmatrix} 
h_0^\prime & h_1^\prime & h_2^\prime & h_3^\prime & h_4^\prime & h_5^\prime & q\\ 
0 & 0 & h_0 & h_1 & h_2 & h_3 & 0 
\end{pmatrix}\, , 
\]
where $h_0 , \ldots , h_3$ is an arbitrary basis of $S_1$. Since $\tH^0(E) = 
0$, $\tH^0(\beta)$ is injective hence $h_0^\prime$ and $h_1^\prime$ are linearly 
independent. Writting $6\sco_\piii$ as $2\sco_\piii \oplus 4\sco_\piii$, up to 
an automorphism of $4\sco_\piii$ one can assume that $h_i = h_i^\prime$, $i = 
0,\, 1$. Finally, substracting from each of the columns 3--7 convenient 
combinations of the first two columns, one can assume that
$h_i^\prime \in kh_2 + kh_3$, $i = 2, \ldots, 5$, and
$q \in kh_2^2 + kh_2h_3 + kh_3^2$. 
\end{proof}

\begin{lemma}\label{L:l0c32}  
Under the hypothesis and with the notation from Lemma~\emph{\ref{L:betac32}}, 
let $L_0 \subset \piii$ be the line of equations 
$h_0 = h_1 = 0$. Then ${\fam0 H}^1(E(1)) \izo {\fam0 H}^1(E_{L_0}(1))$. 
Moreover, ${\fam0 h}^1(E_{L_0}(1)) \leq 1$ and ${\fam0 h}^1(E_{L_0}(1)) = 1$ if 
and only if $h_2^\prime = h_3^\prime = 0$. 
\end{lemma}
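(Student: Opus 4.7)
The plan is to follow the two-step structure of Lemma~\ref{L:l0}. First, I will apply Lemma~\ref{L:h1erah1el} to get the isomorphism $\tH^1(E(1)) \izo \tH^1(E_{L_0}(1))$; its hypotheses hold because $\tH^2(E(-1)) = 0$ by the spectrum $(-1, 0, 0)$ of $E$, and $\tH^3(E(-2)) = \tH^0(E^\vee(-2))^\vee = 0$ by the stability of $E^\vee$. It then suffices to verify that multiplication by $h_0$ and $h_1$ annihilates $\tH^1(E)$ in $\tH^1(E(1))$. Since $\tH^1(B(l)) = 0$ for $B := 6\sco_\piii \oplus \sco_\piii(-1)$ and all $l$, I identify $\tH^1(E)$ with $\Cok \tH^0(\beta)$ and $\tH^1(E(1))$ with $\Cok \tH^0(\beta(1))$. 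Reading off the six relations coming from the columns of $\beta$ on global sections, one gets $[(h_0, 0)] = [(h_1, 0)] = 0$ in $\tH^1(E)$ and $[(0, h_j)] \in k[(h_2, 0)] + k[(h_3, 0)]$ for $j = 0, 1, 2, 3$ (using $h_i^\prime \in kh_2 + kh_3$), so $\tH^1(E)$ is spanned by $[(h_2, 0)]$ and $[(h_3, 0)]$. Multiplication by $h_i$ ($i = 0, 1$) sends each of these to $[(h_i h_j, 0)]$ for $j = 2, 3$, which is visibly the image under $\tH^0(\beta(1))$ of $h_j$ placed in the $(i+1)$-th slot, hence zero.

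For the second step, I restrict the Horrocks monad to $L_0$. Because $h_0 \vb L_0 = h_1 \vb L_0 = 0$, the first two columns of $\beta_{L_0}$ vanish, so $\ker \beta_{L_0} = 2\sco_{L_0} \oplus N$, where $N$ is the kernel of the remaining 5-column map $4\sco_{L_0} \oplus \sco_{L_0}(-1) \ra 2\sco_{L_0}(1)$. Taking cohomology of the restricted monad yields $\tH^1(E_{L_0}(1)) \simeq \tH^1(N(1))$, since $\tH^1(2\sco_{L_0}(1)) = 0$. The bundle $N$ on $L_0 \simeq \pj$ has rank $3$, degree $-3$, and $\chi(N) = 0$. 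Writing $h_i^\prime = a_i h_2 + b_i h_3$ for $i = 2, 3, 4, 5$, the induced map $\tH^0(4\sco_{L_0}) \ra \tH^0(2\sco_{L_0}(1))$ is a $4 \times 4$ matrix whose rank equals $2 + \rk\left(\begin{smallmatrix} a_2 & a_3 \\ b_2 & b_3 \end{smallmatrix}\right)$, because the columns for $h_4^\prime, h_5^\prime$ contribute two pivots from their $h_2, h_3$ entries on the second row.

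I finish with a three-way case analysis on this $2 \times 2$ rank. When it equals $2$, $\h^0(N) = 0$ gives $\h^1(N(1)) \leq \h^1(N) = 0$. When it equals $1$, $\h^0(N) = \h^1(N) = 1$ forces $N \simeq \sco_{L_0}(-2) \oplus \sco_{L_0}(-1) \oplus \sco_{L_0}$ (the unique rank-$3$ splitting on $\pj$ with degree $-3$ and $(\h^0, \h^1) = (1, 1)$), whence $\h^1(N(1)) = 0$. When it equals $0$, i.e., $h_2^\prime = h_3^\prime = 0$, the first two columns of the 5-column matrix also vanish, so $N = 2\sco_{L_0} \oplus \sco_{L_0}(-3)$ and $\h^1(N(1)) = \h^1(\sco_{L_0}(-2)) = 1$. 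The only step requiring any real thought will be ruling out other splittings in the middle case; the rest is routine bookkeeping with Euler characteristics and column operations.
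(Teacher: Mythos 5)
Your proof is correct and follows essentially the same route as the paper: identify $\tH^1(E(l))$ with $\Cok\tH^0(\beta(l))$, check that $h_0$ and $h_1$ kill $\tH^1(E)$ so that Lemma~\ref{L:h1erah1el} gives the isomorphism, then restrict the monad to $L_0$ and reduce to computing $\h^1(N(1))$ for the same rank-$3$, degree-$-3$ kernel bundle $N$. Your endgame (explicit rank of the $4\times 4$ matrix on global sections and the splitting types of $N$) is a slightly more computational rendering of the paper's argument via $\h^0(N)\le 2$ and $\chi(N)=0$, but it is the same analysis.
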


\begin{proof}
One has $\tH^1(E(l)) \simeq \Cok \tH^0(\beta(l))$, $l = 0,\, 1$. Using the 
relations$\, :$ 
\[
\begin{pmatrix}0\\ h_ih_j \end{pmatrix} = 
h_i\begin{pmatrix} h_{j+2}^\prime\\ h_j \end{pmatrix} - 
h_{j+2}^\prime\begin{pmatrix} h_i\\ 0 \end{pmatrix}\, ,\ i = 0,\, 1\, ,\  
j = 0, \ldots , 3\, , 
\]
one sees that the multiplication maps $h_i \colon \tH^1(E) \ra \tH^1(E(1))$, 
$i = 0,\, 1$, are both the zero map. Using Lemma~\ref{L:h1erah1el}, one 
deduces that $\tH^1(E(1)) \izo \tH^1(E_{L_0}(1))$. 

Now, one has an exact sequence $0 \ra \sco_{L_0}(-1) \oplus \sco_{L_0}(-2) \ra 
2\sco_{L_0} \oplus N \ra E_{L_0} \ra 0$, where $N$ is the kernel of the 
epimorphism $\phi \colon 4\sco_{L_0} \oplus \sco_{L_0}(-1) \ra 2\sco_{L_0}(1)$ 
defined by the matrix$\, :$ 
\[
\begin{pmatrix} 
h_2^\prime & h_3^\prime & h_4^\prime & h_5^\prime & q\\ 
0 & 0 & h_2 & h_3 & 0 
\end{pmatrix}\, , 
\] 
It follows that $\tH^1(N(1)) \izo \tH^1(E_{L_0}(1))$. One has $\h^0(N) \leq 2$ 
because there is no epimorphism $\sco_{L_0} \oplus \sco_{L_0}(-1) \ra 
2\sco_{L_0}(1)$. Since $\h^0(N) - \h^1(N) = 0$, one deduces that
$\h^1(N) \leq 2$ hence $\h^1(N(1)) \leq 1$ (since $N$ is a vector bundle on
$L_0 \simeq \pj$). Moreover, if $\h^1(N(1)) = 1$ then $\h^1(N) = 2$ hence
$\h^0(N) = 2$ and this happens if and only if $h_2^\prime = h_3^\prime = 0$. 
\end{proof}

\begin{proposition}\label{P:c32moduli} 
The moduli space ${\fam0 M}(2)$ of stable rank $3$ vector bundles $E$ on 
$\piii$ with $c_1 = 0$, $c_2 = 3$, $c_3 = 2$ is nonsingular and irreducible, 
of dimension $28$. 
\end{proposition}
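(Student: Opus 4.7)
The plan is to follow the template of Propositions~\ref{P:c30000moduli} and~\ref{P:c30moduli}. First, I will establish that $\tH^2(E^\vee \otimes E) = 0$ for every $[E] \in \text{M}(2)$, which by Remark~\ref{R:moduli} will force $\text{M}(2)$ to be nonsingular of pure local dimension $28$ at every point. Starting from the Horrocks monad of Lemma~\ref{L:c32monad}(a), put $Q := \Cok \alpha$. Dualizing yields exact sequences $0 \to 2\sco_\piii(-1) \to Q^\vee \to E^\vee \to 0$ and $0 \to Q^\vee \to 6\sco_\piii \oplus \sco_\piii(1) \xra{\alpha^\vee} \sco_\piii(1) \oplus \sco_\piii(2) \to 0$. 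Tensoring both with $E$ and invoking the vanishings $\tH^2(E(l)) = 0$ for $l \geq -1$ (from the spectrum $(-1,0,0)$) together with $\tH^3(E(-1)) = 0$ (by stability of $E^\vee$ and Serre duality), the vanishing of $\tH^2(E^\vee \otimes E)$ reduces to the surjectivity of
\[
\tH^1(\alpha^\vee \otimes \text{id}_E) \colon 6\tH^1(E) \oplus \tH^1(E(1)) \lra \tH^1(E(1)) \oplus \tH^1(E(2)).
\]
This is checked by restriction to the line $L_0$ of Lemma~\ref{L:l0c32}: that lemma gives $\tH^1(E(1)) \Izo \tH^1(E_{L_0}(1))$, Lemma~\ref{L:h1erah1el} (applied both to $E$ and to $E(1)$) provides the necessary surjections on the remaining restriction maps, and $\alpha_{L_0}^\vee$ is an epimorphism of bundles on $L_0 \simeq \pj$, so the induced map on $\tH^1$ of the $E_{L_0}$-twists is automatically surjective.

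For irreducibility, let $\text{M}^\circ(2) \subset \text{M}(2)$ denote the open subscheme of bundles with no unstable plane. By Lemma~\ref{L:c32monad}(b), each such $E$ is the cohomology sheaf of a monad of the form $0 \to \sco_\piii(-2) \xra{\alpha} 6\sco_\piii \xra{\beta} 2\sco_\piii(1) \to 0$. Let $\mathcal{N}$ be the reduced variety of such monads for which, moreover, $\tH^0(\beta(2))$ is surjective (an open condition, satisfied on the normal form obtained by adapting Lemma~\ref{L:betac32}). The space of epimorphisms $\beta$ is an open subset of $\text{Hom}_{\sco_\piii}(6\sco_\piii, 2\sco_\piii(1))$, of dimension $48$, and over each such $\beta$ the kernel $\{\alpha : \beta \circ \alpha = 0\}$ has dimension $\h^0(6\sco_\piii(2)) - \h^0(2\sco_\piii(3)) = 60 - 40 = 20$. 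Hence $\mathcal{N}$ is irreducible of dimension $68$. The group $\Gamma := \text{GL}(1) \times \text{GL}(6) \times \text{GL}(2)$ of dimension $41$ acts on $\mathcal{N}$ with the fibres of the map $\mathcal{N} \to \text{M}^\circ(2)$ being precisely the orbits, and the stabilizer of any monad is the diagonal $\text{GL}(1)$ (since $\text{Hom}_{\sco_\piii}(E, E) \simeq k$). It follows that $\text{M}^\circ(2)$ is irreducible of dimension $68 - 40 = 28$.

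Finally, the closed complement $\text{M}(2) \setminus \text{M}^\circ(2)$ consists of bundles with an unstable plane, which admit a minimal Horrocks monad as in Lemma~\ref{L:c32monad}(a) with the component $\sco_\piii(-1) \to \sco_\piii(-1)$ of $\alpha$ equal to $0$. Parametrizing this locus by the corresponding monad space, using Lemma~\ref{L:betac32} to normalize $\beta$, and quotienting by the action of the evident automorphism group, a dimension count analogous to the previous one bounds the dimension of this closed subset by $27$. Since $\text{M}(2)$ is nonsingular of pure dimension $28$ and $\text{M}^\circ(2)$ is a dense open irreducible subset of that dimension, $\text{M}(2)$ is irreducible of dimension $28$. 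The main obstacle is the nonsingularity step: unlike in Prop.~\ref{P:c30000moduli}, the target of $\tH^1(\alpha^\vee \otimes \text{id}_E)$ here involves $\tH^1(E(2))$ in addition to $\tH^1(E(1))$, and comparing $\tH^1(E(2))$ with $\tH^1(E_{L_0}(2))$ requires either a sharpening of Lemma~\ref{L:l0c32} (using that $\h^1(E_{L_0}(1)) \leq 1$ constrains the splitting type of $E_{L_0}$ and forces $\h^1(E_{L_0}(2)) = 0$) or a direct verification that $\tH^0(\beta(2))$ is surjective so that $\tH^1(E(2)) = 0$.
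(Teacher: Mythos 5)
Your overall architecture is sound and the nonsingularity half follows the paper's template (the paper itself only says ``one uses the same kind of argument as in the proof of Prop.~\ref{P:c30000moduli}, using Lemma~\ref{L:c32monad}(a) and Lemma~\ref{L:l0c32}''). You correctly identify the map $6\tH^1(E)\oplus\tH^1(E(1))\ra\tH^1(E(1))\oplus\tH^1(E(2))$ whose surjectivity is needed, and you correctly flag the one new difficulty, but you leave it unresolved. It is resolvable with tools already in the paper: by Claim 1 in the proof of Lemma~\ref{L:c32monad}, $\tH^1_\ast(E)$ is generated by $\tH^1(E(-1))$, so $\tH^1(E(1))=S_1\tH^1(E)$; the proof of Lemma~\ref{L:l0c32} shows $h_i\tH^1(E)=0$ for $i=0,1$, hence $h_i\tH^1(E(1))=S_1\bigl(h_i\tH^1(E)\bigr)=0$, so by Lemma~\ref{L:h1erah1el} applied to $E(1)$ the map $\tH^1(E(2))\ra\tH^1(E_{L_0}(2))$ is an isomorphism; and $\h^1(E_{L_0}(2))=0$ because $\h^1(E_{L_0}(1))\le 1$ forces every summand of the splitting type of $E_{L_0}$ to have degree $\ge -3$. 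You should supply this argument rather than leave it as an acknowledged ``obstacle''.

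For irreducibility you take a genuinely different route: you stratify by the existence of an unstable plane rather than by the vanishing of $\tH^1(E(1))$, and you replace the paper's explicit one-parameter family of monads connecting the two strata by a dimension bound ($\le 27$) on the unstable-plane locus together with purity. The logic is valid, but the dimension bound is the serious gap: it is asserted as ``a dimension count analogous to the previous one'' and never carried out. It is not a one-line count, because over the 7-term monad shape of Lemma~\ref{L:c32monad}(a) the fibre $\{\alpha\mid\beta\circ\alpha=0\}$ has dimension $89-60+\h^1(E(1))+\h^1(E(2))$, which jumps exactly on the locus $\h^1(E(1))=1$; bounding the total space therefore forces you back into the stratification by $\h^1(E(1))$ via the normal forms of Lemma~\ref{L:betac32} and Lemma~\ref{L:l0c32} --- i.e., essentially the same analysis the paper performs, after which the paper still needs the explicit family $(\alpha_t,\beta_t)$ to show $\overline{\mathcal U}$ meets $\{\h^1(E(1))=1\}$. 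Your purity argument would indeed let you dispense with that family \emph{if} the bound $\le 27$ were proved on every stratum, but as written the claim is unsupported, and you have also not verified that the generically computed $27$ is an upper bound rather than the dimension of one stratum. Until that count is done honestly, the irreducibility of $\text{M}(2)$ is not established.
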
 

\begin{proof} 
One uses the same kind of argument as in the proof of 
Prop.~\ref{P:c30000moduli}. Firstly, using Lemma~\ref{L:c32monad}(a) and 
Lemma~\ref{L:l0c32}, one shows that $\tH^2(E^\vee \otimes E) = 0$ for every 
bundle $E$ as in the statement. It follows that $\text{M}(2)$ is nonsingular, 
of pure dimension $28$. Then the open subset $\mathcal{U}$ of $\text{M}(2)$ 
corresponding to the bundles $E$ with $\tH^1(E(1)) = 0$ is irreducible, and 
so is its complement $\text{M}(2) \setminus \mathcal{U}$. 
In order to check that $\text{M}(2)$ is irreducible, it suffices, now, to show 
that 
${\overline {\mathcal{U}}} \cap (\text{M}(2) \setminus \mathcal{U}) 
\neq \emptyset$. This can be shown using the family of monads 
$0 \ra \sco_\piii(-1) \oplus \sco_\piii(-2) \overset{\alpha_t}{\lra} 6\sco_\piii 
\oplus \sco_\piii(-1) \overset{\beta_t}{\lra} 2\sco_\piii(1) \ra 0$, with$\, :$ 
\begin{gather*}
\beta_t := 
\begin{pmatrix} 
X_0 & X_1 & tX_2 & tX_3 & 0 & X_2 & X_3^2\\
0 & 0 & X_0 & X_1 & X_2 & X_3 & 0
\end{pmatrix}\, ,\\ 
\alpha_t := 
\begin{pmatrix} 
X_2 & 0 & X_3 & -X_2 & X_1 & -X_0 & 0\\
X_3^2 + tX_1X_3 & -X_2^2 - tX_1X_2 & X_1^2 & -X_0X_1 & -X_1X_3 & X_1X_2 & -X_0 
\end{pmatrix}^{\text{t}}\, . 
\end{gather*}
\end{proof}

\section{The case $c_3 = 4$}\label{S:c34}

\begin{lemma}\label{L:c34monad}  
Let $E$ be a stable rank $3$ vector bundle on $\piii$ with $c_1 = 0$, 
$c_2 = 3$, $c_3 = 4$. Then $E$ is the cohomology sheaf of a Horrocks monad of 
the form$\, :$ 
\[
0 \lra 2\sco_\piii(-2) \overset{\alpha}{\lra} 3\sco_\piii \oplus 
3\sco_\piii(-1) \overset{\beta}{\lra} \sco_\piii(1) \lra 0\, . 
\]
\end{lemma}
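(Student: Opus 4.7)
The plan is to follow the template of Lemmas~\ref{L:c30000monads} and~\ref{L:c32monad}. First, from the spectrum $(-1,-1,0)$ of $E$ (with dual spectrum $(0,1,1)$ of $E^\vee$) and Riemann--Roch, I compute the relevant cohomology dimensions: $\h^1(E(-1))=\h^1(E)=1$, $\h^1(E(-2))=0$, $\h^2(E(-3))=5$, $\h^2(E(-2))=2$, $\h^2(E(-1))=0$, and dually $\h^1(E^\vee(-3))=0$, $\h^1(E^\vee(-2))=2$, $\h^1(E^\vee(-1))=\h^1(E^\vee)=5$, $\h^2(E^\vee(-2))=0$.

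Next I would establish two structural claims in the style of the proof of Lemma~\ref{L:c32monad}. \emph{Claim 1}: $\tH^1_\ast(E)$ is generated by the one-dimensional space $\tH^1(E(-1))$. This follows from the refined Castelnuovo--Mumford Lemma (which, using $\tH^2(E(-1))=0$ and $\tH^3(E(-2))=0$, gives generation in degrees $\leq 0$) together with the observation that a generator $\xi$ of $\tH^1(E(-1))$ satisfies $S_1\xi=\tH^1(E)$: otherwise $S_1\xi=0$ would trigger condition (v) of Remark~\ref{R:spehv}, forcing $c_3=c_2^2-c_2=6$, contrary to $c_3=4$. \emph{Claim 2}: $\tH^1_\ast(E^\vee)$ is generated by the two-dimensional $\tH^1(E^\vee(-2))$; equivalently, the multiplication map $\mu\colon S_1\otimes\tH^1(E^\vee(-2))\to\tH^1(E^\vee(-1))$ is surjective. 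This is the main obstacle. I would argue by contradiction on the rank $r$ of $\mu$. The easy case $r\leq 2$ mirrors Claim 2 of Lemma~\ref{L:c32monad}: any non-zero $\xi\in\tH^1(E^\vee(-2))$ is then annihilated by $\geq 4-r\geq 2$ independent linear forms cutting out a line $L\subset\piii$, and the Koszul long exact sequence associated to $0\to E^\vee(-2)\to 2E^\vee(-1)\to\sci_L\otimes E^\vee\to 0$ forces $\tH^0(\sci_L\otimes E^\vee)\neq 0$, contradicting $\tH^0(\sci_L\otimes E^\vee)\subseteq\tH^0(E^\vee)=0$. The finer cases $r=3$ (a single annihilating form produces an unstable plane of order $1$ for $E$, and the resulting one-parameter family of such planes must be ruled out via spectrum considerations) and $r=4$ (corresponding via Serre duality to a non-zero socle element $\zeta\in\tH^2(E(-3))$ with $S_1\zeta=0$, to be excluded by a careful analysis of the Beilinson monad of $E$ from Remark~\ref{R:beilinson}, where such a $\zeta$ would split off a summand $\sco_\piii(-1)$ of the differential $5\sco_\piii(-1)\to 2\Omega_\piii^2(2)$ incompatible with the absence of a non-zero morphism $\sco_\piii(-1)\to\Omega_\piii(1)$ matching the exactness at $\Omega_\piii(1)\oplus 2\Omega_\piii^2(2)$) require more delicate analysis.

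Combining the two claims with Barth--Hulek~\cite{bh} gives that $E$ is the cohomology sheaf of a Horrocks monad of the form $0\to 2\sco_\piii(-2)\to B\to\sco_\piii(1)\to 0$ with $B$ a direct sum of line bundles. The long exact sequences for the monad force $\rk(B)=6$, $c_1(B)=-3$, $\h^0(B)=\h^0(\sco_\piii(1))-\h^1(E)=3$, and $\h^0(B(-1))=0$ (the last from the minimality of the monad: the connecting map $\tH^0(\sco_\piii)\to\tH^1(E(-1))$ is an isomorphism). Within sums of line bundles on $\piii$ satisfying these constraints, $B\simeq 3\sco_\piii\oplus 3\sco_\piii(-1)$ is the unique option.
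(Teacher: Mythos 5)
Your overall architecture (two generation claims, then Barth--Hulek, then pinning down $B$) is the right one; your Claim~1, the rank $r\leq 2$ part of Claim~2, and the final identification of $B$ are all correct. But Claim~2 has a genuine gap exactly where you flag it: the cases $r=3$ and $r=4$ are not proved, and the sketches you offer would not work. For $r=4$ you invoke ``the absence of a non-zero morphism $\sco_\piii(-1)\to\Omega_\piii(1)$'', but $\text{Hom}(\sco_\piii(-1),\Omega_\piii^1(1))\simeq\tH^0(\Omega_\piii^1(2))\simeq\bigwedge^2V^\vee\neq 0$, and a nondegenerate $2$-form even gives a \emph{locally split} monomorphism $\sco_\piii(-1)\to\Omega_\piii^1(1)$ (this is precisely how the nullcorrelation bundle arises), so no contradiction follows from a summand of the Beilinson differential having zero component into $2\Omega_\piii^2(2)$. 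For $r=3$, ``spectrum considerations'' cannot suffice: the spectrum $(-1,-1,0)$ is perfectly compatible with $E$ having unstable planes of order $1$ (the paper's own proof explicitly allows them), and nothing in the spectrum by itself excludes a one-parameter family of such planes.

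The paper closes this by a different mechanism, which is the missing idea. It splits on whether $E$ has an unstable plane. If it has none, the multiplication $S_1\otimes\tH^1(E^\vee(-2))\to\tH^1(E^\vee(-1))$ has no zero-divisors, and Hartshorne's Bilinear Map Lemma gives an image of dimension at least $4+2-1=5$, i.e.\ surjectivity --- this disposes of $r=3,4$ at a stroke under that hypothesis. If $E$ does have an unstable plane, the paper does \emph{not} prove your Claim~2 up front: it notes (via \cite[Prop.~5.1]{ehv}) that $\tH^1_\ast(E)$ is then generated in degree $-1$, runs Barth--Hulek allowing $m\geq 0$ minimal generators of $\tH^1_\ast(E^\vee)$ in degree $-1$, obtains a monad $0\to m\sco_\piii(-1)\oplus 2\sco_\piii(-2)\to 3\sco_\piii\oplus(m+3)\sco_\piii(-1)\to\sco_\piii(1)\to 0$, and observes that minimality kills the component $m\sco_\piii(-1)\to(m+3)\sco_\piii(-1)$ of the left differential, so the component $m\sco_\piii(-1)\to 3\sco_\piii$ would have to be a locally split monomorphism --- impossible on $\piii$, since three linear forms always have a common zero. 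Hence $m=0$. You need to import one of these two mechanisms (or an equivalent) to establish Claim~2; as written, the proof is incomplete.
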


\begin{proof} 
The spectrum of $E$ must be $(-1 , -1 , 0)$ hence $\tH^1(E(l)) = 0$ for 
$l \leq -2$, $\h^1(E(-1)) = 1$, $\tH^2(E(l)) = 0$ for $l \geq -1$. Moreover, 
by Riemann-Roch, $\h^1(E) = 1$. Since $\tH^2(E(-1)) = 0$ and
$\tH^3(E(-2)) = 0$,
it follows that $\tH^1_\ast(E)$ is generated in degrees $\leq 0$ (by the 
Castelnuovo-Mumford Lemma, as formulated in \cite[Lemma~1.21]{acm1}). 

On the other hand, the spectrum of $E^\vee$ is $(0 , 1 , 1)$ hence 
$\tH^1(E^\vee(l)) = 0$ for $l \leq -3$, $\h^1(E^\vee(-2)) = 2$, $\h^1(E^\vee(-1)) 
= 5$, $\tH^2(E^\vee(l)) = 0$ for $l \geq -2$. Moreover, by Riemann-Roch, 
$\h^1(E^\vee) = 5$. Since $\tH^2(E^\vee(-2)) = 0$ and $\tH^3(E^\vee(-3)) = 0$, 
$\tH^1_\ast(E^\vee)$ is generated in degrees $\leq -1$. 

\vskip2mm 

\noindent
{\bf Case 1.}\quad $E$ \emph{has no unstable plane}. 

\vskip2mm 

\noindent 
In this case, applying the Bilinear Map Lemma \cite[Lemma~5.1]{ha} to the 
multiplication map $S_1 \otimes \tH^1(E^\vee(-2)) \ra \tH^1(E^\vee(-1))$ one 
gets that this map is surjective hence $\tH^1_\ast(E^\vee)$ is generated by 
$\tH^1(E^\vee(-2))$. It follows that $E$ is the cohomology sheaf of a (not 
necessarily minimal) Horrocks monad of the form$\, :$ 
\[
0 \lra 2\sco_\piii(-2) \lra B \lra \sco_\piii(1) \oplus \sco_\piii \lra 0\, ,
\]
where $B$ is a direct sum of line bundles. $B$ has rank 7, $c_1(B) = -3$, 
$\tH^0(B(-1)) = 0$ and $\tH^0(B^\vee(-2)) = 0$ hence $B \simeq 4\sco_\piii 
\oplus 3\sco_\piii(-1)$. Since there is no epimorphism $3\sco_\piii(-1) \ra 
\sco_\piii$, it follows that the component $4\sco_\piii \ra \sco_\piii$ of the 
differential $B \ra \sco_\piii(1) \oplus \sco_\piii$ of the monad is non-zero 
hence one can cancel a direct summand $\sco_\piii$ of $B$ and the direct 
summand $\sco_\piii$ of $\sco_\piii(1) \oplus \sco_\piii$. One thus gets a monad 
as in the statement. 

\vskip2mm 

\noindent 
{\bf Case 2.}\quad $E$ \emph{has an unstable plane}. 

\vskip2mm 

\noindent 
In this case, \cite[Prop.~5.1]{ehv} (recalled in Remark~\ref{R:spehv}) 
implies that $\tH^0(E_H) = 0$, for the general plane $H \subset \piii$. One  
deduces that if $h \in S_1$ is a general linear form then multiplication by 
$h \colon \tH^1(E(-1)) \ra \tH^1(E)$ is injective hence bijective. It follows 
that $\tH^1_\ast(E)$ is generated by $\tH^1(E(-1))$. Assume that 
$\tH^1_\ast(E^\vee)$ has $m$ minimal generators of degree $-1$, for some $m \geq 
0$. Then $E$ is the cohomology sheaf of a monad of the form$\, :$ 
\[
0 \lra m\sco_\piii(-1) \oplus 2\sco_\piii(-2) \lra B^\prime \lra \sco_\piii(1) 
\lra 0\, ,
\]
where $B^\prime$ is a direct sum of line bundles. $B^\prime$ has rank $m + 6$, 
$c_1(B^\prime) = -m - 3$, $\tH^0(B^\prime(-1)) = 0$ and 
$\tH^0(B^{\prime \vee}(-2)) = 0$. It follows that
$B^\prime \simeq 3\sco_\piii \oplus (m + 3)\sco_\piii(-1)$.
The component $m\sco_\piii(-1) \ra (m + 3)\sco_\piii(-1)$
of the left differential of the monad is zero, by the 
minimality of $m$. Since there is no locally split monomorphism
$\sco_\piii(-1) \ra 3\sco_\piii$ it follows that $m = 0$. 
\end{proof} 

\begin{lemma}\label{L:c34h0eh0} 
Let $E$ be a stable rank $3$ vector bundle on $\piii$ with $c_1 = 0$, 
$c_2 = 3$, $c_3 = 4$. Then there is a point $x \in \piii$ such that, for every 
plane $H \subset \piii$, ${\fam0 H}^0(E_H) = 0$ if $x \notin H$ and 
${\fam0 h}^0(E_H) = 1$ if $x \in H$. 
\end{lemma}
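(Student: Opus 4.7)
The plan is to apply the morphism $\mu$ from Remark~\ref{R:mu} in the present setting, where both cohomology groups involved happen to be one-dimensional. By Remark~\ref{R:beilinson}, the condition $c_3 = 4$ forces the spectrum of $E$ to be $(-1,-1,0)$; combined with the definition of the spectrum in Remark~\ref{R:spectrum} and Riemann-Roch, this gives $\h^1(E(-1)) = \h^1(E) = 1$ and $\tH^2(E(l)) = 0$ for $l \geq -1$.

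The morphism
\[
\mu \colon \tH^1(E(-1)) \otimes \sco_{\p^{3\vee}}(-1) \lra \tH^1(E) \otimes \sco_{\p^{3\vee}}
\]
is therefore a map between invertible sheaves on $\p^{3\vee}$. After fixing bases of $\tH^1(E(-1))$ and $\tH^1(E)$, it is represented by a single element $\ell \in V = \tH^0(\sco_{\p^{3\vee}}(1))$. The only nontrivial point is to show that $\ell \neq 0$: if $\ell$ vanished, a nonzero generator $\xi$ of $\tH^1(E(-1))$ would satisfy $S_1\xi = 0$ in $\tH^1(E)$, and the closing observation of Remark~\ref{R:spehv} would then force $E$ to satisfy condition (v), and hence also (vi), of that remark; this would make the spectrum of $E$ equal to $(-2,-1,0)$, contradicting what was just computed.

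To finish, I would exploit the duality between hyperplanes of $\p^{3\vee}$ and points of $\piii$. The nonzero vector $\ell \in V$ determines a point $x := [\ell] \in \p(V) = \piii$, and the zero locus of $\mu$ is the hyperplane $\{[h] \in \p^{3\vee} : h(\ell) = 0\}$, which is precisely the set of planes $H \subset \piii$ containing $x$. Since by Remark~\ref{R:mu} the space $\tH^0(E_H)$ is the kernel of the reduced stalk $\mu([h]) \colon k \to k$, it has dimension $0$ when $x \notin H$ (so that $\mu([h]) \neq 0$) and dimension $1$ when $x \in H$ (so that $\mu([h]) = 0$). The only step of genuine substance is the nonvanishing of $\ell$, which as indicated reduces at once to Remark~\ref{R:spehv}; everything else is a formal unpacking of the point--plane duality.
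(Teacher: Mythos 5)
Your argument is correct, but it takes a genuinely different route from the paper's. The paper proves this lemma in two lines from the Horrocks monad of Lemma~\ref{L:c34monad}: the component $\beta_1 \colon 3\sco_\piii \ra \sco_\piii(1)$ of $\beta$ is given by three linearly independent linear forms (since $\tH^0(E)=0$ forces $\tH^0(\beta)$ to be injective), $x$ is their common zero, and for a plane $H$ one has $\tH^0(E_H) = \Ker \tH^0(\beta_H)$, which is nonzero (and then one-dimensional) exactly when the three forms become dependent modulo $h$, i.e.\ when $x \in H$. You instead bypass the monad entirely: since the spectrum $(-1,-1,0)$ gives $\h^1(E(-1)) = \h^1(E) = 1$, the morphism $\mu$ of Remark~\ref{R:mu} is a map of line bundles on $\p^{3\vee}$ given by a single vector $\ell \in V$, which is nonzero because $\ell = 0$ would put $E$ in case (v) of Remark~\ref{R:spehv} and hence force spectrum $(-2,-1,0)$; the identification $\tH^0(E_H) \simeq \Ker(h \colon \tH^1(E(-1)) \ra \tH^1(E))$ then yields the statement with $x = [\ell]$. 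Both proofs are sound and about equally short; yours has the mild advantage of needing only the spectrum (not the full monad classification of Lemma~\ref{L:c34monad}), while the paper's makes the point $x$ visible directly in the monad data, which is in the same spirit as the subsequent analysis of $\alpha_2$ in Lemma~\ref{L:alpha2}. Of course the two descriptions of $x$ agree, since the matrix of $\mu$ is computed from the multiplication $\tH^1(E(-1)) \otimes S_1 \ra \tH^1(E)$, which the monad identifies with the pairing against the three linear forms defining $\beta_1$.
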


\begin{proof}
The component $\beta_1 \colon 3\sco_\piii \ra \sco_\piii(1)$ of the differential 
$\beta$ of the monad of $E$ from Lemma~\ref{L:c34monad} is defined by three 
linearly independent linear forms (because $\tH^0(E) = 0$). $x$ is the point 
where these three forms vanish simultaneously. 
\end{proof}

\begin{lemma}\label{L:alpha2} 
Let $E$ be a stable rank $3$ vector bundle on $\piii$ with $c_1 = 0$, 
$c_2 = 3$, $c_3 = 4$ and let $\alpha_2$ be the component $2\sco_\piii(-2) \ra 
3\sco_\piii(-1)$ of the differential $\alpha$ of the monad of $E$ from 
Lemma~\emph{\ref{L:c34monad}}. Then, up to automorphisms of $\piii$, 
$3\sco_\piii$ and $2\sco_\piii(1)$, $\alpha_2^\vee(-1)$ is defined by one of the 
following matrices$\, :$ 
\begin{gather*} 
(\fam0 1) \begin{pmatrix} X_0 & X_1 & X_2\\ 0 & X_0 & X_1 \end{pmatrix};\,  
(\fam0 2) \begin{pmatrix} X_0 & X_1 & 0\\ 0 & X_0 & X_2 \end{pmatrix};\,  
(\fam0 3) \begin{pmatrix} X_0 & 0 & X_2\\ 0 & X_1 & X_2 \end{pmatrix};\,  
(\fam0 4) \begin{pmatrix} X_0 & X_1 & X_2\\ 0 & X_0 & X_3 \end{pmatrix};\\ 
(\fam0 5) \begin{pmatrix} X_0 & 0 & X_2\\ 0 & X_1 & X_3 \end{pmatrix};\, 
(\fam0 6) \begin{pmatrix} X_0 & X_1 & X_2\\ 0 & X_2 & X_3 \end{pmatrix};\,  
(\fam0 7) \begin{pmatrix} X_0 & X_1 & X_2\\ X_1 & X_2 & X_3 \end{pmatrix}. 
\end{gather*} 
\end{lemma}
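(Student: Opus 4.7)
The plan is to classify $M := \alpha_2^\vee(-1)$ as a $2 \times 3$ matrix of linear forms on $\piii$, modulo the group $\text{GL}_2 \times \text{GL}_3 \times \text{GL}(V)$ acting by row operations, column operations, and linear change of coordinates. Writing $M = \sum_{i=0}^3 X_i M_i$ with scalar matrices $M_i \in \text{Mat}_{2,3}(k)$, the $\text{GL}(V)$-action permits replacing $(M_0, \ldots, M_3)$ by an arbitrary basis of their span $U \subseteq \text{Mat}_{2,3}(k)$. Thus the problem reduces to classifying the subspaces $U$ of $\text{Mat}_{2,3}(k)$ (of dimension at most $4$) up to $\text{GL}_2 \times \text{GL}_3$.

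I would first verify that $\dim U \geq 3$. If $\dim U \leq 2$, then after a coordinate change $M$ has entries in $kX_0 + kX_1$; restricting the monad of Lemma~\ref{L:c34monad} to the line $L_0 := \{X_0 = X_1 = 0\}$ shows that $\alpha_2 \vb L_0 = 0$, which, combined with the identity $\beta\alpha = 0$ and the requirement that $\alpha$ be a locally split monomorphism, forces either a torsion contribution or a rank drop in the cohomology sheaf along $L_0$, contradicting the fact that $E$ is a vector bundle with the given invariants. Hence $\dim U \in \{3, 4\}$.

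For $\dim U = 3$, after a coordinate change the entries of $M$ lie in $kX_0 + kX_1 + kX_2$, and I would organize the case split by the rank function of the associated pencil of rows: for $[a : b] \in \pj$, let $\rho(a, b)$ denote the rank of $aR_1 + bR_2 \colon k^3 \ra V^\vee$, where $R_1, R_2$ are the two rows of $M$. Generically $\rho = 3$, and the three subcases (1), (2), (3) correspond respectively to $\rho$ dropping to $2$ at exactly one, two, or three values of $[a : b]$ (which one checks are $\text{GL}_2$-inequivalent configurations on $\pj$). In each subcase, explicit row and column operations together with a rescaling of the $X_i$ bring $M$ to the stated canonical form.

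For $\dim U = 4$, all four variables appear and I would stratify further by the individual ranks of $R_1, R_2$ as linear maps $k^3 \ra V^\vee$. Ranks $(3, 3)$ yield the scroll-type form (7); mixed ranks $(3, 2)$ produce (4) or (6) depending on whether the image of the rank-$2$ row meets the image of the rank-$3$ row only at $0$, or in a line; finally, ranks $(2, 2)$ with linearly disjoint row images yield (5) (the remaining $(2, 2)$ configurations either reduce to $\dim U = 3$ or are ruled out by the subbundle condition). The main obstacle I expect is the mixed-rank $(3, 2)$ analysis, where separating (4) from (6) requires choosing a basis of the rank-$3$ row's image that is compatible with the additional direction supplied by the rank-$2$ row; a secondary task is to verify that the seven forms give genuinely distinct orbits, which I would do using invariants such as the number of rank-drop points in the pencil and the common zero locus of the three $2 \times 2$ minors of $M$.
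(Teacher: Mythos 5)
Your overall frame---putting $M=\alpha_2^\vee(-1)$ into normal form under row operations, column operations and coordinate changes, and stratifying by the degeneration of the pencil of rows---is the same problem the paper attacks, and your rank function $\rho(a,b)$ is precisely the fibrewise rank of the morphism $\psi \colon 3\sco_\pj \ra \sco_\pj(1)\otimes V^\vee$ that the paper obtains by ``transposing'' the tensor; the paper then reads the seven normal forms off the isomorphism class of $\Cok \psi^\vee$, a sheaf on $\pj$, which automatically encodes the multiplicities. Your execution, however, has concrete flaws. First, for $\dim U=4$ you stratify by the individual ranks of $R_1$ and $R_2$; these are not orbit invariants, since row operations replace $R_1,R_2$ by arbitrary independent combinations (for matrix (5) the row $R_1+R_2=(X_0,X_1,X_2+X_3)$ already has rank $3$). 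The invariant datum is the rank function on the whole pencil together with its scheme structure. Second, your criterion for separating (4) from (6) is vacuous: when $\dim U=4$ and the two row images have dimensions $3$ and $2$, they meet in exactly a line by dimension count---in both cases. What actually separates them is the length of the degeneracy at the unique rank-$2$ point of the pencil ($2$ for (4), $1$ for (6)), i.e.\ whether $\Cok \psi^\vee$ is $\sco_{\pj,P}/\fm_P^2$ or $\sco_{\pj,P}/\fm_P$.

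Third, and most importantly, you never invoke the hypothesis that $E$ is a stable bundle beyond the vague $\dim U\le 2$ exclusion. The paper's Claim~1---that $\psi$ has rank $\ge 2$ at every point of $\pj$, a consequence of the injectivity of $\tH^0(\alpha^\vee)$, i.e.\ of $\tH^0(E^\vee)=0$---is indispensable: without it your assertion that $\rho=3$ generically when $\dim U=3$ is false (the matrix $\left(\begin{smallmatrix} X_0&0&X_1\\ 0&X_0&X_2\end{smallmatrix}\right)$ has $\dim U=3$ and $\rho\equiv 2$; it is the paper's excluded case (ix)), and your one/two/three-point trichotomy for $\dim U=3$ misses orbits such as $\left(\begin{smallmatrix} X_0&X_1&X_2\\ 0&0&X_0\end{smallmatrix}\right)$, where the rank drops to $1$ and the degeneracy module is no longer cyclic. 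Likewise the constant-rank-$2$ configuration with $\dim U=2$ (the paper's case (viii)) is not killed by ``$\alpha$ is a locally split monomorphism'' alone: the paper must bring in the other component $\beta_1$ of the monad, restricted to the line $X_0=X_1=0$, and use that $\beta_1$ is given by three independent linear forms (from $\tH^0(E)=0$). So the classification cannot be completed as a pure linear-algebra exercise on $U$; the conditions $\tH^0(E^\vee)=0$ and $\tH^0(E)=0$ must be fed in at exactly these points, and your stratification invariants for $\dim U=4$ must be replaced by ones that are actually constant on orbits.
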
 

\begin{proof} 
The argument is standard. Let us denote
$\alpha_2^\vee(-1) \colon 3\sco_\piii \ra 2\sco_\piii(1)$ by $\phi$.
The morphism $\phi$ is uniquely determined by 
$\tH^0(\phi) \colon \tH^0(3\sco_\piii) \ra \tH^0(2\sco_\piii(1))$ which can be 
viewed as a linear map $\rho \colon k^3 \ra (k^2)^\vee \otimes V^\vee$. Let 
$\psi \colon 3\sco_\pj \ra \sco_\pj(1) \otimes V^\vee$ be the unique morphism 
for which $\tH^0(\psi) = \rho$. Let $u_0,\, u_1$ be the canonical basis of 
$k^2$ and $T_0,\, T_1$ the dual basis of $(k^2)^\vee$. $\phi$ is defined by a 
$2 \times 3$ matrix $\Phi$ with entries in $V^\vee$ and $\psi$ is defined by a 
$4 \times 3$ matrix $\Psi$ with entries in $(k^2)^\vee$. Since both of these 
matrices are derived from $\rho$, they are related as follows$\, :$  
for $i = 0,\, 1$, the $i\, $th row of $\Phi$ is 
$(X_0 , \ldots , X_3)\Psi(u_i)$, where $\Psi(u_i)$ is the $4 \times 3$ matrix 
with entries in $k$ obtained by evaluating the entries of $\Psi$ at $u_i$. 
Notice that $\Psi(u_i)$ defines the reduced stalk of the morphism $\psi$ at 
the point $[u_i]$ of $\pj$. 

\vskip2mm 

\noindent 
{\bf Claim 1.}\quad $\psi \colon 3\sco_\pj \ra \sco_\pj(1) \otimes V^\vee$ 
\emph{has rank} $\geq 2$ \emph{at every point of} $\pj$. 

\vskip2mm 

\noindent 
\emph{Indeed}, the fact that $\tH^0(E^\vee) = 0$ implies that 
$\tH^0(\alpha^\vee)$ is injective. In particular, $\tH^0(\phi(1))$ is injective. 
Assume, by contradiction, that there is a point of $\pj$ where $\psi$ has 
rank $\leq 1$. Up to an automorphism of $\pj$, one can assume that this point 
is $[0:1]$. This means that, up to an automorphism of $2\sco_\piii(1)$, $\phi$ 
is represented by a matrix of the form$\, :$ 
\[
\begin{pmatrix} 
h_{00} & h_{01} & h_{02}\\ 
h_{10} & h_{11} & h_{12} 
\end{pmatrix}, 
\]
with $\dim_k(kh_{10} + kh_{11} + kh_{12}) \leq 1$. Up to an automorphism of 
$3\sco_\piii$ one can assume that $h_{11} = h_{12} = 0$ and this 
\emph{contradicts} the fact that $\tH^0(\phi(1))$ is injective. 

\vskip2mm 

Consider, now, the morphism
$\psi^\vee \colon \sco_\pj(-1) \otimes V \ra 3\sco_\pj$.
Since $\pj$ has dimension 1 and $\tH^1(\sco_\pj(-1)) = 0$ it follows 
that the map $\tH^0(3\sco_\pj) \ra \tH^0(\Cok \psi^\vee)$ is surjective hence 
$\h^0(\Cok \psi^\vee) \leq 3$. One deduces that if $\psi^\vee$ has, generically, 
rank 3 then $\Cok \psi^\vee$ is a torsion sheaf of length $\leq 3$ generated, 
locally, by one element, and if it has rank 2 everywhere then $\Cok \psi^\vee$ 
is a line bundle, which must be $\sco_\pj(2)$ or $\sco_\pj(1)$ (it cannot be 
$\sco_\pj$ because $\tH^0(\phi)$ is injective hence so is $\tH^0(\psi)$). 

Consequently, up to an automorphism of $\pj$, one can assume that 
$\Cok \psi^\vee$ is one of the following sheaves$\, :$ 
\begin{gather*} 
\text{(i)}\, \sco_{\pj , P_0}/\fm_{P_0}^3;\, 
\text{(ii)}\, \sco_{\pj , P_0}/\fm_{P_0}^2 \oplus 
\sco_{\pj , P_1}/\fm_{P_1};\, 
\text{(iii)} {\textstyle \bigoplus_{i = 0}^2}\sco_{\pj , P_i}/\fm_{P_i};\, 
\text{(iv)}\, \sco_{\pj , P_0}/\fm_{P_0}^2;\, \\
\text{(v)}\, \sco_{\pj , P_0}/\fm_{P_0} \oplus 
\sco_{\pj , P_1}/\fm_{P_1};\, 
\text{(vi)}\, \sco_{\pj , P_0}/\fm_{P_0};\, 
\text{(vii)}\, 0;\, 
\text{(viii)}\, \sco_\pj(2);\, 
\text{(ix)}\, \sco_\pj(1),\, 
\end{gather*}
where $P_0 = [0:1]$, $P_1 = [1:0]$ and $P_2 = [1:-1]$. 

In case (i), choosing the $k$-basis of $\sco_{\pj , P_0}/\fm_{P_0}^3$ consisting 
of the classes of the regular functions $1,\, -T_0/T_1,\, T_0^2/T_1^2$, 
$\psi^\vee$ is defined, up to automorphisms of $3\sco_\pj$ and $V$, by the 
following matrix$\, :$ 
\[
\begin{pmatrix} 
T_0 & 0 & 0 & 0\\
T_1 & T_0 & 0 & 0\\
0 & T_1 & T_0 & 0
\end{pmatrix}
\]
hence the matrix $\Psi$ defining $\psi$ is the dual of this matrix. One deduces 
that the matrix $\Phi$ defining $\phi$ is as in item (1) from the statement. 

Analogously, in the cases (ii)--(vii), $\Phi$ is as in the items (2)--(7) from 
the statement, respectively. We show, now, that the cases (viii) and (ix) 
\emph{cannot occur} in our context. 

In case (viii), choosing the $k$-basis $T_1^2,\, -T_0T_1,\, T_0^2$ of 
$\tH^0(\sco_\pj(2))$, $\psi^\vee$ is defined by the matrix$\, :$ 
\[
\begin{pmatrix}
T_0 & 0 & 0 & 0\\
T_1 & T_0 & 0 & 0\\
0 & T_1 & 0 & 0
\end{pmatrix}\text{ hence }\Phi = 
\begin{pmatrix} 
X_0 & X_1 & 0\\
0 & X_0 & X_1
\end{pmatrix}. 
\]
Consider the line  $L \subset \piii$ of equations $X_0 = X_1 = 0$ and restrict 
to $L$ the dual of the monad from Lemma~\ref{L:c34monad}$\, :$ 
\[
0 \lra \sco_L(-1) \overset{\beta_L^\vee}{\lra} 3\sco_L \oplus 3\sco_L(1) 
\overset{\alpha_L^\vee}{\lra} 2\sco_L(2) \lra 0\, . 
\]
Let $\alpha_1 \colon 2\sco_\piii(-2) \ra 3\sco_\piii$ be 
the other component of $\alpha$ and let
$\beta_1 \colon 3\sco_\piii \ra 
\sco_\piii(1)$ and $\beta_2 \colon 3\sco_\piii(-1) \ra \sco_\piii(1)$
be the 
components of $\beta$. Since $\alpha_2^\vee \vb L = 0$, $(\alpha_1^\vee \vb L) 
\colon 3\sco_L \ra 2\sco_L(2)$ is an epimorphism hence its kernel is isomorphic 
to $\sco_L(-4)$. Moreover, $(\alpha_2^\vee \vb L) \circ (\beta_2^\vee \vb L) = 0$ 
hence $(\alpha_1^\vee \vb L) \circ (\beta_1^\vee \vb L) = 0$. It follows that 
$(\beta_1^\vee \vb L) = 0$ and this \emph{contradicts} the fact that $\beta_1$ 
is defined by three linearly independent linear forms (because $\tH^0(E) = 0$). 

Finally, in case (ix), choosing the $k$-basis of $\tH^0(\sco_\pj(1))$ 
consisting of $T_1 ,\, -T_0$, $\psi^\vee$ is defined by the matrix$\, :$ 
\[
\begin{pmatrix} 
T_0 & 0 & 0 & 0\\
T_1 & 0 & 0 & 0\\
0 & T_0 & T_1 & 0
\end{pmatrix}\text{ hence } 
\Phi = 
\begin{pmatrix} 
X_0 & 0 & X_1\\
0 & X_0 & X_2
\end{pmatrix}. 
\]
But $(X_1\, ,\, X_2\, ,\, -X_0)^{\text{t}}$ belongs to the kernel of the map 
$3S_1 \ra 2S_2$ defined by $\Phi$ and this \emph{contradicts} the fact 
that $\tH^0(\phi(1))$ is injective. 
\end{proof} 

\begin{proposition}\label{P:c34} 
Let $E$ be a stable rank $3$ vector bundle on $\piii$ with $c_1 = 0$,
$c_2 = 3$, $c_3 = 4$. Then, for the general plane $H \subset \piii$, one has 
${\fam0 H}^0(E_H^\vee) = 0$. 
\end{proposition}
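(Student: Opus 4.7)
The plan is to invoke Lemma~\ref{L:jumpinglines}. By Lemma~\ref{L:c34h0eh0}, there is a single point $x \in \piii$ with the property that $\tH^0(E_H) \neq 0$ exactly when $x \in H$, so the planes containing $x$ form a 2-dimensional subvariety of $\p^{3 \vee}$ and $\tH^0(E_H) = 0$ for the general plane. The remaining hypothesis of Lemma~\ref{L:jumpinglines} is the assertion that the lines $L \subset \piii$ with $\h^1(E_L^\vee) \geq 1$ pass through one of finitely many points, lie in one of finitely many planes, or form a family of dimension $\leq 1$; this is what I would establish next.

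The first reduction is to rephrase the jumping-line condition as a rank drop. Dualizing the Horrocks monad from Lemma~\ref{L:c34monad} gives an exact sequence
\[
0 \lra \sco_\piii(-1) \lra 3\sco_\piii \oplus 3\sco_\piii(1) \overset{\alpha^\vee}{\lra} 2\sco_\piii(2) \lra 0.
\]
Restricting to any line $L \simeq \pj$ preserves exactness and surjectivity of $\alpha_L^\vee$, and the cohomology sequence together with $\tH^1(\sco_L(-1)) = 0$ yields
\[
\h^1(E_L^\vee) = \dim\Cok\bigl(\tH^0(\alpha_L^\vee) \colon \tH^0(3\sco_L \oplus 3\sco_L(1)) \lra \tH^0(2\sco_L(2))\bigr),
\]
a linear map of $k$-vector spaces of dimensions $9$ and $6$. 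Thus $\h^1(E_L^\vee) \geq 1$ is equivalent to the failure of this $6 \times 9$ map to be surjective.

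The main work is then to split the analysis according to the seven normal forms furnished by Lemma~\ref{L:alpha2} for the linear component $\alpha_2^\vee(-1)$. For each form I would write the matrix of $\tH^0(\alpha_L^\vee)$ on a parametrized line $L$, normalize the quadratic component $\alpha_1^\vee$ using the relation $\beta \circ \alpha = 0$ (which forces $\beta_1 \alpha_1 = -\beta_2 \alpha_2$, pinning $\alpha_1$ modulo the kernel of $\beta_1$), and read off the jumping locus from the corresponding Fitting ideal. In forms (1)--(3), where $\alpha_2^\vee(-1)$ involves only three of the four coordinates, the drop of rank is controlled by a distinguished point or line in $\piii$, confining the jumping lines to meet a fixed subscheme. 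In form (7), which is the symmetric matrix $\left(\begin{smallmatrix} X_0 & X_1 & X_2\\ X_1 & X_2 & X_3 \end{smallmatrix}\right)$ associated with the twisted cubic, one might fear a 2-parameter family of jumping lines (as actually happens in the Schwarzenberger case $c_3 = 6$); however, with $c_3 = 4$ the additional constraint from $\beta$ strictly cuts this down.

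The principal obstacle is the routine bookkeeping across these seven cases. Each individual case is a direct linear-algebra computation, but the separate normal forms must be treated one by one to verify the required dimensional bound on the jumping locus. Once this verification is complete, the hypotheses of Lemma~\ref{L:jumpinglines} are met and the conclusion $\tH^0(E_H^\vee) = 0$ for the general plane $H \subset \piii$ follows.
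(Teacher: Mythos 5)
Your high-level plan coincides with the paper's: verify the hypotheses of Lemma~\ref{L:jumpinglines} by a case analysis over the seven normal forms of Lemma~\ref{L:alpha2}, and your reduction of $\h^1(E_L^\vee)$ to the corank of $\tH^0(\alpha_L^\vee)$ is correct (though the dualized monad is a monad with middle cohomology $E^\vee$, not an exact sequence). The difficulty is that the entire substance of the proof --- the verification, in each of the seven cases, that the jumping lines pass through finitely many points, lie in finitely many planes, or form a family of dimension at most $1$ --- is deferred (``this is what I would establish next'', ``I would write \dots''). This is not routine bookkeeping. In case (7) the secants of the twisted cubic form a $2$-dimensional family and every plane contains some of them; this is exactly the mechanism that makes the Schwarzenberger bundle of Theorem~\ref{T:c10c23}(b)(i) an exception when $c_3 = 6$, so your assertion that for $c_3 = 4$ ``the additional constraint from $\beta$ strictly cuts this down'' is precisely the point at issue and is nowhere justified. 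Likewise, the conclusion you announce for forms (1)--(3), that the jumping lines are ``confined to meet a fixed subscheme'', is not of the shape required by Lemma~\ref{L:jumpinglines}: lines meeting a fixed curve form a $3$-dimensional family, which is far too large.

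What is missing is the structural step that makes the seven cases tractable. The paper resolves $\sci_Y(1)$, where $Y$ is the degree-$3$ degeneracy curve of $\alpha_2$, by the Eagon--Northcott complex and dualizes to obtain
\[
0 \lra \sco_\piii(-1) \lra 3\sco_\piii(1) \xra{\alpha_2^\vee} 2\sco_\piii(2) \xra{\pi} \omega_Y(3) \lra 0\, ;
\]
the image $W$ of $\tH^0(\pi \circ \alpha_1^\vee)$ is then a $3$-dimensional subspace of $\tH^0(\omega_Y(3))$ generating $\omega_Y(3)$ globally, and for a line $L$ meeting $Y$ properly one has $\h^1(E_L^\vee) \geq 1$ if and only if the evaluation $W \ra \tH^0(\omega_Y(3) \vb L \cap Y)$ fails to be surjective. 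Hence every jumping line is a secant of $Y$, and the question becomes one about a net of sections of a rank-one sheaf on a cubic curve: the twisted cubic case is then easy, and the degenerate cases reduce to tangent planes of quadrics and to the singular points of $Y$. Without some such reduction, your $6 \times 9$ Fitting-ideal computation would have to be carried out explicitly in all seven cases; until that is done, the proposal is a plan rather than a proof.
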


\begin{proof} 
We intend to apply Lemma~\ref{L:jumpinglines}. 
Consider the monad of $E$ from Lemma~\ref{L:c34monad} and let
$\alpha_1 \colon 2\sco_\piii(-2) \ra 3\sco_\piii$
and $\alpha_2 \colon 2\sco_\piii(-2) \ra 3\sco_\piii(-1)$
be the components of $\alpha$. It follows, from Lemma~\ref{L:alpha2},
that the degeneracy scheme of $\alpha_2$ is a locally 
Cohen-Macaulay subscheme $Y \subset \piii$ of pure dimension 1, which is 
locally complete intersection except at finitely many points and has degree 
3. More precisely, denoting by $L_{ij}$ the line of equations $X_i = X_j = 0$,
$0 \leq i < j \leq 3$, one can assume that one of the following holds$\, :$ 
\begin{enumerate}
\item[(1)] $Y$ is the Weil divisor $3L_{01}$ on the cone $\Sigma$ of
equation $X_1^2 - X_0X_2 = 0\, ;$
\item[(2)] $Y = X \cup L_{01}$, where $X$ is the divisor $2L_{02}$ on the
plane $H_0 \colon X_0 = 0\, ;$ 
\item[(3)] $Y = L_{01} \cup L_{02} \cup L_{12}\, ;$
\item[(4)] $Y = X \cup L_{01}$, where $X$ is the divisor $2L_{03}$ on the
surface $\Sigma \colon X_1X_3 - X_0X_2 = 0\, ;$ 
\item[(5)] $Y = L_{01} \cup L_{02} \cup L_{13}\, ;$
\item[(6)] $Y = C \cup L_{23}$, where $C$ is the conic of equations
$X_0 = X_1X_3 - X_2^2 = 0\, ;$
\item[(7)] $Y$ is a twisted cubic curve.   
\end{enumerate}
One deduces, using the Eagon-Northcott complex, an exact sequence$\, :$ 
\[
0 \lra 2\sco_\piii(-2) \overset{\alpha_2}{\lra} 3\sco_\piii(-1) \lra 
\sci_Y(1) \lra 0\, , 
\]
which, by dualization, produces an exact sequence$\, :$ 
\[
0 \lra \sco_\piii(-1) \lra 3\sco_\piii(1) \overset{\alpha_2^\vee}{\lra} 
2\sco_\piii(2) \overset{\pi}{\lra} \omega_Y(3) \lra 0\, . 
\]
The image of 
$\tH^0(\pi \circ \alpha_1^\vee) \colon \tH^0(3\sco_\piii) \ra \tH^0(\omega_Y(3))$ 
is a subspace $W$ of $\tH^0(\omega_Y(3))$, which 
has dimension 3 (because $\tH^0(E^\vee) = 0$) and generates $\omega_Y(3)$ 
globally. Denoting by $Q$ the cokernel of $\alpha$, one has exact
sequences$\, :$
\begin{gather*}
0 \lra \sco_\piii(-1) \overset{\overline{\beta}^\vee}{\lra} Q^\vee \lra
E^\vee \lra 0\, ,\\
0 \lra \sco_\piii(-1) \lra Q^\vee \lra W \otimes_k \sco_\piii
\overset{\e}{\lra} \omega_Y(3) \lra 0\, , 
\end{gather*} 
where $\e$ is the evaluation morphism. Let us denote $\omega_Y(3)$ by
$\scl$. Consider a line $L$ that is not a component of $Y$ and passes through
none of the points where $Y$ is not locally complete intersection. One has
an exact sequence$\, :$
\[
0 \lra \sco_L(a-1) \lra Q_L^\vee \lra W \otimes_k \sco_L
\overset{\e_L}{\lra} \scl_L \lra 0\, , 
\]
where $a = \text{length}\, (L \cap Y)$ (because $\scl_L \simeq \sco_{L \cap Y}$).
Since $\tH^1(Q_L^\vee) \izo \tH^1(E_L^\vee)$ it follows that
$\h^1(E_L^\vee) \geq 1$ if and only if
$\tH^0(\e_L) \colon W \ra \tH^0(\scl_L)$ is not surjective. 
In that case one must have $a \geq 2$, i.e., $L$ must be a \emph{secant} of
$Y$. We split, now, the proof into several cases according to the various
possibilities for $Y$ listed above. 

\vskip2mm

\noindent
{\bf Case 1.}\quad $Y$ \emph{as in item} (1) \emph{above}.
    
\vskip2mm

\noindent
If $L$ is a secant of $Y$ not passing through the vertex $P_3 := [0:0:0:1]$ of
the cone $\Sigma$ then $L$ is tangent to $\Sigma$ at a point of
$L_{01} \setminus \{P_3\}$ hence $L$ is contained in the plane $H_0$ of
equation $X_0 = 0$. One applies, now, Lemma~\ref{L:jumpinglines}.

\vskip2mm

\noindent
The cases 2, 3 and 5 where $Y$ is as in one of the items (2), (3), or (5)
above can be treated similarly.

\vskip2mm

\noindent
{\bf Case 4.}\quad $Y$ \emph{as in item} (4) \emph{above}.

\vskip2mm

\noindent 
A secant $L$ of $Y$ not passing through the point $P_2 := [0:0:1:0]$ where
$L_{03}$ and $L_{01}$ intersect is either contained in the plane $H_0$
spanned by $L_{03}$ and $L_{01}$ or is tangent to the nonsingular quadric
surface $\Sigma$ at a point of $L_{03} \setminus \{P_2\}$. Taking into account
Lemma~\ref{L:jumpinglines}, it suffices to show that if $L$ is a general
tangent to $\Sigma$ at a point of $L_{03} \setminus \{P_2\}$ then the map
$\tH^0(\e_L)$ is surjective.

Our argument uses two observations. Firstly, let $s_0$, $s_1$, $s_2$ be a
$k$-basis of $W$. The morphism $(0,1) \colon 2\sco_\piii \ra \sco_{L_{03}}$
induces an epimorphism $\omega_Y(1) \ra \sco_{L_{03}}$. Let $f_i$ be the image
of $s_i$ into $\tH^0(\sco_{L_{03}}(2))$. Since $f_0$, $f_1$, $f_2$ generate
$\sco_{L_{03}}(2)$ globally, one can assume that $f_0$ and $f_1$ are linearly
independent. Interpreting the restrictions of $f_0$ and $f_1$ to
$L_{03} \setminus \{P_2\}$ as functions of one variable, one has, for a general
point $x$ of $L_{03} \setminus \{P_2\}$,
$f_0^\prime(x)f_1(x) - f_0(x)f_1^\prime(x) \neq 0$. This means that the images of
$f_0$ and $f_1$ in $\sco_{L_{03}\, ,\, x}/\fm_{L_{03}\, ,\, x}^2$ are linearly
independent.

Secondly, let $\Gamma$ be the ``fat point'' on $\Sigma$ at $x$ defined by
the ideal sheaf $\sci_\Gamma := \sci_{\{x\}}^2 + \sci_\Sigma$.
$\sci_{\{x\}}/\sci_\Gamma$ is a 2-dimensional vector space. If $L$ is a line
tangent to $\Sigma$ at $x$ then $(\sci_L + \sci_\Gamma)/\sci_\Gamma$ is a
1-dimensional subspace of $\sci_{\{x\}}/\sci_\Gamma$ and in this way one gets
all the 1-dimensional subspaces of $\sci_{\{x\}}/\sci_\Gamma$. If
$L \neq L_{03}$ then $\sci_L + \sci_\Gamma$ is the ideal sheaf of the scheme
$L \cap X$, while $\sci_{L_{03}} + \sci_\Gamma$ defines the divisor $2x$ on
$L_{03}$. Let us denote the scheme associated to this divisor by $D$.

Now, according to the first observation, if $x$ is a general point of
$L_{03} \setminus \{P_2\}$ then one can assume that the images of $s_0$ and
$s_1$ in $\scl_D$ are linearly independent. It follows, from the second
observation, that if $L$ is a general tangent line to $\Sigma$ at $x$ then
the images of $s_0$ and $s_1$ in $\scl_{L \cap X}$ are linearly independent.
This implies that the map $\tH^0(\e_L)$ is surjective.

\vskip2mm

\noindent
{\bf Case 6.}\quad $Y$ \emph{as in item} (6) \emph{above}.

\vskip2mm    

\noindent
If $L$ is a secant of $Y$ then either $L$ passes through the point
$P_1 := [0:1:0:0]$ where $C$ and $L_{23}$ intersect, or it is contained in
the plane $H_0$ of $C$, or joins a point $x$ of $C \setminus \{P_1\}$ and a
point $y$ of $L_{23} \setminus \{P_1\}$. $\omega_Y$ is an invertible
$\sco_Y$-module. Since $\omega_Y(1)$ is globally generated,
$\scl = \omega_Y(3)$ is very ample. Since $W$ generates $\scl$ globally,
the restriction maps $W \ra \tH^0(\scl \vb C)$ and
$W \ra \tH^0(\scl \vb L_{23})$ must have rank at least 2.

Let $x$ be a point of $C \setminus \{P_1\}$. The space
$W^\prime := \{\sigma \in W \vb \sigma(x) = 0\}$ has dimension 2. By what have
been said above, the restriction map $W^\prime \ra \tH^0(\scl \vb L_{23})$ is
non-zero hence, for a general point $y \in L_{23} \setminus \{P_1\}$,
the space $W^\secund := \{\sigma \in W^\prime \vb \sigma(y) = 0\}$ has
dimension 1. If $L$ is the line joining $x$ and $y$ then the map
$\tH^0(\e_L)$ is surjective.

\vskip2mm

\noindent
Finally, the case 7 where $Y$ is as in item (7) above is quite easy. 
\end{proof} 

\begin{proposition}\label{P:c34moduli} 
The moduli space ${\fam0 M}(4)$ of stable rank $3$ vector bundles on $\piii$ 
with $c_1 = 0$, $c_2 = 3$, $c_3 = 4$ is nonsingular and connected, of 
dimension $28$. 
\end{proposition}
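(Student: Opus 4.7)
The plan is to mimic the strategy used in Prop.~\ref{P:c30moduli} and Prop.~\ref{P:c32moduli}. The first step is to establish that $\tH^2(E^\vee \otimes E) = 0$ for every $E \in \text{M}(4)$, giving nonsingularity of pure dimension $28$ by Remark~\ref{R:moduli}. Starting from the Horrocks monad of Lemma~\ref{L:c34monad} and setting $Q := \Cok \alpha$, tensoring $0 \to \sco_\piii(-1) \to Q^\vee \to E^\vee \to 0$ with $E$ and using $\tH^3(E(-1)) = 0$ reduces the vanishing to $\tH^2(Q^\vee \otimes E) = 0$. Tensoring $0 \to Q^\vee \to 3\sco_\piii \oplus 3\sco_\piii(1) \xra{\alpha^\vee} 2\sco_\piii(2) \to 0$ with $E$ and using the spectrum vanishings $\tH^2(E) = \tH^2(E(1)) = 0$ then reduces the problem to the surjectivity of $\tH^1(\alpha^\vee \otimes \text{id}_E) \colon 3\tH^1(E) \oplus 3\tH^1(E(1)) \to 2\tH^1(E(2))$; as in Prop.~\ref{P:c30000moduli} I would verify this by restricting to a suitable line $L$ on which the relevant multiplication maps on $\tH^1$ stabilize, reducing the check to a one-dimensional computation.

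For connectedness, since a nonsingular connected $k$-scheme is irreducible, it suffices to show $\text{M}(4)$ is irreducible. Consider the reduced parameter space $\mathcal{N}$ of monads of the form $0 \to 2\sco_\piii(-2) \xra{\alpha} 3\sco_\piii \oplus 3\sco_\piii(-1) \xra{\beta} \sco_\piii(1) \to 0$ with $\tH^0(\alpha^\vee)$ and $\tH^0(\beta)$ injective and $\beta$ surjective. The natural morphism $\mathcal{N} \to \text{M}(4)$ is surjective by Lemma~\ref{L:c34monad}, and its fibres are orbits of the reductive group $\Gamma := \text{GL}(2) \times \text{Aut}(3\sco_\piii \oplus 3\sco_\piii(-1)) \times \text{GL}(1)$ with $\text{GL}(1)$ embedded diagonally; it therefore suffices to prove $\mathcal{N}$ is irreducible.

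By Lemma~\ref{L:alpha2}, the component $\alpha_2$ determines a stratification of $\mathcal{N}$ into seven strata. The generic stratum $\mathcal{N}_7$, where the degeneracy locus $Y$ of $\alpha_2$ is a twisted cubic, is open and irreducible: the locus of twisted-cubic $\alpha_2$'s is an irreducible open subset of the affine space of $2 \times 3$ matrices of linear forms, and, once $\alpha_2$ is fixed, the space of compatible pairs $(\alpha_1, \beta)$ forms an irreducible quasi-affine variety (an open subset of a vector space cut out by the single constraint $\beta \circ \alpha = 0$ together with open rank conditions). I would prove density of $\mathcal{N}_7$ in $\mathcal{N}$ by exhibiting, for each $\alpha_2$ listed in items (1)--(6) of Lemma~\ref{L:alpha2}, an explicit one-parameter family $(\alpha_t, \beta_t)$ of monads in $\mathcal{N}$ whose generic fibre lies in $\mathcal{N}_7$ and whose special fibre at $t = 0$ recovers the given $\alpha_2$, in the spirit of the explicit deformations written down at the ends of Prop.~\ref{P:c30000moduli} and Prop.~\ref{P:c32moduli}. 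Dimension $28$ then follows as in Remark~\ref{R:moduli}.

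The main obstacle will be this case-by-case construction of explicit connecting families. At the level of $\alpha_2$ alone the existence of such deformations is clear, since each of the degenerate curves in items (1)--(6) of Prop.~\ref{P:c34} is a flat limit of twisted cubics on the twisted-cubic component of the Hilbert scheme of degree-$3$ curves in $\piii$; the technical work lies in lifting these deformations to $\mathcal{N}$, i.e., in verifying in each of the six cases that compatible families $\alpha_1(t)$ and $\beta(t)$ can be chosen so that the complex remains an exact monad for every $t$ and so that the cohomology sheaf stays stable.
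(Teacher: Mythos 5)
Your nonsingularity argument is essentially the paper's: the reduction of $\tH^2(E^\vee\otimes E)=0$ to the surjectivity of $3\tH^1(E)\oplus 3\tH^1(E(1))\to 2\tH^1(E(2))$ is exactly what the paper does. But no restriction to a line is needed at the last step: one has $\tH^1(E(l))\simeq\Cok\tH^0(\beta(l))$, and since the component $3\sco_\piii\to\sco_\piii(1)$ of $\beta$ is given by three linearly independent linear forms while some quadric entry of the component $3\sco_\piii(-1)\to\sco_\piii(1)$ is nonzero at their common zero, $\tH^0(\beta(l))$ is surjective for $l\geq 1$; hence $\tH^1(E(l))=0$ for $l\geq 1$ and the target of your map already vanishes.

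The irreducibility half is where the genuine gap lies, and it is twofold. First, for fixed $\alpha_2$ the locus of compatible pairs $(\alpha_1,\beta)$ is \emph{not} ``an open subset of a vector space cut out by the single constraint $\beta\circ\alpha=0$'': the equation $\beta_1\alpha_1+\beta_2\alpha_2=0$ is bilinear in $(\alpha_1,\beta_1)$, so this locus is a cone of determinantal type whose irreducibility is precisely what would have to be proved, not something that comes for free. Second, the density of the twisted-cubic stratum --- lifting the flat degenerations of the curve $Y$ to one-parameter families of exact monads with stable cohomology, in each of six cases --- is the entire difficulty of your plan, and you leave it open. Neither step is needed. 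The paper's route (the one it imports from the irreducibility of $\mathcal{U}$ in Prop.~\ref{P:c30000moduli}) is to fibre the space of monads over $\beta$ instead of stratifying by $\alpha_2$: the admissible $\beta$ form an open subset of the vector space $3S_1\oplus 3S_2$, hence an irreducible base, and over each such $\beta$ the set $\{\alpha\,\vert\,\beta\circ\alpha=0\}=\text{Hom}_{\sco_\piii}(2\sco_\piii(-2),\Ker\beta)=2\tH^0((\Ker\beta)(2))$ is a vector space whose dimension is independent of $\beta$, again because $\tH^0(\beta(2))$ is surjective for every admissible $\beta$. The total space of monads is therefore irreducible, and so is its image $\text{M}(4)$. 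In particular Lemma~\ref{L:alpha2} and the classification of the degeneracy curves $Y$ play no role in the moduli statement; they are needed only for Prop.~\ref{P:c34}.
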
 

\begin{proof} 
Recall, from Lemma~\ref{L:c34monad}, that if $E$ is a stable rank 3 vector 
bundle on $\piii$ with the Chern classes from the statement then $E$ is the 
cohomology sheaf of a monad of the form$\, :$ 
\[
0 \lra 2\sco_\piii(-2) \overset{\alpha}{\lra} 3\sco_\piii \oplus 3\sco_\piii(-1) 
\overset{\beta}{\lra} \sco_\piii(1) \lra 0\, . 
\]
Using the argument from the beginning of the proof of 
Prop.~\ref{P:c30000moduli} and taking into account that $\tH^2(E(l)) = 0$, 
for $l \geq -1$, $\tH^3(E(-1)) = 0$, and $\tH^1(E(2)) = 0$ (actually, 
$\tH^1(E(l)) = 0$ for $l \geq 1$ because $\tH^0(\beta(1))$ must be obviously 
surjective), one deduces that $\tH^2(E^\vee \otimes E) = 0$. Moreover, the kind 
of argument used in the proof of Prop.~\ref{P:c30000moduli} to show the 
irreducibility of $\mathcal{U}$ can be used to prove the irreducibility of 
$\text{M}(4)$. 
\end{proof}

\section{The case $c_3 = 6$}\label{S:c36} 

\begin{lemma}\label{L:c36-1-1-1monad}  
Let $E$ be a stable rank $3$ vector bundle on $\piii$ with $c_1 = 0$, $c_2 = 
3$, $c_3 = 6$ and spectrum $(-1 , -1 , -1)$. Then one has an exact 
sequence$\, :$ 
\[
0 \lra 3\sco_\piii(-2) \overset{\alpha}{\lra} 6\sco_\piii(-1) \lra E 
\lra 0\, . 
\]
\end{lemma}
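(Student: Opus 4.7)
The plan is to show that $\tH^1_\ast(E) = 0$, whence Horrocks' splitting criterion furnishes a two-term resolution of $E$ by direct sums of line bundles, and then to pin down the summands by numerical invariants.

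\emph{Step 1 (Vanishing of $\tH^1_\ast(E)$).} With $K = 3\sco_\pj(-1)$, conditions (I) and (II) in Remark~\ref{R:spectrum} give $\tH^1(E(l)) = 0$ for $l \leq -1$ and $\tH^2(E(l)) = 0$ for $l \geq -1$. Since $\chi(E) = 0$ by Riemann-Roch and $\tH^0(E) = \tH^2(E) = \tH^3(E) = 0$ (the last by $\tH^3(E) \simeq \tH^0(E^\vee(-4))^\vee$ and stability of $E^\vee$), one gets $\tH^1(E) = 0$. Now $\tH^1(E) = \tH^2(E(-1)) = \tH^3(E(-2)) = 0$ means that $E(1)$ is Castelnuovo-Mumford $0$-regular, so $\tH^1(E(l)) = 0$ also for all $l \geq 0$, $E(1)$ is globally generated, and indeed $\tH^1_\ast(E) = 0$.

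\emph{Step 2 (The resolution).} Riemann-Roch together with the vanishings above gives $\h^0(E(1)) = \chi(E(1)) = 6$. Global generation of $E(1)$ then yields a surjection $B_0 := 6\sco_\piii(-1) \twoheadrightarrow E$ whose kernel $B_1$ is locally free of rank $3$. The $0$-regularity of $E(1)$ implies that $\tH^0_\ast(B_0) \to \tH^0_\ast(E)$ is surjective, so the long exact sequence gives $\tH^1_\ast(B_1) = 0$; and $\tH^1_\ast(E) = 0$ together with $\tH^2_\ast(B_0) = 0$ give $\tH^2_\ast(B_1) = 0$. By Horrocks' splitting criterion on $\piii$, one concludes $B_1 \simeq \bigoplus_{j=1}^3 \sco_\piii(-b_j)$.

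\emph{Step 3 (Identifying the summands).} The inclusion $B_1 \hookrightarrow 6\sco_\piii(-1)$ forces $\h^0(B_1) = 0$, hence $b_j \geq 1$. From $c_1(E) = 0$ one gets $\sum_j b_j = 6$; and comparing second Chern characters in $0 \to B_1 \to B_0 \to E \to 0$, using $\text{ch}_2(E) = \tfrac{1}{2}(c_1^2 - 2c_2)H^2 = -3H^2$, yields $\sum_j b_j^2 = 12$. The only partition of $6$ into three positive integers with sum of squares $12$ is $(2,2,2)$, so $B_1 \simeq 3\sco_\piii(-2)$ and the desired exact sequence follows. No step is genuinely delicate; the only thing to be careful about is the $0$-regularity argument in Step~1, since without the global vanishing $\tH^1_\ast(E) = 0$ the Horrocks splitting in Step~2 would not apply.
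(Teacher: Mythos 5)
Your proof is correct and follows essentially the same route as the paper: use the spectrum and Castelnuovo--Mumford regularity to get $\tH^1_\ast(E)=0$ and global generation of $E(1)$ with $\h^0(E(1))=6$, then apply Horrocks' criterion to the rank-3 kernel. The only (immaterial) difference is at the last step, where the paper works with the kernel $K$ of $6\sco_{\p^3}\to E(1)$, for which $c_1(K)=-3$ and $\tH^0(K)=0$ already force $K\simeq 3\sco_{\p^3}(-1)$, whereas your untwisted normalization requires the additional $\mathrm{ch}_2$ computation to single out $(2,2,2)$.
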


\begin{proof} 
The result is due to Spindler \cite{sp1}. We include, for completeness, a 
short argument. One has $\tH^2(E(l)) = 0$ for $l \geq -1$ (by the spectrum) 
and $\tH^3(E(l)) = 0$ for $l \geq -4$ (by Serre duality). Moreover, from 
Riemann-Roch, $\h^1(E) = 0$. It follows that $E$ is 1-regular. Using the 
spectrum one deduces that $\tH^1_\ast(E) = 0$. Since, by Riemann-Roch too, 
$\h^0(E(1)) = 6$, one has an epimorphism $6\sco_\piii \ra E(1)$. The kernel $K$ 
of this epimorphism has $\tH^i_\ast(K) = 0$, $i = 1,\, 2$, hence it is a direct 
sum of line bundles. Since $K$ has rank 3, $c_1(K) = -3$ and $\tH^0(K) = 0$ it 
follows that $K \simeq 3\sco_\piii(-1)$.  
\end{proof}

\begin{corollary}\label{C:c36-1-1-1h0eh0} 
Under the hypothesis of Lemma~\emph{\ref{L:c36-1-1-1monad}}, ${\fam0 H}^0(E_H) 
= 0$, for every plane $H \subset \piii$. 
\qed
\end{corollary}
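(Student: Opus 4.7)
The plan is to derive the vanishing directly from the resolution supplied by Lemma~\ref{L:c36-1-1-1monad}, restricted to $H$. Since $E$ is a vector bundle, in particular locally free, tensoring the short exact sequence
\[
0 \lra 3\sco_\piii(-2) \xra{\alpha} 6\sco_\piii(-1) \lra E \lra 0
\]
with $\sco_H$ kills $\sct or_1^{\sco_\piii}(E , \sco_H)$, so the restricted complex
\[
0 \lra 3\sco_H(-2) \xra{\alpha_H} 6\sco_H(-1) \lra E_H \lra 0
\]
is again short exact on any plane $H \subset \piii$.

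Then the idea is simply to take the associated long exact cohomology sequence
\[
0 \lra \tH^0(3\sco_H(-2)) \lra \tH^0(6\sco_H(-1)) \lra \tH^0(E_H) \lra \tH^1(3\sco_H(-2))\, .
\]
Since $H \simeq \pii$, one has $\tH^0(\sco_H(-1)) = 0$ (hence the middle term is zero) and $\tH^1(\sco_H(-2)) = 0$ (all intermediate cohomology of line bundles on $\pii$ vanishes), so the sequence forces $\tH^0(E_H) = 0$. No case analysis on $H$ is needed because the resolution of $E$ comes from globally defined bundles $3\sco_\piii(-2)$ and $6\sco_\piii(-1)$, and the required vanishing of the plane cohomology groups of line bundles is uniform in $H$.

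There is no substantive obstacle$\, :$ the only thing to be careful about is the exactness of the restricted sequence on the left, which is automatic because $\sct or_1^{\sco_\piii}(E , \sco_H) = 0$. Everything else is the standard cohomology of line bundles on $\pii$.
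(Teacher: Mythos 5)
Your argument is correct and is exactly the one the paper leaves implicit behind the \qed: restricting the resolution $0 \ra 3\sco_\piii(-2) \ra 6\sco_\piii(-1) \ra E \ra 0$ to $H$ (exact since $E$ is locally free) and using $\tH^0(\sco_{\pii}(-1)) = \tH^1(\sco_{\pii}(-2)) = 0$. Nothing is missing.
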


\begin{proposition}\label{P:c36-1-1-1}  
Let $E$ be a stable rank $3$ vector bundle on $\piii$ with $c_1 = 0$,
$c_2 = 3$, $c_3 = 6$ and spectrum $(-1 , -1 , -1)$. If
${\fam0 H}^0(E_H^\vee) \neq 0$, for every plane $H \subset \piii$, then $E$ is
as in Theorem~\emph{\ref{T:c10c23}(b)(i)}. 
\end{proposition}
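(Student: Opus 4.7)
The plan is to extract the Schwarzenberger structure from the $6 \times 6$ matrix of linear forms on $\p^{3\vee}$ defining the map
\[
\mu : \tH^1(E^\vee(-1)) \otimes \sco_{\p^{3\vee}}(-1) \lra \tH^1(E^\vee) \otimes \sco_{\p^{3\vee}}
\]
of Remark~\ref{R:mu}, by identifying a twisted cubic $\Gamma \subset \p^{3\vee}$ that supports $\Cok\mu$, and then invoking the main result of Vall\`es \cite{val} to rigidify the structure.

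By Lemma~\ref{L:c36-1-1-1monad} one has an exact sequence $0 \to 3\sco_\piii(-2) \overset{\alpha}{\to} 6\sco_\piii(-1) \to E \to 0$ and dually $0 \to E^\vee \to 6\sco_\piii(1) \overset{\alpha^\vee}{\to} 3\sco_\piii(2) \to 0$. Using the spectrum $(1,1,1)$ of $E^\vee$ and Riemann--Roch, $\h^1(E^\vee(-2)) = 3$, $\h^1(E^\vee(-1)) = \h^1(E^\vee) = 6$, and $\tH^2(E^\vee(l)) = 0$ for $l \geq -2$. Corollary~\ref{C:c36-1-1-1h0eh0} gives $\tH^0(E_H) = 0$ for every plane $H$; combined with the hypothesis and Spindler's bound, $\h^0(E_H^\vee) = 1$ for general $H$, and then the argument recalled in the Introduction (applied to $E^\vee$) shows that the general plane contains a line $L_0$ with $\h^1(E_{L_0}^\vee) \geq 1$.

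Applying Remark~\ref{R:mu} with $E^\prim = E^\vee$ and $d = 6$: the kernel of $\mu$ is $\sco_{\p^{3\vee}}(a)$ with $a \leq -2$ by observation~(i); Lemma~\ref{L:s1n-1} applied to $E^\vee$ (whose hypothesis $\tH^2(E^\vee(-2)) = 0$ holds) gives $S_1 \tH^1(E^\vee(-1)) = \tH^1(E^\vee)$, whence $b \geq 1$ by observation~(ii); and observation~(iii), combined with the jumping line $L_0$ above, forces $\dim Y = 1$. At this point I would appeal to the main result of Vall\`es \cite{val}, whose content in our setting is that a Steiner-type bundle on $\piii$ with this abundance of jumping planes must be Schwarzenberger; equivalently, $Y$ is a twisted cubic $\Gamma \subset \p^{3\vee}$ and $(\Cok \mu)_{\text{tors}} = 0$. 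Relation~\eqref{E:ab} then pins down $b = 2$ and $a = -4$, so $\Cok \mu \simeq \sci_\Gamma(2)$.

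Finally, the Hilbert--Burch resolution $0 \to 2\sco_{\p^{3\vee}}(-1) \to 3\sco_{\p^{3\vee}} \to \sci_\Gamma(2) \to 0$ forces, after splitting off three trivial summands of both source and target of $\mu$, the catalecticant shape characteristic of a Schwarzenberger map. Via the identification in Remark~\ref{R:mu} of the matrix of $\mu$ with the matrix defining the Beilinson differential $\tH^1(E^\vee(-1)) \otimes \Omega_\piii(1) \to \tH^1(E^\vee) \otimes \sco_\piii$ (which is dual to the datum of $\alpha$), one reads off that after a linear change of coordinates placing $\Gamma$ in the standard parametrisation $[t_0^3 : t_0^2 t_1 : t_0 t_1^2 : t_1^3]$, $\alpha$ takes precisely the form displayed in Theorem~\ref{T:c10c23}(b)(i). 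The main obstacle is the rigidity step: upgrading the weak information ``$\dim Y = 1$'' to the identification of $Y$ as a non-degenerate twisted cubic and $\Cok \mu$ as torsion-free. This is where Vall\`es' theorem does the essential work and where the hypothesis $\tH^0(E_H^\vee) \neq 0$ for \emph{every} plane (not just the general one) is most sharply used.
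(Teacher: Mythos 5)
There is a genuine gap at the step where you invoke Vall\`es. The main result of \cite{val} takes as input a large (here: infinite) supply of \emph{unstable} hyperplanes of the Steiner bundle $E(1)$, i.e.\ planes $H$ with $\tH^0\bigl((E(1))_H^\vee\bigr) = \tH^0(E_H^\vee(-1)) \neq 0$ --- exactly the unstable planes of $E$ in the sense of Remark~\ref{R:spehv}. Everything you establish (generic corank $1$ of $\mu$, $a \leq -2$, $b \geq 1$, $\dim Y = 1$) concerns the strictly weaker per-plane condition $\tH^0(E_H^\vee) \neq 0$, and you never exhibit a single unstable plane, let alone infinitely many. So ``this abundance of jumping planes'' is not the hypothesis of Vall\`es' theorem, and the rigidity you ask it to provide (that $Y$ is a nondegenerate twisted cubic and $(\Cok \mu)_{\text{tors}} = 0$) is not something it asserts about the degeneracy locus of $\mu$.

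The paper closes precisely this gap, and does so without any global analysis of $\mu$ on $\p^{3 \vee}$: assume $E$ has only finitely many unstable planes and choose a plane $\Pi \subset \p^{3 \vee}$ missing the corresponding points. For every $[h] \in \Pi$ one has $\tH^0(E_H^\vee(-1)) = 0$ and $\tH^0(E_H) = 0$ (Cor.~\ref{C:c36-1-1-1h0eh0}), so $E_H^\vee$ is semistable and Lemma~\ref{L:h0fleq2} (whose $\h^0 = 2$ case would dualize to give $\tH^0(E_H) \neq 0$) forces $\h^0(E_H^\vee) = 1$ exactly. Hence $\mu \vert \Pi$ has constant corank $1$, its kernel and cokernel are line bundles $\sco_\Pi(b)$ and $\sco_\Pi(a)$, and the resulting four-term exact sequence on $\Pi$ gives $a = b + 6$ together with $a(a+1) = b(b+1)$, which has no integer solution. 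This contradiction yields infinitely many unstable planes, and only then does Vall\`es' theorem apply and deliver the resolution in Theorem~\ref{T:c10c23}(b)(i). If you wish to keep your route through the structure of $\Cok \mu$, you would still need an independent argument relating $Y$ to the locus of unstable planes and showing the latter is infinite; as written, that is the essential missing step, and the catalecticant extraction at the end inherits the gap.
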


\begin{proof}
We will show that $E$ has infinitely many unstable planes. Then the main 
result of Vall\`{e}s \cite[Thm.~3.1]{val} (see, also, the proof of
\cite[Prop.~2.2]{val}) will imply the conclusion of the proposition. 

Assume, by contradiction, that $E$ has only finitely many unstable planes. Let 
$\Pi \subset \p^{3 \vee}$ be a plane containing none of the points of 
$\p^{3 \vee}$ corresponding to the unstable planes of $E$. Let $H \subset 
\piii$ be a plane of equation $h = 0$ such that $[h] \in \Pi$. One has 
$\tH^0(E_H^\vee(-1)) = 0$ and $\tH^0(E_H) = 0$ by Cor.~\ref{C:c36-1-1-1h0eh0}. 
Applying Lemma~\ref{L:h0fleq2} to $F := E_H^\vee$ (on $H \simeq \pii$) one gets 
that $\h^0(E_H^\vee) \leq 1$ hence $\h^0(E_H^\vee) = 1$, due to our hypothesis. 
One deduces that the kernel $\scm$ and the cokernel $\scl$ of the restriction
to $\Pi$ of the morphism
$\mu \colon \tH^1(E^\vee(-1)) \otimes \sco_{\p^{3 \vee}}(-1) \ra
\tH^1(E^\vee) \otimes \sco_{\p^{3 \vee}}$
from Remark~\ref{R:mu} are line bundles on $\Pi$ ($\tH^1(E^\vee(-1))$ and
$\tH^1(E^\vee)$ have both dimension 6) hence 
$\scl \simeq \sco_\Pi(a)$ and $\scm \simeq \sco_\Pi(b)$,
for some integers $a,\, b$. One thus has an exact sequence$\, :$ 
\[ 
0 \ra \sco_\Pi(b) \lra \tH^1(E^\vee(-1)) \otimes \sco_\Pi(-1)
\overset{\mu \vert \Pi}{\lra}  
\tH^1(E^\vee) \otimes \sco_\Pi \lra \sco_\Pi(a) \ra 0\, . 
\]
It follows that $a = b + 6$ and $\chi(\sco_\Pi(a-1)) = \chi(\sco_\Pi(b-1))$, 
i.e., $a(a+1) = b(b+1)$. Since the equation $(b+6)(b+7) = b(b+1)$ has no
integer solution, we have got a \emph{contradiction}. 
\end{proof} 

We recall, finally, that according to Spindler \cite[Satz~6,~Satz~7]{sp1}, 
the moduli space $\text{M}(6)$ of stable rank 3 vector bundles on $\piii$ with 
$c_1 = 0$, $c_2 = 3$, $c_3 = 6$ is nonsingular and connected, of dimension 28, 
and that the points of this moduli space corresponding to the bundles with 
spectrum $(-2 , -1 , 0)$ form an irreducible hypersurface. Moreover, by the 
proof of the Proposition on page 72 of \cite{c2}, if $E$ has this spectrum then 
$\tH^0(E_H^\vee) = 0$ for a general plane $H \subset \piii$.

\appendix 
\section{The spectrum of a stable rank 3 reflexive 
sheaf}\label{A:spectrum} 

We recall here the definition and the properties of 
the spectrum of a stable rank 3 reflexive sheaf on $\piii$ with $c_1 = 0$. 
In the case of a torsion free sheaf of arbitrary rank, the results are due to 
Okonek and Spindler \cite{oks2}, \cite{oks3}. In the particular case under 
consideration, their results have been refined by the author in \cite{c1}, 
\cite{c2}. All three authors follow, however, closely the approach of 
Hartshorne \cite{ha}, \cite{ha2} who treated the case of stable rank 2 
reflexive sheaves. This approach uses some technical results on $\pii$ and then 
a restriction theorem   
such as Spindler's generalization \cite{sp} of the theorem of
Grauert-M\"{u}lich or the restriction theorems of Schneider \cite{sch},  
Spindler \cite{sp2}, and of Ein, Hartshorne and Vogelaar \cite{ehv}. 

We begin by recalling the technical result on $\pii$ alluded above. 

\begin{theorem}\label{T:nsubh1f} 
Let $F$ be a rank $3$ vector bundle on $\pii$ with $c_1 = 0$, let 
$R = k[X_0 , X_1 , X_2]$ be the homogeneous coordinate ring of $\pii$ and let 
$N$ be a graded submodule of the graded $R$-module ${\fam0 H}^1_\ast(F)$. 
Put $n_i := \dim_kN_i$, for $i \in \z$. 

\emph{(a)} If $F$ is semistable then$\, :$ 
\begin{enumerate} 
\item[(i)] $n_{-1} \geq n_{-2}$$\, ;$ 
\item[(ii)] $n_{-i} > n_{-i-1}$ if $N_{-i-1} \neq 0$, $\forall \, i \geq 
2$$\, ;$ 
\item[(iii)] If $n_{-i} - n_{-i-1} = 1$ for some $i \geq 3$ then there exists a 
non-zero linear form $\ell \in R_1$ such that $\ell N_{-j} = (0)$ in 
${\fam0 H}^1(F(-j+1))$, $\forall \, j \geq i$. 
\end{enumerate}

\emph{(b)} If $F$ is stable then$\, :$ 
\begin{enumerate} 
\item[(1)] $n_{-1} > n_{-2}$ unless $N_{-1} = (0)$ and $N_{-2} = (0)$ or 
$N_{-1} = {\fam0 H}^1(F(-1))$ and $N_{-2} = {\fam0 H}^1(F(-2))$ $($by 
Riemann-Roch, ${\fam0 h}^1(F(-1)) = {\fam0 h}^1(F(-2)) = c_2$$)$$\, ;$ 
\item[(2)] \emph{(iii)} holds also for $i = 2$. 
\end{enumerate}
\end{theorem}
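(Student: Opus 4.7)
The plan is to follow the template of Hartshorne \cite{ha} for rank 2 reflexive sheaves, as extended to rank 3 by Okonek--Spindler \cite{oks2,oks3} and refined in \cite{c1,c2}. The core idea is to restrict $F$ to a general line $L\subset\pii$ and analyze multiplication by the defining linear form $\ell\in R_1$ on the graded pieces of $N$.

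First I would invoke the generalized Grauert--M\"{u}lich theorem of Spindler \cite{sp}: the restriction of the semistable rank $3$ bundle $F$ with $c_1=0$ to a general line $L$ has balanced splitting, either $F_L\simeq 3\sco_L$ or $F_L\simeq\sco_L(1)\oplus\sco_L\oplus\sco_L(-1)$. In both cases $\tH^0(F_L(m))=0$ for all $m\leq -2$ and $\tH^1(F_L(m))=0$ for all $m\geq 0$. Tensoring the Koszul sequence $0\to F(m-1)\xrightarrow{\ell}F(m)\to F_L(m)\to 0$ and taking cohomology yields the fundamental six-term sequence
\[
\tH^0(F_L(m))\lra\tH^1(F(m-1))\xra{\ell}\tH^1(F(m))\lra\tH^1(F_L(m))\lra 0\, .
\]
For a general $\ell$ and $m\leq -2$, the left vanishing makes $\ell$ act injectively on $\tH^1(F(m-1))$. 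Restricting to $N$ gives the weak monotonicity $n_{m-1}\leq n_m$ for $m\leq -2$, which covers (a)(i) for $i\geq 2$; the boundary case $i=1$, where $\tH^0(F_L(-1))$ can be one-dimensional in the non-trivial balanced split, requires a finer analysis using the explicit structure of the one-dimensional family of sections producing the obstruction, combined with the submodule property of $N$.

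To upgrade to the strict inequality in (a)(ii) I would apply Hartshorne's Bilinear Map Lemma \cite[Lemma~5.1]{ha} to the multiplication map $R_1\otimes N_{-i-1}\to N_{-i}$. The lemma requires injectivity of $\ell\cdot -$ on $N_{-i-1}$ for \emph{every} nonzero $\ell\in R_1$. For $i\geq 2$ this universal injectivity is forced by semistability: any nonzero element of $\tH^0(F_{L_0}(-i))$ for an exceptional line $L_0$ would, after appropriate twisting, yield a global section of $F(m)$ with $m\leq -1$, contradicting $\tH^0(F(-1))=0$. Granted this, the lemma gives $n_{-i}\geq n_{-i-1}+\dim R_1-1=n_{-i-1}+2$ whenever $N_{-i-1}\neq 0$, which is stronger than (a)(ii) demands. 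Statement (iii) corresponds to the situation where the difference $n_{-i}-n_{-i-1}$ takes the minimal value, forcing the universal-$\ell$ injectivity to fail: one extracts a distinguished nonzero $\ell_0\in R_1$ annihilating an element of $N_{-i-1}$ in $\tH^1(F(-i))$, and a descending induction on $j\geq i$ exploiting the $R$-module structure of $N$ propagates this annihilation to $\ell_0N_{-j}=0$ in $\tH^1(F(-j+1))$ for all $j\geq i$.

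For part (b), the stability hypothesis upgrades the relevant vanishings at the boundary $m=-1$, allowing the arguments of the previous paragraph to extend to $i=1$ yielding (1) and to $i=2$ yielding (2); the two degenerate configurations excluded in (1) correspond precisely to $N$ being trivial in the lowest degrees or to $N$ filling all of $\tH^1_\ast(F)$ in degrees $-2$ and $-1$ (where by Riemann-Roch both pieces have dimension $c_2$). The main obstacle throughout is verifying the universal-$\ell$ injectivity required by the Bilinear Map Lemma, i.e., careful control of what can happen on exceptional (non-general) lines; this is precisely where the semistability (respectively, stability) hypothesis is essential and constitutes the main technical contribution of \cite{oks2,oks3} and \cite{c1,c2} over the original rank 2 argument of Hartshorne.
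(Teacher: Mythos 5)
There is a genuine gap, and it sits exactly at the heart of the theorem. The paper states that the difficult assertions are (a)(i) and (b)(1), and your proposal does not actually prove them. Restriction to a general line $L$ gives the exact sequence $\tH^0(F_L(-1))\to\tH^1(F(-2))\xra{\ell}\tH^1(F(-1))$, and when the generic splitting type is $\sco_L(1)\oplus\sco_L\oplus\sco_L(-1)$ the term $\tH^0(F_L(-1))$ is one-dimensional, so this only yields $n_{-2}\le n_{-1}+1$; the ``finer analysis'' you defer to is precisely the missing content of the proof. The paper takes an entirely different route here: it passes to the universal extension $0\to F\to G\to \tH^1(F)\otimes\sco_\pii\to 0$ (so that $G$ is semistable, resp.\ Gieseker--Maruyama stable, with $c_1(G)=0$), writes down the Beilinson resolution of $G$ whose first differential is $\sum X_i\otimes e_i$, and then applies the Drezet--Le Potier trick: minimize $\Delta(W)=\dim_k(R_1W)-\dim_kW$ over non-zero subspaces $W\subseteq\tH^1(F(-2))$, choose $W$ maximal realizing the minimum, show that the induced map $(\tH^1(F(-2))/W)\otimes\Omega^2_\pii(2)\to(\tH^1(F(-1))/R_1W)\otimes\Omega^1_\pii(1)$ is a locally split monomorphism, and deduce that a sheaf $\scg$ with $c_1(\scg)=-\Delta(W)$ embeds into $G$, whence $\Delta(W)\ge 0$ by semistability. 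Nothing resembling this mechanism appears in your proposal, and the line-restriction approach alone cannot close the gap in the unbalanced case.

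A second, concrete error: your justification of the universal-$\ell$ injectivity needed for the Bilinear Map Lemma in (a)(ii) does not work. A non-zero element of $\tH^0(F_{L_0}(-i))$ on a jumping line $L_0$ does not lift to a global section of any twist $F(m)$ on $\pii$, so semistability of $F$ does not exclude it; semistable bundles do have jumping lines with unbalanced splitting. Moreover, if multiplication by every non-zero $\ell$ were injective on $N_{-i-1}$ whenever $N_{-i-1}\neq 0$, the Bilinear Map Lemma would force $n_{-i}-n_{-i-1}\ge 2$ and case (iii) could never arise --- which contradicts both the statement of the theorem and your own subsequent discussion of (iii). The paper's proof of (a)(ii) explicitly splits into two cases: when every $\ell$ acts injectively one applies the Bilinear Map Lemma, and when some $\ell$ does not one restricts to the corresponding line, introduces the submodule $N^\prim=\Ker(\tH^0_\ast(F_L)\to\tH^1_\ast(F)(-1)/N(-1))$, and applies (a)(i) to the submodule $\ell N$; your descending-induction sketch for (iii) omits this structure and, as written, rests on the false injectivity claim.
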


\begin{proof} 
The difficult assertions are (a)(i) and (b)(1). 
Let us prove (a)(i). Consider the universal extension$\, :$ 
\[
0 \lra F \lra G \lra \tH^1(F) \otimes \sco_\pii \lra 0\, . 
\]
$G$ is semistable with $c_1(G) = c_1(F) = 0$. One has $\tH^0(F(l)) \izo 
\tH^0(G(l))$ for $l \leq 0$, $\tH^1(G) = 0$, $\tH^1(F(l)) \izo \tH^1(G(l))$ for 
$l < 0$, and $\tH^2(F(l)) \izo \tH^2(G(l))$ for $l \geq -2$. By the theorem 
of Beilinson \cite{bei}, one has an exact sequence$\, :$ 
\[
0 \lra \tH^1(F(-2)) \otimes \Omega^2_\pii(2) \overset{d^{-1}}{\lra}  
\begin{matrix} \tH^1(F(-1)) \otimes \Omega^1_\pii(1)\\ \oplus\\ 
\tH^0(F) \otimes \sco_\pii \end{matrix} \lra G \lra 0\, . 
\]
Moreover, by a result of Eisenbud, Fl\o ystad and Schreyer \cite[(6.1)]{efs} 
(both results are recalled in \cite[1.23--1.25]{acm1}), the component 
$d_1^{-1} \colon \tH^1(F(-2)) \otimes \Omega^2_\pii(2) \ra \tH^1(F(-1)) \otimes 
\Omega^1_\pii(1)$
of $d^{-1}$ is defined by the operator $\sum_{i=0}^2X_i \otimes e_i$,
where $e_0,\, e_1,\, e_2$ is the canonical basis of $k^3$
($X_0,\, X_1,\, X_2$ being the dual basis of $R_1 = (k^3)^\vee$). 

We use, now, a trick that appears in the proof of a result of Drezet and 
Le Potier \cite[Prop.~(2.3)]{dlp}. Let $W$ be a non-zero vector subspace of 
$\tH^1(F(-2))$ and put $\Delta(W) := \dim_k(R_1W) - \dim_kW$ (where $R_1W 
\subseteq \tH^1(F(-1))$). Let $\Delta_{\text{min}}$ be the minimal value of 
$\Delta(W)$ (for all $W$ as above). We have to show that
$\Delta_{\text{min}} \geq 0$. Choose $W$ such that
$\Delta(W) = \Delta_{\text{min}}$ and $W$ is maximal among the subspaces having
this property. 

\vskip2mm 

\noindent 
{\bf Claim.}\quad \emph{The morphism}
$(\tH^1(F(-2))/W) \otimes \Omega^2_\pii(2) \ra (\tH^1(F(-1))/R_1W) \otimes
\Omega^1_\pii(1)$
\emph{induced by} $d_1^{-1}$ \emph{is a locally split monomorphism}. 

\vskip2mm 

\noindent 
\emph{Indeed}, assume, by contradiction, that there exists a point $x \in 
\pii$ such that the reduced stalk at $x$ of the morphism from the Claim is 
not injective. One has $x =[v_2]$ for some non-zero vector $v_2$ in $k^3$. 
Complete $v_2$ to a basis $v_0,\, v_1,\, v_2$ of $k^3$ and let
$\ell_0,\, \ell_1,\, \ell_2$ be the dual basis of $(k^3)^\vee$. Since
$\sum X_i \otimes e_i = \sum \ell_i \otimes v_i$ it follows that there exists
a non-zero element $\overline{\xi}$ of $\tH^1(F(-2))/W$ such that
$\ell_i \overline{\xi} = 0$ in $\tH^1(F(-1))/R_1W$, $i = 0,\, 1$. Lift
$\overline{\xi}$ to an element $\xi$ of $\tH^1(F(-2)) \setminus W$. One has
$\ell_i \xi \in R_1W$, $i = 0,\, 1$. Put $\widetilde{W} := W + k\xi$. One has
$\Delta(\widetilde{W}) \leq \Delta(W) = \Delta_{\text{min}}$ and this
\emph{contradicts} the maximality of $W$. 

\vskip2mm 

One deduces, from the Claim, that the cokernel $\scg$ of the morphism 
$W \otimes \Omega_\pii^2(2) \ra (R_1W) \otimes \Omega_\pii^1(1) \oplus 
\tH^0(F) \otimes \sco_\pii$ induced by $d^{-1}$ embeds into $G$. Since $G$ is 
semistable it follows that $-\Delta(W) = c_1(\scg) \leq 0$ and (a)(i) is 
proven. 

For (b)(1) one uses the same argument noticing that $G$ is stable in the sense 
of Gieseker and Maruyama, that is, taking into account that $c_1(G) = 0$ and 
$\chi(G) = 0$, for any coherent subsheaf $\scg$ of $G$ one has
$c_1(\scg) \leq 0$ and if $c_1(\scg) = 0$ then $\chi(\scg) < 0$ unless
$\scg = (0)$ or $\scg = G$. 

The rest of the assertions can be easily deduced from (a)(i) and (b)(1), 
respectively.  
For example, (a)(ii) can be proven by induction on $i$. Let us check the case 
$i = 2$. If, for any non-zero linear form $\ell \in R_1$, multiplication by 
$\ell \colon N_{-3} \ra N_{-2}$ is injective then $n_{-2} \geq n_{-3} + 2$, by 
the Bilinear Map Lemma \cite[Lemma~5.1]{ha}. Assume, now, that there exists 
a form $\ell$ such that the above multiplication map is not injective. Let 
$L \subset \pii$ be the line of equation $\ell = 0$ and let $N^\prime$ be the 
kernel of the composite map $\tH^0_\ast(F_L) \overset{\partial}{\lra} 
\tH^1_\ast(F)(-1) \ra \tH^1_\ast(F)(-1)/N(-1)$. Since $\tH^0(F(-1)) = 0$ one 
has, for $i \geq 1$, an exact sequence$\, :$ 
\[
0 \ra N^\prime_{-i} \lra N_{-i-1} \overset{\ell}{\lra} (\ell N)_{-i} 
\ra 0\, . 
\] 
Since $N^\prime_{-2} \neq 0$ it follows that $\dim_kN^\prime_{-2} < 
\dim_kN^\prime_{-1}$. Moreover, applying (a)(i) to $\ell N$, one gets that 
$\dim_k(\ell N)_{-2} \leq \dim_k(\ell N)_{-1}$ hence $n_{-3} < n_{-2}$. 
\end{proof} 

\begin{remark}\label{R:quotient} 
(a) Applying the above theorem to $F^\vee$ and using Serre duality, 
one gets similar information about the graded quotient modules $Q$ of 
$\tH^1_\ast(F)$ (in degrees $\geq -2$). 

(b) Let $F^{\, \prime}$ be a stable rank 3 vector bundle on $\pii$ with 
$c_1(F^{\, \prime}) = -1$ or $-2$. If $G^\prime$ is defined by the universal 
extension 
$0 \ra F^{\, \prime} \ra G^\prime \ra \tH^1(F^{\, \prime}) \otimes \sco_\pii \ra 0$ 
then one can show that every non-zero coherent subsheaf $\scg^\prime$ of 
$G^\prime$ satisfies $c_1(\scg^\prime) < 0$. One deduces, as in the proof of 
Thm.~\ref{T:nsubh1f}, that if $N^\prime$ is a graded $R$-submodule of 
$\tH^1_\ast(F^{\, \prime})$ and $n_i^\prime := \dim_kN^\prime_i$, then$\, :$ 
\begin{enumerate} 
\item[(1)] $n^\prime_{-i} > n^\prime_{-i-1}$ if $N^\prime_{-i-1} \neq 0$,
$\forall \, i \geq 1\, ;$ 
\item[(2)] If $n^\prime_{-i} - n^\prime_{-i-1} = 1$ for some $i \geq 2$ then there 
exists a non-zero linear form $\ell \in R_1$ such that
$\ell N^\prime_{-j} = (0)$ in $\tH^1(F^{\, \prime}(-j+1))$, $\forall \, j \geq i$.
\end{enumerate} 
See \cite[Prop.~B.4]{acm2} for details. 

(c) Applying the results in (b) to $F^{\, \prime \vee}(-1)$ (notice that 
$c_1(F^{\, \prime \vee}(-1)) = -2$ if $c_1(F^{\, \prime}) = -1$ and 
$c_1(F^{\, \prime \vee}(-1)) = -1$ if $c_1(F^{\, \prime}) = -2$) and using Serre 
duality one gets similar results about the graded quotient modules $Q^\prime$ 
of $\tH^1_\ast(F^{\, \prime})$ (in degrees $\geq -1$). 
\end{remark}  

\begin{definition}\label{D:spectrum}  
Let $\sce$ be a stable rank 3 reflexive sheaf on $\piii$ with $c_1 = 0$. It is 
known that the Chern classes of such a sheaf satisfy the relations $c_2 \geq 2$ 
and $-c_2^2 + c_2 \leq c_3 \leq c_2^2 - c_2$ (see \cite[Thm.~4.2]{ehv}). 
Choose a general plane $H \subset \piii$ of equation 
$h = 0$ such that, at least, $H$ does not contain any singular point of $\sce$ 
and $\sce_H$ is semistable and put$\, :$
\begin{gather*}
N := \text{Im}\, (\tH^1_\ast(\sce) \ra \tH^1_\ast(\sce_H)) \simeq 
\Cok (\tH^1_\ast(\sce(-1)) \overset{h}{\lra} \tH^1_\ast(\sce))\, ,\\
Q := \Cok (\tH^1_\ast(\sce) \ra \tH^1_\ast(\sce_H)) \simeq 
\Ker (\tH^2_\ast(\sce(-1)) \overset{h}{\lra} \tH^2_\ast(\sce))\, . 
\end{gather*}
Notice that, by the semistability of $\sce_H$, the multiplication map 
$h \colon \tH^1(\sce(i-1)) \ra \tH^1(\sce(i))$ (resp.,
$h \colon \tH^2(\sce(i-1)) \ra \tH^2(\sce(i))$ is injective (resp.,
surjective) for $i \leq -1$ (resp., $i \geq -2$). Put $n_i := \dim_kN_i$ and
$q_i := \dim_kQ_i$ and consider the following vector bundle on $\pj\, :$ 
\[
K := {\textstyle \bigoplus_{i \geq 1}}(n_{-i} - n_{-i-1})\sco_\pj(i-1) \oplus 
{\textstyle \bigoplus_{i \geq -1}}(q_i - q_{i+1})\sco_\pj(-i-2)\, . 
\]
One can write $K$ as $\bigoplus_{i=1}^m\sco_\pj(k_i)$, with
$k_1 \leq \cdots \leq k_m$. $k_\sce := (k_1 , \ldots , k_m)$ is called the
\emph{spectrum} of $\sce$. One checks easily, from definitions, that 
\begin{enumerate} 
\item[(i)] $\h^1(\sce(l)) = \h^0(K(l + 1))$, for $l \leq -1\, ;$ 
\item[(ii)] $\h^2(\sce(l)) = \h^1(K(l + 1))$, for $l \geq -3\, ;$ 
\item[(iii)] $m = c_2$ and $-2\sum k_i = c_3\, ;$ 
\item[(iv)] If, for some $i$, $k_i < 0$ (resp., $k_i > 0$) then
$k_i, k_i + 1, \ldots , -1$ (resp., $1 , 2, \ldots , k_i$) occur in the
spectrum (possibly several times).  
\end{enumerate}
\emph{Indeed}, one has, by the defintion of $K\, :$ 
\begin{gather*} 
\h^1(\sce(-l)) - \h^1(\sce(-l-1)) = n_{-l} = \h^0(K(-l+1)) - \h^0(K(-l))\, , 
\text{ for }l \geq 1\, ,\\
\h^2(\sce(l-1)) - \h^2(\sce(l)) = q_l = \h^1(K(l)) - \h^1(K(l+1))\, , 
\text{ for }l \geq -1\, . 
\end{gather*}
Actually, the second relation is valid also for $l = -2$ because$\, :$ 
\[
q_{-2} + n_{-2} = \h^1(\sce_H(-2)) = c_2 = \h^1(\sce_H(-1)) = q_{-1} + n_{-1}\, . 
\]
One gets, in particular, that$\, :$ 
\[K = 
{\textstyle \bigoplus_{i \geq 2}}(n_{-i} - n_{-i-1})\sco_\pj(i-1) \oplus 
{\textstyle \bigoplus_{i \geq -2}}(q_i - q_{i+1})\sco_\pj(-i-2)\, .  
\]
The relations (i) and (ii) follow immediately. 
For the relations (iii) one uses the fact that $n_{-1} + q_{-1} = c_2$ (which
implies that $K$ has rank $c_2$) and the Riemann-Roch formula
$\chi(\sce(-1)) = -c_2 + \frac{1}{2}c_3$, while assertion (iv) follows from 
Thm.~\ref{T:nsubh1f}(a)(ii) and its analogue for quotient modules.  
\end{definition} 

\begin{definition}\label{D:unstableplane} 
Let $\sce$ be a stable rank 3 reflexive sheaf on $\piii$ with $c_1 = 0$ and 
let $r > 0$ be an integer. A plane $H_0 \subset \piii$ is an \emph{unstable 
plane} for $\sce$ of order $r$ if $\text{Hom}(\sce_{H_0} , \sco_{H_0}(-r)) 
\neq 0$ and $\text{Hom}(\sce_{H_0} , \sco_{H_0}(-r-1)) = 0$ (note that $H_0$ 
can contain singular points of $\sce$). By Serre duality, this is equivalent to 
$\tH^2(\sce_{H_0}(r-3)) \neq 0$ and $\tH^2(\sce_{H_0}(r-2)) = 0$. 
\end{definition}

\begin{lemma}\label{L:criterionforunstplane}  
Let $\sce$ be a stable rank $3$ reflexive sheaf on $\piii$ with $c_1 = 0$ and 
let $r > 0$ be an integer such that ${\fam0 H}^2(\sce(r-2)) = 0$. Let $H 
\subset \piii$ be a plane avoiding the singular points of $\sce$ and such 
that $\sce_H$ is semistable and let $Q$ be as in 
Definition~\emph{\ref{D:spectrum}}. Then $\sce$ has an unstable plane of order 
$r$ under any of the hypotheses below$\, :$ 

\emph{(I)} There exists a non-zero linear form $\ell \in 
{\fam0 H}^0(\sco_H(1))$ such that multiplication by $\ell \colon Q_{r-3} \ra 
Q_{r-2}$ is the zero map$\, ;$ 

\emph{(II)} ${\fam0 h}^2(\sce(r-4)) \leq {\fam0 h}^2(\sce(r-3)) + 2$. 
\end{lemma}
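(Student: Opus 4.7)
The plan is to reduce both hypotheses to finding a nonzero $h_0 \in S_1$ for which multiplication $h_0 \colon \tH^2(\sce(r-4)) \to \tH^2(\sce(r-3))$ fails to be surjective. Indeed, since $\sce$ is torsion-free, twisting $0 \to \sce(-1) \xrightarrow{h_0} \sce \to \sce_{H_0} \to 0$ by $r-2$ and by $r-3$, and using the hypothesis $\tH^2(\sce(r-2)) = 0$ together with the vanishing $\tH^3(\sce(l)) = 0$ for $l \geq -3$ (valid for $r \geq 1$; it follows by Serre duality from $\tH^3(\sce(l))^\vee \simeq \tH^0(\sce^\vee(-l-4)) = 0$ for $-l-4 \leq -1$, by the semistability of $\sce^\vee$), the long exact sequences will yield $\tH^2(\sce_{H_0}(r-2)) = 0$ for every plane $H_0$ and $\tH^2(\sce_{H_0}(r-3)) \simeq \Cok(h_0 \colon \tH^2(\sce(r-4)) \to \tH^2(\sce(r-3)))$. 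Via the Serre-duality equivalence recorded in Definition~\ref{D:unstableplane}, the nonsurjectivity of that map produces an unstable plane of order exactly $r$.

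For case (II), I argue by contradiction. If $h_0 \colon \tH^2(\sce(r-4)) \to \tH^2(\sce(r-3))$ is surjective for every $0 \neq h_0 \in S_1$, then dually each such $h_0$ induces an injection $\tH^2(\sce(r-3))^\vee \to \tH^2(\sce(r-4))^\vee$. Applying Hartshorne's Bilinear Map Lemma \cite[Lemma~5.1]{ha} to the multiplication $S_1 \otimes \tH^2(\sce(r-3))^\vee \to \tH^2(\sce(r-4))^\vee$ with $\dim S_1 = 4$, I will obtain $\h^2(\sce(r-4)) \geq \h^2(\sce(r-3)) + 3$, contradicting the hypothesis $\h^2(\sce(r-4)) \leq \h^2(\sce(r-3)) + 2$.

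For case (I), I fix a lift $\tilde\ell \in S_1 \setminus kh$ of $\ell$. Since $\tH^2(\sce(r-2)) = 0$ we have $Q_{r-2} = \tH^2(\sce(r-3))$, so the hypothesis reads $\tilde\ell \cdot Q_{r-3} = 0$ inside $\tH^2(\sce(r-3))$. If $\tilde\ell \colon \tH^2(\sce(r-4)) \to \tH^2(\sce(r-3))$ is not surjective, I am done with $h_0 = \tilde\ell$. Otherwise, since $h$ is also surjective (by the semistability of $\sce_H$) with kernel $Q_{r-3}$, the containment $Q_{r-3} \subseteq \Ker\tilde\ell$ becomes an equality by a dimension count. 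The key step, which is where the main work of the argument sits, is then to form the well-defined endomorphism $\phi := \tilde\ell \circ h^{-1}$ of $\tH^2(\sce(r-3))$ (well-defined because $\tilde\ell$ annihilates $\Ker h$). Since $k$ is algebraically closed, $\phi$ admits an eigenvalue $\lambda_0$, and then $h_0 := \tilde\ell - \lambda_0 h = (\phi - \lambda_0) \circ h$ has non-full image, while $h_0 \neq 0$ since otherwise $\ell$ would vanish in $R_1 = S_1/kh$. The principal obstacle is precisely this eigenvalue trick of case (I); once one recognizes that $\phi$ is a genuine automorphism of $\tH^2(\sce(r-3))$ to which spectral theory applies, the rest is bookkeeping with the Koszul long exact sequence and, for (II), a direct invocation of the Bilinear Map Lemma.
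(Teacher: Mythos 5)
Your proof is correct and follows essentially the same route as the paper: the same reduction to finding $h_0\in S_1\setminus\{0\}$ with $h_0\colon \tH^2(\sce(r-4))\to\tH^2(\sce(r-3))$ not surjective, the same eigenvalue argument for (I) (the paper phrases it as finding $c$ with $c\overline{h}-\overline{\lambda}$ not an isomorphism on $\tH^2(\sce(r-4))/Q_{r-3}$, which is your $\phi-\lambda_0$), and the same dual application of the Bilinear Map Lemma for (II). No substantive differences.
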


\begin{proof} 
Since $\tH^2(\sce(r-2)) = 0$ and $\tH^3(\sce(r-3)) = 0$ one has 
$\tH^2(\sce_{H^\prime}(r-2)) = 0$, for every plane $H^\prime \subset \piii$. 
Consequently, it suffices to show that there is a non-zero linear form 
$h_0 \in S_1$ such that multiplication by
$h_0 \colon \tH^2(\sce(r-4)) \ra \tH^2(\sce(r-3))$ is not surjective. 

Assuming (I), let $h = 0$ be an equation of $H$ and let
$\lambda \in S_1 \setminus kh$ be a linear form lifting $\ell$. One has
$Q_{r-2} \izo \tH^2(\sce(r-3))$ and an exact sequence$\, :$ 
\[
0 \ra Q_{r-3} \lra \tH^2(\sce(r-4)) \overset{h}{\lra} \tH^2(\sce(r-3)) 
\ra 0\, . 
\]
Our hypothesis implies that multiplication by $\lambda \colon 
\tH^2(\sce(r-4)) \ra \tH^2(\sce(r-3))$ maps $Q_{r-3}$ into $(0)$ hence induces 
a map $\overline{\lambda} \colon \tH^2(\sce(r-4))/Q_{r-3} \ra \tH^2(\sce(r-3))$.
On the other hand, multiplication by $h$ induces an isomorphism $\overline{h} 
\colon \tH^2(\sce(r-4))/Q_{r-3} \izo \tH^2(\sce(r-3))$. Then there exists 
$c \in k$ such that $c\overline{h} - \overline {\lambda}$ is not an 
isomorphism. One can take $h_0 = ch - \lambda$. 

Assuming (II), the existence of $h_0$ follows from the Bilinear Map Lemma 
\cite[Lemma~5.1]{ha}. 
\end{proof}

The theorem below shows that if the restriction of $\sce$ to a general plane 
is stable then its spectrum satisfies two additional properties. 
Property (vi) is the analogous of a property proven by 
Hartshorne \cite[Prop.~5.1]{ha2} for stable rank 2 reflexive sheaves. We 
provide, for completeness, a simplified version of his arguments. 

\begin{theorem}\label{T:vvi}
Let $\sce$ be a stable rank $3$ reflexive sheaf on $\piii$ with $c_1 = 0$. 
Assume that there exists a plane $H \subset \piii$ avoinding the singular 
points of $\sce$ such that $\sce_H$ is stable. Let $k_\sce := (k_1, \ldots , 
k_m)$ be the spectrum of $\sce$. 

\emph{(v)} If $0$ does not occur in the spectrum then either
$k_{m-2} = k_{m-1} = k_m = -1$ or $k_1 = k_2 = k_3 = 1\, ;$ 

\emph{(vi)} If, for some $i$ with $2 \leq i \leq m-1$, one has
$k_{i-1} < k_i < k_{i+1} \leq 0$ then $\sce$ has an unstable plane of order
$-k_1$ and $k_1 < k_2 < \cdots < k_i$. 
\end{theorem}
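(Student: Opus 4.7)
The plan is to derive both statements from Theorem~\ref{T:nsubh1f} and its quotient-module analog (obtained via Serre duality on $H \simeq \pii$), combined with Lemma~\ref{L:criterionforunstplane} for the unstable-plane assertion in (vi). Throughout, write $F := \sce_H$, the stable rank-$3$ bundle on $H$, and let $\mu_k$ denote the multiplicity of $k$ in $k_\sce$.

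For (v), the assumption $0 \notin k_\sce$ gives $\mu_0 = n_{-1} - n_{-2} = 0$, so $n_{-1} = n_{-2}$. Theorem~\ref{T:nsubh1f}(b)(1) then forces one of two boundary cases. In \emph{Case} (A), $N_{-1} = N_{-2} = 0$: iterating Theorem~\ref{T:nsubh1f}(a)(ii) gives $N_{-j} = 0$ for all $j \geq 1$, so $\mu_k = 0$ for every $k \geq 0$ and hence every $k_j \leq -1$; moreover the multiplicity of $-1$ equals $q_{-1} - q_0 = c_2 - (c_2 - 3 - n_0) = 3 + n_0 \geq 3$, giving $k_{m-2} = k_{m-1} = k_m = -1$. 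In \emph{Case} (B), $N_{-1} = \tH^1(F(-1))$ and $N_{-2} = \tH^1(F(-2))$: applying Case (A) to $\sce^\vee$ (whose spectrum is $(-k_m, \ldots , -k_1)$ and whose restriction to $H$ is again stable) delivers $k_1 = k_2 = k_3 = 1$. The mixed possibility (both $\pm 1$ occur while $0$ is absent) is ruled out because then $\mu_1 \geq 1$ forces $N_{-2} \neq 0$ while $\mu_{-1} \geq 1$ forces $N_{-1} \neq \tH^1(F(-1))$, contradicting both boundary cases simultaneously.

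For (vi), set $s := -k_i \geq 1$ and $r := -k_1$. The chain $k_{i-1} < k_i < k_{i+1}$ forces the multiplicity of $-s$ to equal exactly $1$, i.e., $q_{s-2} - q_{s-1} = 1$. The Serre duality isomorphism $\tH^1(F(l))^\vee \simeq \tH^1(F^\vee(-3-l))$ lets one transpose the graded quotient $Q \subseteq \tH^1_\ast F$ into a graded submodule $N^{(Q)} \subseteq \tH^1_\ast F^\vee$ with $\dim_k N^{(Q)}_{-3-l} = q_l$. The identity becomes $\dim_k N^{(Q)}_{-s-1} - \dim_k N^{(Q)}_{-s-2} = 1$; as $F^\vee$ is stable, Theorem~\ref{T:nsubh1f}(b)(2) at index $s + 1 \geq 2$ supplies a nonzero $\ell \in R_1$ with $\ell N^{(Q)}_{-j} = 0$ in $\tH^1(F^\vee(-j+1))$ for every $j \geq s + 1$. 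Re-dualizing, this amounts to $\ell \cdot \tH^1(F(l)) \subseteq N_{l+1}$---equivalently, $\ell \colon Q_l \to Q_{l+1}$ vanishes---for all $l \geq s - 3$. Since $r \geq s$, in particular $\ell \colon Q_{r-3} \to Q_{r-2}$ is the zero map; together with $\tH^2(\sce(r-2)) = \tH^1(K(r-1)) = 0$, which is read off from $k_\sce$ as $k_1 = -r$ is minimal, Lemma~\ref{L:criterionforunstplane}(I) provides the unstable plane of order $r = -k_1$.

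For the strict chain $k_1 < k_2 < \cdots < k_i$, the idea is to exploit the fact that $\ell$-multiplication vanishes on consecutive degrees of $Q$: the truncation $\bigoplus_{l=s-2}^{r-2} Q_l$ inherits a graded $R/(\ell) \simeq k[X, Y]$-module structure, whose Hilbert function $q_{s-2} > q_{s-1} > \cdots > q_{r-2} > 0$ is strictly decreasing (by the dual of Theorem~\ref{T:nsubh1f}(a)(ii), applicable since $\sce_H$ is stable and $Q_{l+1} \neq 0$ throughout the range). The main obstacle is precisely this final step: parlaying the single form $\ell$ into a uniform bound $\mu_{-j} \leq 1$ for $s \leq j \leq r$. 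The strategy is to view the truncation as a finitely generated graded $k[X, Y]$-module with strictly decreasing Hilbert function of length $r - s + 1$, whose socle and generator degrees are pinned by the already-known multiplicity $\mu_{-s} = 1$; a Macaulay/Gotzmann-type analysis, combined with the connectedness of the negative part of the spectrum (which yields $\mu_{-j} \geq 1$ throughout $s \leq j \leq r$), forces each multiplicity to equal exactly $1$. This is the rank-$3$ counterpart to Hartshorne's refinement \cite{ha2} in the rank-$2$ case.
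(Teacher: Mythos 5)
Your part (v), Case (A), and the first half of part (vi) (producing the linear form $\ell$ via the quotient-module analogue of Theorem~\ref{T:nsubh1f}(a)(iii),(b)(2) and then invoking Lemma~\ref{L:criterionforunstplane}(I)) coincide with the paper's argument. Two points, however, need attention. The smaller one is in (v), Case (B): you reduce to Case (A) for $\sce^\vee$ using the duality $k_{\sce^\vee} = (-k_m, \ldots , -k_1)$. That duality is stated in the paper (Remark~\ref{R:spectrum}) only for locally free $E$, and for a reflexive non-locally-free $\sce$ it fails in general, because Serre duality gives $\tH^2(\sce(l))^\vee \simeq \mathrm{Ext}^1(\sce , \sco_\piii(-4-l))$, which differs from $\tH^1(\sce^\vee(-4-l))$ by a contribution of the skyscraper $\sce xt^1(\sce , \sco_\piii)$. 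The paper avoids this by arguing Case (B) directly on the quotient side: $q_{-1} = 0$ forces $q_i = 0$ for $i \geq -1$, so all $k_j \geq 1$, and $n_{-2} - n_{-3} \geq c_2 - (c_2 - 3) = 3$ gives $k_1 = k_2 = k_3 = 1$. Your Case (B) should be rewritten this way (everything dualizes safely on the plane $H$, where $\sce_H$ is locally free, but not on $\piii$).

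The genuine gap is the strict chain $k_1 < k_2 < \cdots < k_i$ in (vi), which you acknowledge as ``the main obstacle'' and propose to close by a Macaulay/Gotzmann analysis of the truncated $Q$ as a $k[X,Y]$-module. This cannot work on numerical grounds alone: a finitely generated graded $k[X,Y]$-module can have a strictly decreasing Hilbert function whose first drop is $1$ and whose later drops exceed $1$ (already $k[x,y]/(x^3 , y^3 , x^2y^2)$ has Hilbert function $1,2,3,2,0$, with consecutive drops $1$ and then $2$; adding suitable trivial summands produces strictly decreasing examples with drop pattern $1, 3$). The connectedness bound $q_l - q_{l+1} \geq 1$ does not exclude such behaviour. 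The paper's proof of this step is genuinely geometric: the unstable plane $H_0$ of order $-k_1$ yields an epimorphism $\sce_{H_0} \to \sci_{Z , H_0}(k_1)$, hence an exact sequence $0 \to F^{\, \prime} \to \sce_H \to \sco_{L_0}(k_1) \to 0$ on $H$ with $L_0 = H \cap H_0$ and $F^{\, \prime}$ a stable rank $3$ bundle with $c_1(F^{\, \prime}) = -1$; since the composite $\tH^1_\ast(\sce) \to \tH^1_\ast(\sce_H) \to \tH^1_\ast(\sco_{L_0}(k_1))$ vanishes, one gets the four-term sequence $\tH^1_\ast(F^{\, \prime}) \to Q \to \tH^1_\ast(\sco_{L_0}(k_1)) \to \tH^2_\ast(F^{\, \prime})$ with $\tH^2(F^{\, \prime}(l)) = 0$ for $l \geq -2$. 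Writing $Q^\prime$ for the image of the first map, the refined inequality of Remark~\ref{R:quotient}(c) (strict decrease of $q^\prime_l$ unless zero) together with $q_j - q_{j+1} = 1$ forces $Q^\prime_l = 0$ for $l \geq j$, whence $q_l - q_{l+1} = \h^1(\sco_{L_0}(k_1 + l)) - \h^1(\sco_{L_0}(k_1 + l + 1)) = 1$ on the whole range $j \leq l \leq -k_1 - 2$. Without this elementary-modification input your argument does not reach the conclusion.
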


\begin{proof} 
(v) If $0$ does not occur in the spectrum then $n_{-1} = n_{-2}$. 
Thm.~\ref{T:nsubh1f}(b)(1) implies that either $n_{-1} = n_{-2} = 0$ or
$n_{-1} = n_{-2} = c_2$. In the former case $n_{-i} = 0$, $\forall \, i \geq 1$
(by Theorem~\ref{T:nsubh1f}(a)(ii)), $q_{-1} = \h^1(\sce_H(-1)) = c_2$, and 
$q_0 \leq \h^1(\sce_H) = c_2 - 3$ (by Riemann-Roch and the fact that 
$\tH^0(\sce_H) = 0$) hence $q_{-1} - q_0 \geq 3$. 

In the latter case, $q_{-1} = 0$ hence $q_i = 0$ for $i \geq -1$, and
$n_{-3} \leq \h^1(\sce_H(-3)) = c_2 - 3$ hence $n_{-2} - n_{-3} \geq 3$. 

(vi) Let $j \geq -1$ be the integer defined by $-j-2 = k_i$. The hypothesis 
says that $q_j - q_{j+1} = 1$. By the analogue of 
Thm.~\ref{T:nsubh1f}(a)(iii),(b)(2) for quotient modules of $\tH^1_\ast(F)$ 
(with $F = \sce_H$) it follows that there exists a non-zero linear form 
$\ell \in \tH^0(\sco_H(1))$ such that multiplication by
$\ell \colon Q_{l-1} \ra Q_l$ is the zero map, $\forall \, l \geq j$. In
particular, multiplication by $\ell \colon Q_{-k_1-3} \ra Q_{-k_1 -2}$ is the zero
map. Lemma~\ref{L:criterionforunstplane}(I) implies that $\sce$ has an unstable 
plane $H_0$ of order $-k_1$. 

Let us show, now, that $q_l - q_{l+1} = 1$ for $-k_1 - 2 \geq l \geq j$. By 
the definition of an unstable plane, there exists an epimorphism
$\sce_{H_0} \ra \sci_{Z , H_0}(k_1)$, for some 0-dimensional subscheme $Z$ of
$H_0$. One can assume that $H \cap Z = \emptyset$. Let $L_0$ be the
intersection line of $H$ and $H_0$. One has an exact sequence$\, :$ 
\[
0 \ra F^{\, \prime} \lra \sce_H \lra \sco_{L_0}(k_1) \ra 0\, , 
\]
with $F^{\, \prime}$ a stable rank 3 vector bundle on $H$ with 
$c_1(F^{\, \prime}) = -1$. Using the commutative diagram$\, :$ 
\[
\SelectTips{cm}{12}\xymatrix{\sce\ar[r]\ar[d] & \sco_{H_0}(k_1)\ar[d]\\
\sce_H\ar[r] & \sco_{L_0}(k_1)}
\]
one sees that the composite map $\tH^1_\ast(\sce) \ra \tH^1_\ast(\sce_H) 
\ra \tH^1_\ast(\sco_{L_0}(k_1))$ is zero. One deduces an exact sequence$\, :$ 
\[
\tH^1_\ast(F^{\, \prime}) \lra Q \lra \tH^1_\ast(\sco_{L_0}(k_1)) \lra 
\tH^2_\ast(F^{\, \prime})\, . 
\]
Since $F^{\, \prime}$ is stable, $\tH^2(F^{\, \prime}(l)) = 0$ for $l \geq -2$. 
Let $Q^\prime$ be the image of $\tH^1_\ast(F^{\, \prime}) \ra Q$ and put 
$q_l^\prime := \dim_kQ^\prime_l$. One has $q_l = q^\prime_l + 
\h^1(\sco_{L_0}(k_1 + l))$ for $l \geq -2$. 

Using Remark~\ref{R:quotient}(c), 
one gets that $q^\prime_l \geq q^\prime_{l+1}$ for $l \geq -1$ with equality if 
and only if both numbers are 0. Since $q_j - q_{j+1} = 1$ it follows that 
$q^\prime_j = 0$ hence $q^\prime_l = 0$, $\forall \, l \geq j$, hence 
$q_l - q_{l+1} = \h^1(\sco_{L_0}(k_1 + l)) - \h^1(\sco_{L_0}(k_1 + l + 1)) = 1$, 
for $j \leq l \leq -k_1 - 2$. 
\end{proof} 

In the remaining part of this appendix we will show that the properties (v) 
and (vi) from Thm.~\ref{T:vvi} are, actually, satisfied by the spectrum of any 
stable rank 3 reflexive sheaf $\sce$ on $\piii$ with $c_1 = 0$. According to 
the main result of the paper of Ein, Hartshorne and Vogelaar 
\cite[Thm.~0.1]{ehv}, if there is no plane $H \subset \piii$ avoiding the 
singular points of $\sce$ such that $\sce_H$ is stable then either $\sce$ can 
be realized as an extension$\, :$ 
\begin{equation}\label{E:omegasceoh0} 
0 \lra \Omega_\piii(1) \lra \sce \lra \sco_{H_0}(-c_2+1) \lra 0\, , 
\end{equation}
for some plane $H_0 \subset \piii$, or $\sce^\vee$ can be realized as such an 
extension, or $\sce$ is the second symmetric power of a nullcorrelation 
bundle, or $c_2 \leq 3$. 

If $\sce$ can be realized as an extension \eqref{E:omegasceoh0} then 
$\h^1(\sce(-1)) = 1$ and $\h^2(\sce(l)) = \h^2(\sco_{H_0}(-c_2 + 1 + l))$ for 
$l \geq -3$ hence the spectrum of $\sce$ is $(-c_2 + 1 , \ldots , -1 , 0)$. 

If $\sce \simeq S^2N$, for some nullcorrelation bundle $N$, then taking the 
second symmetric power of the monad $0 \ra \sco_\piii(-1) \ra 4\sco_\piii \ra 
\sco_\piii(1) \ra 0$ whose cohomology sheaf is $N$ one gets that $\sce$ is 
the cohomology sheaf of a monad of the form$\, :$ 
\[
0 \ra \sco_\piii(-1) \otimes 4\sco_\piii \ra S^2(4\sco_\piii) \oplus 
(\sco_\piii(-1) \otimes \sco_\piii(1)) \lra 4\sco_\piii \otimes \sco_\piii(1) 
\ra 0\, . 
\]
It follows that $\tH^i(\sce(-2)) = 0$, $i = 1,\, 2$, and $\h^1(\sce(-1)) = 4$ 
hence the spectrum of $\sce$ is $(0 , 0 , 0 , 0)$. 

\vskip2mm 

The following result is the Proposition on page 72 of \cite{c2}. We include an 
argument, for completeness. 

\begin{proposition}\label{P:scedual}
Let $\sce$ be a stable rank $3$ reflexive sheaf on $\piii$ with $c_1 = 0$, 
$c_2 \geq 3$ such that $\sce^\vee$ can be realized as an extension$\, :$ 
\[
0 \lra \Omega_\piii(1) \lra \sce^\vee \lra \sco_{H_0}(-c_2+1) \lra 0\, , 
\]
for some plane $H_0 \subset \piii$, and let $(k_1 , \ldots , k_m)$ be the 
spectrum of $\sce$. Then$\, :$ 

\emph{(a)} ${\fam0 H}^0(\sce_H) = 0$, for the general plane
$H \subset \piii\, ;$ 

\emph{(b)} If $0$ does not occur in the spectrum then
$k_{m-2} = k_{m-1} = k_m = -1\ ;$ 

\emph{(c)} If $k_1 \leq -2$ then $\sce$ has an unstable plane of order 
$-k_1\, ;$ 

\emph{(d)} If, for some $i$ with $2 \leq i \leq m-1$, one has
$k_{i-1} < k_i < k_{i+1} \leq 0$ then $k_1 < k_2 < \cdots < k_i$. 
\end{proposition}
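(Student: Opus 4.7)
The hypothesis forces the spectrum of $\sce$ to be $(0,1,\ldots,c_2-1)$. Indeed, applying the computation preceding the proposition to $\sce^\vee$ shows that the spectrum of $\sce^\vee$ is $(-c_2+1,\ldots,-1,0)$, and Serre duality (Remark~\ref{R:spectrum}) then gives the claim. In particular $k_1=0$, every $k_i\ge 0$, and $0$ occurs in the spectrum, so the hypotheses of (b), (c), and (d) are all vacuous and those statements hold automatically. Thus the real content is part (a), on which the plan focuses.

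First I would dualize the given extension. Applying $\mathcal{H}om(-,\sco_\piii)$ to $0\to\Omega_\piii(1)\to\sce^\vee\to\sco_{H_0}(-c_2+1)\to 0$ and computing $\mathcal{E}xt^1(\sco_{H_0}(-c_2+1),\sco_\piii)\simeq\sco_{H_0}(c_2)$ from the resolution $0\to\sco_\piii(-c_2)\to\sco_\piii(-c_2+1)\to\sco_{H_0}(-c_2+1)\to 0$, I obtain in the locally free case the exact sequence
\[
0\to\sce\to T_\piii(-1)\to\sco_{H_0}(c_2)\to 0
\]
(in the reflexive, non-locally-free case there is an additional $\mathcal{E}xt^1(\sce^\vee,\sco_\piii)$ supported at finitely many points, which is irrelevant on a general plane). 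Using the Euler sequence $0\to\sco_\piii(-1)\to V\otimes\sco_\piii\to T_\piii(-1)\to 0$ to identify $H^0(T_\piii(-1))\cong V$, stability of $\sce$ forces the induced map $V\to H^0(\sco_{H_0}(c_2))$ to be injective, with $4$-dimensional image $W$. Lifting $T_\piii(-1)\to\sco_{H_0}(c_2)$ to $V\otimes\sco_\piii\to\sco_{H_0}(c_2)$ presents $W$ as $\sum_i k p_i$, and the vanishing of the composite $\sco_\piii(-1)\to V\otimes\sco_\piii\to\sco_{H_0}(c_2)$ yields the syzygy $\sum_i X_i p_i=0$ on $H_0$. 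With coordinates chosen so that $H_0=\{X_0=0\}$, the triple $(p_1,p_2,p_3)$ is a Koszul syzygy of $(X_1,X_2,X_3)$ while $p_0$ is arbitrary.

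For a general plane $H\neq H_0$ meeting $H_0$ in a line $L_0$, restricting the sequence and using the splitting $T_\piii(-1)|_H\simeq T_H(-1)\oplus\sco_H$ (valid since $H^1(T_H(-1))=0$) gives
\[
H^0(\sce_H)\;\cong\;W\cap H^0(\sci_{L_0,H_0}(c_2)).
\]
Thus (a) reduces to showing that for a general line $L_0\subset H_0$, no nonzero element of $W$ vanishes on $L_0$. Since $\sce$ is locally free, $(p_0,\ldots,p_3)$ is base-point-free on $H_0$, so $W$ defines a morphism $\psi\colon H_0\cong\pii\to\p(W^\vee)\cong\piii$ with image a surface $Y$. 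If the claim failed, every $\psi|_{L_0}$ would factor through a proper linear subspace of $\piii$, imposing severe restrictions on $Y$. Introducing the incidence variety $\mathcal{I}=\{(L,[w])\in\mathrm{Gr}(1,H_0)\times\p(W)\ :\ L\subset Z(w)\}$ and observing that its fibers over $\p(W)$ are finite (a plane curve of degree $c_2$ has only finitely many linear components) yields $\dim\mathcal{I}\le 3$; the surjectivity of the projection $\mathcal{I}\to\mathrm{Gr}(1,H_0)$ is then what must be ruled out.

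The main obstacle is precisely this last geometric step: excluding every $4$-dimensional linear system $W\subset H^0(\sco_{H_0}(c_2))$ compatible with the Koszul syzygy such that every line in $H_0$ is contained in the zero locus of some member of $W$. The expected argument combines the rigid form of $(p_1,p_2,p_3)$ dictated by the syzygy, the independence of $p_0$ from this Koszul part (which follows from stability), and a dimension estimate for the locus of reducible members of $W$ in $\p(W)$, with a case analysis according to the image surface $Y$ and the degree of $\psi$.
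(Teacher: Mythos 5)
Your opening step is a fatal error, and it invalidates the whole structure of the argument. The Serre-duality relation $k_{\sce^\vee} = (-k_m, \ldots , -k_1)$ from Remark~\ref{R:spectrum} is a statement about vector \emph{bundles}; it rests on the identification $\h^2(\sce(l)) = \h^1(\sce^\vee(-l-4))$, which fails for a reflexive sheaf that is not locally free because the local-to-global Ext sequence acquires the term $\tH^0(\sce xt^1(\sce , \sco_\piii)(-l-4))$. Here $\sce^\vee$ is precisely such a sheaf: as the paper's proof shows by dualizing the extension, one gets $0 \to \sce \to \text{T}_\piii(-1) \to \sci_{Z,H_0}(c_2) \to 0$ with $\sco_Z(c_2) \simeq \sce xt^1(\sce^\vee , \sco_\piii)$, and $Z \neq \emptyset$ in general. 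Concretely, this sequence gives $\tH^1(\sce(l)) \simeq \tH^0(\sci_{Z,H_0}(c_2+l))$ for $l \leq -1$, so $\h^1(\sce(-1)) = \binom{c_2+1}{2} - (\text{conditions imposed by }Z) < \binom{c_2+1}{2}$ whenever $Z \neq \emptyset$, whereas the spectrum $(0,1,\ldots,c_2-1)$ would force $\h^1(\sce(-1)) = \binom{c_2+1}{2}$. So the spectrum of $\sce$ is \emph{not} $(0,1,\ldots,c_2-1)$; in fact it is typically concentrated in non-positive degrees, which is why the conclusions (b), (c), (d) all concern negative entries. Your claim that (b)--(d) are vacuous is therefore false, and you have given no argument for them. (The whole reason this proposition exists, and the reason the paper cites \cite{c2}, is exactly that the spectrum of the dual of a non-locally-free reflexive sheaf cannot be read off by negation-reversal.)

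For part (a), your reduction to showing that no nonzero member of the $4$-dimensional system $W \subset \tH^0(\sci_{Z,H_0}(c_2))$ vanishes on a general line $L_0 \subset H_0$ is essentially correct, but you stop exactly at the point where an argument is needed, and the incidence-variety dimension count you sketch does not close the gap (surjectivity of $\mathcal{I} \to \mathrm{Gr}(1,H_0)$ is compatible with $\dim \mathcal{I} = 2$). The paper's route is shorter and complete: it considers the rank $2$ bundle $G = \Ker\bigl(\phi_0 \colon \text{T}_\piii(-1)_{H_0} \to \sci_{Z,H_0}(c_2)\bigr)$ on the plane $H_0$, notes $c_1(G) = -c_2+1 \leq -2$ and $\tH^0(G) = 0$ (from stability of $\sce^\vee$), and applies Grauert--M\"ulich to get $\tH^0(G_L) = 0$ for a general line $L \subset H_0$, which gives the injectivity of $\tH^0(\phi \vb H)$ and hence $\tH^0(\sce_H) = 0$. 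You would still then need the arguments for (b), (c), (d), which in the paper use the inclusion $N_i \subseteq \tH^0(\sco_L(c_2+i))$, Lemma~\ref{L:criterionforunstplane}, and the restriction theorem of Schneider; none of this is present in your proposal.
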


\begin{proof} 
Dualizing the extension from the statement, one gets an exact sequence$\, :$ 
\[
0 \lra \sce \lra \text{T}_\piii(-1) \overset{\phi}{\lra} \sci_{Z , H_0}(c_2) 
\lra 0\, , 
\]
for some 0-dimensional subscheme $Z$ of $H_0$ such that
$\sco_Z(c_2) \simeq \sce xt^1(\sce^\vee , \sco_\piii)$.
Let $\phi_0 \colon \text{T}_\piii(-1)_{H_0} \ra \sci_{Z , H_0}(c_2)$
be the restriction of $\phi$ to $H_0$ and let $G$ be the 
kernel of $\phi_0$. $G$ is a rank 2 vector bundle on $H_0$ with
$c_1(G) = -c_2 + 1$. 

(a) Let $H \subset \piii$ be a general plane. Assume, in particular, that 
$H \neq H_0$ and $H \cap Z = \emptyset$. Let $L$ be the line $H \cap H_0$. 
One has an exact sequence$\, :$ 
\[
0 \lra \sce_H \lra \text{T}_\piii(-1)_H \overset{\phi \vb H}{\lra} 
\sco_L(c_2) \lra 0\, . 
\]
Since $c_1(G) \leq -2$ and $\tH^0(G) = 0$ (because
$\tH^0(\text{T}_\piii(-1)) \izo \tH^0(\text{T}_\piii(-1)_{H_0})$
and $\tH^0(\sce^\vee) = 0$), the theorem of 
Grauert-M\"{u}lich (see, for example, \cite[Thm.~0.1]{c3}) implies that, for 
a general line $L \subset H_0$, $\tH^0(G_L) = 0$. In this case $\tH^0(\phi 
\vb L) \colon \tH^0(\text{T}_\piii(-1)_L) \ra \tH^0(\sco_L(c_2))$ is injective. 
Since $\tH^0(\text{T}_\piii(-1)_H) \izo \tH^0(\text{T}_\piii(-1)_L)$ one 
deduces that $\tH^0(\phi \vb H)$ is injective hence $\tH^0(\sce_H) = 0$. 

(b) Using the notation from the proof of (a), one has exact sequences$\, :$ 
\[
\tH^0(\text{T}_\piii(-1)_H(i)) \ra \tH^0(\sco_L(c_2 + i)) \ra \tH^1(\sce_H(i)) 
\ra \tH^1(\text{T}_\piii(-1)_H(i))\, . 
\]
Using the commutative diagram$\, :$ 
\[
\SelectTips{cm}{12}\xymatrix{\tH^1(\sce(i))\ar[r]\ar[d] & 
\tH^1(\text{T}_\piii(i-1))\ar[d]\\
\tH^1(\sce_H(i))\ar[r] & \tH^1(\text{T}_\piii(-1)_H(i))} 
\]
one deduces that $N_i \subseteq \tH^0(\sco_L(c_2 + i))$ for $i \leq -1$ (see 
Definition~\ref{D:spectrum} for the notation). Moreover, for $i \geq -1$, 
$\tH^1(\sce_H(i))$ and, consequently, $Q_i$ is a quotient of 
$\tH^0(\sco_L(c_2 + i))$. 

Now, if $0$ does not occur in the spectrum then $n_{-1} = n_{-2}$. Since 
$N_i \subseteq \tH^0(\sco_L(c_2 + i))$ for $i \leq -1$, it follows that 
$n_i = 0$ for $i \leq -1$. This implies that the spectrum contains only 
negative integers. Moreover, $q_{-1} = \h^1(\sce_H(-1)) = c_2$ 
and $q_0 \leq \h^1(\sce_H) = c_2 - 3$ (because $\h^0(\sce_H) = 0$ by (a)) hence 
$q_{-1} - q_0 \geq 3$, i.e., $-1$ occurs at least three times in the spectrum. 

(c) As we saw in the proof of (b), $Q_i$ is a quotient of 
$\tH^0(\sco_L(c_2 + i))$ for $i \geq -1$. It follows that if
$\ell \in \tH^0(\sco_H(1))$ is an equation of the line $L = H \cap H_0$, then 
multiplication by $\ell \colon Q_{-k_1-3} \ra Q_{-k_1-2}$ is the zero map. 
Lemma~\ref{L:criterionforunstplane}(I) implies, now, that $\sce$ has an 
unstable plane of order $-k_1$. 

(d) By (c), $\sce$ has an unstable plane $H_1$ of order $-k_1$ hence 
one has an exact sequence$\, :$ 
\[
0 \lra \sce^\prime \lra \sce \lra \sci_{Z_1 , H_1}(k_1) \lra 0\, , 
\]
where $Z_1$ is a subscheme of dimension $\leq 0$ of $H_1$ and $\sce^\prime$ is 
a stable rank 3 reflexive sheaf with $c_1(\sce^\prime) = -1$. It follows from 
(a) that, for the general plane $H \subset \piii$, one has 
$\tH^0(\sce^\prime_H) = 0$. This implies that $\sce^\prime$ is not isomorphic to 
$\Omega_\piii(1)$ hence, by the restriction theorem of Schneider \cite{sch} 
(see, also, \cite[Thm.~3.4]{ehv}), the restriction of $\sce^\prime$ to a general 
plane $H \subset \piii$ is stable. Assuming that $H \cap Z_1 = \emptyset$, 
one has an exact sequence$\, :$ 
\[
0 \lra \sce^\prime_H \lra \sce_H \lra \sco_{L_1}(k_1) \lra 0\, , 
\] 
where $L_1 := H \cap H_1$. One can conclude, now, as in the proof of 
Thm.~\ref{T:vvi}(vi) (with $F^{\, \prime} = \sce^\prime_H$). 
\end{proof}

\begin{lemma}\label{L:scewithunstplane}  
Let $\sce$ be a stable rank $3$ reflexive sheaf on $\piii$ with $c_1 = 0$, and 
let $(k_1 , \ldots , k_m)$ be the spectrum of $\sce$. Assume that $\sce$ has an 
unstable plane. If $0$ does not occur in the spectrum of $\sce$ then
$c_2 \geq 3$ and $k_{m-2} = k_{m-1} = k_m = -1$. 
\end{lemma}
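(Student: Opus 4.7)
My plan is to case-split on whether $\sce$ admits a plane $H$ (avoiding its singular points) whose restriction $\sce_H$ is stable, invoking Theorem~\ref{T:vvi}(v) in the first case and the Ein--Hartshorne--Vogelaar classification of \cite[Thm.~0.1]{ehv} (in its version for reflexive sheaves) in the second.

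In the first case, Theorem~\ref{T:vvi}(v) gives either $k_{m-2}=k_{m-1}=k_m=-1$, which settles the matter (with $c_2=m\geq 3$), or $k_1=k_2=k_3=1$, i.e., all $k_i\geq 1$. The crux is to exclude the latter possibility using the unstable plane. If all $k_i\geq 1$, the spectrum $(-k_m,\dots,-k_1)$ of $\sce^\vee$ has every entry $\leq -1$, so formula~(i) of Definition~\ref{D:spectrum} applied to $\sce^\vee$ yields $\tH^1(\sce^\vee(l))=0$ for every $l\leq -1$ (each summand $\sco_\pj(-k_i+l+1)$ of the twisted spectrum bundle has degree $\leq -1$). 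On the other hand, applying $\text{Hom}(\sce,\,\cdot\,)$ to $0\to\sco_\piii(-r-1)\xra{h_0}\sco_\piii(-r)\to\sco_{H_0}(-r)\to 0$, using $\tH^0(\sce^\vee(n))=0$ for $n\leq 0$ (stability of $\sce^\vee$) and the identification $\text{Ext}^1(\sce,\sco_\piii(n))=\tH^1(\sce^\vee(n))$ (valid since $\sce$ is reflexive), yields an injection $\text{Hom}(\sce_{H_0},\sco_{H_0}(-r))\hookrightarrow\Ker(\tH^1(\sce^\vee(-r-1))\xra{h_0}\tH^1(\sce^\vee(-r)))$, whose right hand side vanishes for every $r\geq 1$ by the above. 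This contradicts the existence of an unstable plane.

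In the second case, the Ein--Hartshorne--Vogelaar classification leaves three possibilities: $\sce\simeq S^2N$ for some nullcorrelation bundle $N$; $\sce^\prime$ sits in an extension $0\to\Omega_\piii(1)\to\sce^\prime\to\sco_{H_0}(-c_2+1)\to 0$ with $\sce^\prime\in\{\sce,\sce^\vee\}$; or $c_2\leq 3$. The sheaf $S^2N$ has spectrum $(0,0,0,0)$, and the extension for $\sce^\prime=\sce$ has spectrum $(-c_2+1,\dots,-1,0)$; both contain $0$, contradicting the hypothesis. If $\sce^\vee$ is the extension, Proposition~\ref{P:scedual}(b) yields the conclusion directly; its hypothesis $c_2\geq 3$ is satisfied because $c_2\leq 2$ forces $0$ into the spectrum (enumeration using $|c_3|\leq c_2^2-c_2$ and connectedness leaves only $(0)$ for $c_2=1$ and $(-1,0),(0,0),(0,1)$ for $c_2=2$). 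Finally, the subcase $c_2\leq 3$ reduces by the same enumeration to $c_2=3$ with spectrum $(1,1,1)$ or $(-1,-1,-1)$, and $(1,1,1)$ is excluded by the argument of the previous paragraph.

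The main nontrivial ingredient is precisely that ``all $k_i\geq 1$'' is incompatible with having an unstable plane; this breaks the $\sce\leftrightarrow\sce^\vee$ symmetry of Theorem~\ref{T:vvi}(v) and forces the ``ends in $-1$s'' alternative. Everything else is routine bookkeeping on the finitely many spectra admitted for $c_2\in\{1,2,3\}$, together with direct applications of Theorem~\ref{T:vvi}(v) and Proposition~\ref{P:scedual}(b).
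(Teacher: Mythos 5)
Your strategy is genuinely different from the paper's: the paper never splits on the existence of a stably-restricting plane. It shows directly that $\tH^0(\sce_H)=0$ for the general plane $H$ (otherwise \cite[Prop.~5.1]{ehv} forces the extension \eqref{E:omegasceoh0} and hence $0$ in the spectrum), deduces $c_2\geq 3$ from $\h^1(\sce_H)=c_2-3\geq 0$, performs the elementary modification $0\to\sce^\prime\to\sce\to\sci_{Z,H_0}(-r)\to 0$ along the unstable plane, gets stability of $\sce^\prime_H$ from Schneider's restriction theorem, and applies Remark~\ref{R:quotient}(b) to the submodule $N\subseteq\tH^1_\ast(\sce^\prime_H)$ to force $n_i=0$ for $i\leq -1$ and then $q_{-1}-q_0\geq 3$. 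Your route, while correct in outline for vector bundles, has two genuine gaps in the reflexive case, which is the case the lemma is stated for and used in.

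First, in Case 1 you use that the spectrum of $\sce^\vee$ is $(-k_m,\dots,-k_1)$ and that $\text{Ext}^1(\sce,\sco_\piii(n))=\tH^1(\sce^\vee(n))$. Both statements hold for locally free sheaves but fail for reflexive sheaves with singular points: the local-to-global sequence reads $0\to\tH^1(\sce^\vee(n))\to\text{Ext}^1(\sce,\sco_\piii(n))\to\tH^0(\sce xt^1(\sce,\sco_\piii)(n))$ with a nonzero skyscraper term, and Proposition~\ref{P:scedual} itself exhibits sheaves whose spectrum is not the reversed negative of the spectrum of the dual. The vanishing you need is still true, but should be reached through $\sce$ itself: if all $k_i\geq 1$, property (ii) of Definition~\ref{D:spectrum} gives $\tH^2(\sce(l))=0$ for $l\geq -3$, and Serre duality $\text{Ext}^1(\sce,\sco_\piii(-r-1))\cong\tH^2(\sce(r-3))^\vee$ then kills $\text{Hom}(\sce_{H_0},\sco_{H_0}(-r))$ for every $r\geq 1$. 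Second, and more seriously, your enumeration in Case 2 appeals to connectedness of the spectrum in the strong sense $k_{i+1}-k_i\leq 1$. For reflexive sheaves the paper only establishes the weaker property (iv) of Definition~\ref{D:spectrum}, which does not exclude $(-1,1)$ for $c_2=2$, nor $(-2,-1,1)$, $(-1,-1,1)$, $(-1,1,1)$, $(-1,1,2)$ for $c_2=3$; eliminating precisely these spectra is the content of Proposition~\ref{c2leq3}, whose proof invokes the present lemma, so your bookkeeping is circular at exactly the point where it matters. The paper's uniform argument disposes of all these residual spectra at once and is the step your proposal is missing.
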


\begin{proof} 
One must have $\tH^0(\sce_H) = 0$ for the general plane $H \subset \piii$ 
because, otherwise, \cite[Prop.~5.1]{ehv} would imply that $\sce$ can be 
realized as an extension \eqref{E:omegasceoh0} and, in this case, as we saw 
above, $\sce$ would have spectrum $(-c_2+1 , \ldots , -1 , 0)$. This implies, 
in particular, that $c_2 \geq 3$ because, by Riemann-Roch,
$\h^1(\sce_H) = c_2 - 3$ (for a general plane $H \subset \piii$ such that
$\sce_H$ is semistable and $\tH^0(\sce_H) = 0$). 

Now, let $H_0$ be an unstable plane for $\sce$ and let $r > 0$ be its order. 
One has an exact sequence$\, :$ 
\[
0 \lra \sce^\prime \lra \sce \lra \sci_{Z , H_0}(-r) \lra 0\, , 
\]
where $Z$ is a subscheme of $H_0$ of dimension $\leq 0$ and $\sce^\prime$ is a 
stable rank 3 reflexive sheaf with $c_1(\sce^\prime) = -1$. By what has been 
shown above, $\tH^0(\sce^\prime_H) = 0$, for the general plane $H \subset 
\piii$. This implies that $\sce^\prime$ is not isomorphic to 
$\Omega_\piii(1)$ hence, by the restriction theorem of Schneider \cite{sch} 
(see, also, \cite[Thm.~3.4]{ehv}), $\sce^\prime_H$ is stable, for a general 
plane $H \subset \piii$. Assuming that $H \cap Z = \emptyset$, one has an 
exact sequence$\, :$ 
\[
0 \lra \sce^\prime_H \lra \sce_H \lra \sco_{L_0}(-r) \lra 0\, , 
\] 
where $L_0 := H \cap H_0$. Since the composite morphism
$\sce \ra \sce_H \ra \sco_{L_0}(-r)$ factorizes through $\sco_{H_0}(-r)$, the
composite map 
$\tH^1(\sce(i)) \ra \tH^1(\sce_H(i)) \ra \tH^1(\sco_{L_0}(-r+i))$ is zero. 
It follows that $N_i \subseteq \tH^1(\sce^\prime_H(i))$ for $i \leq 0$. 

Now, since 0 does not occur in the spectrum of $\sce$ one has $n_{-1} = n_{-2}$. 
Applying Remark~\ref{R:quotient}(b) to $F^{\, \prime} := \sce^\prime_H$ on 
$H \simeq \pii$, one gets that $n_i = 0$ for $i \leq -1$. This implies that 
the spectrum of $\sce$ consists only on negative integers. On the other hand, 
$q_{-1} = \h^1(\sce_H(-1)) = c_2$ and $q_0 \leq \h^1(\sce_H) = c_2 - 3$ hence 
$q_{-1} - q_0 \geq 3$, i.e., $-1$ occurs at least three times in the spectrum. 
\end{proof}

\begin{proposition}\label{c2leq3} 
Let $\sce$ be a stable rank $3$ reflexive sheaf on $\piii$ with $c_1 = 0$. 

\emph{(a)} If $c_2 = 2$ then the possible spectra of $\sce$ are $(-1 , 0)$, 
$(0 , 0)$, and $(0 , 1)$. 

\emph{(b)} If $c_2 = 3$ and $0$ does not occur in the spectrum of $\sce$ then 
this spectrum is either $(-1 , -1 , -1)$ or $(1 , 1 , 1)$. 
\end{proposition}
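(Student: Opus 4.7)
The plan is a short combinatorial enumeration leveraging the properties of the spectrum already established: $m = c_2$ and $-2\sum k_i = c_3$ from Definition~\ref{D:spectrum}, the connectedness inequality $k_{i+1} - k_i \leq 1$ recorded in Remark~\ref{R:spectrum}, property~(iv) of Definition~\ref{D:spectrum} (a negative entry forces every intermediate integer down to $-1$ into the spectrum, dually for a positive entry), the bound $|c_3| \leq c_2^2 - c_2$ of \cite[Thm.~4.2]{ehv}, and the parity constraint $c_3 \in 2\z$ coming from the Riemann--Roch formula in Remark~\ref{R:spectrum}.

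For part~(a), $c_2 = 2$ forces $c_3 \in \{-2, 0, 2\}$ and hence $k_1 + k_2 \in \{-1, 0, 1\}$. The plan is simply to enumerate the non-decreasing integer pairs $(k_1, k_2)$ with $k_2 - k_1 \leq 1$ and the correct sum: this produces exactly $(-1, 0)$, $(0, 0)$, and $(0, 1)$. Pairs with a jump across zero such as $(-1, 1)$ or $(-2, 2)$ are eliminated by connectedness, and same-sign pairs such as $(-1, -1)$, $(-2, -1)$, $(1, 1)$, or $(1, 2)$ are eliminated by the Chern bound.

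For part~(b), the first observation is that under the hypothesis $0 \notin \{k_1, k_2, k_3\}$ together with connectedness and the non-decreasing order, the spectrum must have entries of constant sign: a sign change between consecutive entries would require a jump of at least~$2$ through~$0$. Since the spectrum of $\sce^\vee$ is $(-k_3, -k_2, -k_1)$, it suffices to treat the negative case. Property~(iv) then puts $-1$ into the spectrum, forcing $k_3 = -1$, and connectedness leaves only the four triples $(-1, -1, -1)$, $(-2, -1, -1)$, $(-2, -2, -1)$, and $(-3, -2, -1)$. Imposing the upper Chern bound $c_3 \leq c_2^2 - c_2 = 6$, equivalently $\sum k_i \geq -3$, selects only $(-1, -1, -1)$, and duality yields $(1, 1, 1)$ in the positive case.

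No genuine obstacle is anticipated: the substantive content is packaged in the spectrum machinery of the appendix and in the Chern class bounds of Ein--Hartshorne--Vogelaar, and what remains is a finite case check carried out in a few lines. The only minor point requiring care is making sure, in each case, that the surviving triples actually realize an even value of $c_3$ compatible with $2\z$; both $(-1,-1,-1)$ (giving $c_3 = 6$) and $(1,1,1)$ (giving $c_3 = -6$), and similarly the three spectra in~(a), pass this test automatically.
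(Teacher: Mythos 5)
Your enumeration of the candidate spectra is accurate, but the argument is circular at exactly the point where the proposition has real content. The connectedness inequality $k_{i+1} - k_i \leq 1$ that you invoke from Remark~\ref{R:spectrum} is not available here: within the self-contained development of Appendix~\ref{A:spectrum}, connectedness is a consequence of property (iv) of Definition~\ref{D:spectrum} together with property (v) of Theorem~\ref{T:vvi}, and Proposition~\ref{c2leq3} is precisely the step that supplies property (v) in the cases $c_2 \leq 3$, where Theorem~\ref{T:vvi} does not apply because the Ein--Hartshorne--Vogelaar restriction theorem does not guarantee a stable plane restriction. Property (iv) only forbids gaps within the negative part and within the positive part of the spectrum; it does not forbid a jump from $-1$ to $1$ skipping $0$. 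Consequently the spectra $(-1,1)$ for $c_2 = 2$, and $(-2,-1,1)$, $(-1,-1,1)$, $(-1,1,1)$, $(-1,1,2)$ for $c_2 = 3$, satisfy monotonicity, property (iv), the parity of $c_3$, and the bound $|c_3| \leq c_2^2 - c_2$; your proposal eliminates them only by appealing to connectedness, i.e., by assuming what is to be proved. The same circularity affects your constant-sign claim in part (b).

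The paper's proof eliminates these mixed-sign spectra by a genuinely different mechanism: for each of them the cohomological profile $\h^2(\sce(l)) = \h^1(K(l+1))$ forces, via Lemma~\ref{L:criterionforunstplane}(II) (an application of the Bilinear Map Lemma), the existence of an unstable plane of order $-k_1$; Lemma~\ref{L:scewithunstplane} then shows that a stable sheaf with an unstable plane whose spectrum omits $0$ must have $c_2 \geq 3$ and $k_{m-2} = k_{m-1} = k_m = -1$, which contradicts each of the listed spectra. To repair your argument you would need to reproduce this (or an equivalent) elimination of the gap at zero; the purely combinatorial part of your write-up, which is correct, only reduces the proposition to that elimination.
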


\begin{proof} 
(a) Taking into account properties (i)--(iv) from Definition~\ref{D:spectrum} 
and the fact that $-2 \leq c_3 \leq 2$ one sees that one has to eliminate the 
spectrum $(-1 , 1)$. If $\sce$ would have this spectrum then one would have 
$\h^2(\sce(-3)) = 2$, $\h^2(\sce(-2)) = 1$, $\h^2(\sce(-1)) = 0$, and 
Lemma~\ref{L:criterionforunstplane}(II) would imply that $\sce$ has an 
unstable plane of order 1. But this would \emph{contradict} 
Lemma~\ref{L:scewithunstplane}. 

(b) Taking into account the properties (i)--(iv) from 
Definition~\ref{D:spectrum} and the fact that $-6 \leq c_3 \leq 6$, one has to 
eliminate the spectra$\, :$ $(-2 , -1 , 1)$, $(-1 , -1 , 1)$, 
$(-1 , 1 , 1)$ and $(-1 , 1 , 2)$. If the spectrum of $\sce$ would be among 
these ones then Lemma~\ref{L:criterionforunstplane}(II) would imply that 
$\sce$ has an unstable plane (of order $-k_1$). Then 
Lemma~\ref{L:scewithunstplane} would imply that the spectrum of $\sce$ is 
$(-1 , -1 , -1)$ which is a \emph{contradiction}. 
\end{proof}

\end{document}